\documentclass[a4paper,12pt,reqno]{amsart}
\usepackage{hyperref}
\hypersetup{backref,colorlinks=true,allcolors=black}
\usepackage{mathrsfs}
\usepackage{enumitem}
\usepackage{dsfont}
\usepackage{mathtools}
\usepackage{xcolor}
\usepackage[final]{graphicx}
\usepackage{upgreek}
\usepackage{amsmath,amssymb,bbm,bm}
\usepackage{nameref}
\usepackage{comment}

\newtheorem{theorem}{Theorem}[section]
\newtheorem{proposition}[theorem]{Proposition}
\newtheorem{lemma}[theorem]{Lemma}
\newtheorem{corollary}[theorem]{Corollary}


\theoremstyle{definition}

\newtheorem{remark}[theorem]{Remark}


\newenvironment{customcondition}[1]
{\innercustomthm}
{\endinnercustomthm}


\newcommand{\ignore}[1]{}
\textwidth 6.25in \textheight 8.9in \evensidemargin -1pt
\oddsidemargin -1pt \topskip -2in \topmargin 0pt
\parindent0.5in


\usepackage{color}
\usepackage{xcolor}
\definecolor{crimson}{rgb}{0.7294,0.0666,0.0470}

\newcommand{\R}{\mathbb{R}}
\newcommand{\N}{\mathbb{N}}

\newcommand{\norm}[1]{\left\lVert #1 \right\rVert}
\newcommand{\abs}[1]{\left\vert #1 \right\rvert}

\newcommand{\E}[1]{\mathbb{E}{\left[ #1\right]}}

\DeclareMathOperator*{\plim}{\mathbb{P}-lim}

\DeclarePairedDelimiter\autobracket{(}{)}
\newcommand{\brac}[1]{\autobracket*{#1}}
\newcommand{\inner}[1]{\left\langle #1 \right\rangle}

\newcommand{\der}{{\mathbb{D}}}

\newcommand{\bracsq}[1]{\left[ #1\right]}

\theoremstyle{definition}

\makeatletter
\@addtoreset{proofcase}{theorem}
\makeatother

\theoremstyle{definition}

\makeatletter
\@addtoreset{proofstep}{theorem}
\makeatother

\pagestyle{plain}
\linespread{1.025} 
\usepackage{microtype}
\usepackage{parskip}


\title{Gradient estimates for semigroups associated with stochastic differential equations driven by cylindrical L\'{e}vy processes}

\author{Thanh~Dang$^1$}
\address{$^1$Department of Mathematics, Florida State University, 1017
  Academic Way, Tallahassee, FL 32306, USA}
\email{ycloud777@gmail.com}
\author{Lingjiong Zhu$^2$}\thanks{Lingjiong Zhu is partially supported by the grants NSF DMS-2053454, NSF DMS-2208303.} 
\address{$^2$Department of Mathematics, Florida State University, 1017
  Academic Way, Tallahassee, FL 32306, USA}
\email{zhu@math.fsu.edu}

\date{}

\begin{document}

\begin{abstract}
 Via a Bismut-Elworthy-Li formula from \cite{kulik2023gradient}, we derive uniform gradient estimates for transition semigroups associated with stochastic differential equations driven by a large class of cylindrical L\'{e}vy processes which includes the important case of cylindrical $\alpha$-stable processes. As the first application, we formulate a Stein's method for quantitative approximation of the invariant measure of these stochastic differential equations in Wasserstein distance. As the second and main application, we study Euler-Maruyama numerical schemes of stochastic differential equations driven by stable L\'{e}vy processes with i.i.d. stable components and obtain a uniform-in-time approximation error in Wasserstein distance. Our approximation error has a linear dependence on the stepsize, which is expected to be tight, as can be seen from an explicit calculation for the case of an Ornstein-Uhlenbeck process.
\end{abstract}

\maketitle

\section{Introduction}

In \cite{kulik2023gradient}, Kulik, Peszat and Priola consider the stochastic differential equation
\begin{align}
\label{sde_cylindricallevy}
    X_t^x=x+\int_0^t b(X_t^x)dt+Z_t,
\end{align}
where
$Z_t:=\brac{Z^1_t,\cdots,Z^d_t}^T$
is a $d$-dimensional cylindrical L\'{e}vy process, i.e. $\{Z^j:1\leq j\leq d\}$ is a family of independent real-valued L\'{e}vy processes. Denote $m$ as the L\'{e}vy measure of $Z$ then under quite general assumptions on $m$ (which includes the L\'{e}vy measure of cylindrical stable processes), they establish a Bismut-Elworthy-Li formula for the transition semigroups $\{P_t:t\geq 0 \}$ associated with \eqref{sde_cylindricallevy}:
\begin{align}
\label{bismut_intro}
    \nabla P_tf(x)= \E{f(X^x_t)G(x,t)}, \qquad f\in \mathcal{C}_b(\R^d),
\end{align}
where the random field $G(x,t)$ does not depend on $f$. Such integration-by-parts formula is a powerful tool in stochastic analysis and has been used to obtain Harnack inequalities, heat kernel estimates as well as strong Feller properties in \cite{arnaudon2006harnack,arnaudon2009gradient,da2009singular,liu2008harnack,wang2007harnack,zhang2010white,wang2012derivative} among others. While Bismut-Elworthy-Li formulas for semigroups of stochastic differential equations driven by L\'{e}vy noise has been studied by several authors, see for instance \cite{takeuchi2010bismut, zhang2013derivative,xuzhang2015gradient}, the case of L\'{e}vy noise with singular L\'{e}vy measure considered in \cite{kulik2023gradient} is arguably more technically challenging to study, as explained in the introduction and also at the end of the second section in \cite{kulik2023gradient}. More generally, there is a vast amount of literature devoted to the study of cylindrical L\'{e}vy process, see e.g. \cite{liu2022alpha,
barndorff2001multivariate,ye2018stochastic, pruitt1969sample,kulczycki2022weak,kulczycki2021strong,kulczycki2017transitioncylin,
bass2006systems,
friesen2021aniso,
bogdan2020heat,
zhang2014aniso,
wang2015harnack}. 

Relying on formula \eqref{bismut_intro}, we are able to derive in Proposition \ref{prop_semigroupestimate} uniform gradient estimates for the semigroups associated the solution to \eqref{sde_cylindricallevy}. Based on these estimates, our first application is to develop a Stein's method for approximation of the invariant measure $\nu$ of the process in  \eqref{sde_cylindricallevy}. Specifically, formula \eqref{bismut_intro} enables us to bound the second derivative of the solution to the Stein's equation, which leads to an upper bound of the Wasserstein distance between the law of a generic random variable $F$ and the invariant measure $\nu$ of \eqref{sde_cylindricallevy} (Theorem~\ref{theorem_steinmethodgeneralbound}). The idea of using Bismut-Elworthy-Li formula to study solution to the Stein's equation has appeared earlier in \cite{FSX19,Gor19,erdogdu2018global} in the context of approximating invariant measure of It\^{o} diffusion processes. 

Furthermore, in our second and main application of Proposition~\ref{prop_semigroupestimate}, we assume the cylindrical L\'{e}vy process to be an $\alpha$-stable L\'{e}vy process with i.i.d. stable components and consider the stochastic differential equation (SDE):
\begin{align}
\label{SDE_stable_intro}
X_t&=b(X_t)dt+\ dL^\alpha_t,\quad X_0=x\in\mathbb{R}^{d}.
\end{align}
Here $\{L^\alpha_t:t\geq 0\},1< \alpha< 2$ denotes a $d$-dimensional $\alpha$-stable L\'{e}vy process with independent and identically distributed (i.i.d.) $\alpha$-stable components and $b(\cdot):\mathbb{R}^{d}\rightarrow\mathbb{R}^{d}$ is the drift term. An Euler-Maruyama scheme of this equation is
\begin{align*}
Y_{m+1}&=Y_m+\eta b(Y_m)+\xi_m, \quad Y_0=x\in\mathbb{R}^{d}, 
\end{align*}
 where $\eta \in[0,1]$ is the stepsize and $\{\xi_m:m\geq 1\}$ are the independent increments of length $\eta$, i.e.
$\xi_{m}:= L^\alpha_{(m+1)\eta}-L^\alpha_{m\eta}$. 

Let us provide here some background and practical motivation for our second application. The study of the Euler-Maruyama scheme of SDE has a long history in the probability and numerical analysis literature. 
In particular, the approximation of Euler-Maruyama scheme of SDE driven by L\'{e}vy noise has been extensively
studied; see e.g. \cite{Janicki1996,Protter1997,Pamen2017,Mikulevicius2018,Kuhn2019,chenxu2023euler}. 
However, the majority of these works obtain an approximation error of Euler-Maruyama scheme
on a fixed time interval $[0,T]$, and the approximation error often grows to infinity
as $T\rightarrow\infty$. The recent applications of L\'{e}vy-driven SDE in the machine learning community
reignite the interest of studying the Euler-Maruyama scheme.
The L\'{e}vy-driven SDEs naturally appear in the following two scenarios
in the machine learning applications.

First, stochastic gradient descent (SGD) methods are one of the most popular algorithms
for solving many optimization problems that arise in machine learning, especially deep learning.
It has been numerically observed that the gradient noise often becomes heavy-tailed over iterations in deep learning practice 
\cite{csimcsekli2019heavy,pmlr-v97-simsekli19a,ht_sgd_quad}.
Recent theoretical studies \cite{ht_sgd_quad, hodgkinson2020multiplicative} showed that heavy tails 
can arise in SGD even under surprisingly simple settings such as linear regression with Gaussian data. 
To better understand the effect of heavy-tails in SGD,
one often approximates the gradient noise by $\alpha$-stable distributions, 
and L\'{e}vy-driven SDE arises naturally as the continuous limit as the stepsize goes to zero \cite{pmlr-v97-simsekli19a,nguyen2019first,simsekli2020hausdorff,raj2023algorithmic,raj2023general}.
Such an approximation allows studies on the correlation of heavy-tailedness with generalization performance 
which is of key interest in machine learning
\cite{pmlr-v97-simsekli19a,simsekli2020hausdorff,raj2023algorithmic,raj2023general}.

Second, fractional Langevin algorithms are proposed and studied in recent machine learning literature
that can target a given distribution. 
The algorithms are based on discretizations of L\'{e}vy-driven SDE
where the drift term involves fractional derivatives \cite{simcsekli2017fractional,nguyen19}.
Such heavy-tailed sampling algorithms can be useful in both
large-scale sampling problems such as Bayesian learning \cite{simcsekli2017fractional}, 
as well as non-convex optimization problems
that arise in machine learning \cite{nguyen19}.

The recent application of L\'{e}vy-driven SDE in machine learning 
calls for uniform-in-time approximation error of Euler-Maruyama scheme.
The motivation is that for an arbitrarily small target accuracy, 
the number of iterates for the stochastic algorithms of interest
can be arbitrarily large. Therefore, one would like to have a quantitative control
on the discretization error that is uniform-in-time and will not grow
to infinity as the number of iterates increases to infinity. See e.g. \cite{simcsekli2017fractional,nguyen19}.

In a seminal work, \cite{chenxu2023euler} studied uniform-in-time 
Euler-Maruyama approximation of SDE driven by rotationally invariant $\alpha$-stable L\'{e}vy processes.
They studied two discretization schemes, based on the $\alpha$-stable distributed noise
and the Pareto distributed noise, and proved the $1$-Wasserstein error bounds in the order
of $\mathcal{O}(\eta^{1-\epsilon})$ and $\mathcal{O}(\eta^{\frac{2}{\alpha}-1})$ respectively 
as the stepsize $\eta\rightarrow 0$,
where $\epsilon\in(0,1)$ can be arbitrarily small. They showed that the discretization error
in the Pareto scheme is tight in terms of the stepsize dependence.

In contrast to \cite{chenxu2023euler}, we consider uniform-in-time 
Euler-Maruyama approximation of SDE driven by $\alpha$-stable L\'{e}vy processes with i.i.d. components.
While rotationally invariant $\alpha$-stable L\'{e}vy-driven SDE has been sometimes used
to approximate heavy-tailed SGD in machine learning literature \cite{raj2023algorithmic,raj2023general},
$\alpha$-stable L\'{e}vy processes with i.i.d. components
are often preferred in some other settings in machine learning applications,
such as in the construction of fractional Langevin Monte Carlo algorithms
that can target a given Gibbs distribution \cite{simcsekli2017fractional,nguyen19,CWZGHS20}. The noise structure of the i.i.d. components
makes it relatively easy to design fractional Langevin Monte Carlo algorithms
in which each component in the drift term involves a one-dimensional fractional derivative \cite{simcsekli2017fractional,nguyen19,CWZGHS20}. 

The $1$-Wasserstein distance between two probability measures $\mu$ and $\nu$ on $\mathbb{R}^{d}$ is defined as (\cite{villani2008optimal})
\begin{equation*}
d_{\operatorname{Wass}}(\mu,\nu)=\sup_{h\in\text{Lip}(1)}\left|\int_{\mathbb{R}^{d}}h(x)\mu(dx)-\int_{\mathbb{R}^{d}}h(x)\nu(dx)\right|,
\end{equation*}
where $\text{Lip}(1)$ consists of the functions $h:\mathbb{R}^{d}\rightarrow\mathbb{R}$
that are $1$-Lipschitz. In Theorem \ref{theorem_eulerscheme}, we provide an upper bound of the convergence rate of the Euler-Maruyama scheme in the 1-Wasserstein distance. Specifically, under suitable conditions, we will show that
\begin{align}
\label{estimate_euler_intro}
    d_{\operatorname{Wass}}\left(\operatorname{Law}(X_{\eta N}),\operatorname{Law}(Y_N)\right)\leq \mathcal{C} \eta,
\end{align}
for a constant $\mathcal{C}$ that will be made explicit. Our Euler-Maruyama approximation \eqref{estimate_euler_intro} has a linear dependence on the stepsize,
which is expected to be tight (that can be seen 
through an explicit calculation for an Ornstein-Uhlenbeck process driven by a one-dimensional stable L\'{e}vy process). 

In addition and as our second Euler-Maruyama approximation, we consider in Appendix~\ref{appendix_eulerschemepareto} a second discretization scheme of Equation~\eqref{SDE_stable_intro} using the i.i.d. 
Pareto noise, inspired by \cite{chenxu2023euler}: 
\begin{align*}
    U_{m+1}=U_m+\eta b(U_m)+\sigma_{\eta,\alpha}\zeta_m, \quad U_0=x\in\mathbb{R}^{d}, 
\end{align*}
where $\sigma_{\eta,\alpha}$ is a proper scaling parameter 
and $\zeta_{m}$ are i.i.d. $d$-dimensional Pareto noises that we will specify later. We will obtain an approximation error under this regime as well.
In particular, we are able to show that
\begin{align}
\label{estimate_euler_intro_pareto}
    d_{\operatorname{Wass}}\left(\operatorname{Law}(X_{\eta N}),\operatorname{Law}(U_N)\right)\leq \mathcal{C}' \eta^{2/\alpha-1},
\end{align}
for a constant $\mathcal{C}'$ that will be made explicit.  
The dependence on the stepsize $\eta^{2/\alpha-1}$ is the same as in \cite{chenxu2023euler}, which is expected
to be tight as is indicated by the discussions in \cite{chenxu2023euler}.

The recent papers \cite{chenxu2023euler,chenxufollowuppaper,bally2023approximation} also study uniform-in-time Euler-Maruyama scheme for SDE driven by L\'{e}vy process. What distinguishes our paper from the aforementioned references is that while they assume L\'{e}vy processes whose L\'{e}vy measures are absolutely continuous with respect to the Lebesgue measure, we are able to handle cylindrical L\'{e}vy processes with singular L\'{e}vy measures by employing a Malliavin calculus framework recently established in \cite{kulik2023gradient}. 
In particular, the authors
of \cite{chenxu2023euler,chenxufollowuppaper} consider SDE driven by $d$-dimensional rotationally invariant $\alpha$-stable L\'{e}vy process $Z^\alpha$. Since $Z^\alpha$ has the representation
    \begin{align*}
   Z^\alpha_t=     B_{S_t}=\brac{B^1_{S_t},\ldots,B^d_{S_t}},
    \end{align*}
    where $\{B_t=\brac{B^1_t,\ldots,B^d_t}:t\geq 0\}$ is an $\R^d$-valued Brownian motion and $S$ is a one-dimensional $\alpha/2$-stable subordinator independent from $B$, they are able to perform a time change to transform their L\'{e}vy driven SDE into an It\^{o} diffusion SDE, after which they apply the classical Malliavin calculus on Wiener space. In the context of our paper, a $d$-dimensional $\alpha$-stable L\'{e}vy process with i.i.d. components $L$ can be written as 
    \begin{align*}
        \brac{L^1_t,\ldots, L^d_t}= \brac{B^1_{S^1_t},\ldots,B^d_{S^d_t}},
    \end{align*}
 where for each $i$, $B^i$ is a one-dimensional Brownian motion and $S^i$ is a one-dimensional $\alpha/2$-stable subordinator independent from $B^i$. When $d\geq 2$, the time change argument in \cite{chenxu2023euler,chenxufollowuppaper} does not work for SDE driven by $L$, and that is where the framework in \cite{kulik2023gradient} comes in.

The paper is organized as follows. We will first provide important definitions and notations in Section~\ref{section_notations}. Then we introduce a few properties for
SDEs driven by a general class of cylindrical L\'{e}vy processes in Section~\ref{sec:cylindrical}. 
In particular, Section~\ref{sec:cylindrical} includes the semigroup gradient estimates that leads to a Stein's bound. 
The application to Euler-Maruyama schemes whose innovations are increments of $\alpha$-stable L\'{e}vy processes 
with i.i.d. components will be provided in Section~\ref{section_eulerscheme}. The proofs of the results in Section~\ref{section_eulerscheme} are presented in Sections~\ref{sec:proofs:1} and \ref{section_proof_eulerscheme}.
In addition, we will also provide in Appendix~\ref{appendix_eulerschemepareto}  a result on Euler-Maruyama schemes whose innovations are multivariate Pareto distribution. The background on Malliavin calculus on Poisson space, additional technical proofs,
and the illustration of the tightness of the stepsize dependence in our second application through an Ornstein-Uhlenbeck process
will also be provided in subsequent sections of the Appendices.

\section{Definitions and notations}
\label{section_notations}

We collect here definitions and notations that appear throughout the paper. 

\begin{itemize}
    \item the operator norm of a linear map $T:\mathbb{R}^{d}\to\mathbb{R}^{d}$ is $\norm{T}_{\operatorname{op}}:=\sup_{v\in\mathbb{R}^{d}:\norm{v}=1}\norm{Tv}$. 
    \item the $1$-Wasserstein distance between two probability measures $\mu$ and $\nu$ on $\mathbb{R}^{d}$ is (\cite{villani2008optimal})
\begin{equation*}
d_{\operatorname{Wass}}(\mu,\nu)=\sup_{h\in\text{Lip}(1)}\left|\int_{\mathbb{R}^{d}}h(x)\mu(dx)-\int_{\mathbb{R}^{d}}h(x)\nu(dx)\right|,
\end{equation*}
where $\text{Lip}(1)$ consists of the functions $h:\mathbb{R}^{d}\rightarrow\mathbb{R}$
that are $1$-Lipschitz. 
\item $\{Z_t:t\geq 0\}$
is a $d$-dimensional cylindrical L\'{e}vy process, i.e. $Z_t:=\brac{Z^1_t,\cdots,Z^d_t}^T$ and $\{Z^j:1\leq j\leq d\}$ is a family of independent real-valued L\'{e}vy processes.
\item In Section~\ref{sec:cylindrical}, $\nu$ is the invariant probability measure of \eqref{sde_cylindricallevy}. In Section~\ref{section_eulerscheme}, $\nu$ is the invariant probability measure of \eqref{SDE_alphastable} which is a special case of \eqref{sde_cylindricallevy}. $W$ denotes a random variable distributed as $\nu$.  $m$ is the the L\'{e}vy measure of $Z$ and $m_j$ is the L\'{e}vy measure of $Z^j$. $\rho_j$ is the density of $m_j$ with respect to the Lebesgue measure.

\item $\pi,R,\kappa,\tau$ and $\Lambda$ are the constants in Condition~\ref{cond_levymeasure}. $\theta_1,\theta_2$ and $\theta_3$ are the constants in Condition~\ref{cond_driftb}. $\theta_4$ is the constant in Condition~\ref{cond_diss}.
\item In Section \ref{sec:cylindrical}, $\{P_t:t\geq 0\}$ is the semigroups of the solution of \eqref{sde_cylindricallevy}. Later in Section \ref{section_eulerscheme}, $\{P_t:t\geq 0\}$ is the semigroups of the solution of \eqref{SDE_alphastable} which is a special case of \eqref{sde_cylindricallevy}. 

\item For $u,v\in \R^d$, any Lipschitz function $f$ on $\R^d$ and $x\in \R^d$, we wite $\nabla_u P_t f(x):=\nabla P_tf(x)u$ and $\nabla_v\nabla_u P_tf(x):=v^T\nabla^2P_tf(x)u$. 

\item Let $x\in \R^{d}$. The Dirac delta function $\delta_x$ is understood in the distributional sense and $\delta_x P_t$ is the law of $X_t$ in \eqref{sde_cylindricallevy} given $X_0=x$ a.s.

\item $\{ L^\alpha_t:t\geq 0 \}$ is an $\alpha$-stable L\'{e}vy process that has $1<\alpha<2$ and i.i.d. $\alpha$-stable components. 
\item In Section~\ref{section_eulerscheme}, $\{Y_m:m\in \N\}$ is the Euler-Maruyama discretization of \eqref{SDE_alphastable} that has step size $\eta$ and $\alpha$-stable noise $\{\xi_m=L_{(m+1)\eta}-L_{m\eta}:m\in\N\}$. $\nu_\eta$ is the associated invariant probability measure and $\{Q_k:k\in\N \}$ the associated semigroups, that is $Q_kf(x)=\E{f(Y^x_k)}$ for $x\in R^d$ and any Lipschitz function $f$. 

\item $ V_{\lambda }(x):=\brac{1+\abs{x}^2}^{\lambda /2}$ with suitable constant $\lambda$ is the Lyapunov function associated with \eqref{SDE_alphastable}. 
\item  In Appendix~\ref{appendix_eulerschemepareto}, $\{U_k:k\in\N \}$ is the discretization of \eqref{SDE_alphastable} that has step size $\eta$ and Pareto noise $\{\zeta_k:k\in \N\}$. $\chi_\eta$ is the associated invariant probability measure and $\{T_k:k\in\N \}$ is the associated semigroups,  that is $T_kf(x)=\E{f(U^x_k)}$ for $x\in R^d$ and any Lipschitz function $f$. 
\end{itemize}

\section{SDEs driven by a class of cylindrical L\'{e}vy processes}\label{sec:cylindrical}

In this section, we consider the stochastic differential equation \eqref{sde_cylindricallevy} driven by a $d$-dimensional cylindrical L\'{e}vy process $Z_t:=\brac{Z^1_t,\cdots,Z^d_t}^T$, i.e. $\{Z^j:1\leq j\leq d\}$ is a family of independent real-valued L\'{e}vy processes. We will denote $m$ as the L\'{e}vy measure of $Z$ and $m_j$ as the L\'{e}vy measure of $Z^j$. 

Our main goal of this section is to obtain semigroup gradient estimates for the semigroups associated with the solution to \eqref{sde_cylindricallevy}
(Proposition~\ref{prop_semigroupestimate}). As the first application of the semigroup gradient estimates, we develop a Stein's method for approximation of the invariant measure $\nu$ of the process in  \eqref{sde_cylindricallevy} and provide a bound in Wasserstein distance between the law of a generic random variable $F$ and $\nu$ (Theorem~\ref{theorem_steinmethodgeneralbound}).

First, we will impose the following conditions on the  L\'{e}vy measure $m_j$ and the drift coefficient $b$. Our goal is to be able to adopt the Malliavin calculus developed in \cite{kulik2023gradient}.
\begin{customcondition}{H1}(on L\'{e}vy measure $m$) 
		\label{cond_levymeasure}
  \begin{enumerate}[label=\roman*)]
\item There exists some $\pi>0$ such that for any $1\leq j\leq d$, 
\begin{align}
\label{cond_pi}
    \liminf_{\epsilon\to 0}\epsilon^\pi m_j\brac{\abs{\xi}\geq \epsilon}\in (0,\infty].
\end{align}

\item There exists some $R\in (0,1]$ such that each $m_j$ restricted to $(-R,R)$ is absolutely continuous with respect to the Lebesgue measure. In particular, the density $\rho_j=dm_j/d\xi$ is in $\mathcal{C}^1\brac{(-R,R)\setminus \{0 \}}$. 

Moreover, there exists a constant $\kappa>1$ such that for all $j$,
\begin{align}
   M_j(\kappa)&:=   \int_{-R}^R\abs{\xi}^\kappa\rho_j(\xi)d\xi<\infty, \label{cond_intkappa}\\
    M_j(2\kappa-2)&:= \int_{-R}^R\abs{\xi}^{2\kappa-2}\rho_j(\xi)d\xi<\infty,\label{cond_int_2kappaminus2}\\
      \overline{M}_j(2\kappa)&:=\int_{-R}^R\abs{\xi}^{2\kappa}\brac{\frac{\rho_j'(\xi)}{\rho_j(\xi)}}^2\rho_j(\xi)d\xi<\infty.\label{cond_inttwokappa}
\end{align}

There is also a constant $\tau>2$ such that for all $j$,
\begin{align}
      \overline{M}_j(\tau \kappa):=  \int_{-R}^R\abs{\xi}^{\tau\kappa}\brac{\frac{\rho_j'(\xi)}{\rho_j(\xi)}}^\tau\rho_j(\xi)d\xi<\infty.\label{cond_inttaukappa}
\end{align}

\item There exists some $\Lambda>\frac{\tau}{\tau-1}$ such that for all $j$ and for any $\lambda$ in $[1,\Lambda)$,
\begin{align}
\label{cond_moment_largejump}
 \widetilde{M}_j(\lambda) := \int_{\abs{\xi}> R}\abs{\xi}^\lambda\rho_j(\xi)d\xi&<\infty. 
\end{align}
\end{enumerate}
	\end{customcondition}

\begin{customcondition}{H2}
		\label{cond_driftb} $b\in \mathcal{C}^2$ and there exist $\theta_1>0, \theta_2\geq 0,\theta_3\geq 0$ such that
\begin{align}
\label{originalconddirftb}
   \sup_{x\in\mathbb{R}^{d}} \abs{\nabla b(x)}\leq \theta_1,\qquad \sup_{x\in\mathbb{R}^{d}} \abs{\nabla^2 b(x)}\leq \theta_2,
\qquad
    \sup_{x\in\mathbb{R}^{d}} \abs{\nabla^3 b(x)}\leq \theta_3. 
\end{align}
\end{customcondition}

Next, let us denote the transition semigroups associated with \eqref{sde_cylindricallevy} by $\{P_t:t\geq 0\}$, that is $   P_tf(x)=\E{f(X^x_t)}$. Let $x\in \R^{d}$. $\delta_x P_t$ denotes the law of $X_t$ given $X_0=x$ a.s., where $\delta_{x}$ is the Dirac delta function. We say the semigroups have a \textit{Wasserstein decay rate} $\mathcal{R}:\R_{\geq 0}\to\R_{\geq 0}$ if 
\begin{align}\label{R:t:eqn}
d_{\operatorname{Wass}}\brac{\delta_xP_t,\delta_yP_t}\leq \mathcal{R}(t) d_{\operatorname{Wass}}\brac{\delta_x,\delta_y}, 
\end{align}
for every $x,y\in\R^d$ and $t\geq 0$.

We make the following assumption on the semigroups.     

\begin{customcondition}{H3} 
\label{cond_wassersteindecay}
$\{P_t:t\geq 0\}$ has a Wasserstein decay rate $\mathcal{R}$ that is a non-increasing and integrable function on $\R_{\geq 0}$.

	\end{customcondition}

\begin{remark}
    The class of cylindrical L\'{e}vy processes under consideration includes the important case that is cylindrical $\alpha$-stable processes (see Remark~\ref{remark_stablenoisebelongtoourclass}). 
    
\end{remark}
\begin{remark}
    One can see the combination of \eqref{cond_pi}, \eqref{cond_intkappa}, \eqref{cond_int_2kappaminus2}, \eqref{cond_inttwokappa} in Condition~\ref{cond_levymeasure} and \eqref{originalconddirftb} in Condition~\ref{cond_driftb} is the original assumption in \cite[Theorem 1]{kulik2023gradient}. Our assumption is therefore more restrictive than theirs, due to the appearance of the new constant $\tau>2$.  There is a technical reason why we need $\tau$ in this paper and further details are provided in Remark~\ref{remark_needtaugreaterthan2}. Another new feature is the presence of the constant $\Lambda$ in Condition~\ref{cond_levymeasure}, which will play a role in the proof of Proposition~\ref{prop_semigroupestimate}. 
    
\end{remark}

\begin{remark}
\label{remark_smalljump}
     Since $0<R\leq 1$, it holds for any $0<a\leq b$, we have:
\begin{align*}
    M_j(a)=\int_{-R}^R \abs{\xi_k}^a \rho_k(\xi_k)d\xi_k  \geq \int_{-R}^R \abs{\xi_k}^b \rho_k(\xi_k)d\xi_k=M_j(b).
\end{align*}
\end{remark}

We are now ready to state the gradient estimates for the semigroups associated with the solution to \eqref{sde_cylindricallevy}. Recall the operator norm of a linear map $T:\mathbb{R}^{d}\to\mathbb{R}^{d}$ is defined as 
\begin{align*}
\norm{T}_{\operatorname{op}}:=\sup_{v\in\mathbb{R}^{d}:\norm{v}=1}\norm{Tv}. 
\end{align*}

\begin{proposition}
\label{prop_semigroupestimate}
    Assume that the SDE \eqref{sde_cylindricallevy} satisfies Conditions~\ref{cond_levymeasure},~\ref{cond_driftb} and~\ref{cond_wassersteindecay}. Then for all $t\geq 0$ and any Lipschitz function $f$, we have for any $u,v\in \R^d$
\begin{align}
\label{estimate_gradsemigroup_prop}
   \sup_{x\in\R^d} \abs{\nabla_u P_tf(x)}&:=\sup_{x\in\R^d} \abs{\nabla P_tf(x)u}\nonumber\\
   &\leq \brac{\sup_{y\in\mathbb{R}^{d}} \norm{\nabla f(y)}_{\operatorname{op}}}\abs{u}\mathcal{R}(t),
\end{align}
where $\mathcal{R}(t)$ is given in \eqref{R:t:eqn} and 
   \begin{align}
   \label{estimate_secondgradientsemigroup_prop}
  \sup_{x\in\R^d}  \abs{\nabla_v\nabla_u P_tf(x)}&:=\sup_{x\in\R^d} \abs{v^T\nabla^2 P_tf(x)u }\nonumber\\
  &\leq C_2\abs{u}\abs{v}\brac{\sup_{y\in\mathbb{R}^{d}}\norm{\nabla f(y)}_{\operatorname{op}}}\phi(t),
\end{align}
where 
\begin{equation}\label{phi:t:eqn}
\phi(t)=\begin{cases} 
      1 &\text{if } 0\leq t\leq 1, \\
     \mathcal{R}(t-1) &\text{if } t>1,
   \end{cases}
\end{equation}
and the constant $C_2$ is defined in \eqref{def_constantC2}. Dimension dependence of $C_2$ is spelled out in Remark \ref{remark_C_2dimensiondependence}. 
\end{proposition}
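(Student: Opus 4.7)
The plan is to obtain \eqref{estimate_gradsemigroup_prop} as an immediate consequence of the Wasserstein contraction Condition~\ref{cond_wassersteindecay}, and then to prove \eqref{estimate_secondgradientsemigroup_prop} in two regimes: for short times $t\in(0,1]$ via a second-order Bismut-Elworthy-Li (BEL) argument built on the Malliavin calculus of \cite{kulik2023gradient}, and for long times $t>1$ by combining the short-time bound with the semigroup property.

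The first bound is soft. If $f$ has Lipschitz constant $L=\sup_y\norm{\nabla f(y)}_{\operatorname{op}}$, Kantorovich-Rubinstein duality together with Condition~\ref{cond_wassersteindecay} gives $|P_tf(x)-P_tf(y)|\le L\,d_{\operatorname{Wass}}(\delta_xP_t,\delta_yP_t)\le L\mathcal{R}(t)|x-y|$, so $P_tf$ is $L\mathcal{R}(t)$-Lipschitz and taking a directional limit along $u$ yields \eqref{estimate_gradsemigroup_prop}.

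For the short-time second-order bound, I would split $P_t=P_{t/2}\circ P_{t/2}$ and apply the first-order BEL formula from \cite[Theorem 1]{kulik2023gradient} to the outer factor, producing a representation $\nabla_u P_tf(x)=\mathbb{E}[(P_{t/2}f)(X_{t/2}^x)\,W_{t/2}^{u}(x)]$ for a Malliavin weight $W_{t/2}^u(x)$ on the underlying Poisson space. Differentiating once more in direction $v$ yields two contributions: one where $\nabla_v$ hits $X_{t/2}^x$, producing an expectation involving $\nabla(P_{t/2}f)(X_{t/2}^x)$ and the first-variation process $\nabla_v X_{t/2}^x$, and one where $\nabla_v$ hits the weight $W_{t/2}^u(x)$. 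The first contribution is bounded using \eqref{estimate_gradsemigroup_prop} applied to $f$ on the subinterval $(0,t/2]$ together with the standard estimate for $\nabla_v X_{t/2}^x$ coming from~\eqref{originalconddirftb}; the second is controlled by $L^q$-bounds on $\nabla_v W_{t/2}^u(x)$, which reduce to the integrability hypotheses \eqref{cond_intkappa}-\eqref{cond_moment_largejump} on $\rho_j$ and $\rho_j'/\rho_j$, with the constants assembling into the explicit $C_2$ of \eqref{def_constantC2}. This gives the regime $\phi(t)=1$. For $t>1$, I would set $g:=P_{t-1}f$ and apply this short-time bound at time $1$ to $g$: $|\nabla_v\nabla_u P_tf(x)|=|\nabla_v\nabla_u P_1 g(x)|\le C_2|u||v|\sup_y\norm{\nabla g(y)}_{\operatorname{op}}$, and inserting \eqref{estimate_gradsemigroup_prop} with input $f$ at time $t-1$ bounds $\sup_y\norm{\nabla g(y)}_{\operatorname{op}}$ by $\mathcal{R}(t-1)\sup_z\norm{\nabla f(z)}_{\operatorname{op}}$, producing the $\phi(t)=\mathcal{R}(t-1)$ tail.

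The main obstacle is the short-time step. A first-order BEL formula is readily available in \cite{kulik2023gradient}, but a clean second-order bound requires controlling Malliavin derivatives of the weight $W_{t/2}^u(x)$, which involve $\rho_j'/\rho_j$ and its iterated powers together with moments of the first-variation process. This is exactly where Condition~\ref{cond_levymeasure} must be strengthened beyond what \cite{kulik2023gradient} uses: the enhanced integrability \eqref{cond_inttaukappa} with $\tau>2$ is needed so that the relevant H\"older exponents close when one squares the Malliavin weight, while the large-jump moment \eqref{cond_moment_largejump} is needed to estimate the first-variation process of $X_t^x$ beyond what is required for the first-order formula. Tracking every moment and constant through this computation to arrive at the explicit value of $C_2$ in \eqref{def_constantC2} is the most technical piece of the argument.
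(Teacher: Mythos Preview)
Your first-gradient argument and your $t>1$ bootstrap are exactly what the paper does. The short-time second-gradient argument, however, has a real gap, and it is precisely the one the paper's proof is built to close.

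In your second contribution you have, schematically, a term of the form
\[
\mathbb{E}\bigl[(P_{t/2}f)(X_{t/2}^x)\,\nabla_v W^{u}_{t/2}(x)\bigr],
\]
and you say it is ``controlled by $L^q$-bounds on $\nabla_v W^{u}_{t/2}(x)$.'' That is not enough: $f$ is only Lipschitz, so $(P_{t/2}f)(X_{t/2}^x)$ has linear growth in $X_{t/2}^x$, and a straight H\"older pairing would bring in moments of $X_{t/2}^x$ and the value $f(0)$; the resulting bound would depend on $x$ and on more than $\sup_y\norm{\nabla f(y)}_{\operatorname{op}}$, contrary to the statement. The paper (which does \emph{not} split $P_t=P_{t/2}P_{t/2}$ for $t\le1$, it works directly with $\nabla_u P_tf(x)=\mathbb{E}[f(X_t^x)\langle G(x,t),u\rangle]$) handles this term by two ingredients you did not mention: first, the centering $\mathbb{E}[\nabla G(x,t)]=0$ (obtained by plugging $f\equiv 1$ into the differentiated BEL formula), and second, the It\^o decomposition
\[
f(X_t^x)=P_tf(x)+\mathcal{B}_{t,x}(f),\qquad
\mathcal{B}_{t,x}(f)=\int_0^t\!\!\int_{\mathbb{R}^d}\bigl(P_{t-s}f(X_{s-}^x+\xi)-P_{t-s}f(X_{s-}^x)\bigr)\widehat N(ds,d\xi).
\]
Together these replace the unbounded factor $f(X_t^x)$ by the compensated integral $\mathcal{B}_{t,x}(f)$, whose $L^\lambda$-norm (Lemma~\ref{lemma_boundB_tx}) is bounded uniformly in $x$ by $C_1(\lambda)^{1/\lambda}\sup_y\norm{\nabla f(y)}_{\operatorname{op}}$. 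One then applies H\"older with exponents $\lambda$ and $\lambda/(\lambda-1)$, and it is exactly here that the extra parameters $\tau>2$ and $\Lambda>\tau/(\tau-1)$ enter: $\mathcal{B}_{t,x}(f)$ has $\lambda$-moments only for $\lambda<\Lambda$, so $\nabla G$ must have $q$-moments for some $q>\Lambda/(\Lambda-1)$, which is guaranteed by \eqref{cond_inttaukappa}. Your sketch is fixable along these lines, but the centering-plus-It\^o step is the heart of the argument and should appear explicitly.
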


For our first application of Proposition~\ref{prop_semigroupestimate}, we provide the following upper bound on the Wasserstein distance between the law of a generic random variable $F$ and $\nu$. The proof is based on Stein's method and will be presented in Section~\ref{section_steinmethod}.

\begin{theorem}
\label{theorem_steinmethodgeneralbound} (Stein's bound)
In addition to Conditions~\ref{cond_levymeasure},\ref{cond_driftb} and~\ref{cond_wassersteindecay}, let us assume that $\nu$ is the unique invariant measure  of  \eqref{sde_cylindricallevy} on $(\R^d,\mathcal{B}({\R^d})$ where $\mathcal{B}({\R^d})$ is the Borel sets in $\R^d$. Moreover, assume $\E{\abs{W}}<\infty$, where $W$ is a random variable distributed as $\nu$. 
Then it holds that
    \begin{align*}
        d_{\operatorname{Wass}}(\operatorname{Law}(F),\nu)\leq \sup_{h\in \mathbb{H}}\abs{\E{\mathcal{L}h(F) }},
    \end{align*}
    where $F$ denotes a generic random variable taking value in $\R^d$ and $h\in \mathbb{H}$ are twice differentiable functions that satisfy
    \begin{align*}
     \sup_{x\in \R^d}   \norm{\nabla h(x)}_{\operatorname{op}}&\leq \int_0^\infty \mathcal{R}(t)dt,
    \end{align*}
where $\mathcal{R}(t)$ is given in \eqref{R:t:eqn} and 
\begin{align*}
     \sup_{x\in \R^d}    \norm{\nabla^2h(x)}_{\operatorname{op}}&\leq C_2\brac{1+\int_0^\infty \mathcal{R}(t)dt},
    \end{align*}
where $C_2$ is the constant defined at \eqref{def_constantC2}. 
\end{theorem}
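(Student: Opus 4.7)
The plan is to follow the classical Stein's method framework. First, I would use the characterization of the 1-Wasserstein distance as a supremum over 1-Lipschitz test functions:
\begin{equation*}
d_{\operatorname{Wass}}(\operatorname{Law}(F),\nu)=\sup_{f\,\mathrm{1\text{-}Lip}}\abs{\E{f(F)}-\E{f(W)}}.
\end{equation*}
For each such $f$, I would introduce the Stein equation $\mathcal{L}h=f-\mathbb{E}_\nu[f]$, where $\mathcal{L}$ is the infinitesimal generator of the SDE \eqref{sde_cylindricallevy}, and consider the candidate solution
\begin{equation*}
h(x)=-\int_0^\infty\brac{P_tf(x)-\mathbb{E}_\nu[f(W)]}dt.
\end{equation*}
Once I know $h\in\mathbb{H}$, applying it to $F$ and taking expectations gives $\E{f(F)}-\mathbb{E}_\nu[f]=\E{\mathcal{L}h(F)}$, which after taking supremum over $f$ yields the claim.

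The second step is to verify that $h$ is well-defined and solves the Stein equation. Convergence of the integral follows from Proposition~\ref{prop_semigroupestimate}: the first-order estimate \eqref{estimate_gradsemigroup_prop} combined with the 1-Lipschitz property of $f$ implies $|P_tf(x)-P_tf(y)|\leq|x-y|\mathcal{R}(t)$, and then the assumption $\mathbb{E}_\nu[|W|]<\infty$ together with stationarity lets me bound $|P_tf(x)-\mathbb{E}_\nu[f]|\leq\mathcal{R}(t)(|x|+\mathbb{E}_\nu[|W|])$, which is integrable in $t$ by Condition~\ref{cond_wassersteindecay}. Differentiating under the integral and invoking Kolmogorov's backward equation gives $\mathcal{L}h=f-\mathbb{E}_\nu[f]$.

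The third and main step is checking the two norm bounds defining $\mathbb{H}$. Differentiating under the integral,
\begin{equation*}
\nabla_u h(x)=-\int_0^\infty\nabla_u P_tf(x)\,dt,\qquad\nabla_v\nabla_u h(x)=-\int_0^\infty\nabla_v\nabla_u P_tf(x)\,dt.
\end{equation*}
The first-order bound \eqref{estimate_gradsemigroup_prop} applied with $\sup\norm{\nabla f}_{\operatorname{op}}\leq 1$ directly yields $\norm{\nabla h(x)}_{\operatorname{op}}\leq\int_0^\infty\mathcal{R}(t)dt$. For the Hessian, \eqref{estimate_secondgradientsemigroup_prop} gives
\begin{equation*}
\abs{\nabla_v\nabla_u h(x)}\leq C_2\abs{u}\abs{v}\int_0^\infty\phi(t)\,dt,
\end{equation*}
and the definition \eqref{phi:t:eqn} of $\phi$ gives $\int_0^\infty\phi(t)dt=\int_0^1 1\,dt+\int_1^\infty\mathcal{R}(t-1)dt=1+\int_0^\infty\mathcal{R}(s)ds$, which is exactly the bound required for membership in $\mathbb{H}$.

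The main obstacle I anticipate is the technical justification of interchanging differentiation and integration in $t$, especially for the Hessian where one must use $\phi(t)$ on $[0,1]$ (since \eqref{estimate_secondgradientsemigroup_prop} is not useful for tiny $t$ without some uniform control), and then rigorously verifying that this $h$ satisfies $\mathcal{L}h=f-\mathbb{E}_\nu[f]$ in the sense needed to substitute $F$ and take expectations. Apart from that, the argument is a direct Stein's-method calculation once Proposition~\ref{prop_semigroupestimate} is in hand.
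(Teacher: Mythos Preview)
Your proposal is correct and follows essentially the same route as the paper: define $h_f(x)=\int_0^\infty(\mathbb{E}_\nu[f(W)]-P_tf(x))\,dt$, show it is well-defined via the Wasserstein decay, check the derivative bounds by integrating the estimates of Proposition~\ref{prop_semigroupestimate} (including the computation $\int_0^\infty\phi(t)\,dt=1+\int_0^\infty\mathcal{R}(t)\,dt$), and conclude by Stein's method. The only substantive difference is in the step you flagged as an obstacle: rather than ``differentiating under the integral and invoking Kolmogorov's backward equation,'' the paper rigorously verifies that $h_f$ solves $\mathcal{L}h=f-\mathbb{E}_\nu[f]$ by first establishing strong continuity of $(P_t)_{t\geq 0}$ on the weighted space $\mathcal{S}=(1+|x|^2)^{\lambda/2}\mathcal{C}_0$ (their Lemma~\ref{lemma_semigroupstronglycontinuous}) and then appealing to closedness of $\mathcal{L}$ via Ethier--Kurtz to pass to the limit in $\mathcal{L}\big(\int_0^t(P_sf-\mathbb{E}_\nu[f])\,ds\big)=f-P_tf$.
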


In Theorem~\ref{theorem_steinmethodgeneralbound}, 
we assumed the existence of an unique invariant measure $\nu$ of \eqref{sde_cylindricallevy}. Note that in the next section where the
cylindrical L\'{e}vy process in \eqref{sde_cylindricallevy} is  an $\alpha$-stable process, we will provide explicit conditions so that this assumption is satisfied.


\section{Euler-Maruyama scheme for SDEs driven by $\alpha$-stable L\'{e}vy process}\label{section_eulerscheme}

In this section, we specialize the  cylindrical L\'{e}vy process considered in the previous section to be an $\alpha$-stable L\'{e}vy process $\{L^\alpha_t:t\geq 0\}$ with $1< \alpha< 2$, where the components are i.i.d. $\alpha$-stable and are equipped with the L\'{e}vy measure ${p_\alpha}/{\abs{z}^{\alpha+1}}$, where $p_\alpha$ is the constant defined as:
\begin{equation}\label{p:alpha}
p_{\alpha}:=\alpha 2^{\alpha-1}\pi^{-1/2}\Gamma(\alpha/2+1/2)\Gamma(1-\alpha/2)^{-1}, 
\end{equation}
(see \cite[Example 2.4d]{schillingbottcher2013levy}).
The SDE in \eqref{sde_cylindricallevy} then becomes
\begin{align}
\label{SDE_alphastable}
X_t&=b(X_t)dt+ dL^\alpha_t,\quad X_0=x. 
\end{align}
We propose the following Euler-Maruyama schemes of the above equation.
\begin{align}
\label{equation_discreteY}
Y_{m+1}&=Y_m+\eta b(Y_m)+\xi_m, \quad Y_0=x, 
\end{align}
where $\{\xi_m:m\geq 1\}$ are increments of the stable process $L^\alpha$, i.e.
$\xi_m:= L^\alpha_{(m+1)\eta}-L^\alpha_{m\eta}$. 



As the second and main application of Proposition~\ref{prop_semigroupestimate}, we will derive estimates on the convergence rates of the Euler-Maruyama scheme in \eqref{equation_discreteY}.   

Let us denote the $j$-th component of $L^\alpha$ by $L^{\alpha,j}$; then its L\'{e}vy measure is defined as:
\begin{align*}
    m_j(B)=p_\alpha\int_{\R} \mathds{1}_{B}(x)\frac{1}{\abs{x}^{\alpha+1}}dx,
    \qquad\text{for any Borel set $B$ in $\mathbb{R}$}.
\end{align*}

\begin{remark}
\label{remark_stablenoisebelongtoourclass}
     To see that $m_j$ above  satisfies Condition~\ref{cond_levymeasure}, one can take $R=1$, $\pi=\Lambda=\alpha$ and let $\kappa$ be any number satisfying $\kappa>1+\frac{\alpha}{2}$, and $\tau$ be any number  satisfying $\tau>\max \left\{\frac{\alpha}{\alpha-1}, 2 \right\}$. 

Indeed, \cite[Remark 3 and Lemma 8]{kulik2023gradient} already pointed out in the case of a cylindrical $\alpha$-stable process, the assumptions \eqref{cond_pi}, \eqref{cond_intkappa}, \eqref{cond_int_2kappaminus2}, \eqref{cond_inttwokappa} in Condition~\ref{cond_levymeasure} are satisfied by letting $R=1,\pi=\alpha$ and $\kappa>1+\frac{\alpha}{2}$. What remains is to verify that \eqref{cond_inttaukappa} and \eqref{cond_moment_largejump} are satisfied by the previous choice of $R,\pi,\kappa$ and additionally $\Lambda=\alpha, \tau>\max \left\{\frac{\alpha}{\alpha-1}, 2 \right\}$. The fact that $\kappa>1+\frac{\alpha}{2}$ means $\frac{2}{\alpha}\geq \frac{1}{\kappa-1}$ and $2>\frac{\alpha}{\kappa-1}$. This implies $\tau\geq \frac{\alpha}{\kappa-1}$ and $\tau(\kappa-1)-\alpha>0$. Therefore, we deduce that
\begin{align*}
    \overline{M}_j(\tau \kappa)=\int_{-1}^1 \abs{\xi}^{\tau(\kappa-1)-\alpha-1}d\xi<\infty. 
\end{align*}
Finally, let us check \eqref{cond_moment_largejump} with $\Lambda=\alpha$. Assume $\lambda\in [1,\alpha)$ then $\lambda-\alpha-1<-1$ and hence
\begin{align*}
     \widetilde{M}_j(\lambda) =\int_{\abs{\xi}>1}\abs{\xi}^{\lambda-\alpha-1}d\xi<\infty. 
\end{align*}
\end{remark}

\begin{remark}
We can perform an exact simulation 
of the Euler-Maruyama scheme \eqref{equation_discreteY}.
To see this, notice that we can simulate multivariate stable distribution with i.i.d. components via exact simulation of one-dimensional stable distribution as follows.
Let us recall that an alpha stable random variable $X$ has the characteristic function
\begin{equation*}
\mathbb{E}[e^{itX}]=\exp\left(it\mu-|ct|^{\alpha}(1-i\beta\text{sgn}(t)\Phi)\right),
\end{equation*}
where $\Phi=\tan(\pi\alpha/2)$ when $\alpha\neq 1$ and $\Phi=-\frac{2}{\pi}\log|t|$ when $\alpha=1$.
Here $\alpha\in(0,2]$ is the stability parameter, $\beta\in[-1,1]$ is the skewness parameter, 
$c\in(0,\infty)$ is the scale parameter and $\mu\in(-\infty,\infty)$ is the location parameter. In order to simulate $X$, we first generate a random variable $U$ uniformly distributed on $(-\frac{\pi}{2},\frac{\pi}{2})$
and an independent random variable $W$ with mean $1$ (which can also be exactly simulated).
Then, for $\alpha\neq 1$, we compute
\begin{equation*}
X=(1+\zeta^{2})\frac{1}{2\alpha}\frac{\sin(\alpha(U+\xi))}{(\cos(U))^{\frac{1}{\alpha}}}\left(\frac{\cos(U-\alpha(U+\xi))}{W}\right)^{\frac{1-\alpha}{\alpha}},
\end{equation*}
and for $\alpha=1$, we compute 
\begin{equation*}
X=\frac{1}{\xi}\left(\left(\frac{\beta}{2}+\beta U\right)\tan U-\beta\log\left(\frac{\frac{\pi}{2}W\cos U}{\frac{\beta}{2}+\beta U}\right)\right),
\end{equation*}
where $\zeta=-\beta\tan(\pi\alpha/2)$ and $\xi=\frac{1}{\alpha}\arctan(-\zeta)$ for $\alpha\neq 1$ and $\xi=\pi/2$ for $\alpha=1$. This simulation method was first proposed in \cite{Chambers1976} and is known as the CMS method in the literature. 
Thus, one can simulate a one-dimensional alpha stable
distribution, and hence a vector of i.i.d. components of one-dimensional alpha stable
distributions that includes $\xi_{m}$ in 
the Euler-Maruyama scheme \eqref{equation_discreteY}.
\end{remark}

For the current section, we also need the following assumption. 
\begin{customcondition}{H3*}~
\label{cond_diss} for all $x,y\in\R$, there exists constants $\theta_4>0$ and $K\geq 0$ such that the drift coefficient $b$ satisfies
     \begin{align*}
 \inner{b(x)-b(y),x-y}\leq -\theta_4\abs{x-y}^2+K. 
\end{align*}
\end{customcondition}

\begin{remark}\label{remark:dissipativity}
    Condition~\ref{cond_driftb} implies for all $x,y\in \R^d$, 
    \begin{align*}
        \inner{b(x)-b(y),x-y}\leq \theta_1\abs{x-y}^2.
    \end{align*}
Meanwhile, Condition~\ref{cond_diss} implies for $\abs{x-y}^2\geq \frac{2K}{\theta_4}$,
\begin{align*}
    \inner{b(x)-b(y),x-y}\leq -\frac{\theta_4}{2}\abs{x-y}^2.
\end{align*}
Together, Condition~\ref{cond_driftb} and Condition~\ref{cond_diss} imply
\begin{align*}
 \inner{b(x)-b(y),x-y}\leq  \begin{cases} 
      \theta_1\abs{x-y}^2 &\text{ if } \abs{x-y}\leq L_0,\\
      -\frac{\theta_4}{2}\abs{x-y}^{2} &\text{ if }\abs{x-y}> L_0,
   \end{cases}
\end{align*}
where $L_0:= \sqrt{\frac{2K}{\theta_4}}$. The above expression is known in literature as \textit{distant dissipativity} condition. It implies Condition~\ref{cond_wassersteindecay} as well as existence of unique invariant measures associated with \eqref{SDE_alphastable} and \eqref{equation_discreteY}. These results will be shown in the upcoming lemmas.
\end{remark}

Let $\{e^i:1\leq i\leq d\}$ be the canonical basis of $\R^d$, i.e. $e^{i}$ is a $d$-dimensional vector with $1$ in its $i$-th coordinate and $0$ elsewhere. For $z=\brac{z_1,\ldots,z_d}\in\R^d$, we write
\begin{align*}
    z^i:=z_ie^i. 
\end{align*}

We define the fractional Laplacian operator as:
\begin{align}\label{L:0:operator:stable}
  L_0h(x)
  =\Delta^{\alpha/2}h(x) = \sum_{i=1}^d p_\alpha\int_{\R}\brac{h(x+z^i)-h(x)-\inner{\nabla h(x),z^i}\mathds{1}_{\{\abs{z_i}\leq 1\}} }\frac{1}{\abs{z_i}^{1+\alpha}}dz_i,
\end{align}
where $p_{\alpha}$ is defined in \eqref{p:alpha}.

Let us also define
\begin{align}
\label{def_Vlambda}
    V_{\lambda }(x):=\brac{1+\abs{x}^2}^{\lambda /2},
\end{align}
where $\lambda \in (1,\Lambda\wedge\kappa)$. The upcoming results state that under Conditions~\ref{cond_driftb} and~\ref{cond_diss}, $\{X_t:t\geq 0\}$ at \eqref{SDE_alphastable} and its Euler-Maruyama discretization $\{Y_k:k\geq 0\}$, $\{U_k:k\geq 0\}$ are ergodic. Their proofs are very similar to the proofs of analogous results in \cite{chenxu2023euler}, and are therefore relegated to the Appendix~\ref{section_proofexistenceofinvariantmeasure} .

\begin{lemma}
\label{lemma_invariantmeasure_continuousX}
    Assume Conditions~\ref{cond_driftb} and~\ref{cond_diss} hold for \eqref{SDE_alphastable}. Then any solution to \eqref{SDE_alphastable} admits an unique invariant measure $\nu$. Moreover for $1\leq \lambda<\Lambda$, there exist constants $C,C'>0$ such that
    \begin{align}
    \label{estimate_exponentialerg_X}
        \sup_{\abs{f}\leq V_\lambda} \abs{\E{f(X^x_t)}-\E{f(W)}}\leq CV_\lambda(x)e^{-C't}. 
    \end{align}

In addition, we have the moment estimate (uniform over $t\geq 0$)
\begin{align*}
    \E{\abs{X^x_t}^\lambda}\leq \E{V_\lambda\brac{X_t^x}}\leq C_3(\lambda)\brac{1+\abs{x}^2}^{\lambda/2},
\end{align*}
where
\begin{align*}
    C_3(\lambda):=\frac{2}{\theta_4}\Bigg(\lambda(\theta_4+K)+\theta_4^{1-\lambda}\abs{b(0)}^\lambda +\frac{2p_\alpha \lambda(3-\lambda)\sqrt{d}}{2(2-\alpha)}+\frac{2p_\alpha\lambda }{\alpha-\lambda}+\brac{\frac{\theta_4}{4}}^{1-\lambda} \brac{\frac{2p_\alpha}{\alpha-1}}^\lambda\Bigg)+1. 
\end{align*}
\end{lemma}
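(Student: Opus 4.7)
The plan is to establish a Foster-Lyapunov drift inequality for the generator $\mathcal{L}$ of \eqref{SDE_alphastable} applied to $V_\lambda$, and then combine it with a local minorization to package the existence and uniqueness of $\nu$, the exponential ergodicity, and the uniform moment bound all at once.

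First, I would compute $\mathcal{L}V_\lambda(x)$, where $\mathcal{L}f(x)=\langle \nabla f(x),b(x)\rangle + L_0f(x)$ with $L_0$ as in \eqref{L:0:operator:stable}. The drift part gives $\langle \nabla V_\lambda(x),b(x)\rangle=\lambda(1+|x|^2)^{\lambda/2-1}\langle x,b(x)\rangle$. Applying Condition~\ref{cond_diss} with $y=0$ yields $\langle x,b(x)\rangle \leq -\theta_4|x|^2+K+|b(0)||x|$, and the cross term $|b(0)||x|$ is split by Young's inequality with exponents $\lambda$ and $\lambda/(\lambda-1)$ to absorb part of it into a $-\frac{\theta_4}{2}|x|^2$ factor. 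The residual constants $\lambda(\theta_4+K)$ and $\theta_4^{1-\lambda}|b(0)|^\lambda$ appear in $C_3(\lambda)$.

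Next, I would bound $L_0 V_\lambda(x)$ by splitting each one-dimensional integral into a small-jump part $|z_i|\leq 1$ and a large-jump part $|z_i|>1$. For small jumps, a second-order Taylor expansion gives
\[
\bigl|V_\lambda(x+z^i)-V_\lambda(x)-\langle \nabla V_\lambda(x),z^i\rangle\bigr| \lesssim \lambda(3-\lambda)(1+|x|^2)^{\lambda/2-1}z_i^2,
\]
and integrating against $p_\alpha|z_i|^{-\alpha-1}$ over $[-1,1]$ and summing over the $d$ coordinates (with a $\sqrt{d}$ emerging from a Cauchy-Schwarz norm-comparison step) produces the $\frac{2p_\alpha\lambda(3-\lambda)\sqrt{d}}{2(2-\alpha)}$ contribution. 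For large jumps, I would use an inequality of the form $V_\lambda(x+z^i)-V_\lambda(x)\leq C_\lambda(|x|^{\lambda-1}+|z_i|^{\lambda-1})|z_i|$ for $\lambda\in[1,2]$; the pure-$|z_i|^\lambda$ piece integrates to $\frac{2}{\alpha-\lambda}$ because $\lambda<\alpha=\Lambda$, giving the $\frac{2p_\alpha\lambda}{\alpha-\lambda}$ term, and a final Young's inequality balancing the $|x|^{\lambda-1}$ factor against $\frac{\theta_4}{2}|x|^\lambda$ from the drift produces the $\bigl(\frac{\theta_4}{4}\bigr)^{1-\lambda}\bigl(\frac{2p_\alpha}{\alpha-1}\bigr)^\lambda$ term.

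Collecting these estimates yields a Foster-Lyapunov inequality $\mathcal{L}V_\lambda(x)\leq -\frac{\theta_4}{2}V_\lambda(x)+D$, with $D$ equal to the large parenthesis inside $C_3(\lambda)$. Dynkin's formula plus Gronwall then give $\mathbb{E}[V_\lambda(X_t^x)]\leq V_\lambda(x)e^{-\theta_4 t/2}+2D/\theta_4$; since $V_\lambda(x)\geq 1$, this is at most $(1+2D/\theta_4)V_\lambda(x)=C_3(\lambda)V_\lambda(x)$, as claimed. For the existence and uniqueness of $\nu$ together with \eqref{estimate_exponentialerg_X}, I would invoke the Meyn-Tweedie/Hairer-Mattingly framework: the drift condition above, combined with a local minorization on sublevel sets of $V_\lambda$ (which follows from strict positivity of the transition density of \eqref{SDE_alphastable} inherited from the $\alpha$-stable driver), delivers geometric ergodicity in the $V_\lambda$-weighted norm. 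The main obstacle will be the careful bookkeeping for $L_0V_\lambda$: extracting the exact dimension dependence $\sqrt{d}$ and matching the explicit constants requires estimating the small-jump Taylor remainder coordinate by coordinate and then reassembling the one-dimensional integrals into the stated form, which is the most delicate calculation in the proof.
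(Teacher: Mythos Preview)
Your proposal is correct and follows essentially the same approach as the paper: both compute the drift term via Condition~\ref{cond_diss} and Young's inequality, split $L_0V_\lambda$ into small and large jumps with a second-order Taylor remainder on $|z_i|\le 1$ and a first-order bound on $|z_i|>1$, obtain $\mathcal{L}V_\lambda\le -\tfrac{\theta_4}{2}V_\lambda+C$, and then invoke Meyn--Tweedie for ergodicity and Dynkin plus Gronwall for the moment bound. The only cosmetic difference is that the paper cites \cite[Theorems~5.1 and~6.1]{meyn1993stability_iii} directly rather than phrasing it as drift-plus-minorization, but the substance is identical.
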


The following lemma establishes exponential ergodicity for the Euler-Maruyama discretiation scheme \eqref{equation_discreteY}.

\begin{lemma}
\label{lemma_invariantmeasure_discreteYk}
Assume Conditions~\ref{cond_driftb} and~\ref{cond_diss} hold. The Markov chain $\{Y_k:k\in\N\}$  admits a unique invariant measure $\nu_\eta$. Moreover, there exist constants $C,C'>0$ such that
  \begin{align}
  \label{estimate_exponentialerg_Y}
        \sup_{\abs{f}\leq V_1} \abs{\E{f(Y^x_k)}-\mathbb{E}_{X\sim \nu_\eta}\bracsq{f(X)}}\leq CV_1(x)e^{-C'k}. 
    \end{align}
In addition for $ 1\leq \lambda<\Lambda$, we have the moment estimate (uniform over $k\geq 0$) 
\begin{align*}
    \E{\abs{Y_k^x}^\lambda}\leq \E{V_\lambda\brac{Y_k^x}}\leq  C_4(\lambda)\left(1+\abs{x}^2\right)^{\lambda/2}, 
\end{align*}
where 
\begin{align*}
&C_4(\lambda):=1+\frac{2}{\theta_4}\Bigg[\frac{\theta_4\lambda}{2}\brac{\eta\frac{2\abs{b(0)}^2}{\theta_4}+2\eta^2\abs{b(0)}^2+1+2\eta K}+\frac{\lambda \abs{b(0)}^2}{\theta_4}\\
&\quad+2\lambda\eta\abs{b(0)}^2+\lambda K+2\lambda p_\alpha\brac{\frac{(3-\alpha)\sqrt{d}}{2(2-\alpha)}+\frac{1}{\alpha-\lambda} +\abs{b(0)}^{\lambda-1}+\frac{\E{\abs{L^\alpha_1}^{\lambda-1}}}{\alpha-1}}
\\
&\qquad\qquad\qquad\qquad\qquad\qquad\qquad\qquad\qquad\quad
+\brac{\frac{2p_\alpha(1+\theta_1^{\lambda-1})}{\alpha-1}}^\lambda\brac{\frac{2}{\theta_4}}^{\lambda-1}\Bigg]. 
\end{align*}
\end{lemma}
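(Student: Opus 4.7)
The plan is to prove the three assertions in the order (i) the $V_\lambda$ moment bound, (ii) exponential ergodicity, (iii) uniqueness of the invariant measure, since a Foster--Lyapunov drift inequality for $V_\lambda$ together with a minorization will supply all three via a Harris-type argument.

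\textbf{Step 1: Drift inequality for $V_\lambda$.} I would start from the one-step identity
\begin{equation*}
\abs{Y_{k+1}}^2=\abs{Y_k}^2+2\eta\inner{b(Y_k),Y_k}+\eta^2\abs{b(Y_k)}^2+2\inner{Y_k+\eta b(Y_k),\xi_k}+\abs{\xi_k}^2,
\end{equation*}
where $\xi_k\stackrel{d}{=}\eta^{1/\alpha}L^\alpha_1$ by self-similarity of the $\alpha$-stable process. Condition~\ref{cond_diss} applied to $b(Y_k)-b(0)$ combined with Condition~\ref{cond_driftb} controls the inner product: $\inner{b(Y_k),Y_k}\le -\theta_4\abs{Y_k}^2+K+\inner{b(0),Y_k}$, and $\abs{b(Y_k)}\le \abs{b(0)}+\theta_1\abs{Y_k}$. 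After applying $(1+\cdot)^{\lambda/2}$ and expanding, the symmetry of $L^\alpha_1$ kills the linear noise terms in expectation, while the quadratic terms in $\xi_k$ must be split into small and large jumps because $\E{\abs{\xi_k}^2}=\infty$ for $\alpha<2$: for the small-jump part one uses $\abs{z}^2\wedge 1$ against $p_\alpha\abs{z}^{-\alpha-1}$, giving the $\frac{p_\alpha(3-\alpha)\sqrt{d}}{2-\alpha}$ contribution visible in $C_4(\lambda)$, while the large-jump part is handled using the $(\lambda-1)$-th moment of $L^\alpha_1$ (finite because $\lambda<\Lambda=\alpha$), producing the $\frac{\E{\abs{L^\alpha_1}^{\lambda-1}}}{\alpha-1}$ term. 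Carefully tracking the constants should yield a drift inequality of the form
\begin{equation*}
\E{V_\lambda(Y_{k+1})\mid Y_k}\le (1-c\eta)V_\lambda(Y_k)+D\eta,
\end{equation*}
for $c,D>0$ that depend on $\theta_1,\theta_4,K,\alpha,d,p_\alpha,\abs{b(0)}$ exactly as they appear in the formula for $C_4(\lambda)$.

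\textbf{Step 2: Iteration to the uniform moment bound.} Iterating the drift inequality and taking the supremum over $k$ gives $\E{V_\lambda(Y_k^x)}\le (1-c\eta)^k V_\lambda(x)+\frac{D}{c}$, which is uniform in $k$ and gives exactly the announced bound once the constants are identified with $C_4(\lambda)$. This is the most computational step but conceptually routine.

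\textbf{Step 3: Minorization, uniqueness, and exponential ergodicity.} The Markov chain $\{Y_k\}$ has a transition density smooth in the $x$-variable, inherited from the density of the $\alpha$-stable increment $\xi_k$ (translated by $y+\eta b(y)$). On any ball $\{\abs{y}\le R_0\}$, boundedness of $\eta b(y)$ and positivity of the stable density give a uniform minorization $P(y,\cdot)\ge \beta\mu(\cdot)$ for some probability measure $\mu$ and $\beta>0$. Combined with the drift inequality, whose sublevel sets of $V_1$ are compact, the standard Harris/Meyn--Tweedie theorem delivers the unique invariant measure $\nu_\eta$ and the $V_1$-geometric convergence \eqref{estimate_exponentialerg_Y}.

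\textbf{Main obstacle.} The chief difficulty is bookkeeping the explicit constant $C_4(\lambda)$: every term (the $\abs{b(0)}$ contributions from the Lipschitz bound on $b$, the small-jump $\frac{(3-\alpha)\sqrt{d}}{2(2-\alpha)}$ term, the large-jump moment $\frac{\E{\abs{L^\alpha_1}^{\lambda-1}}}{\alpha-1}$, and the final $\bigl(\tfrac{2p_\alpha(1+\theta_1^{\lambda-1})}{\alpha-1}\bigr)^\lambda\bigl(\tfrac{2}{\theta_4}\bigr)^{\lambda-1}$ arising from Young's inequality applied to the cross term) must be reproduced with matching constants, following closely the analogous computation in \cite{chenxu2023euler}. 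Once the drift inequality is secured with these sharp constants, the Harris-theorem step is essentially black-box.
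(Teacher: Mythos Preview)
Your proposal is correct and follows the same overall architecture as the paper: a Foster--Lyapunov drift inequality for $V_\lambda$, iteration to a uniform moment bound, and Meyn--Tweedie for ergodicity and uniqueness. Two organizational differences are worth flagging.

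First, for the noise contribution the paper does \emph{not} start from the quadratic identity for $\abs{Y_{k+1}}^2$ and then apply $(1+\cdot)^{\lambda/2}$; as you note, that route forces you to confront the divergent $\E{\abs{\xi_k}^2}$ and patch it by hand. Instead the paper writes
\[
V_\lambda(Y_{m+1})=V_\lambda(Y_m)+\underbrace{\int_0^\eta\!\inner{\nabla V_\lambda(Y_m+sb(Y_m)),b(Y_m)}\,ds}_{\mathcal A}+\underbrace{V_\lambda(Y_m+\eta b(Y_m)+\xi_m)-V_\lambda(Y_m+\eta b(Y_m))}_{\mathcal B},
\]
and evaluates $\E{\mathcal B}$ via Dynkin's formula, $\E{\mathcal B}=\int_0^\eta\E{\Delta^{\alpha/2}V_\lambda(y+\eta b(y)+L^\alpha_s)}\,ds$. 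The bound on $\Delta^{\alpha/2}V_\lambda$ already established in the proof of Lemma~\ref{lemma_invariantmeasure_continuousX} (equation \eqref{estimate_needforanotherproofofY}) is then reused verbatim, which is exactly where the $\frac{(3-\alpha)\sqrt d}{2(2-\alpha)}$, $\frac{1}{\alpha-\lambda}$, and $\frac{\E{\abs{L^\alpha_1}^{\lambda-1}}}{\alpha-1}$ terms come from. Your small/large jump split is the same computation unpacked, but going through $\Delta^{\alpha/2}V_\lambda$ avoids ever writing a divergent quantity and makes the bookkeeping for $C_4(\lambda)$ cleaner.

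Second, the paper proves ergodicity \emph{first}, via a separate and simpler drift for $V_1$ (bounding $\E{V_1(Y_1)\mid Y_0=x}$ directly using $V_1(y)\le\abs{y}+1$ and $\abs{x+\eta(b(x)-b(0))}^2\le(1-2\theta_4\eta+\theta_1^2\eta^2)\abs{x}^2+2K\eta$), and only afterwards derives the $V_\lambda$ moment bound. Your reversed order is fine too.
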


The next lemma provides Wasserstein decay rate of Equation \eqref{SDE_alphastable} which is our SDE driven by an $\alpha$-stable L\'{e}vy process with i.i.d. components. It is similar to the main theorem in \cite{jianwangwassersteindecay}, which considers SDE driven by a rotationally invariant $\alpha$-stable process. Apart from some minor technical differences, the proof of the next Lemma largely follows the idea of the aforementioned paper. The proof is somewhat long and is therefore relegated to the Appendix~\ref{section_proofresultbyjianwang} .

\begin{lemma}
\label{lemma_basedonjianwangpaper}
   Under Conditions~\ref{cond_driftb} and~\ref{cond_diss}, for every $x,y\in\R^d$ and $t\geq 0$, it holds that
   \begin{align*}
       d_{\operatorname{Wass}}(\operatorname{Law}(X^x_t),\operatorname{Law}(X^y_t))\leq \frac{2\brac{1-e^{-c_1L_0}}}{L_0}e^{-C_5t}\abs{x-y}, 
   \end{align*}
where 
\begin{align*}
    C_5:=- e^{-2c_1\sqrt{\frac{2K}{\theta_4}}} \min \Bigg\{  2\theta_1,\frac{\theta_4}{2}\brac{\frac{2K}{\theta_4}}^{\theta_4/2-1},\frac{c_1}{8\sqrt{2}} \brac{\frac{e^{-2c_1\sqrt{\frac{2K}{\theta_4}}}}{20}+1}\frac{\theta_4^{3/2}}{K^{1/2}} \brac{\frac{2K}{\theta_4}}^{\theta_4/2-1} \Bigg\},
\end{align*}
and 
\begin{align*}
    c_1:=\brac{\frac{\theta_1(2-\alpha)}{4p_\alpha}\brac{\frac{\theta_4}{2K}}^{\frac{1-\alpha}{2}}e^{-2\sqrt{\frac{2K}{\theta_4}}} }^{\frac{1}{\alpha-1}}. 
\end{align*}
\end{lemma}

Now, we are ready to state the main result of this section.
The following results provide the convergence rates of the Euler-Maruyama scheme. The proof is deferred to Section~\ref{section_proof_eulerscheme}.

\begin{theorem}
\label{theorem_eulerscheme}
Assume the stepsize $\eta$ satisfies $\eta\leq \min \left\{ 1,\frac{\theta_4}{8
\theta_1^2},\frac{1}{\theta_4}\right\} $. Then it holds that 
\begin{align}
\label{estimate_distancebetweenXandY}
d_{\operatorname{Wass}}(\operatorname{Law}(X_{\eta N}),\operatorname{Law}(Y_N))
\leq\mathcal{C}\eta,
\end{align}
where
\begin{align}
\mathcal{C}&:=\brac{1+\frac{2\brac{1-e^{-c_1L_0}}}{L_0C_5}+\frac{2C_2\brac{1-e^{-c_1L_0}}}{L_0C_5}+C_2}\nonumber
    \\&\quad\cdot\bigg(\brac{3\theta_1^2+\frac{4\theta_2d p_\alpha}{(2-\alpha)(\alpha-1)}} C_3(1)C_4(1)\brac{1+\abs{x}^2}^{1/2}+2\theta_1\E{\abs{L^\alpha_1}}+\abs{\Delta^{\alpha/2}b(0)}\bigg).\label{defn:constant:C}
\end{align}
This leads to 
\begin{align}
\label{estimate_distancebetweeninvariantmeasures}
d_{\operatorname{Wass}}\brac{\nu_\eta,\nu}
\leq\mathcal{C}\eta.
\end{align}
\end{theorem}


\begin{remark}
The uniform-in-time Euler-Maruyama approximation bounds in Theorem~\ref{theorem_eulerscheme} can be directly applied to many settings in machine learning,
such as fractional Langevin Monte Carlo algorithms \cite{simcsekli2017fractional,nguyen19} 
where a rigorous uniform-in-time approximation
analysis is lacking, and our results help bridge a gap between theory and practice.
\end{remark}

\begin{remark}
In Theorem~\ref{theorem_eulerscheme}, our approximation error bound has a linear dependence
on the stepsize $\eta$, and we expect it to be tight as can be seen through an explicit calculation
in the case of the Ornstein-Uhlenbeck process in Appendix~\ref{sec:OU} . 
\end{remark}

\begin{remark}
We will present the proof of Theorem~\ref{theorem_eulerscheme} in Section~\ref{section_proof_eulerscheme} which will follow the strategy in \cite{chenxu2023euler} and employ the classical Linderberg's principle plus the semigroup gradient estimates that is obtained in Proposition~\ref{prop_semigroupestimate}.

A natural question that one might come up is why our strategy to prove Theorem~\ref{theorem_eulerscheme}  is not via the Stein's bound proposed in Theorem~\ref{theorem_steinmethodgeneralbound}. The answer is that one could use Theorem~\ref{theorem_steinmethodgeneralbound} to deduce an upper bound like \eqref{estimate_distancebetweeninvariantmeasures}. In fact, in the case of stochastic differential equations with Brownian noise, Stein's method is applied to derive a similar result to \eqref{estimate_distancebetweeninvariantmeasures} in \cite[Theorem 4.1]{FSX19}. However, it is ultimately not clear to us whether Stein's method can lead to a bound such as \eqref{estimate_distancebetweenXandY}, and thus we follow the strategy in \cite{chenxu2023euler}.
\end{remark}


\section{Proof of Proposition~\ref{prop_semigroupestimate}}\label{sec:proofs:1}

In this section, we present the proof of Proposition~\ref{prop_semigroupestimate}, 
which provides semigroup gradient estimates for the semigroups associated with the solution to \eqref{sde_cylindricallevy}. We first present a technical lemma. 

\begin{lemma}
\label{lemma_boundB_tx}
For any Lipschitz function $f$, let:
    \begin{align*}
        \mathcal{B}_{t,x}(f):=\int_0^t\int_{\R^d} \left(P_{t-s}f(X^x_{s-}+\xi)-P_{t-s}f(X^x_{s-})\right)\widehat{N}(ds,d\xi),
    \end{align*}
    where $\widehat{N}$ is the compensated Poisson measure defined in Appendix~\ref{appendix_malliavin} .
    Then for any $0\leq t\leq 1$ and $1\leq \lambda<\Lambda$ with $\Lambda$ defined in Condition \eqref{cond_moment_largejump}, we have
\begin{align}
 \sup_{x\in\R^d}   \E{\abs{\mathcal{B}_{t,x}(f)}^\lambda}\leq C_1(\lambda) \brac{\sup_{y\in\mathbb{R}^{d}} \norm{\nabla f(y)}_{\operatorname{op}}}^\lambda,\label{holds:with:C1}
\end{align}
where the factor $C_{1}(\lambda)$ is defined as:
\begin{align}
\label{def_constantC1}
    C_1(\lambda):=\sup_{t\in[0,1]} \E{\abs{ \int_0^t\int_{\R^d}\abs{\xi}\widehat{N}
       (ds,d\xi)}^\lambda}.
\end{align}
\end{lemma}

\begin{proof}
We can compute that
    \begin{align*}
    \E{\abs{\mathcal{B}_{t,x}(f)}^\lambda}&=\E{ \abs{\int_0^t\int_{\R^d} \left(P_{t-s}f(X^x_{s-}+\xi)-P_{t-s}f(X^x_{s-})\right)\widehat{N}(ds,d\xi)}^\lambda}\\
   &\leq \E{\abs{ \int_0^t\int_{\R^d} \sup_{y\in\mathbb{R}^{d}}\norm{\nabla P_{t-s}f(y)}_{\operatorname{op}}\abs{\xi}\widehat{N}(ds,d\xi)}^\lambda}\\
       &\leq \E{\abs{ \int_0^t\int_{\R^d} {\brac{\sup_{y\in\mathbb{R}^{d}} \norm{\nabla f(y)}_{\operatorname{op}}}}e^{-\theta_4(t-s)}\abs{\xi}\widehat{N}
       (ds,d\xi)}^\lambda}\\
   &\leq \brac{{\sup_{y\in\mathbb{R}^{d}} \norm{\nabla f(y)}_{\operatorname{op}}}}^\lambda\E{\abs{ \int_0^t\int_{\R^d}\abs{\xi}\widehat{N}
       (ds,d\xi)}^\lambda}.
    \end{align*}
Recall $\rho$ is the intensity measure of $N$. Let $\psi(x)=\abs{x}$. Per \cite[Theorem 1.2.14, Part 1) of Theorem 2.3.7 and (2.9)]{applebaum2009levy}, the L\'{e}vy measure associated with the Poisson integral $\int_0^t\int_{\R^d}\abs{\xi}\widehat{N}
       (s,d\xi)$  has the form $t\rho_{\psi}(\cdot)$ where $\rho_{\psi}(A):=\rho(\psi^{-1}(A\cap [0,\infty)^{\otimes d}))$. Then due to  \eqref{cond_moment_largejump} in Condition~\ref{cond_levymeasure} which guarantees
$\int_{\abs{\xi}>R} \abs{\xi}^\lambda \rho_\psi(d\xi)<\infty$
and \cite[Theorem 2.5.2]{applebaum2009levy}, the Poisson integral above is indeed finite for all $t\geq 0$. Hence \eqref{holds:with:C1} holds with the constant
$C_1(\lambda)$ that is defined in \eqref{def_constantC1}. 
This completes the proof.
\end{proof}


Now, we are ready to prove Proposition~\ref{prop_semigroupestimate}.

\begin{proof}[Proof of Proposition~\ref{prop_semigroupestimate}]
For any Lipschitz function $f$, we have
\begin{align*}
    \left|P_tf(x)-P_tf(y)\right|&=\left|\E{f(X^x_t)-f(X^y_t)}\right|\\
    &\leq \brac{\sup_{y\in\mathbb{R}^{d}} \norm{\nabla f(y)}_{\operatorname{op}}} d_{\operatorname{Wass}}\brac{\delta_xP_t,\delta_yP_t}\\
    &\leq \brac{\sup_{y\in\mathbb{R}^{d}} \norm{\nabla f(y)}_{\operatorname{op}}} \mathcal{R}(t)d_{\operatorname{Wass}}\brac{\delta_x,\delta_y}\\
    &=\brac{\sup_{y\in\mathbb{R}^{d}} \norm{\nabla f(y)}_{\operatorname{op}}}\mathcal{R}(t)\abs{x-y}, 
\end{align*}
where we applied \eqref{R:t:eqn}, which gives the estimate \eqref{estimate_gradsemigroup_prop}.

Next,  per the Bismut-Elworthy-Li formula in Proposition~\ref{prop_bismut}, there exists a random vector $G$ such that
\begin{align*}
   \nabla_u P_tf(x)= \nabla_u \E{f(X^x_t)}=\E{f(X^x_t) \inner{G(x,t),u}}.
\end{align*}

By interchanging the derivative and the expectation, we get
\begin{align}
\label{step_secondgradientsemigroup}
    \nabla_v \nabla_u P_tf(x)&= \E{\nabla_v\brac{f(X^x_t) \inner{G(x,t),u}}}\nonumber\\
    &=\E{\nabla_v f(X_t^x) \inner{G(x,t),u}}+\E{f(X_t^x)v^T\nabla G(x,t)u}. 
\end{align}
To see that $\E{f(X^x_t) \inner{G(x,t),u}}$ is differentiable with respect to $x$ and that the interchange of derivative and expected value in \eqref{step_secondgradientsemigroup} is valid, we will rely on \cite[Theorem 16.8]{Billingsleyprobandmeasure} and the paragraph following the proof of the aforementioned theorem. By Condition~\ref{cond_driftb} and Lipschitz continuity of $f$, $f(X^x_t)$
and $ \nabla_v f(X_t^x)$ are continuous in $x $ almost surely. Furthermore by Lemma~\ref{lemma_GandnablaG}, $G(x,t)$ and $\nabla G(x,t)$ are differentiable and hence continuous in $x$. These facts imply the integrand in \eqref{step_secondgradientsemigroup}, which is given by
\begin{align*}
    J_x=\nabla_v f(X_t^x) \inner{G(x,t),u}+f(X_t^x)v^T\nabla G(x,t)u,
\end{align*}
are continuous in $x$. Let $K_x$ be a compact ball around $x$. The continuity implies that there exists $x_0\in K_x$ such that 
\begin{align*}
\sup_{y\in K_x} J_y=J_{x_0}. 
\end{align*}
We further claim that $J_x$ is integrable for every $x\in\R^d$, i.e.
\begin{align}
\label{step_Jxintegrable}
    \E{\abs{J_x}}<\infty.  
\end{align}
The proof of \eqref{step_Jxintegrable} will be postponed to the end. Then the aforementioned result from \cite{Billingsleyprobandmeasure} applies and it is possible to interchange the derivative and expected value in \eqref{step_secondgradientsemigroup}. 


Next, let us get back to Equation~\eqref{step_secondgradientsemigroup}. Regarding the first term on the right hand side of \eqref{step_secondgradientsemigroup}, we have
\begin{align*}
    \abs{\E{\nabla_v f(X_t^x) \inner{G(x,t),u}}}\leq \abs{u}\abs{v}\E{\abs{G(x,t)}}\sup_{y\in\mathbb{R}^{d}}\norm{\nabla f(y)}_{\operatorname{op}}.
\end{align*}
Now we study the second term on the right hand side of \eqref{step_secondgradientsemigroup}. An application of It\^{o}'s formula \cite[Theorem 4.4.7]{applebaum2009levy} to the function $(s,x)\mapsto P_{t-s}f(x)$ gives
\begin{align}
\label{equation_itoformulaapplied}
    f(X^x_t)&=P_tf(x)+\int_0^t\int_{\R^d} \left(P_{t-s}f(X^x_{s-}+\xi)-P_{t-s}f(X^x_{s-})\right)\widehat{N}(ds,d\xi)\nonumber\\
    &=P_tf(x)+\mathcal{B}_{t,x}(f).
\end{align}
Notice in particular that we can apply It\^{o}'s formula \cite[Theorem 4.4.7]{applebaum2009levy} as long as for any fixed $t$ and any Lipschitz function $f$, $P_tf(x)$ is twice continuously differentiable with respect to $x$. The latter is implied by \eqref{step_secondgradientsemigroup} and the paragraph following it.

This leads to
\begin{align*}
    \abs{\E{f(X_t^x)v^T\nabla G(x,t)u}}&= P_tf(x)v^T\E{\nabla G(x,t)}u+\E{\mathcal{B}_{t,x}(f)v^T\nabla G(x,t)u}\\
    &=\E{\mathcal{B}_{t,x}(f)v^T\nabla G(x,t)u},
\end{align*}
where we used $\E{\nabla G(x,t)}=0$ which is obtained by substituting $f\equiv 1$ into Equation \eqref{step_secondgradientsemigroup}
to obtain the last equality above. 

Next, \eqref{cond_moment_largejump} in Condition~\ref{cond_levymeasure} allows us to choose $\lambda$ which satisfies $\tau/(\tau-1)< \lambda<\Lambda $. Then
\begin{align}
\label{step_needtaugreaterthan2}
    \abs{\E{f(X_t^x)v^T\nabla G(x,t)u}}\leq \abs{u}\abs{v} \E{\abs{\mathcal{B}_{t,x}(f)}^\lambda}^{1/\lambda} \E{\abs{\nabla G(x,t)}^{\frac{\lambda}{\lambda-1}}}^{\frac{\lambda-1}{\lambda}}. 
\end{align}
By Lemma~\ref{lemma_GandnablaG}, $G(x,t)$ and $\nabla G(x,t)$ are $q$-integrable for $\tau>q\geq 1$ and our choice of $\lambda$ ensures $\tau> \lambda/(\lambda-1)$. The previous facts and Lemma~\ref{lemma_boundB_tx} allows us to deduce from Equation \eqref{step_secondgradientsemigroup} that for every $t\leq 1$,
\begin{align*}
    \abs{\nabla_v \nabla_u P_tf(x)}\leq \abs{u}\abs{v}C_2\sup_{y\in\mathbb{R}^{d}}\norm{\nabla f(y)}_{\operatorname{op}},
\end{align*}
where $C_2$ is a constant that is chosen large enough such that
\begin{align}
\label{def_constantC2}
  C_2\geq  \sup_{t\in [0,1],x\in\R^d} \E{\abs{G(x,t)}}+\sup_{t\in [0,1],x\in\R^d}C_1(\lambda)^{1/\lambda}  \E{\abs{\nabla G(x,t)}^{\frac{\lambda}{\lambda-1}}}^{\frac{\lambda-1}{\lambda}},
\end{align}
for some $\lambda$ satisfying $\frac{\tau}{\tau-1}< \lambda<\Lambda $, where the factor $C_1(\lambda)$ is defined in Lemma~\ref{lemma_boundB_tx}.

Now, for $t> 1$, we have
\begin{align*}
   \abs{ \nabla_v \nabla_u P_tf(x)}=\abs{\nabla_v \nabla_u P_1 P_{t-1}f(x)}
    &\leq  C_2\abs{u}\abs{v}\sup_{y\in\mathbb{R}^{d}}\norm{\nabla P_{t-1}f(y)}_{\operatorname{op}}\\
    &\leq C_2\abs{u}\abs{v}\brac{\sup_{y\in\mathbb{R}^{d}}\norm{\nabla f(y)}_{\operatorname{op}}}\mathcal{R}(t-1),
\end{align*}
where we applied \eqref{R:t:eqn}, which gives the estimate \eqref{estimate_secondgradientsemigroup_prop}. 

Finally, as the last step of the proof, we will prove \eqref{step_Jxintegrable}. It follows from the definition of $J_x$ and \eqref{equation_itoformulaapplied} that 
\begin{align*}
    \E{\abs{J_x}}&\leq \E{\abs{\nabla_v f(X_t^x)}\abs{G(x,t)}}\abs{u}+\E{\abs{\nabla G(x,t) }}\abs{u}\abs{v}\abs{P_tf(x)}
    \\
    &\qquad\qquad\qquad+\E{\abs{\mathcal{B}_{t,x}(f)}\abs{\nabla G(x,t) } }\abs{u}\abs{v}. 
\end{align*}
The first and second term on the right hand side are finite due to Lipschitz continuity of $f$ and Lemma~\ref{lemma_GandnablaG}. To see the last term is also finite, recall \eqref{cond_moment_largejump} in Condition~\ref{cond_levymeasure} which allows us to choose $\lambda$ such that $\tau/(\tau-1)< \lambda<\Lambda $. It follows that
\begin{align*}
    \E{\abs{\mathcal{B}_{t,x}(f)}\abs{\nabla G(x,t) } }\leq  \E{\abs{\mathcal{B}_{t,x}(f)}^\lambda}^{1/\lambda} \E{\abs{\nabla G(x,t)}^{\frac{\lambda}{\lambda-1}}}^{\frac{\lambda-1}{\lambda}}. 
\end{align*}
Again by Lemma~\ref{lemma_GandnablaG}, $\nabla G(x,t)$ is $q$-integrable for $\tau>q\geq 1$ and our choice of $\lambda$ ensures $\tau> \lambda/(\lambda-1)$. This, together with Lemma~\ref{lemma_boundB_tx}, indicates the right hand side of the above equation is finite. Thus, we have shown $J_x$ is integrable for every $x\in\R^d$. 
The proof is complete.
\end{proof}

\begin{remark}\label{remark_needtaugreaterthan2}
Here we explain the importance of the parameter $\tau$ in Condition~\ref{cond_levymeasure}, noting that this parameter does not appear in the earlier reference \cite{kulik2023gradient}, but is needed in this paper. 
Having $\tau>2$ is crucial if we want the results of Proposition~\ref{prop_semigroupestimate} to hold in the case that the L\'{e}vy process in \eqref{sde_cylindricallevy} is an $\alpha$-stable process $L^\alpha$ with $1< \alpha<2$. Lemma~\ref{lemma_GandnablaG} shows that the constant $\tau$ in Condition~\ref{cond_levymeasure}  determines $q$-integrability of $\nabla G(x,t)$, i.e. 
    \begin{align}
    \label{roleoftau_remark}
        \E{\abs{\nabla G(x,t)}^q}<\infty, \quad \tau>q\geq 1.
    \end{align}
     Now let us consider the calculation in \eqref{step_needtaugreaterthan2} where the H\"{o}lder's inequality is applied. The $\alpha$-stable process $L^\alpha$ with $1<\alpha<2$ has moments up to order $p<\alpha$. Then Lemma~\ref{lemma_boundB_tx} implies for a Lipschitz function $f$, $\mathcal{B}_{t,x}(f)$ is $p$-integrable for $p<\alpha$ and in particular $p<2$. This suggests $\nabla G(x,t)$ must be $q$-integrable for some $q>2$ in order for the right hand side of \eqref{step_needtaugreaterthan2} to remain bounded. In view of \eqref{roleoftau_remark}, requiring $\tau>2$ fulfills such a purpose.
\end{remark}

\begin{remark}
\label{remark_C_2dimensiondependence}
In this remark, we spell out the dimension dependence of $C_2$. 
    Since $\lambda$ satisfies $\tau/(\tau-1)< \lambda<\Lambda $, we deduce that $\lambda/(\lambda-1)<\tau$. Moreover,
\begin{align*}
    \frac{\lambda}{\lambda-1}>\frac{\tau/(\tau-1)}{\Lambda-1}=\frac{\tau}{(\tau-1)(\Lambda-1)}. 
\end{align*}
Based on this, if we set $q=\lambda/(\lambda-1)$ then finding $C_2$ that satisfies \eqref{def_constantC2} reduces to finding $C_2$ such that 
\begin{align}
\label{C_2_new}
    C_2\geq  \sup_{t\in [0,1],x\in\R^d} \E{\abs{G(x,t)}}+\sup_{t\in [0,1],x\in\R^d}C_1(\lambda)^{1/\lambda}  \E{\abs{\nabla G(x,t)}^q}^\frac{1}{q},
\end{align}
for some $q$ satisfying $\frac{\tau}{(\tau-1)(\Lambda-1)}<q<\tau$ and some $\lambda$ satisfying $\frac{\tau}{\tau-1}< \lambda<\Lambda $. While the explicit form of $C_2$ can be derived using Lemma~\ref{lemma_GandnablaG}, it is quite cumbersome; so we will focus only on the dimension dependence of $C_2$.

Via Appendix~\ref{appendix_malliavin}  and Jensen's inequality, 
\begin{align*}
    C_1(\lambda)\leq  \E{\abs{ \int_0^1\int_{\R^d}\abs{\xi}\widehat{N}
       (ds,d\xi)}^\lambda}
       &=\E{\abs{\sum_{i=1}^d\int_0^1\int_{\R^d}\abs{\xi_j}\widehat{N}_j(ds,d\xi_j) }^\lambda}\\
       &\leq d^{\lambda-1}\sum_{i=1}^d\E{\abs{\int_0^1\int_{\R^d}\abs{\xi_j}\widehat{N}_j(ds,d\xi_j) }^\lambda}.
\end{align*}
Hence, in terms of the dimension dependence, $\left(C_1(\lambda)\right)^{1/\lambda}$ is of the order $\mathcal{O}(d)$ as $d\rightarrow\infty$. Next due to the fact that $\frac{q}{2(\tau-q)}<1$,  in terms of the dimension dependence, our upper bound of the quantity $\sup_{t\in [0,1],x\in\R^d} \E{\abs{G(x,t)}}$ in Lemma~\ref{lemma_GandnablaG} is of the order
\begin{align*}
\mathcal{O}\left(d^{\frac{3}{2}+\tau\brac{\frac{1}{2(\tau-1)}\vee 1 }} +d^{\frac{q_0}{2(q_0-1)}}\right),
\end{align*}
as $d\rightarrow\infty$.
Meanwhile, our upper bound on $\sup_{t\in [0,1],x\in\R^d} \E{\abs{\nabla G(x,t)}^q}^\frac{1}{q}$ in Lemma~\ref{lemma_GandnablaG} is of the order 
\begin{align*}
\mathcal{O}\left(d^{\frac{4q_0}{q_0-q}+\frac{5}{2q}}+d^{\frac{q_0}{q}+\frac{7}{2q}-\frac{q_0}{q\tau}}+d^{\frac{\tau}{q(\tau-q)}+\frac{5}{2q}}+d^{\frac{\tau}{q}+\frac{5}{2q}}\right),
\end{align*}
as $d\rightarrow\infty$, 
with $1\leq q<q_0<\tau$. Since $\frac{q_0}{2(q_0-1)}\leq \frac{4q_0}{q_0-q}$ and based on \eqref{C_2_new}, we can choose $C_2$ of the order $\mathcal{O}\brac{d^\mathcal{B}}$
as $d\rightarrow\infty$, where 
\begin{align*}
    \mathcal{B}&:=\max\Bigg\{\frac{3}{2}+\tau\brac{\frac{1}{2(\tau-1)}\vee 1 },\frac{4q_0}{q_0-q}+\frac{5}{2q}+1,\frac{q_0}{q}+\frac{7}{2q}-\frac{q_0}{q\tau}+1,\\
    &\hspace{16em}\frac{\tau}{q(\tau-q)}+\frac{5}{2q}+1, \frac{\tau}{q}+\frac{5}{2q}+1 \Bigg\}.
\end{align*}
In particular, when the  cylindrical L\'{e}vy process in \eqref{sde_cylindricallevy} is an $\alpha$-stable L\'{e}vy process $\{L^\alpha_t:t\geq 0\}$ with $1< \alpha< 2$, we have $\tau\brac{\frac{1}{2(\tau-1)}\vee 1 }\geq \tau>\max \left\{\frac{\alpha}{\alpha-1}, 2 \right\}$ (see Remark \ref{remark_stablenoisebelongtoourclass}), so that $\mathcal{B}\to \infty$ as $\alpha\to 1^+$. In plain words, the dimension dependence of $C_2$ gets worse as $\alpha\to 1^+$. 

\end{remark}


\section{Stein's method and proof of Theorem~\ref{theorem_steinmethodgeneralbound}}
\label{section_steinmethod} 

In this section, we introduce Stein's method for approximation of the invariant measure $\nu$ of the process in  \eqref{sde_cylindricallevy} and present
the proof of Theorem~\ref{theorem_steinmethodgeneralbound}, 
which provides an upper bound on the Wasserstein distance between the law of a generic random variable $F$ and $\nu$.

Denote $\mathcal{L}$ the infinitesimal generator associated with \eqref{sde_cylindricallevy}. It is a linear operator from $\mathcal{C}^2$ to itself and takes the form
\begin{align}
\label{def_generator}
    \mathcal{L}h(x):= \inner{b(x),\nabla h(x)}+\mathcal{L}_0h(x),
\end{align}
where $\mathcal{L}_0$ is the non-local operator defined as:
\begin{align*}
    \mathcal{L}_0h(x):=\int_{\R^d}\brac{h(x+\xi)-h(x)-\inner{\xi,\nabla h(x)}\mathbbm{1}_{\{\abs{\xi}\leq R\}}}m(d\xi),
\end{align*}
which includes \eqref{L:0:operator:stable} as a special case.
Let $f$ be a Lipschitz function. A Stein's equation for approximation of the invariant measure $\nu$ of \eqref{sde_cylindricallevy} in the Wasserstein distance is 
\begin{align}
\label{equation_stein}
    \mathcal{L} h(x)=f(x)-\E{W},
\end{align}
where $W$ is a random variable distributed according to $\nu$.
Formally, it is easy to see  \begin{align*}
        h_f(x)=\int_0^\infty \left(\E{f(W)}-P_tf(x)\right)dt
    \end{align*}
is a solution to the Stein's equation \eqref{equation_stein}. To rigorously justify this, we need to study strong continuity property of the semigroups $\{P_t:t\geq 0\}$ associated with the SDE \eqref{sde_cylindricallevy}.   The next argument is motivated by the argument in \cite[Appendix B]{Gor19}. We introduce the function space
\begin{align}
\label{def_functionspaceS}
    \mathcal{S}:=\brac{1+\abs{x}^2}^{\lambda /2}\mathcal{C}_0,
\end{align}
where $\lambda $ is any constant which satisfies $1<\lambda <\Lambda$, and $\mathcal{C}_0$ is the set of continuous functions on $\R^d$ vanishing at $\infty$. The function space $\mathcal{S}$ is a Banach space equipped with the norm 
\begin{align*}
    \abs{h}_\mathcal{S}:=\sup_{x\in \R^d}\frac{\abs{f(x)}}{\brac{1+\abs{x}^2}^{\lambda /2}}.
\end{align*}
We also let $\mathcal{C}_c^2$ denote the set of compactly supported functions for which the first and second derivatives are also compactly supported. Finally, denote $\operatorname{Lip}(1)$ the set of $1$-Lipschitz functions on $\R^d$.  Our goal is to show
the following technical lemma:

\begin{lemma}
\label{lemma_semigroupstronglycontinuous}
    The semigroup $\{P_t:t\geq 0\}$ associated with \eqref{SDE_alphastable} is strongly continuous on the Banach space $\mathcal{S}$. Moreover, $\operatorname{Lip}(1)$ is a subspace in $\mathcal{S}$.
\end{lemma}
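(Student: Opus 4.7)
The plan is to treat the two assertions separately. The inclusion $\operatorname{Lip}(1) \subset \mathcal{S}$ is a direct check: if $f$ is $1$-Lipschitz then $|f(x)| \leq |f(0)| + |x|$, so $f(x)/(1+|x|^2)^{\lambda/2}$ is continuous and tends to $0$ as $|x| \to \infty$ since $\lambda > 1$, whence $f/V_\lambda \in \mathcal{C}_0$. For the strong continuity, I would use the standard three-step density argument on a Banach space: (i) $P_t : \mathcal{S} \to \mathcal{S}$ is a bounded operator with $\|P_t\|_{\mathcal{S} \to \mathcal{S}}$ controlled uniformly in $t$; (ii) $|P_t f - f|_\mathcal{S} \to 0$ as $t \to 0^+$ for $f$ in a dense subspace, which here will be $\mathcal{C}_c^2$; (iii) extend by density.

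For step (i), I would write $h = V_\lambda g$ with $g \in \mathcal{C}_0$ and use the moment bound $\mathbb{E}[V_\lambda(X_t^x)] \leq C_3(\lambda) V_\lambda(x)$ from Lemma~\ref{lemma_invariantmeasure_continuousX}, which directly yields $|P_t h(x)| \leq \|g\|_\infty \mathbb{E}[V_\lambda(X_t^x)] \leq C_3(\lambda)\|g\|_\infty V_\lambda(x)$, hence $|P_t h|_\mathcal{S} \leq C_3(\lambda)|h|_\mathcal{S}$. To see that $P_t h$ genuinely lies in $\mathcal{S}$, continuity of $x \mapsto P_t h(x)$ follows from continuous dependence of the SDE solution on its initial condition, combined with dominated convergence (the moment bound supplying a locally uniform majorant). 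For the vanishing-at-infinity of $(P_t h)/V_\lambda$, I would fix $\epsilon > 0$, choose $K$ with $|g(y)| < \epsilon$ for $|y| > K$, and split the expectation into the events $\{|X_t^x| \leq K\}$ and $\{|X_t^x| > K\}$ to obtain $|P_t h(x)| \leq \|g\|_\infty (1+K^2)^{\lambda/2} + \epsilon C_3(\lambda) V_\lambda(x)$; dividing by $V_\lambda(x)$ and letting $|x| \to \infty$ followed by $\epsilon \to 0$ gives $(P_t h)/V_\lambda \in \mathcal{C}_0$.

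For step (ii), given $f \in \mathcal{C}_c^2$, Dynkin's formula yields $P_t f(x) - f(x) = \int_0^t P_s \mathcal{L}f(x) \, ds$, so using $V_\lambda \geq 1$ we immediately get $|P_t f - f|_\mathcal{S} \leq t\|\mathcal{L}f\|_\infty$. Boundedness of $\mathcal{L}f$ reduces to two checks: the drift term $\langle b(x), \nabla f(x)\rangle$ vanishes off $\operatorname{supp}(\nabla f)$ and is bounded on that compact set since $b \in \mathcal{C}^2$; the nonlocal part is controlled by a second-order Taylor expansion on small jumps (giving $|f(x+\xi) - f(x) - \langle \xi, \nabla f(x)\rangle| \leq \tfrac{1}{2}\|\nabla^2 f\|_\infty |\xi|^2$, integrable against $m$ on $\{|\xi| \leq R\}$ by the standard L\'evy-measure property), plus the crude bound $|f(x+\xi)-f(x)| \leq 2\|f\|_\infty$ on large jumps where $m$ is finite. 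For step (iii), I would approximate $g \in \mathcal{C}_0$ by $g_n \in \mathcal{C}_c^\infty$ in sup norm and set $f_n := V_\lambda g_n \in \mathcal{C}_c^\infty \subset \mathcal{C}_c^2$; since $|f_n - h|_\mathcal{S} = \|g_n - g\|_\infty \to 0$, combining the triangle inequality with the uniform bound from step (i) and the convergence from step (ii) yields $|P_t h - h|_\mathcal{S} \to 0$.

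The main obstacle I anticipate is step (i), specifically showing $P_t \mathcal{S} \subset \mathcal{S}$ rather than merely the weighted sup-norm bound. The vanishing-at-infinity property is the only point where membership of $g$ in $\mathcal{C}_0$ is genuinely exploited, and it requires carefully coupling the moment estimate for $V_\lambda(X_t^x)$ with a splitting of $\mathcal{C}_0$ functions into a bounded-support piece and a uniformly small tail; the rest is routine functional-analytic bookkeeping.
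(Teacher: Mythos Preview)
Your proof is correct and follows the same density-through-$\mathcal{C}_c^2$ skeleton as the paper, but the execution differs in two places worth noting. For the mapping property $P_t\mathcal{S}\subset\mathcal{S}$ you invoke the uniform-in-$t$ moment bound $\mathbb{E}[V_\lambda(X_t^x)]\leq C_3(\lambda)V_\lambda(x)$ from Lemma~\ref{lemma_invariantmeasure_continuousX}, which requires Condition~\ref{cond_diss}; the paper instead derives a time-dependent bound $\mathbb{E}[|X_t^0|^\lambda]\leq C(\lambda,t)e^{\theta_1 t}$ via Gr\"onwall using only Condition~\ref{cond_driftb}, which is enough for the local-in-$t$ estimate needed here and keeps the lemma available under the weaker hypotheses of Theorem~\ref{theorem_steinmethodgeneralbound}. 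Conversely, your argument for $(P_th)/V_\lambda\in\mathcal{C}_0$ via the splitting on $\{|X_t^x|\leq K\}$ is cleaner than the paper's, which writes $P_tf(x)=\int f(x+y)\,q_t(dy)$ with $q_t=\mathrm{Law}(X_t^0)$; that convolution formula holds only when $X_t^x=x+X_t^0$, i.e.\ for additive processes, and is not justified for the nonlinear drift $b$ assumed here, so your route is the more robust one. Your step (ii) via Dynkin and the global bound $|P_tf-f|_\mathcal{S}\leq t\|\mathcal{L}f\|_\infty$ is exactly the paper's It\^o-formula computation, just packaged more compactly.
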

\begin{proof}
To justify strong continuity of $P_t$ on $\mathcal{S}$, we make the following claims. 

\begin{enumerate}[label=\roman*)]
\item $\brac{\mathcal{S},\abs{\cdot}_\mathcal{S}}$ is a Banach space.
\item The space $\mathcal{S}'=\brac{1+\abs{x}^2}^{\lambda /2}\mathcal{C}^2_c=\mathcal{C}^2_c$ equipped with $\abs{\cdot}_\mathcal{S}$-norm is dense in $\mathcal{S}$.
\item $P_t$ maps $\mathcal{S}$ into $\mathcal{S}$. 
\item $\operatorname{Lip}(1)\subset \mathcal{S}$, which implies any Lipschitz function can be approximated by elements in $\mathcal{S}'$. 
\item For any $f\in \mathcal{S}'$, it holds that $\lim_{t\to 0}\abs{P_tf-f}_\mathcal{S}= 0$. This implies for any $f\in \mathcal{S}$, we have $\lim_{t\to 0}\abs{P_tf-f}_\mathcal{S}= 0$
\end{enumerate}
Item i) is immediate since $\mathcal{C}_0$ equipped with the supremum norm is a Banach space. To show ii), we recall the Stone–Weierstrass theorem for $\R^d$ which is a locally compact space. The fact that $\mathcal{C}^2_c$ separates points in $\R$ and vanishes nowhere, and also that $\mathcal{C}^2_c$ is a sub-algebra in $\mathcal{C}_0$ imply $\mathcal{C}^2_c$ is dense in $\mathcal{C}_0$. This means $\mathcal{S}'$ is dense in $\mathcal{S}$.

Regarding to item iii), we follow the argument in \cite[Proof of Theorem 3.1.9]{applebaum2009levy} and use the formula
\begin{align*}
    P_tf(x)=\int_{\R^d} f(x+y)q_t(dy),
\end{align*}
where $q_t(\cdot)$ is the law of $X^0_t$. Next, assume $f\in \mathcal{S}'$, that is 
\begin{align*}
    f(x)=\brac{1+\abs{x}^2}^{\lambda /2}g(x),
\end{align*}
for some $g\in \mathcal{C}^2_c$. Since $\mathcal{S}'$ is dense in $\mathcal{S}$, if one can show $P_tf\in \mathcal{S}$ then item iii) follows. Notice that we have
\begin{align*}
    \lim_{\abs{x}\to \infty}\frac{\abs{P_tf(x)}}{\brac{1+\abs{x}^2}^{\lambda /2}}&\leq  \lim_{\abs{x}\to \infty}\int_{\R^d} \frac{\brac{1+(x+y)^2}^{\lambda /2}\abs{g(x+y)}}{\brac{1+\abs{x}^2}^{\lambda /2}}q_t(dy)\\
    &= \int_{\R^d} \lim_{\abs{x}\to \infty}\frac{\brac{1+(x+y)^2}^{\lambda /2}\abs{g(x+y)}}{\brac{1+\abs{x}^2}^{\lambda /2}}q_t(dy)=0, 
\end{align*}
where we applied the dominated convergence theorem to interchange limit and integral in the above calculation. Notice $g\in \mathcal{C}^2_c$ is bounded, so we have
\begin{align*}
    \frac{\brac{1+(x+y)^2}^{\lambda /2}\abs{g(x+y)}}{\brac{1+\abs{x}^2}^{\lambda /2}}\leq C\abs{y}^\lambda,
\end{align*}
for some constant $C$ independent of $x$. Then if we know for a fixed $t$ and $\lambda$, 
\begin{align}
\label{momentbound_X_tcylindricallevynoise}
    \E{\abs{X^0_t}^\lambda}=\int_{\R^d}\abs{y}^\lambda q_t(dy)<\infty,
\end{align}
then the dominated convergence theorem can be applied. 

Next, let us show \eqref{momentbound_X_tcylindricallevynoise} holds. \cite[Theorem 2.5.2]{applebaum2009levy} and \eqref{cond_moment_largejump} in Condition~\ref{cond_levymeasure} imply there exists a positive function $C(\lambda,t)$ which is finite for every $\lambda,t$ and is such that $\E{\abs{Z_t}^\lambda}<C(\lambda,t)$. This and Condition~\ref{cond_driftb} provide us with
\begin{align*}
    \E{\abs{X^0_t}^\lambda}\leq C(\lambda,t)+\int_0^t\theta_1\E{\abs{X_s}^\lambda}ds,
\end{align*}
and by Gronwall's inequality,
\begin{align*}
    \E{\abs{X^0_t}^\lambda}\leq C(\lambda,t)e^{\theta_1t}.
\end{align*}
This proves \eqref{momentbound_X_tcylindricallevynoise}.

Next, item iv) is true since a Lipschitz function $f$ has sub-linear growth, and hence
\begin{align*}
    \abs{f}_{\mathcal{S}}=\sup_{x\in\R^d}\frac{\abs{f(x)}}{ \brac{1+\abs{x}^2}^{\lambda /2}}<\infty. 
\end{align*}
For v), it is sufficient to consider only $t\in[0,1]$. We apply It\^{o}'s formula \cite[Theorem 4.4.7]{applebaum2009levy} to $f\in \mathcal{S}'$ to get
\begin{align}
\label{equation_itoformulaforstrongcontinuity}
    \E{f(X_t^x)}-f(x)&=\E{\int_0^t \nabla f(X_s)b(X_s^x)ds}+\E{\int_0^t\int_{\abs{\xi}\geq R} \left(f\brac{X_{s-}^x+\xi}-f(X_{s-}^x)\right)m(d\xi)ds}\nonumber\\
    &\qquad\qquad+\frac{1}{2}\E{\int_0^t\int_{\abs{\xi}< R}\nabla^2f(a(X_{s},\xi))\xi^T\xi m(d\xi)ds },
\end{align}
where $a(X_{s},\xi)$ is some element in between $X_s$ and $X_s+\xi$. Next, let us each term on the right hand side of \eqref{equation_itoformulaforstrongcontinuity}.  

By \eqref{cond_moment_largejump} in Condition~\ref{cond_levymeasure}, there exists a constant $C$ such that 
\begin{align*}
   &\abs{ \E{\int_0^t\int_{\abs{\xi}\geq R} \left(f\brac{X_{s-}^x+\xi}-f(X_{s-}^x)\right)m(d\xi)ds}}\\
   &\leq \brac{\sup_{y\in\R^d} \norm{\nabla f(y)}_{\operatorname{op}}}\abs{\int_0^t\int_{\abs{\xi}\geq R} \abs{\xi} m(d\xi)ds }\leq \brac{\sup_{y\in\R^d} \norm{\nabla f(y)}_{\operatorname{op}}} Ct.
\end{align*}

Moreover, $m$ is a L\'{e}vy measure so that $\int_{\R^d}(\abs{\xi}^2\wedge 1)m(d\xi)<\infty$. This, and the assumption $R\in (0,1]$, imply that there exists some constant $C'$ such that 
\begin{align*}
   \abs{ \E{\int_0^t\int_{\abs{\xi}< R}\nabla^2f(a(X_{s},\xi))\xi^T\xi m(d\xi)ds }}\leq \brac{\sup_{y\in\R^d} \norm{\nabla^2 f(y)}_{\operatorname{op}}}C't. 
\end{align*}

Finally, we consider the first term on the right hand side of \eqref{equation_itoformulaforstrongcontinuity} restricted to $t\in[0,1]$. By the estimate in \eqref{momentbound_X_tcylindricallevynoise}, one can define
\begin{align*}
    C'':=\sup_{s\in[0,1]}\E{\abs{X^x_s}}<\infty. 
\end{align*}
Then Condition~\ref{cond_driftb} implies that 
\begin{align*}
    \abs{\E{\int_0^t \nabla f(X_s)b(X_s^x)ds}}&\leq \brac{\sup_{y\in\R^d} \norm{\nabla f(y)}_{\operatorname{op}}}\theta_1\int_0^t \E{\abs{X_s}}ds\\
    &\leq \brac{\sup_{y\in\R^d} \norm{\nabla f(y)}_{\operatorname{op}}}\theta_1 C'' t. 
\end{align*}
We combine \eqref{equation_itoformulaforstrongcontinuity} and the previous estimates to get
\begin{align*}
   \lim_{t\to 0}\abs{P_tf-f}_\mathcal{S}= \lim_{t\to 0}\sup_{x\in\mathbb{R}^{d}}\frac{\abs{\E{f(X_t^x)}-f(x)}}{\brac{1+\abs{x}^2}^{\lambda /2}}=0. 
\end{align*}
This completes the proof.

\end{proof}

The next result is a corollary of Proposition~\ref{prop_semigroupestimate}. This corollary combined with the Stein's equation at \eqref{equation_stein} will yield Theorem~\ref{theorem_steinmethodgeneralbound}. 

\begin{corollary}
\label{coro_semigroupestimate}
     Assume Conditions~\ref{cond_levymeasure},\ref{cond_driftb} and~\ref{cond_wassersteindecay} hold. Then, for any Lipschitz function $f$, the function
    \begin{align*}
        h_f(x):=\int_0^\infty \left(\E{f(W)}-P_tf(x)\right)dt,
    \end{align*}
    solves the Stein's equation \eqref{equation_stein}. 
    Moreover, $h_f$ is twice differentiable and satisfies
    \begin{align*}
      \sup_{x\in \R^d}  \abs{\nabla_u h_f(x)}&\leq \brac{\int_0^\infty \mathcal{R}(t)dt}\brac{\sup_{y\in\mathbb{R}^{d}}\norm{\nabla f(y)}_{\operatorname{op}}} \abs{u},\\
        \sup_{x\in \R^d}   \abs{\nabla_v\nabla_u h_f(x)}&\leq C_2\brac{1+\int_0^\infty \mathcal{R}(t)dt}\brac{\sup_{y\in\mathbb{R}^{d}}\norm{\nabla f(y)}_{\operatorname{op}}}\abs{u}\abs{v},
    \end{align*}
where $\mathcal{R}(t)$ is given in \eqref{R:t:eqn} and the constant $C_2$ is defined in \eqref{def_constantC2}. 
\end{corollary}

\begin{proof}
  Since $W\sim \nu$ is the invariant measure of \eqref{sde_cylindricallevy}, $\E{P_tf(W)}=\E{f(W)}$ so that 
  \begin{align*}
     \E{f(W)-P_tf(x)}=\E{P_tf(W)-P_tf(x)}\leq \E{\abs{W-x}}\mathcal{R}(t),
  \end{align*}
  where we applied \eqref{R:t:eqn}.
Since $\int_0^\infty \mathcal{R}(t)dt<\infty$ by Condition~\ref{cond_driftb}, $h_f$ is well-defined. We proceed to show that $h_f$ solves \eqref{equation_stein} along the line of \cite[Proof of Theorem 5]{Gor19}. Note that 
Lemma~\ref{lemma_semigroupstronglycontinuous} and \cite[Proposition 1.5]{ethierkurtzbook} provide us with
\begin{align}
\label{equation_ethierprelimit}
    f(x)-P_tf(x)=\mathcal{L}\brac{\int_0^t \left(P_sf(x)-\E{f(W)}\right)ds},
\end{align}
where $\mathcal{L}$ is defined at \eqref{def_generator}. 
Let $u(x,t):=\int_0^t \left(P_sf(x)-\E{f(W)}\right)ds$. Then $u(x,t)$ is Lipschitz with respect to its time variable, and moreover $\{u(x,t) \}_{t\geq 0}$ is a Cauchy sequence in the function space $\mathcal{S}$ defined in \eqref{def_functionspaceS}. Indeed, for any $0\leq r\leq t$, we have
\begin{align*}
    \abs{u(x,t)-u(x,r)}&=\int_r^t \left(P_sf(x)-\E{f(W)}\right)ds
    \\
&\leq \E{\abs{W-x}}\int_r^t  \mathcal{R}(s)ds\\
&\leq  \brac{\E{\abs{W}}+\abs{x}}\mathcal{R}(0)\abs{t-r},
\end{align*}
where we applied \eqref{R:t:eqn}, which leads to
\begin{align*}
    \abs{u(x,t)-u(x,r)}_{\mathcal{S}} \leq \sup_{x\in\R^d} \frac{\E{\abs{W}}+\abs{x}}{\brac{1+x^2}^{\lambda /2}}\mathcal{R}(0)\abs{t-r}. 
\end{align*}
Thus, $u(x,t)$ is a Cauchy sequence in $\mathcal{S}$ and 
$
    \lim_{t\to\infty} u(x,t)=\int_0^\infty P_sf(x)-\E{f(W)}ds. 
$
Now let us take limit $t\to\infty$ on both sides of \eqref{equation_ethierprelimit}. Since $P_t$ is strongly continuous on $\mathcal{S}$ per Lemma \eqref{lemma_semigroupstronglycontinuous}, \cite[Corollary 1.6]{ethierkurtzbook} tells us that the generator $\mathcal{L}$ is closed on its domain. This implies
\begin{align*}
    f(x)-\E{f(W)}&=\lim_{t\to\infty} \left(f(x)-P_tf(x)\right)
    \\
    &=\mathcal{L}\brac{\lim_{t\to\infty} \int_0^t \left(P_sf(x)-\E{f(W)}\right)ds}
    \\
    &=\mathcal{L}\brac{\int_0^\infty \left(P_sf(x)-\E{f(W)}\right)ds}. 
\end{align*}
Therefore $h_f$ solves the Stein's equation \eqref{equation_stein}. Finally, derivative estimates of $h_f$ follow directly from Proposition ~\ref{prop_semigroupestimate}, which completes the proof.
\end{proof}


\section{Proofs of Theorem~\ref{theorem_eulerscheme}}
\label{section_proof_eulerscheme}
In this section, we will use the following notations for a Lipschitz function $f$. 
\begin{align*}
    P_tf(x)=\E{f\brac{X^x_t}},\qquad Q_kf(x)=\E{f\brac{Y^x_k}}. 
\end{align*}

We introduce the next three technical lemmas from \cite{chenxu2023euler}.

\begin{lemma}(\cite[Lemma 2.6]{chenxu2023euler})
    \label{lemma_gradientdifferencefromchenxupaper}
Assume $h$ is a function from $\R^d$ to $\R$ and satisfies
\begin{align}\label{h:assumption}
    \sup_{z\in \R^d} \norm{\nabla h(z)}_{\operatorname{op}}<\infty,\qquad \sup_{z\in \R^d} \norm{\nabla^2h(z)}_{\operatorname{op}}<\infty. 
\end{align}
Then for any $\beta\in [1,2]$ and $x,y\in \R^d$, we have
    \begin{align*}
        \abs{\nabla h(x)-\nabla h(y)}\leq \brac{2\sup_{z\in \R^d} \norm{\nabla h(z)}_{\operatorname{op}}+\sup_{z\in \R^d} \norm{\nabla^2h(z)}_{\operatorname{op}}}\abs{x-y}^{\beta-1}. 
    \end{align*}
\end{lemma}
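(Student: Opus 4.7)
The plan is to prove this interpolation-style bound by splitting into two cases depending on whether $\abs{x-y}$ is at most $1$ or larger than $1$, exploiting the fact that for $\beta-1 \in [0,1]$ the exponent behaves oppositely to the linear function on the two sides of the threshold. Abbreviate $M_1 := \sup_{z\in \R^d} \norm{\nabla h(z)}_{\operatorname{op}}$ and $M_2 := \sup_{z\in \R^d} \norm{\nabla^2 h(z)}_{\operatorname{op}}$, both finite by assumption \eqref{h:assumption}.

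For the short-distance case $\abs{x-y}\leq 1$, I would apply the mean value inequality along the segment from $y$ to $x$ using the bound $\norm{\nabla^2 h}_{\operatorname{op}}\leq M_2$, yielding $\abs{\nabla h(x)-\nabla h(y)}\leq M_2 \abs{x-y}$. Since $\beta-1\in[0,1]$ and $\abs{x-y}\leq 1$, we have $\abs{x-y}\leq \abs{x-y}^{\beta-1}$, so the bound upgrades to $M_2 \abs{x-y}^{\beta-1}$.

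For the long-distance case $\abs{x-y}> 1$, I would instead use the triangle inequality and the gradient bound directly: $\abs{\nabla h(x)-\nabla h(y)}\leq 2M_1$. Since $\abs{x-y}^{\beta-1}\geq 1$ in this regime, this is dominated by $2M_1 \abs{x-y}^{\beta-1}$.

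Combining the two cases gives $\abs{\nabla h(x)-\nabla h(y)}\leq (2M_1+M_2)\abs{x-y}^{\beta-1}$, which is exactly the claim. There is essentially no obstacle here; the only minor care is the endpoint behavior at $\beta=1$ (where the exponent $\beta-1=0$ makes the right-hand side constant, but the left-hand side is trivially bounded by $2M_1$) and at $\beta=2$ (where it reduces to the standard Lipschitz bound from $M_2$), both of which are captured by the case split.
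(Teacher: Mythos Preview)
Your argument is correct. The paper does not include its own proof of this lemma; it is quoted directly from \cite[Lemma 2.6]{chenxu2023euler}, so there is nothing in the present paper to compare against, but your case split at the threshold $\abs{x-y}=1$ is exactly the standard way this interpolation bound is obtained and matches the argument in the cited source.
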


\begin{lemma}(\cite[Lemma 2.4]{chenxu2023euler})
\label{lemma_fractionallaplacianfromchenxupaper}
Assume $h$ is a function from $\R^d$ to $\R$ that satisfies \eqref{h:assumption}. Then, it holds that:
    \begin{align*}
        \abs{\Delta^{\alpha/2}h(x)-\Delta^{\alpha/2}h(y)}\leq \frac{4dp_\alpha \brac{{\sup_{z\in \R^d} \norm{\nabla^2h(z)}_{\operatorname{op}}}}  }{(2-\alpha)(\alpha-1)}\abs{x-y}^{2-\alpha}. 
    \end{align*}
\end{lemma}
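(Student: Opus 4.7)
The plan is to work coordinate-by-coordinate using the representation \eqref{L:0:operator:stable} of $\Delta^{\alpha/2}$. Writing $F_i(w,z_i):=h(w+z_ie^i)-h(w)-z_i\inner{\nabla h(w),e^i}\mathds{1}_{\{\abs{z_i}\leq 1\}}$ for $w\in\R^d$ and $z_i\in\R$, the task reduces to bounding $\sum_{i=1}^d p_\alpha\int_{\R}\bigl[F_i(x,z_i)-F_i(y,z_i)\bigr]\abs{z_i}^{-(1+\alpha)}dz_i$. I would split each one-dimensional integral at the scale $r:=\abs{x-y}$, which is the natural threshold interpolating between a Taylor-type small-jump bound and a mean-value large-jump bound. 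The interesting regime is $\abs{x-y}\leq 1$; the complementary case $\abs{x-y}>1$ is handled either by truncating at $r=1$ or by invoking the global boundedness of $\Delta^{\alpha/2}h$ that follows from the first assumption in \eqref{h:assumption}.

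For the small-jump region $\abs{z_i}\leq r$, Taylor's formula with integral remainder in the first argument of $F_i$ gives $\abs{F_i(w,z_i)}\leq \tfrac12\bigl(\sup_{z\in\R^d}\norm{\nabla^2h(z)}_{\operatorname{op}}\bigr)z_i^2$ for $w\in\{x,y\}$. Summing via the triangle inequality and integrating against $\abs{z_i}^{-(1+\alpha)}$ produces a per-coordinate contribution of $\tfrac{2\bigl(\sup_{z\in\R^d}\norm{\nabla^2h(z)}_{\operatorname{op}}\bigr)r^{2-\alpha}}{2-\alpha}$.

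For the large-jump region $\abs{z_i}>r$, I would apply the mean value theorem to $w\mapsto F_i(w,z_i)$ along the segment from $x$ to $y$. Differentiation yields $\nabla_w F_i(w,z_i)=\nabla h(w+z_ie^i)-\nabla h(w)-z_i\bigl(\nabla^2 h(w)\bigr)e^i\mathds{1}_{\{\abs{z_i}\leq 1\}}$, which is uniformly bounded in both subregimes $r<\abs{z_i}\leq 1$ and $\abs{z_i}>1$ by $2\bigl(\sup_{z\in\R^d}\norm{\nabla^2h(z)}_{\operatorname{op}}\bigr)\abs{z_i}$ (the drift correction term vanishes when $\abs{z_i}>1$, sharpening the constant there, but the uniform factor $2$ is enough for what follows). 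Hence $\abs{F_i(x,z_i)-F_i(y,z_i)}\leq 2\bigl(\sup_{z\in\R^d}\norm{\nabla^2 h(z)}_{\operatorname{op}}\bigr)\abs{z_i}\abs{x-y}$, and integrating against $\abs{z_i}^{-(1+\alpha)}$ over $\abs{z_i}>r$ contributes $\tfrac{4\bigl(\sup_{z\in\R^d}\norm{\nabla^2 h(z)}_{\operatorname{op}}\bigr)\abs{x-y}\,r^{1-\alpha}}{\alpha-1}$ per coordinate.

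Choosing $r=\abs{x-y}$ converts both contributions into multiples of $\abs{x-y}^{2-\alpha}$. After summing over $i=1,\ldots,d$ and multiplying by $p_\alpha$, the total bracket is $\tfrac{2}{2-\alpha}+\tfrac{4}{\alpha-1}=\tfrac{6-2\alpha}{(2-\alpha)(\alpha-1)}\leq \tfrac{4}{(2-\alpha)(\alpha-1)}$, the last inequality using $\alpha\in(1,2)$ so that $6-2\alpha\leq 4$. This delivers the claimed constant $\tfrac{4 d p_\alpha}{(2-\alpha)(\alpha-1)}$. The main technical obstacle is the large-jump step: careful bookkeeping around the cutoff $\abs{z_i}=1$ (where the drift correction turns off) is needed to maintain the uniform gradient estimate $\abs{\nabla_w F_i}\leq 2\bigl(\sup_{z\in\R^d}\norm{\nabla^2 h(z)}_{\operatorname{op}}\bigr)\abs{z_i}$, and the choice $r=\abs{x-y}$ (rather than a free parameter to be optimized) is what forces the two regimes to conspire into the exact Hölder exponent $2-\alpha$ on the right-hand side.
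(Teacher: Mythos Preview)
Your splitting at $r=|x-y|$ with the Taylor estimate on $\{|z_i|\le r\}$ and the mean-value estimate on $\{|z_i|>r\}$ is exactly the right mechanism, and the arithmetic for $|x-y|\le 1$ is correct. The gap is in the case $|x-y|>1$. Neither of your proposed fixes actually yields the stated bound: truncating the split at $r=1$ leaves a large-jump contribution of order $|x-y|$ (from integrating $|\nabla_w F_i|\le 2\bigl(\sup_z\|\nabla^2h(z)\|_{\operatorname{op}}\bigr)|z_i|$ over $|z_i|>1$), not $|x-y|^{2-\alpha}$; and invoking global boundedness of $\Delta^{\alpha/2}h$ brings in $\sup_z\|\nabla h(z)\|_{\operatorname{op}}$, whereas the lemma's constant involves only $\sup_z\|\nabla^2h(z)\|_{\operatorname{op}}$. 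Since the lemma is later applied at arbitrarily distant points (e.g.\ with $|x-y|=|X^x_r|$ in Lemma~\ref{lemma_semigroupfirststep}), this matters.

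The fix is one line. By symmetry of the one-dimensional stable kernel, $\int_{|z_i|>1}z_i\,\partial_ih(w)\,|z_i|^{-(1+\alpha)}dz_i=0$, so in \eqref{L:0:operator:stable} you may replace $F_i$ by the fully compensated $\tilde F_i(w,z_i)=h(w+z_ie^i)-h(w)-z_i\partial_ih(w)$ without changing $\Delta^{\alpha/2}h$. Now the Taylor bound $|\tilde F_i(w,z_i)|\le\tfrac12\bigl(\sup_z\|\nabla^2h(z)\|_{\operatorname{op}}\bigr)z_i^2$ holds for \emph{every} $z_i$, and your small/large split at $r=|x-y|$ runs verbatim for all $x,y$ with no case distinction. (Equivalently, one can work with the gradient representation of $\Delta^{\alpha/2}$ that the paper itself invokes in the proof of Lemma~\ref{lemma_semigroupfirststep_pareto}.) The paper does not supply its own proof of this lemma but cites \cite{chenxu2023euler}; with the above correction your argument is presumably the intended one.
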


\begin{lemma}(\cite[Lemma 2.2]{chenxu2023euler})
\label{lemma_someestimatesfromchenxueuler}
   For all $t\in (0,1]$ and $\beta\in [1,\alpha)$, it holds that
  \begin{align*}
      \E{\abs{X^x_t-x}^\beta}&\leq \brac{2\theta_1^\beta C_3(\beta)(1+\abs{x}^2)^{\beta/2}+2\E{\abs{L^\alpha_1}^\beta}}t^{\beta/\alpha},\\
      \E{\abs{X^x_\eta-Y^x_1}^\beta}&\leq \theta_1 \brac{2\theta_1^\beta C_3(\beta)(1+\abs{x}^2)^{\beta/2}+2\E{\abs{L^\alpha_1}^\beta}}\eta^{\beta+\frac{\beta}{\alpha}},
  \end{align*}
where the constant $C_3$ (as a function of $\beta$) is defined in Lemma~\ref{lemma_invariantmeasure_continuousX}. 
\end{lemma}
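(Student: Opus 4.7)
The plan is to prove both bounds by elementary moment estimates directly from the integrated form of the SDE \eqref{SDE_alphastable} and the one-step Euler increment \eqref{equation_discreteY}, relying only on the Lipschitz bound in Condition~\ref{cond_driftb}, the moment estimate from Lemma~\ref{lemma_invariantmeasure_continuousX}, the self-similarity of the stable process, and Jensen's inequality. No Malliavin calculus or Stein-type tools are needed.

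For the first inequality, I would write $X^x_t-x=\int_0^t b(X^x_s)\,ds+L^\alpha_t$ and split the $\beta$-th moment via $\abs{a+b}^\beta\leq 2^{\beta-1}(\abs{a}^\beta+\abs{b}^\beta)$. For the drift piece, Jensen's inequality gives $\abs{\int_0^t b(X^x_s)\,ds}^\beta\leq t^{\beta-1}\int_0^t\abs{b(X^x_s)}^\beta ds$; the Lipschitz bound $\abs{b(y)}\leq \abs{b(0)}+\theta_1\abs{y}$ together with the uniform-in-time moment estimate $\E{\abs{X^x_s}^\beta}\leq C_3(\beta)(1+\abs{x}^2)^{\beta/2}$ from Lemma~\ref{lemma_invariantmeasure_continuousX} then bounds this by a constant times $t^\beta$. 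Since $\alpha\in(1,2)$ and $t\in(0,1]$, one has $t^\beta\leq t^{\beta/\alpha}$, which lets me absorb the drift term into the claimed $t^{\beta/\alpha}$ rate. For the noise piece, the $\alpha$-stable self-similarity $L^\alpha_t\stackrel{d}{=}t^{1/\alpha}L^\alpha_1$ gives $\E{\abs{L^\alpha_t}^\beta}=t^{\beta/\alpha}\E{\abs{L^\alpha_1}^\beta}$ exactly, and the two contributions combine to match the stated form.

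For the second inequality, the key observation is that the stable increments cancel: since $X^x_\eta=x+\int_0^\eta b(X^x_s)\,ds+L^\alpha_\eta$ and $Y^x_1=x+\eta b(x)+L^\alpha_\eta$, one obtains
\begin{align*}
X^x_\eta-Y^x_1=\int_0^\eta\brac{b(X^x_s)-b(x)}ds.
\end{align*}
Applying the Lipschitz bound pointwise and then Jensen's inequality with exponent $\beta$ yields $\abs{X^x_\eta-Y^x_1}^\beta\leq \theta_1^\beta \eta^{\beta-1}\int_0^\eta\abs{X^x_s-x}^\beta ds$. Substituting the first estimate inside the integral produces $\int_0^\eta s^{\beta/\alpha}ds\leq \eta^{\beta/\alpha+1}$, which combines with the $\eta^{\beta-1}$ prefactor to give the total power $\eta^{\beta+\beta/\alpha}$ as claimed.

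There is no substantive obstacle; everything reduces to a routine computation once the moment bound from Lemma~\ref{lemma_invariantmeasure_continuousX} is in hand. The only bookkeeping concern is tracking the explicit constants and verifying they match the stated form — in particular, checking that the $\abs{b(0)}$-type contribution from $\abs{b(y)}\leq \abs{b(0)}+\theta_1\abs{y}$ is subsumed into the constant $C_3(\beta)$, and that the prefactor $\theta_1$ in the second estimate is consistent with the factor $\theta_1^\beta$ arising naturally from the Lipschitz step (the two coincide for the principal case $\beta=1$, which is the one actually used in the proof of Theorem~\ref{theorem_eulerscheme}).
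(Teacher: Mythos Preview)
The paper does not supply its own proof of this lemma; it is quoted verbatim from \cite[Lemma 2.2]{chenxu2023euler}. Your proposed argument is exactly the standard one and is almost certainly what the cited reference does: write $X^x_t-x=\int_0^t b(X^x_s)\,ds+L^\alpha_t$, split via $|a+b|^\beta\leq 2^{\beta-1}(|a|^\beta+|b|^\beta)\leq 2(|a|^\beta+|b|^\beta)$, use Jensen and the uniform moment bound for the drift integral (with $t^\beta\leq t^{\beta/\alpha}$ on $(0,1]$), and self-similarity for the noise; then for the second inequality exploit the exact cancellation of the stable increment and feed the first bound back in.

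Your two flagged bookkeeping points are genuine but cosmetic. The $|b(0)|$ contribution is not literally absorbed by $\theta_1^\beta C_3(\beta)(1+|x|^2)^{\beta/2}$ under the conventions of this paper, and the Lipschitz step in the second estimate naturally produces $\theta_1^\beta$ rather than $\theta_1$; the stated constants appear to be transcribed from \cite{chenxu2023euler}, whose normalizations may differ slightly. None of this affects the order in $\eta$ or the way the lemma is used downstream (only the case $\beta=1$ enters the proofs of Theorems~\ref{theorem_eulerscheme} and~\ref{theorem_eulerscheme_pareto}, where $\theta_1^\beta=\theta_1$).
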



The next lemma quantifies how well the one-step iterate of our discretization schemes tracks the original SDE \eqref{SDE_alphastable}. While our Lemma~\ref{lemma_semigroupfirststep} contains an analogous statement to \cite[Lemma 2.7]{chenxu2023euler}, we get a better dependence on the stepsize $\eta$, that is $\eta^2$. 
\begin{lemma}
    \label{lemma_semigroupfirststep}
Assume $h:\R^d \to \R$ is a function satisfying 
\begin{align*}
    \sup_{y\in\mathbb{R}^{d}}\norm{\nabla h(y)}_{\operatorname{op}}<\infty,\quad \sup_{y\in\mathbb{R}^{d}}\norm{\nabla^2 h(y)}_{\operatorname{op}}<\infty. 
\end{align*}
Then for all $x\in \R^d$ and $\eta\in(0,1)$, it holds that
  \begin{align*}
   & \abs{P_\eta h(x)-Q_1h(x)}\\
   &\leq \brac{3\sup_{y\in \R^d}\norm{\nabla h(y)}_{\operatorname{op}} +\sup_{y\in \R^d}\norm{\nabla^2 h(y)}_{\operatorname{op}}}\bigg(\brac{6\theta_1+\theta_1^2+\frac{4\theta_2d p_\alpha}{(2-\alpha)(\alpha-1)}} \\
     &\hspace{12em}C_3(1)(1+\abs{x}^2)^{1/2}+6\E{\abs{L^\alpha_1}}+\abs{\Delta^{\alpha/2}b(0)}\bigg)\eta^{2}. 
\end{align*}
\end{lemma}
\begin{proof}
 By a Taylor's expansion, we can write
\begin{align*}
    &\E{h(X^x_\eta)}-\E{h(Y^x_1)}\\
    &=\E{\inner{\nabla h(Y^x_1), X^x_\eta-Y^x_1}}+\int_0^1 \inner{\nabla h\brac{Y^x_1+r(X^x_\eta-Y^x_1)}-\nabla h(Y^x_1), X^x_\eta-Y^x_1 } dr\\
    &=\mathcal{A}_1+\mathcal{A}_2. 
\end{align*}
Regarding the term $\mathcal{A}_1$, It\^{o} formula says
\begin{align}
\label{step_decompose_onestepdifference}
    \mathcal{A}_1&=\E{\inner{\nabla h(Y^x_1),\int_0^\eta \brac{b(X^x_s)-b(x)}ds }}\nonumber\\
    &=\inner{\nabla h(Y^x_1), \int_0^\eta \int_0^s \E{\inner{\nabla b(X^x_r), b(X^x_r)}+\Delta^{\alpha/2} b(X^x_r)}  drds}.
\end{align}
Condition \ref{cond_driftb} and Lemma \ref{lemma_invariantmeasure_continuousX} imply that
\begin{align*}
    \E{\abs{\inner{\nabla b(X^x_r), b(X^x_r)}}}&\leq \theta_1^2 \E{\abs{X^x_r}}\leq \theta_1^2 C_3(1)(1+\abs{x}^2)^{1/2}. 
\end{align*}
Moreover, Condition \ref{cond_driftb} and Lemma \ref{lemma_fractionallaplacianfromchenxupaper} indicate
\begin{align*}
\abs{\Delta^{\alpha/2} b(X^x_r)-\Delta^{\alpha/2}b(0) }\leq \frac{4\theta_2 d p_\alpha}{(2-\alpha)(\alpha-1)}\abs{X^x_r}^{2-\alpha}
\end{align*}
so that
\begin{align*}
    \E{\abs{\Delta^{\alpha/2}b(X^x_r) }}&\leq \frac{4\theta_2d p_\alpha}{(2-\alpha)(\alpha-1)}\E{\abs{X^x_r}^{2-\alpha}}+\abs{\Delta^{\alpha/2}b(0)}\\
    &\leq \frac{4\theta_2d p_\alpha}{(2-\alpha)(\alpha-1)}C_3(1)\brac{1+\abs{x}^2}^{1-\frac{\alpha}{2}}+\abs{\Delta^{\alpha/2}b(0)}. 
\end{align*}
In the last line, we have applied Lemma \ref{lemma_invariantmeasure_continuousX} and the fact that $\E{\abs{X^x_r}^{2-\alpha}}\leq \E{\abs{X^x_r}}^{2-\alpha}$. 
Therefore, we can deduce from \eqref{step_decompose_onestepdifference} that 
\begin{align*}
    \abs{\mathcal{A}_1}&\leq \brac{\sup_{y\in \R^d}\norm{\nabla h(y)}_{\operatorname{op}}}\cdot\Bigg(\theta_1^2 C_3(1)(1+\abs{x}^2)^{1/2}+\abs{\Delta^{\alpha/2}b(0)}\\
    &\hspace{7em}+\frac{4\theta_2d p_\alpha}{(2-\alpha)(\alpha-1)}C_3(1)\brac{1+\abs{x}^2}^{1-\frac{\alpha}{2}}\Bigg)\eta^2\\
    &\leq \brac{\sup_{y\in \R^d}\norm{\nabla h(y)}_{\operatorname{op}}}\cdot\brac{\brac{\theta_1^2 +\frac{4\theta_2d p_\alpha}{(2-\alpha)(\alpha-1)}}C_3(1)(1+\abs{x}^2)^{1/2}+\abs{\Delta^{\alpha/2}b(0)}}\eta^2. 
    \end{align*} 

Next let us deal with the term $\mathcal{A}_2$. Assume $\beta \in [1,\alpha)$ then Lemma \ref{lemma_gradientdifferencefromchenxupaper} and Lemma \ref{lemma_someestimatesfromchenxueuler} imply
\begin{align*}
    \abs{\mathcal{A}_2}&\leq \int_0^1 \E{\brac{2\sup_{y\in \R^d}\norm{\nabla h(y)}_{\operatorname{op}} +\sup_{y\in \R^d}\norm{\nabla^2 h(y)}_{\operatorname{op}}}r^{\beta-1}\abs{X^x_\eta-Y^x_1}^\beta }dr\\
   &\leq  \brac{2\sup_{y\in \R^d}\norm{\nabla h(y)}_{\operatorname{op}} +\sup_{y\in \R^d}\norm{\nabla^2 h(y)}_{\operatorname{op}}} \\
   &\hspace{5em}\cdot\frac{\theta_1}{\beta}\brac{2\theta_1^\beta C_3(\beta)(1+\abs{x}^2)^{\beta/2}+2\E{\abs{L^\alpha_1}^\beta}}\eta^{\beta+\frac{\beta}{\alpha}}\\
      &\leq  \brac{2\sup_{y\in \R^d}\norm{\nabla h(y)}_{\operatorname{op}} +\sup_{y\in \R^d}\norm{\nabla^2 h(y)}_{\operatorname{op}}} \\
   &\hspace{5em}\cdot\frac{\theta_1}{\beta}\brac{2\theta_1^\beta C_3(\beta)(1+\abs{x}^2)^{\beta/2}+2\E{\abs{L^\alpha_1}^\beta}}\eta^{2}. 
\end{align*}
To get the last line, we choose $\beta$ close enough to $\alpha$ such that $\beta+\frac{\beta}{\alpha}>2$ and hence $\eta^2>\eta^{\beta+\frac{\beta}{\alpha}}$. Since the above bound on $\mathcal{A}_2$ holds for any $\beta\in [1,\alpha)$, we can set $\beta=1$. Combining this with our previous bound on $\mathcal{A}_1$ yields

\begin{align*}
 &\E{h(X^x_\eta)}-\E{h(Y^x_1)}\\
     &\leq \brac{3\sup_{y\in \R^d}\norm{\nabla h(y)}_{\operatorname{op}} +\sup_{y\in \R^d}\norm{\nabla^2 h(y)}_{\operatorname{op}}}\bigg(3\theta_1^2 C_3(1)(1+\abs{x}^2)^{1/2}+2\theta_1\E{\abs{L^\alpha_1}}\\
     &\hspace{12em}+\frac{4\theta_2d p_\alpha}{(2-\alpha)(\alpha-1)}C_3(1)\brac{1+\abs{x}^2}^{1-\frac{\alpha}{2}}+\abs{\Delta^{\alpha/2}b(0)}\bigg)\eta^{2}\\
      &\leq \brac{3\sup_{y\in \R^d}\norm{\nabla h(y)}_{\operatorname{op}} +\sup_{y\in \R^d}\norm{\nabla^2 h(y)}_{\operatorname{op}}}\bigg(\brac{3\theta_1^2+\frac{4\theta_2d p_\alpha}{(2-\alpha)(\alpha-1)}} \\
     &\hspace{13em}C_3(1)(1+\abs{x}^2)^{1/2}+2\theta_1\E{\abs{L^\alpha_1}}+\abs{\Delta^{\alpha/2}b(0)}\bigg)\eta^{2},
\end{align*}
noting that $1-\frac{\alpha}{2}<\frac{1}{2}$. 

\end{proof}


Now, we are finally ready to prove Theorem \ref{theorem_eulerscheme}.

\begin{proof}[Proofs of Theorem \ref{theorem_eulerscheme}]
The proof will follow the strategy in \cite[Proof of Theorem 1.2]{chenxu2023euler} (see also \cite{chenxu23amarkovapproach}). We start with the following decomposition that is in the spirit of the classical Lindeberg's principle. 
\begin{align*}
    &\E{h(X^x_{\eta N})}-\E{h(Y^x_N)}\\
    &=P_{\eta N}h(x)-Q_Nh(x)=\sum_{i=1}^N Q_{i-1}\brac{P_\eta-Q_1}P_{(N-i)\eta} h(x). 
\end{align*}
It follows that
 \begin{align}
    \label{estimate_semigroupsdecomposition}
        &d_{\operatorname{Wass}}(X_{\eta N},Y_N)\nonumber\\
        &=\sup_{h\in \operatorname{Lip(1)}}\abs{P_{N\eta}h(x)-Q_N h(x)}\nonumber\\
        &\leq \sup_{h\in \operatorname{Lip(1)}}\abs{Q_{N-1}\brac{P_\eta-Q_1}h(x)}+\sup_{h\in \operatorname{Lip(1)}}\sum_{i=1}^{N-1}\abs{Q_{i-1}(P_\eta-Q_1)P_{(N-i)\eta}h(x) }\nonumber\\
        &=\mathcal{B}_1+\mathcal{B}_2. 
    \end{align}
Let us first bound $\mathcal{B}_1$. Via the same calculation as the one for $\mathcal{A}_1$ at \eqref{step_decompose_onestepdifference}, we get 
\begin{align*}
    &\abs{\brac{P_\eta-Q_1}h(x)}\\
    &\leq \brac{\sup_{y\in\R^d} \norm{\nabla h(y)}_{\operatorname{op}}}\E{\abs{X^x_\eta-Y^x_1}}\\
   & \leq \brac{\sup_{y\in\R^d} \norm{\nabla h(y)}_{\operatorname{op}}} \abs{\int_0^\eta b(X^x_s)-b(x)dx}\\
     &\leq \brac{\sup_{y\in \R^d}\norm{\nabla h(y)}_{\operatorname{op}}}\cdot\brac{\brac{\theta_1^2 +\frac{4\theta_2d p_\alpha}{(2-\alpha)(\alpha-1)}}C_3(1)(1+\abs{x}^2)^{1/2}+\abs{\Delta^{\alpha/2}b(0)}}\eta^2.  
\end{align*}
Hence by Lemma \ref{lemma_invariantmeasure_discreteYk}, 
\begin{align*}
    \mathcal{B}_1&\leq \sup_{h\in \operatorname{Lip(1)}}\brac{\sup_{y\in \R^d}\norm{\nabla h(y)}_{\operatorname{op}}}\\
    &\qquad\cdot\brac{\brac{\theta_1^2 +\frac{4\theta_2d p_\alpha}{(2-\alpha)(\alpha-1)}}C_3(1)\E{(1+\abs{Y_{N-1}^x}^2)^{1/2}}+\abs{\Delta^{\alpha/2}b(0)}}\eta^2\\
    &\leq\brac{\brac{\theta_1^2 +\frac{4\theta_2d p_\alpha}{(2-\alpha)(\alpha-1)}}C_3(1)C_4(1)(1+\abs{x}^2)^{1/2}+\abs{\Delta^{\alpha/2}b(0)}}\eta^2.  
\end{align*}
Next we consider $\mathcal{B}_2$ in \eqref{estimate_semigroupsdecomposition}. Lemma \ref{lemma_semigroupfirststep} implies for any $x\in\R^d$ and $1\leq i\leq N-1$, 
\begin{align}
    \label{equation_step_i>1}
     & \sup_{h\in \operatorname{Lip(1)}}\abs{(P_\eta-Q_1)P_{(N-i)\eta} h(x)}\nonumber\\
       &\leq \sup_{h\in \operatorname{Lip(1)}}\brac{3\sup_{y\in \R^d}\norm{\nabla P_{(N-i)\eta}h(y)}_{\operatorname{op}} +\sup_{y\in \R^d}\norm{\nabla^2 P_{(N-i)\eta}h(y)}_{\operatorname{op}}}\nonumber\\
     &\qquad\cdot\bigg(\brac{3\theta_1^2+\frac{4\theta_2d p_\alpha}{(2-\alpha)(\alpha-1)}} C_3(1)(1+\abs{x}^2)^{1/2}+2\theta_1\E{\abs{L^\alpha_1}}+\abs{\Delta^{\alpha/2}b(0)}\bigg)\eta^{2}\nonumber \\
     &\leq \sup_{h\in \operatorname{Lip(1)}}\sup_{y\in \R^d}\norm{\nabla h(y)}_{\operatorname{op}}\brac{3 \mathcal{R}\brac{(N-i)\eta} +C_2\phi\brac{(N-i)\eta}}\nonumber\\
     &\qquad\cdot\bigg(\brac{3\theta_1^2+\frac{4\theta_2d p_\alpha}{(2-\alpha)(\alpha-1)}} C_3(1)(1+\abs{x}^2)^{1/2}+2\theta_1\E{\abs{L^\alpha_1}}+\abs{\Delta^{\alpha/2}b(0)}\bigg)\eta^{2}.
\end{align}
The last line is due to Proposition \ref{prop_semigroupestimate}. In particular, Lemma \ref{lemma_basedonjianwangpaper} tells us the Wasserstein decay rate $\mathcal{R}(t)$  in the aforementioned Proposition is
\begin{align*}
\mathcal{R}(t)=    \frac{2\brac{1-e^{-c_1L_0}}}{L_0}\exp\brac{-C_5t}. 
\end{align*}

Moreover, we observe that
\begin{align*}
   \sum_{i=1}^{N-1}\mathcal{R}\brac{(N-i)\eta}&= \frac{2\brac{1-e^{-c_1L_0}}}{L_0}\sum_{i=1}^{N-1}\exp\brac{-C_5(N-i)\eta}\\
   &\leq\frac{2\brac{1-e^{-c_1L_0}}}{L_0} \exp\brac{- C_5 N\eta}\int_1^{N}\exp\brac{\eta C_5x}dx\leq \frac{2\brac{1-e^{-c_1L_0}}}{L_0}\frac{1}{C_5 \eta}. 
\end{align*}

When $N\geq \frac{1}{\eta}$, the definition of $\phi$ in Proposition \ref{prop_semigroupestimate}  implies that 
\[\phi\brac{(N-x)\eta}= \begin{cases} 
      \frac{2\brac{1-e^{-c_1L_0}}}{L_0}\exp\brac{-C_5\brac{(N-x)\eta-1}} &  0\leq x\leq N-\frac{1}{\eta}, \\
      1 &  \text{ otherwise}
   \end{cases}
\]
Hence

\begin{align*}
    \sum_{i=1}^{N-1}\phi\brac{(N-i)\eta}
    &\leq \int_1^{N}\phi\brac{(N-x)\eta}dx\\
    &=\int_1^{N-\frac{1}{\eta}}\phi\brac{(N-x)\eta}dx+\int^{N}_{N-\frac{1}{\eta}}\phi\brac{(N-x)\eta}dx\\
    &=\frac{2\brac{1-e^{-c_1L_0}}}{L_0}\frac{1}{C_5\eta}\brac{1-\exp\brac{-C_5\eta(N-1)-1} }+\frac{1}{\eta}\\
    &=\frac{2\brac{1-e^{-c_1L_0}}}{L_0C_5}\frac{1}{\eta}+\frac{1}{\eta}
    \leq \brac{\frac{2\brac{1-e^{-c_1L_0}}}{L_0C_5}+1}\frac{1}{\eta}. 
\end{align*}

By combining the previous calculations and \eqref{equation_step_i>1}, we arrive at
\begin{align*}
    \mathcal{B}_2& \leq \brac{\frac{2\brac{1-e^{-c_1L_0}}}{L_0C_5}+\frac{2C_2\brac{1-e^{-c_1L_0}}}{L_0C_5}+C_2}\\
    &\qquad \cdot\bigg(\brac{3\theta_1^2+\frac{4\theta_2d p_\alpha}{(2-\alpha)(\alpha-1)}} C_3(1)\E{(1+\abs{Y_{i-1}}^2)^{1/2}}+2\theta_1\E{\abs{L^\alpha_1}}+\abs{\Delta^{\alpha/2}b(0)}\bigg)\eta\\
    &\leq \brac{\frac{2\brac{1-e^{-c_1L_0}}}{L_0C_5}+\frac{2C_2\brac{1-e^{-c_1L_0}}}{L_0C_5}+C_2}
    \\&\qquad \cdot\bigg(\brac{3\theta_1^2+\frac{4\theta_2d p_\alpha}{(2-\alpha)(\alpha-1)}} C_3(1)C_4(1)\brac{1+\abs{x}^2}^{1/2}+2\theta_1\E{\abs{L^\alpha_1}}+\abs{\Delta^{\alpha/2}b(0)}\bigg)\eta. 
\end{align*}
The last line is a consequence of Lemma \ref{lemma_invariantmeasure_discreteYk}. Now by summing up the bounds on $\mathcal{B}_1$ and $\mathcal{B}_2$, we obtain the stated estimate on $d_{\operatorname{Wass}}(X_{\eta N},Y_N)$ at \eqref{estimate_distancebetweenXandY}.

Finally by the triangle inequality,
\begin{align*}
    d_{\operatorname{Wass}}\brac{\nu_n,\nu}&\leq d_{\operatorname{Wass}}\brac{\nu_n,\operatorname{Law}(Y_N)}+d_{\operatorname{Wass}}\brac{\operatorname{Law}(Y_N), \operatorname{Law}(X_{\eta N})}+d_{\operatorname{Wass}}\brac{\operatorname{Law}(X_{\eta N}), \nu}\\
    &\leq d_{\operatorname{Wass}}\brac{\nu_n,\operatorname{Law}(Y_N)}+ C_11\eta^{1+\frac{1}{\alpha}-\frac{1}{\beta}}+d_{\operatorname{Wass}}\brac{\operatorname{Law}(X_{\eta N}), \nu}. 
\end{align*}
Letting $N\to \infty$ and notice that Lemmas \ref{lemma_invariantmeasure_continuousX} and \ref{lemma_invariantmeasure_discreteYk} imply  
\begin{align*}
\lim_{N\to\infty}d_{\operatorname{Wass}}\brac{\nu_n,\operatorname{Law}(Y_N)}=\lim_{N\to\infty}d_{\operatorname{Wass}}\brac{\operatorname{Law}(X_{\eta N}), \nu}=0. 
    \end{align*}
Thus we have deduced \eqref{estimate_distancebetweeninvariantmeasures}. 
\end{proof}



\section*{Acknowledgements}
The authors are grateful to Peng Chen, Alexei Kulik, Enrico Priola, Jian Wang, Lihu Xu and Xicheng Zhang 
for helpful comments and discussions.
Lingjiong Zhu is partially supported by the grants NSF DMS-2053454, NSF DMS-2208303.

\bibliographystyle{alpha}
\bibliography{refs}

\newcommand{\etalchar}[1]{$^{#1}$}
\begin{thebibliography}{GDVM19}

\bibitem[App09]{applebaum2009levy}
David Applebaum.
\newblock {\em L{\'e}vy Processes and Stochastic Calculus}.
\newblock Cambridge University Press, 2009.

\bibitem[ATW06]{arnaudon2006harnack}
Marc Arnaudon, Anton Thalmaier, and Feng-Yu Wang.
\newblock Harnack inequality and heat kernel estimates on manifolds with
  curvature unbounded below.
\newblock {\em Bulletin des sciences mathematiques}, 130(3):223--233, 2006.

\bibitem[ATW09]{arnaudon2009gradient}
Marc Arnaudon, Anton Thalmaier, and Feng-Yu Wang.
\newblock Gradient estimates and {H}arnack inequalities on non-compact
  {R}iemannian manifolds.
\newblock {\em Stochastic Processes and their Applications},
  119(10):3653--3670, 2009.

\bibitem[BC86]{basscranston1986malliavin}
Richard~F Bass and Michael Cranston.
\newblock The {M}alliavin calculus for pure jump processes and applications to
  local time.
\newblock {\em The Annals of Probability}, 14(2):490--532, 1986.

\bibitem[BC06]{bass2006systems}
Richard~F Bass and Zhen-Qing Chen.
\newblock Systems of equations driven by stable processes.
\newblock {\em Probability Theory and Related Fields}, 134:175--214, 2006.

\bibitem[BGJ87]{Bichtelerbook}
Klaus Bichteler, Jean-Bernard Gravereaux, and Jean Jacod.
\newblock {\em Malliavin Calculus for Processes with Jumps}, volume~2 of {\em
  Stochastics Monographs}.
\newblock Gordon and Breach Science Publishers, New York, 1987.

\bibitem[Bil95]{Billingsleyprobandmeasure}
{Patrick} Billingsley.
\newblock {\em Probability and Measure}.
\newblock A Wiley-Interscience publication. Wiley, New York, 3rd edition, 1995.

\bibitem[BNPS01]{barndorff2001multivariate}
Ole~E Barndorff-Nielsen, Jan Pedersen, and Ken-iti Sato.
\newblock Multivariate subordination, self-decomposability and stability.
\newblock {\em Advances in Applied Probability}, 33(1):160--187, 2001.

\bibitem[BQ24]{bally2023approximation}
Vlad Bally and Yifeng Qin.
\newblock Approximation for the invariant measure with applications for jump
  processes (convergence in total variation distance).
\newblock {\em Stochastic Processes and their Applications}, 176:104416, 2024.

\bibitem[BSK20]{bogdan2020heat}
Krzysztof Bogdan, Pawe{\l} Sztonyk, and Victoria Knopova.
\newblock Heat kernel of anisotropic nonlocal operators.
\newblock {\em Documenta Mathematica}, 25:1--54, 2020.

\bibitem[BSW13]{schillingbottcher2013levy}
Bj\"{o}rn B\"{o}ttcher, Ren\'{e}~L Schilling, and Jian Wang.
\newblock {\em L\'{e}vy Matters III: L\'{e}vy-Type Processes: Construction,
  Approximation and Sample Path Properties}.
\newblock Springer, 2013.

\bibitem[CDSX23]{chenxu2023euler}
Peng Chen, Chang-Song Deng, Ren{\'e}~L Schilling, and Lihu Xu.
\newblock Approximation of the invariant measure of stable {SDE}s by an
  {E}uler--{M}aruyama scheme.
\newblock {\em Stochastic Processes and their Applications}, 163:136--167,
  2023.

\bibitem[CJXX25]{chenxufollowuppaper}
Peng Chen, Xinghu Jin, Yimin Xiao, and Lihu Xu.
\newblock Approximation of the invariant measure for stable {SDE} by the
  {E}uler-{M}aruyama scheme with decreasing step sizes.
\newblock {\em Advances in Applied Probability}, pages 1--31, 2025.

\bibitem[CMS76]{Chambers1976}
J.~M. Chambers, C.~L. Mallows, and B.~W. Stuck.
\newblock A method for simulating stable random variables.
\newblock {\em Journal of the American Statistical Association},
  71(354):340–344, June 1976.

\bibitem[CNXY24]{CheXia19multivariate}
Peng Chen, Ivan Nourdin, Lihu Xu, and Xiaochuan Yang.
\newblock Multivariate stable approximation by {S}tein's method.
\newblock {\em Journal of Theoretical Probability}, 37:446--488, 2024.

\bibitem[CSX23]{chenxu23amarkovapproach}
Peng Chen, Qi-Man Shao, and Lihu Xu.
\newblock A probability approximation framework: Markov process approach.
\newblock {\em The Annals of Applied Probability}, 33(2):1619--1659, 2023.

\bibitem[CWZ{\etalchar{+}}21]{CWZGHS20}
Alexander Camuto, Xiaoyu Wang, Lingjiong Zhu, Mert G\"{u}rb\"{u}zbalaban, Chris
  Holmes, and Umut \c{S}im\c{s}ekli.
\newblock Asymmetric heavy tails and implicit bias in {G}aussian noise
  injections.
\newblock In {\em Proceedings of the 38th International Conference on Machine
  Learning}, volume 139, pages 1249--1260. PMLR, 2021.

\bibitem[DN99]{davydov1999limit}
Y~Davydov and A~Nagaev.
\newblock Limit theorems and simulation of stable random vectors.
\newblock {\em Limit Theorems in Probability and Statistics, Balatonelelle},
  1:495--519, 1999.

\bibitem[DN02]{davydov2002two}
Yu~Davydov and AV~Nagaev.
\newblock On two aproaches to approximation of multidimensional stable laws.
\newblock {\em Journal of Multivariate Analysis}, 82(1):210--239, 2002.

\bibitem[DPRW09]{da2009singular}
Giuseppe Da~Prato, Michael R{\"o}ckner, and Feng-Yu Wang.
\newblock Singular stochastic equations on hilbert spaces: Harnack inequalities
  for their transition semigroups.
\newblock {\em Journal of Functional Analysis}, 257(4):992--1017, 2009.

\bibitem[EK09]{ethierkurtzbook}
Stewart~N Ethier and Thomas~G Kurtz.
\newblock {\em Markov Processes: Characterization and Convergence}.
\newblock John Wiley \& Sons, 2009.

\bibitem[EMS18]{erdogdu2018global}
Murat~A. Erdogdu, Lester Mackey, and Ohad Shamier.
\newblock Global non-convex optimization with discretized diffusions.
\newblock In {\em Advances in Neural Information Processing Systems}, 2018.

\bibitem[FJR21]{friesen2021aniso}
Martin Friesen, Peng Jin, and Barbara R{\"u}diger.
\newblock Existence of densities for stochastic differential equations driven
  by {L}{\'e}vy processes with anisotropic jumps.
\newblock {\em Ann. Inst. H. Poincar\'{e} Probab. Statist.}, 57(1):250--271,
  2021.

\bibitem[FSX19]{FSX19}
Xiao Fang, Qi-Man Shao, and Lihu Xu.
\newblock Multivariate approximations in {W}asserstein distance by {S}tein’s
  method and {B}ismut’s formula.
\newblock {\em Probability Theory and Related Fields}, 174(3):945--979, 2019.

\bibitem[GDVM19]{Gor19}
Jackson Gorham, Andrew~B Duncan, Sebastian~J Vollmer, and Lester Mackey.
\newblock Measuring sample quality with diffusions.
\newblock {\em The Annals of Applied Probability}, 29(5):2884--2928, 2019.

\bibitem[G{\c{S}}Z21]{ht_sgd_quad}
Mert G\"{u}rb\"{u}zbalaban, Umut {\c{S}}im\c{s}ekli, and Lingjiong Zhu.
\newblock The heavy-tail phenomenon in {SGD}.
\newblock In {\em Proceedings of the 38th International Conference on Machine
  Learning}, volume 139, pages 3964--3975. PMLR, 2021.

\bibitem[HM21]{hodgkinson2020multiplicative}
Liam Hodgkinson and Michael~W Mahoney.
\newblock Multiplicative noise and heavy tails in stochastic optimization.
\newblock In {\em Proceedings of the 38th International Conference on Machine
  Learning}, volume 139, pages 4262--4274. PMLR, 2021.

\bibitem[JMW96]{Janicki1996}
Aleksander Janicki, Zbigniew Michna, and Aleksander Weron.
\newblock Approximation of stochastic differential equations driven by
  $\alpha$-stable {L}\'{e}vy motion.
\newblock {\em Applicationes Mathematicae}, 24(2):149--168, 1996.

\bibitem[KKR22]{kulczycki2022weak}
Tadeusz Kulczycki, Alexei Kulik, and Micha{\l} Ryznar.
\newblock On weak solution of {SDE} driven by inhomogeneous singular {L}\'{e}vy
  noise.
\newblock {\em Transactions of the American Mathematical Society},
  375(7):4567--4618, 2022.

\bibitem[KPP23]{kulik2023gradient}
Alexei~M Kulik, Szymon Peszat, and Enrico Priola.
\newblock Gradient formula for transition semigroup corresponding to stochastic
  equation driven by a system of independent {L}{\'e}vy processes.
\newblock {\em Nonlinear Differential Equations and Applications NoDEA},
  30:article number 7, 2023.

\bibitem[KR18]{kulczycki2017transitioncylin}
Tadeusz Kulczycki and Michal Ryznar.
\newblock Transition density estimates for diagonal systems of {SDE}s driven by
  cylindrical $alpha$-stable processes.
\newblock {\em ALEA, Lat. Am. J. Probab. Math. Stat.}, 15:1335--1375, 2018.

\bibitem[KRS21]{kulczycki2021strong}
Tadeusz Kulczycki, Micha{\l} Ryznar, and Pawe{\l} Sztonyk.
\newblock Strong {F}eller property for {SDE}s driven by multiplicative
  cylindrical stable noise.
\newblock {\em Potential Analysis}, 55:75--126, 2021.

\bibitem[KS19]{Kuhn2019}
Franziska K\"{u}hn and Ren\'{e}~L. Schilling.
\newblock Strong convergence of the {E}uler-{M}aruyama approximation for a
  class of {L}\'{e}vy-driven {SDE}s.
\newblock {\em Stochastic Processes and their Applications}, 129:2654--2680,
  2019.

\bibitem[Kun19]{kunitabook2019stochastic}
Hiroshi Kunita.
\newblock {\em Stochastic Flows and Jump-Diffusions}.
\newblock Springer, 2019.

\bibitem[Liu22]{liu2022alpha}
Xianming Liu.
\newblock The $\alpha$-dependence of the invariant measure of stochastic real
  {G}inzburg-{L}andau equation driven by $\alpha$-stable {L}\'{e}vy processes.
\newblock {\em Journal of Differential Equations}, 314:418--445, 2022.

\bibitem[LTX22]{lihuxu2022central}
Jianya Lu, Yuzhen Tan, and Lihu Xu.
\newblock Central limit theorem and self-normalized {C}ram{\'e}r-type moderate
  deviation for {E}uler-{M}aruyama scheme.
\newblock {\em Bernoulli}, 28(2):937--964, 2022.

\bibitem[LW08]{liu2008harnack}
Wei Liu and Feng-Yu Wang.
\newblock Harnack inequality and strong {F}eller property for stochastic
  fast-diffusion equations.
\newblock {\em Journal of Mathematical Analysis and Applications},
  342(1):651--662, 2008.

\bibitem[MT92]{meyntweediestability_i}
Sean~P Meyn and Richard~L Tweedie.
\newblock Stability of {M}arkovian processes {I}: Criteria for discrete-time
  chains.
\newblock {\em Advances in Applied Probability}, 24(3):542--574, 1992.

\bibitem[MT93]{meyn1993stability_iii}
Sean~P Meyn and Richard~L Tweedie.
\newblock Stability of {M}arkovian processes {III}: {F}oster--{L}yapunov
  criteria for continuous-time processes.
\newblock {\em Advances in Applied Probability}, 25(3):518--548, 1993.

\bibitem[MX19]{Mikulevicius2018}
R.~Mikulevi\v{c}ius and Fanhui Xu.
\newblock On the rate of convergence of strong {E}uler approximation for {SDE}s
  driven by {L}\'{e}vy processes.
\newblock {\em Stochastics}, 90:569--604, 2019.

\bibitem[Nor88]{norris1988integration}
James~R Norris.
\newblock Integration by parts for jump processes.
\newblock {\em S{\'e}minaire de probabilit{\'e}s de Strasbourg}, 22:271--315,
  1988.

\bibitem[NSGR19]{nguyen2019first}
Thanh~Huy Nguyen, Umut Simsekli, Mert Gurbuzbalaban, and Ga{\"e}l Richard.
\newblock First exit time analysis of stochastic gradient descent under
  heavy-tailed gradient noise.
\newblock In {\em Advances in Neural Information Processing Systems},
  volume~32, 2019.

\bibitem[N{\c{S}}R19]{nguyen19}
Thanh~Huy {Nguyen}, Umut {{\c{S}}im{\c{s}}ekli}, and Ga{\"e}l {Richard}.
\newblock Non-asymptotic analysis of fractional {L}angevin {M}onte {C}arlo for
  non-convex optimization.
\newblock In {\em Proceedings of the 36th International Conference on Machine
  Learning}, volume~97, pages 4810--4819. PMLR, 2019.

\bibitem[PT69]{pruitt1969sample}
WE~Pruitt and SJ~Taylor.
\newblock Sample path properties of processes with stable components.
\newblock {\em Zeitschrift f\"{u}r Wahrscheinlichkeitstheorie und Verwandte
  Gebiete}, 12:267--289, 1969.

\bibitem[PT97]{Protter1997}
Philip Protter and Denis Talay.
\newblock The {E}uler scheme for {L}\'{e}vy driven stochastic differential
  equations.
\newblock {\em Annals of Probability}, 25:393--423, 1997.

\bibitem[PT17]{Pamen2017}
Olivier~Menoukeu Pamen and Dai Taguchi.
\newblock Strong rate of convergence for the {E}uler-{M}aruyama approximation
  of {SDE}s with {H}\"{o}lder continuous drift coefficient.
\newblock {\em Stochastic Processes and their Applications}, 127:2542--2559,
  2017.

\bibitem[RBG{\etalchar{+}}23]{raj2023algorithmic}
Anant Raj, Melih Barsbey, Mert G\"{u}rb\"{u}zbalaban, Lingjiong Zhu, and Umut
  {\c{S}}im\c{s}ekli.
\newblock Algorithmic stability of heavy-tailed stochastic gradient descent on
  least squares.
\newblock In {\em International Conference on Algorithmic Learning Theory},
  volume 201, pages 1292--1342. PMLR, 2023.

\bibitem[RZG{\c{S}}23]{raj2023general}
Anant Raj, Lingjiong Zhu, Mert G\"{u}rb\"{u}zbalaban, and Umut
  {\c{S}}im\c{s}ekli.
\newblock Algorithmic stability of heavy-tailed {SGD} with general loss
  functions.
\newblock In {\em International Conference on Machine Learning}, volume 202,
  pages 28578--28597. PMLR, 2023.

\bibitem[{\c{S}}GN{\etalchar{+}}19]{csimcsekli2019heavy}
Umut {\c{S}}im{\c{s}}ekli, Mert G{\"u}rb{\"u}zbalaban, Thanh~Huy Nguyen,
  Ga{\"e}l Richard, and Levent Sagun.
\newblock On the heavy-tailed theory of stochastic gradient descent for deep
  neural networks.
\newblock {\em arXiv preprint arXiv:1912.00018}, 2019.

\bibitem[{\c{S}}im17]{simcsekli2017fractional}
Umut {\c{S}}im{\c{s}}ekli.
\newblock Fractional {L}angevin {M}onte {C}arlo: Exploring {L}{\'e}vy driven
  stochastic differential equations for {M}arkov {C}hain {M}onte {C}arlo.
\newblock In {\em International Conference on Machine Learning}, volume~70,
  pages 3200--3209. PMLR, 2017.

\bibitem[{\c{S}}SDE20]{simsekli2020hausdorff}
Umut {\c{S}}im\c{s}ekli, Ozan Sener, George Deligiannidis, and Murat~A Erdogdu.
\newblock Hausdorff dimension, heavy tails, and generalization in neural
  networks.
\newblock In {\em Advances in Neural Information Processing Systems},
  volume~33, pages 5138--5151, 2020.

\bibitem[{\c{S}}SG19]{pmlr-v97-simsekli19a}
Umut {\c{S}}im{\c{s}}ekli, Levent Sagun, and Mert G{\"u}rb{\"u}zbalaban.
\newblock A tail-index analysis of stochastic gradient noise in deep neural
  networks.
\newblock In {\em International Conference on Machine Learning}, volume~97,
  pages 5827--5837. PMLR, 2019.

\bibitem[Tak10]{takeuchi2010bismut}
Atsushi Takeuchi.
\newblock Bismut--{E}lworthy--{L}i-type formulae for stochastic differential
  equations with jumps.
\newblock {\em Journal of Theoretical Probability}, 23:576--604, 2010.

\bibitem[Vil08]{villani2008optimal}
C{\'e}dric Villani.
\newblock {\em Optimal Transport: Old and New}, volume 338.
\newblock Springer, 2008.

\bibitem[Wan07]{wang2007harnack}
Feng-Yu Wang.
\newblock Harnack inequality and applications for stochastic generalized porous
  media equations.
\newblock {\em Annals of Probability}, 35(4):1333--1350, 2007.

\bibitem[Wan16]{jianwangwassersteindecay}
Jian Wang.
\newblock {$L^p$}-{W}asserstein distance for stochastic differential equations
  driven by {L}\'{e}vy processes.
\newblock {\em Bernoulli}, 22(3):1598--1616, 2016.

\bibitem[WX12]{wang2012derivative}
Feng-Yu Wang and Lihu Xu.
\newblock Derivative formula and applications for hyperdissipative stochastic
  {N}avier--{S}tokes/{B}urgers equations.
\newblock {\em Infinite Dimensional Analysis, Quantum Probability and Related
  Topics}, 15(03):1250020, 2012.

\bibitem[WXZ15]{xuzhang2015gradient}
Feng-Yu Wang, Lihu Xu, and Xicheng Zhang.
\newblock Gradient estimates for {SDE}s driven by multiplicative {L}{\'e}vy
  noise.
\newblock {\em Journal of Functional Analysis}, 269(10):3195--3219, 2015.

\bibitem[WZ15]{wang2015harnack}
Linlin Wang and Xicheng Zhang.
\newblock Harnack inequalities for {SDE}s driven by cylindrical $\alpha$-stable
  processes.
\newblock {\em Potential Analysis}, 42:657--669, 2015.

\bibitem[Xu19]{Xu19}
Lihu Xu.
\newblock Approximation of stable law in {W}asserstein-1 distance by
  {S}tein’s method.
\newblock {\em The Annals of Applied Probability}, 29(1):458--504, 2019.

\bibitem[YZ18]{ye2018stochastic}
Nanyang Ye and Zhanxing Zhu.
\newblock Stochastic fractional {H}amiltonian {M}onte {C}arlo.
\newblock In {\em Proceedings of the Twenty-Seventh International Joint
  Conference on Artificial Intelligence (IJCAI-18)}, pages 3019--3025, 2018.

\bibitem[Zha10]{zhang2010white}
Tusheng Zhang.
\newblock White noise driven {SPDE}s with reflection: Strong {F}eller
  properties and {H}arnack inequalities.
\newblock {\em Potential Analysis}, 33:137--151, 2010.

\bibitem[Zha13]{zhang2013derivative}
Xicheng Zhang.
\newblock Derivative formulas and gradient estimates for {SDE}s driven by
  $\alpha$-stable processes.
\newblock {\em Stochastic Processes and their Applications}, 123(4):1213--1228,
  2013.

\bibitem[Zha14]{zhang2014aniso}
Xicheng Zhang.
\newblock Fundamental solution of kinetic {F}okker--{P}lanck operator with
  anisotropic nonlocal dissipativity.
\newblock {\em SIAM Journal on Mathematical Analysis}, 46(3):2254--2280, 2014.

\bibitem[ZZ23]{zhang2022ergodicity}
Xiaolong Zhang and Xicheng Zhang.
\newblock Ergodicity of supercritical {SDE}s driven by $\alpha$-stable
  processes and heavy-tailed sampling.
\newblock {\em Bernoulli}, 29(3):1933--1958, 2023.

\end{thebibliography}

\newpage

\begin{center}
\Large \bf Appendices: Gradient estimates for semigroups associated with stochastic differential equations driven by cylindrical L\'{e}vy processes
\end{center}

\appendix

The Appendices are organized as follows:
\begin{itemize}
\item In Appendix~\ref{appendix_eulerschemepareto}, we provide our result on Euler-Maruyama scheme with Pareto noise.  
    \item In Appendix~\ref{section_proofexistenceofinvariantmeasure}, we provide the technical proofs of Lemmas \ref{lemma_invariantmeasure_continuousX}, \ref{lemma_invariantmeasure_discreteYk} in the main paper and Lemma~\ref{lemma_invariantmeasure_discretepareto} from Appendix~\ref{appendix_eulerschemepareto}.
    \item In Appendix~\ref{section_proofresultbyjianwang}, we provide the technical proof of Lemma~\ref{lemma_basedonjianwangpaper} in the main paper.
        \item In Appendix~\ref{appendix_malliavin}, we provide some technical background on Malliavin calculus on Poisson space. 
    \item In Appendix~\ref{section_proofoflemmaGandnablaG}, we present the technical proof of Lemma~\ref{lemma_GandnablaG} from Appendix~\ref{appendix_malliavin}.
    \item Finally, in Appendix~\ref{sec:OU}, we present an explicit calculation for the Euler scheme of a one-dimensional Ornstein-Uhlenbeck process driven by an $\alpha$-stable L\'{e}vy process, and we show that the approximation error bound has a linear dependence on the stepsize $\eta$, as in Theorem~\ref{theorem_eulerscheme}.
\end{itemize}

\section{Euler-Maruyama scheme with Pareto noise}
\label{appendix_eulerschemepareto}

In this appendix, for the purpose of completeness, we include here a second discretization of Equation \eqref{SDE_alphastable} which will employ i.i.d. $\R^d$-valued Pareto random variables $\zeta_m,m\in\N$: 
\begin{align}
\label{equation_discretePareto}
    U_{m+1}=U_m+\eta b(U_m)+\frac{\eta^{1/\alpha}}{\sigma}\zeta_m, \quad U_0=x, 
\end{align}
where $\sigma:=(\alpha/2p_\alpha)^{1/\alpha}$, 
$p_{\alpha}$ is a constant defined in \eqref{p:alpha}
and $\zeta_m,m\in\N$ are i.i.d. $\R^d$-valued Pareto random variables and the components $(\zeta_m^k)_{k=1}^{d}$ of each $\zeta_m$ are i.i.d. with density
\begin{align*}
    \frac{\alpha}{2\abs{z}^{\alpha+1}}\mathds{1}_{(1,\infty)}\brac{\abs{z}}. 
\end{align*} 

Using sum of multivariate Pareto distribution to simulate multivariate stable distribution  has been proposed in \cite{davydov2002two,davydov1999limit,chenxu2023euler} due to the following facts: 1) the classical stable central limit theorem (see e.g. \cite{Xu19,CheXia19multivariate}) says that under suitable scaling, a sum of Pareto distributions converges to a stable limit; 2) unlike an $\alpha$-stable distribution whose density does not admit an analytic form, a Pareto distribution has an explicit density formula which can be more analytically tractable.


We state here the main result for the Euler-Maruyama scheme
using the Pareto noise in \eqref{equation_discretePareto}. For a Lipschitz function $f$, we will write $T_kf(x)=\E{f\brac{U^x_k}}$.

\begin{theorem}\label{theorem_eulerscheme_pareto}
Assume the stepsize $\eta$ satisfies $\eta\leq \min \left\{ 1,\frac{\theta_4}{8
\theta_1^2},\frac{1}{\theta_4}\right\} $. Then the Markov chain $\{U_k:k\in\N\}$  admits a unique invariant measure $\chi_\eta$ and it holds that 
\begin{align}
\label{estimate_distancebetweenXandU}
d_{\operatorname{Wass}}\brac{\operatorname{Law}\brac{X_{\eta N}},\operatorname{Law}\brac{U_N}}
\leq\mathcal{C}'\eta^{2/\alpha-1},
\end{align}
where
\begin{align}
\mathcal{C}'&:=\Bigg[ \Bigg(\frac{2\theta_1}{1+\frac{1}{\alpha}}\brac{\theta_1 C_3(1)\brac{\left(1+\abs{x}^2\right)^{1/2}+\frac{2C_7}{\theta_4}}+\E{\abs{L^\alpha_1}}}
+\frac{d p_\alpha}{ \sigma^{\alpha}}\nonumber\\
&+\frac{2d\alpha p_\alpha \E{\abs{L^\alpha_1}^{2-\alpha}}}{(2-\alpha)(\alpha-1)}\Bigg)\cdot\brac{\frac{2\brac{1-e^{-c_1L_0}}}{L_0}\frac{1}{C_5 }+C_2\brac{\frac{2\brac{1-e^{-c_1L_0}}}{L_0C_5}+1}} \nonumber\\
&\hspace{8em}+\frac{2\theta_1}{1+\frac{1}{\alpha}}\brac{\theta_1 C_3(1)\brac{\left(1+\abs{x}^2\right)^{1/2}+\frac{2C_7}{\theta_4}}+\E{\abs{L^\alpha_1}}}\Bigg].\label{defn:constant:C:prime}
\end{align}
Recall $\nu$ is the unique invariant measure of \eqref{SDE_alphastable}. Then it also holds that 
\begin{align}
\label{estimate_distancebetweeninvariantmeasures_pareto}
d_{\operatorname{Wass}}\brac{\chi_\eta,\nu}
\leq\mathcal{C}'\eta^{2/\alpha-1},
\end{align} 
where $\mathcal{C}'$ is defined in \eqref{defn:constant:C:prime}.
\end{theorem}

\begin{remark}
In Theorem~\ref{theorem_eulerscheme_pareto}, our approximation error bound is of the order $\eta^{2/\alpha-1}$,
which is also expected to be tight based on the discussions in \cite{chenxu2023euler}.
\end{remark}

What follows are the supporting lemmas for Theorem \ref{theorem_eulerscheme_pareto} and its proof at the end of this section. The first lemma establishes exponential ergodicity for the Euler-Maruyama discretiation scheme with Pareto noise \eqref{equation_discretePareto}. The proof is in Appendix \ref{section_proofexistenceofinvariantmeasure}. 

\begin{lemma}
\label{lemma_invariantmeasure_discretepareto}
Assume Conditions~\ref{cond_driftb} and~\ref{cond_diss} hold. The Markov chain $\{U_k:k\in\N\}$  admits a unique invariant measure $\chi_\eta$. Moreover, there exist constants $C,C'>0$ such that
  \begin{align}
  \label{estimate_exponentialerg_pareto}
        \sup_{\abs{f}\leq V_1} \abs{\E{f(U^x_k)}-\mathbb{E}_{\chi_\eta}\bracsq{f(X)}}\leq CV_1(x)e^{-C'k}. 
    \end{align}
In addition, we have the moment estimate (uniform over $k\geq 0$) 
\begin{align*}
    \E{\abs{U_k^x}}\leq \E{V_1\brac{U_k^x}}\leq  \left(1+\abs{x}^2\right)^{1/2}+\frac{2C_7}{\theta_4}, 
\end{align*}
where 
\begin{align*}
    C_7&:=\frac{d\alpha}{\sigma}\brac{\frac{1}{(2-\alpha)\sigma}+\frac{1}{\alpha-1}}\\
    &\qquad\qquad+\frac{\theta_4}{2}\brac{\eta\frac{2\abs{b(0)}^2}{\theta_4}+2\eta^2\abs{b(0)}^2+1+2\eta K}+\frac{ \abs{b(0)}^2}{\theta_4}+2\eta \abs{b(0)}^2+ K.
\end{align*}
\end{lemma}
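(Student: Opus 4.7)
The proof follows the template of Lemma~\ref{lemma_invariantmeasure_discreteYk}, with the stable increment $\xi_m$ replaced by the scaled Pareto increment $\frac{\eta^{1/\alpha}}{\sigma}\zeta_m$. The plan is to (i) establish a Foster--Lyapunov drift inequality for the Lyapunov function $V_1(x)=\sqrt{1+|x|^2}$, (ii) obtain a minorization condition on compact sublevel sets of $V_1$ from the smoothness of the Pareto density, and (iii) invoke the Meyn--Tweedie / Harris framework in the $V_1$-weighted variation norm to conclude uniqueness of $\chi_\eta$ and the geometric ergodicity bound \eqref{estimate_exponentialerg_pareto}. The moment bound then follows by iterating the drift inequality and summing a geometric series.

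For the drift inequality, the key identity is that, whenever $|\zeta_m|$ lies below a truncation level $R^\ast$, the elementary bound $\sqrt{a^2+b}\leq a+b/(2a)$ applied with $a=V_1(U_m+\eta b(U_m))$ yields
\begin{align*}
V_1(U_{m+1}) - V_1(U_m+\eta b(U_m)) &\leq \frac{\langle U_m+\eta b(U_m),\,\eta^{1/\alpha}\zeta_m/\sigma\rangle}{V_1(U_m+\eta b(U_m))} \\
&\quad + \frac{\eta^{2/\alpha}|\zeta_m|^2}{2\sigma^2\,V_1(U_m+\eta b(U_m))},
\end{align*}
while for $|\zeta_m|>R^\ast$ one uses the coarse triangle-inequality bound $V_1(U_{m+1})-V_1(U_m+\eta b(U_m))\leq \eta^{1/\alpha}|\zeta_m|/\sigma$. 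Taking conditional expectation kills the linear-in-$\zeta_m$ term by the coordinatewise symmetry of the Pareto density, and choosing the truncation level so that $R^\ast\sigma/\eta^{1/\alpha}=\eta^{-1/\alpha}$ forces both the truncated second moment $\mathbb{E}[|\zeta|^2\mathbf{1}_{|\zeta|\leq R^\ast\sigma/\eta^{1/\alpha}}]$ and the large-jump first moment $\mathbb{E}[|\zeta|\mathbf{1}_{|\zeta|>R^\ast\sigma/\eta^{1/\alpha}}]$ to rescale to order $\eta$, producing exactly the $\frac{d\alpha}{\sigma}\left(\frac{1}{(2-\alpha)\sigma}+\frac{1}{\alpha-1}\right)$ summand in $C_7$. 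The deterministic piece $V_1(x+\eta b(x))-V_1(x)$ is handled by Taylor expansion using $\nabla V_1(x)=x/V_1(x)$ together with Conditions~\ref{cond_driftb} and~\ref{cond_diss}, producing the remaining summands of $C_7$ together with the contractive factor. Iterating the resulting inequality $\mathbb{E}[V_1(U_{m+1})\mid U_m=x]\leq (1-\eta\theta_4/2)V_1(x)+\eta C_7$ yields $\mathbb{E}[V_1(U_k^x)]\leq V_1(x)+2C_7/\theta_4$ uniformly in $k$.

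For existence, uniqueness of $\chi_\eta$ and the geometric decay \eqref{estimate_exponentialerg_pareto}, the drift inequality supplies tightness and controls the tails, while the minorization condition comes from the Pareto density being smooth and strictly positive on its support. Since the one-step noise support $\{z:|z^k|\geq 1\text{ for all }k\}$ does not cover $\mathbb{R}^d$, I would work with a multi-step transition kernel, whose density becomes strictly positive on a nonempty open set uniformly over starting points in any compact set; Harris' theorem in the Hairer--Mattingly form then produces the stated $V_1$-geometric ergodicity. The main technical obstacle is the careful bookkeeping of the truncation: the infinite second moment of the Pareto law for $\alpha<2$ rules out a naive Lyapunov computation with $1+|x|^2$, so one is forced to work with $V_1$ and split the noise as above, and the precise form of the constant $C_7$ depends on this judicious choice of truncation level. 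Apart from this, the argument is parallel to the stable-noise case treated in Lemma~\ref{lemma_invariantmeasure_discreteYk} and in \cite{chenxu2023euler}.
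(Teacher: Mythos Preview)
Your outline matches the paper's: Foster--Lyapunov drift for $V_1$ combined with the Meyn--Tweedie framework, with the deterministic drift and noise contributions separated exactly as you describe. The one substantive difference is in bounding the noise increment $\mathcal{N}=V_1\bigl(y+\frac{\eta^{1/\alpha}}{\sigma}\zeta_m\bigr)-V_1(y)$ with $y=U_m+\eta b(U_m)$. You propose the concavity bound $\sqrt{a^2+b}\leq a+b/(2a)$ on a small-jump event and the triangle inequality on its complement; the paper instead writes $\mathcal{N}=\int_0^{\eta^{1/\alpha}/\sigma}\langle\nabla V_1(y+r\zeta_m),\zeta_m\rangle\,dr$ and then, using the pointwise bounds $|\nabla V_1|\leq 1$ and $|\nabla^2 V_1|\leq 2$, integrates coordinate-by-coordinate against the one-dimensional Pareto density, splitting each $|z_i|$ at the level $\eta^{-1/\alpha}$ (so the truncation is componentwise, not on the full vector $|\zeta_m|$). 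Both routes yield an $O(\eta)$ bound for $\mathbb{E}[\mathcal{N}]$, but only the paper's coordinate-wise computation reproduces the exact summand $\frac{d\alpha}{\sigma}\bigl(\frac{1}{(2-\alpha)\sigma}+\frac{1}{\alpha-1}\bigr)$ appearing in $C_7$; your vector-norm truncation would give a constant of the same order but not literally the one stated, and your threshold notation is internally inconsistent (you truncate at $|\zeta_m|\leq R^\ast$ but then write the moments with indicator $|\zeta|\leq R^\ast\sigma/\eta^{1/\alpha}$). The ergodicity step is handled the same way in substance: the paper defers to \cite[Theorem 6.3]{meyntweediestability_i} and the irreducibility argument in \cite[Appendix A]{lihuxu2022central} rather than the Hairer--Mattingly formulation you invoke, but your observation that a multi-step kernel is needed because the one-step Pareto support does not cover $\mathbb{R}^d$ is a correct point that the paper leaves implicit in those citations.
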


\begin{proof}
Performing the same calculation as the one in the proof for Lemma~\ref{lemma_invariantmeasure_discreteYk}, we will arrive at 
\begin{align*}
    \mathbb{E}\bracsq{ V_1(U_1)|U_0=x}
    &\leq \brac{1-\frac{\theta_4\eta}{2}}\abs{x}+\frac{\eta^{1/\alpha}}{\sigma}\E{\abs{\zeta_1}}+\sqrt{2K\eta}+\eta\abs{b(0)}+1. 
\end{align*}

Consequently,
\begin{align*}
    \mathbb{E}\bracsq{ V_1(U_1)|U_0=x}\leq CV_1(x)+C'\mathds{1}_{A}(x),
\end{align*}
where
\begin{align*}
    C:=1-\frac{\theta_4\eta}{2}<1, \qquad C':=1+\frac{\theta_4\eta}{2}+\frac{\eta^{1/\alpha}}{\sigma}\E{\abs{\zeta_1}}+\sqrt{2K\eta}+\eta\abs{b(0)},
\end{align*}
and the compact set $A$ is given by:
\begin{align*}
    A:=\left\{ x\in \R^d:\abs{x}\leq \frac{2\brac{\frac{\eta^{1/\alpha}}{\sigma}\E{\abs{\zeta_1}}+\sqrt{2K\eta}+\eta\abs{b(0)}+1}}{\brac{1-\frac{\theta_4\eta}{2}}}\right\}. 
\end{align*}
Now one can follow \cite[Appendix A]{lihuxu2022central} to show $\{U_n:n\in\N\}$ is an irreducible Markov chain. Then via \cite[Theorem 6.3]{meyntweediestability_i}, our Markov chain is indeed ergodic and satisfies
\eqref{estimate_exponentialerg_pareto}.

Next we will obtain the moment estimate for $U_m$. We can compute that:
    \begin{align}
        V_1(U_{m+1})
        &=V_1\brac{U_m+\eta b(U_m)}+V_1\brac{U_m+\eta b(U_m)+\frac{\eta^{1/\alpha}}{\sigma}\zeta_m}-V_1\brac{U_m+\eta b(U_m)}\nonumber\\
        &=V_1\brac{U_m}+\int_0^\eta \inner{\nabla V_1(U_m+sb(U_m)),b(U_m)}ds
        \nonumber\\
        &\qquad\qquad\qquad+\brac{V_1\brac{U_m+\eta b(U_m)+\frac{\eta^{1/\alpha}}{\sigma}\zeta_m}-V_1\brac{U_m+\eta b(U_m)}}\nonumber
        \\
         &=V_1\brac{U_m}+\int_0^\eta \inner{\nabla V_1(U_m+sb(U_m)),b(U_m)}ds
        \nonumber\\
        &\qquad\qquad\qquad\qquad\qquad+ \int_0^{\frac{\eta^{1/\alpha}}{\sigma}} \inner{\nabla V\brac{U_m+\eta b(U_m)+r \zeta_m},\zeta_m }dr \nonumber
        \\
        &=:V_1\brac{U_m}+\mathcal{M}+\mathcal{N}. \label{M:N:eqn}
    \end{align}
The term $\mathcal{M}$ can be bounded in the same way as $\mathcal{A}$ in the proof of Lemma \ref{lemma_invariantmeasure_discreteYk}, yielding
\begin{align*}
    \abs{\mathcal{M}}\leq -\frac{\theta_4}{2}\eta V_1(U_m)+C(\eta)\eta,
\end{align*}
where
\begin{align*}
    C(s):=\frac{\theta_4}{2}\brac{s\frac{2\abs{b(0)}^2}{\theta_4}+2\eta^2\abs{b(0)}^2+1+2\eta K}+\frac{ \abs{b(0)}^2}{\theta_4}+2s\abs{b(0)}^2+ K.
\end{align*}
To bound the term $\mathcal{N}$ in \eqref{M:N:eqn}, we repeat the argument in \cite{chenxu2023euler}. Let $u\in \R^d$ then
\begin{align*}
    &\E{\inner{\nabla V\brac{u+\eta b(u)+r \zeta_m},\zeta_m }}\\
    &=\frac{\alpha}{2}\sum_{i=1}^d\int_{\abs{z_i}\geq 1}\inner{\nabla V\brac{u+\eta b(u)+rz^i}-\nabla V\brac{u+\eta b(u)}\mathds{1}_{(0,\eta^{1/\alpha})}(\abs{z_i}),z^i  }\\
    &\hspace{30em}\frac{dz_i}{\abs{z_i}^{\alpha+1}}\\
    &=\frac{\alpha}{2}\sum_{i=1}^d \int_{1\leq \abs{z_i}\leq \eta^{-1/\alpha}} \int_0^r \inner{\nabla^2 V\brac{u+\eta b(u)+sz^i},z^i\brac{z^i}^T }\frac{dsdz_i}{\abs{z_i}^{\alpha+1}}\\
    &\qquad\qquad+\frac{\alpha}{2}\sum_{i=1}^d \int_{ \abs{z_i}>\eta^{-1/\alpha}} \inner{\nabla V\brac{u+\eta b(u)+rz^i},z^i }\frac{dz_i}{\abs{z_i}^{\alpha+1}}. 
\end{align*}
Furthermore \eqref{estimate_gradientVlambda} says $|\nabla^2 V_1(x)|\leq 2$ and $|\nabla V^1(x)|\leq 1$ for all $x$, so that 
\begin{align*}
    &\abs{\E{\inner{\nabla V\brac{u+\eta b(u)+r \zeta_m},\zeta_m }}}\\
    &\leq \frac{\alpha}{2} \sum_{i=1}^d\brac{\int_{\abs{z_i}\leq \eta^{-1/\alpha}}\int_0^r 2\abs{z_i}^2\frac{dz_ids}{\abs{z_i}^{\alpha+1}}+\int_{\abs{z_i}\geq \eta^{-1/\alpha}} \frac{dz_i}{\abs{z_i}^{\alpha+1}} }\\
    &=\frac{2d\alpha}{2-\alpha}r\eta^{1-2/\alpha}+\frac{d\alpha}{\alpha-1}\eta^{1-1/\alpha}.
\end{align*}
Since $Y_m$ is independent from $\zeta_m$, we can write
\begin{align*}
\abs{\mathcal{N}} &\leq \int_0^{\frac{\eta^{1/\alpha}}{\sigma}} \frac{2d\alpha}{2-\alpha}r\eta^{1-2/\alpha}+\frac{d\alpha}{\alpha-1}\eta^{1-1/\alpha}dr\\
&= \frac{d\alpha}{\sigma}\brac{\frac{1}{(2-\alpha)\sigma}+\frac{1}{\alpha-1}}\eta. 
\end{align*}
Let us set 
\begin{align*}
    C_7&:=\frac{d\alpha}{\sigma}\brac{\frac{1}{(2-\alpha)\sigma}+\frac{1}{\alpha-1}}\\
    &\qquad+\frac{\theta_4}{2}\brac{\eta\frac{2\abs{b(0)}^2}{\theta_4}+2\eta^2\abs{b(0)}^2+1+2\eta K}+\frac{ \abs{b(0)}^2}{\theta_4}+2\eta \abs{b(0)}^2+ K.
\end{align*}
Then by combining the bounds on $\mathcal{M}$ and $\mathcal{N}$, we get
\begin{align*}
    \abs{V_1(U_{m+1})} \leq \brac{1-\frac{\theta_4}{2}\eta} V_1(U_m)+C_7\eta. 
\end{align*}
Performing the above procedure iteratively will lead to
\begin{align*}
    \abs{V_1(U_{m+1})}&\leq \brac{1-\frac{\theta_4}{2}\eta}^{m+1} V_1(x)+C_7\eta\sum_{i=0}^m \brac{1-\frac{\theta_4}{2}\eta}^j\\
    &\leq V_1(x)+\frac{2C_7}{\theta_4}.
\end{align*}
This completes the proof. 
\end{proof}

The following lemma quantify how well the one-step iterate of our discretization schemes tracks the original SDE \eqref{SDE_alphastable}. The result is analogous to \cite[Lemma 2.5]{chenxu2023euler}.

\begin{lemma}
\label{lemma_semigroupfirststep_pareto}
Assume $h:\R^d \to \R$ is a function satisfying 
\begin{align*}
    \sup_{y\in\mathbb{R}^{d}}\norm{\nabla h(y)}_{\operatorname{op}}<\infty \text{ and } \sup_{y\in\mathbb{R}^{d}}\norm{\nabla^2 h(y)}_{\operatorname{op}}<\infty. 
\end{align*}
Then for all $x\in \R^d$ and $\eta\in(0,1)$, it holds that
    \begin{align*}
&\abs{P_\eta h(x)-T_1h(x) }\\
&\leq \brac{\frac{\theta_1}{1+1/\alpha}\brac{2\theta_1 C_3(1)(1+\abs{x}^2)^{1/2}+2\E{\abs{L^\alpha_1}}}
+d p_\alpha \sigma^{-\alpha}+\frac{4d\alpha p_\alpha \E{\abs{L^\alpha_1}^{2-\alpha}}}{2(2-\alpha)(\alpha-1)}}\\
&\hspace{15em}\cdot\brac{\sup_{y\in \R^d}\norm{\nabla h(y)}_{\operatorname{op}}+\sup_{y\in \R^d}\norm{\nabla^2 h(y)}_{\operatorname{op}}}\eta^{2/\alpha}. 
\end{align*}
\end{lemma}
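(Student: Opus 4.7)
The plan is a triangle-inequality argument with an intermediate random variable $\widetilde{X}_\eta := x+\eta b(x)+L^\alpha_\eta$, which represents a one-step Euler iterate that uses the true stable increment but freezes the drift at $x$. Write
\[
\abs{P_\eta h(x)-T_1h(x)}\leq \abs{\E{h(X^x_\eta)}-\E{h(\widetilde{X}_\eta)}}+\abs{\E{h(\widetilde{X}_\eta)}-\E{h(U_1^x)}}.
\]
The first term isolates the drift-iteration error, while the second is the heart of the argument and handles the noise replacement from the stable increment $L^\alpha_\eta$ to the scaled Pareto increment $\eta^{1/\alpha}\zeta_1/\sigma$.

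For the drift term, since $X^x_\eta-\widetilde{X}_\eta=\int_0^\eta[b(X^x_s)-b(x)]\,ds$, Lipschitzness of $h$ and of $b$ (Condition~\ref{cond_driftb}) give
\[
\abs{\E{h(X^x_\eta)-h(\widetilde{X}_\eta)}}\le \brac{\sup_{y\in\R^d}\norm{\nabla h(y)}_{\operatorname{op}}}\cdot \theta_1 \int_0^\eta\E{\abs{X^x_s-x}}\,ds.
\]
Applying Lemma~\ref{lemma_someestimatesfromchenxueuler} with $\beta=1$ bounds $\E{\abs{X^x_s-x}}$ by $(2\theta_1 C_3(1)(1+\abs{x}^2)^{1/2}+2\E{\abs{L^\alpha_1}})s^{1/\alpha}$, and integrating over $s\in[0,\eta]$ produces a factor $\frac{\theta_1}{1+1/\alpha}(\cdots)\eta^{1+1/\alpha}$. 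Since $\alpha>1$ and $\eta\le 1$ we have $\eta^{1+1/\alpha}\le \eta^{2/\alpha}$, yielding the first piece of the stated constant.

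For the noise-replacement term, use the self-similarity $L^\alpha_\eta\stackrel{d}{=}\eta^{1/\alpha}L^\alpha_1$, set $y=x+\eta b(x)$ and $g(z):=h(y+\eta^{1/\alpha}z)$. Then $\sup_{z}\norm{\nabla g(z)}_{\operatorname{op}}\le \eta^{1/\alpha}\sup_y\norm{\nabla h(y)}_{\operatorname{op}}$ and $\sup_z\norm{\nabla^2 g(z)}_{\operatorname{op}}\le\eta^{2/\alpha}\sup_y\norm{\nabla^2 h(y)}_{\operatorname{op}}$, so the problem reduces to estimating $\abs{\E{g(L^\alpha_1)-g(\zeta_1/\sigma)}}$. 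Because $L^\alpha_1$ and $\zeta_1/\sigma$ both have independent components, I interpolate coordinate-by-coordinate: a hybrid chain that replaces one coordinate at a time telescopes the bound into $d$ one-dimensional comparisons of the form $\abs{\E{\phi(L^{\alpha,k}_1)-\phi(\zeta^k_1/\sigma)}}$, where $\phi$ is a 1D section of $g$ inheriting the derivative bounds of $g$. Represent $\E{\phi(L^{\alpha,k}_1)-\phi(0)}=\int_0^1\E{\mathcal{A}\phi(L^{\alpha,k}_s)}\,ds$ by It\^o/Dynkin, with $\mathcal{A}$ the 1D stable generator, while $\E{\phi(\zeta^k_1/\sigma)-\phi(0)}=\mathcal{A}_P\phi(0)$ with the ``Pareto'' operator $\mathcal{A}_P\phi(z):=p_\alpha\int_{\abs{w}>1/\sigma}[\phi(z+w)-\phi(z)]\abs{w}^{-\alpha-1}\,dw$. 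Split the error as
\[
\int_0^1\E{\mathcal{A}\phi(L^{\alpha,k}_s)-\mathcal{A}\phi(0)}\,ds+\brac{\mathcal{A}\phi(0)-\mathcal{A}_P\phi(0)}.
\]
Control the first summand using the H\"older continuity of the fractional Laplacian (Lemma~\ref{lemma_fractionallaplacianfromchenxupaper}) together with the self-similarity $\E{\abs{L^{\alpha,k}_s}^{2-\alpha}}=s^{(2-\alpha)/\alpha}\E{\abs{L^{\alpha,k}_1}^{2-\alpha}}$ and the identity $\int_0^1 s^{(2-\alpha)/\alpha}\,ds=\alpha/2$; summing over coordinates produces the coefficient $\frac{4d\alpha p_\alpha\E{\abs{L^\alpha_1}^{2-\alpha}}}{2(2-\alpha)(\alpha-1)}$ in the stated bound. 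Control the second summand, $\abs{\mathcal{A}\phi(0)-\mathcal{A}_P\phi(0)}$, by direct integral estimates: the compensator discrepancy of the stable integrand vanishes by symmetry on the annulus between $1/\sigma$ and $1$, leaving a Taylor-remainder term of the form $\phi(w)-\phi(0)-w\phi'(0)$ bounded via $\sup\norm{\nabla^2 g}$, with an integral over the relevant small-jump region producing the $p_\alpha\sigma^{-\alpha}$ factor. Multiplying by $d$, collecting the $\eta^{1/\alpha}$ and $\eta^{2/\alpha}$ scalings inherited from $g$, and combining with the drift estimate yields the form of the bound.

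The main obstacle is the one-dimensional generator comparison, specifically producing a clean estimate of $\abs{\mathcal{A}\phi(0)-\mathcal{A}_P\phi(0)}$ with the precise constant $p_\alpha\sigma^{-\alpha}$. This requires bookkeeping of the small-jump versus large-jump split in the stable L\'evy measure against the Pareto tail, and the estimate must depend jointly on $\sup\norm{\nabla h}_{\operatorname{op}}$ and $\sup\norm{\nabla^2 h}_{\operatorname{op}}$ so that the $\eta^{2/\alpha}$ order is achieved without any spurious $\eta^{1/\alpha}$ remainder after assembling the three contributions.
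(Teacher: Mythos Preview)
Your proposal is correct and essentially the same as the paper's: the same drift/noise split (the paper's $\mathcal{A}_1+\mathcal{A}_2$), the same use of Lemma~\ref{lemma_someestimatesfromchenxueuler} for the drift, and for the noise replacement the same combination of Dynkin's formula for the stable part, Lemma~\ref{lemma_fractionallaplacianfromchenxupaper} for the H\"older continuity of $\Delta^{\alpha/2}$, and a small-jump remainder (your $\mathcal{A}\phi(0)-\mathcal{A}_P\phi(0)$, the paper's $\mathcal{R}$) controlled by $\sup\|\nabla^2 h\|$. The only cosmetic difference is that you rescale to time~$1$ via self-similarity and run an explicit coordinate-by-coordinate Lindeberg swap, whereas the paper stays at scale $\eta$ and recognizes the Pareto expectation as $\eta\,\Delta^{\alpha/2}h(x+\eta b(x))-\mathcal{R}$ directly via a change of variables; your version makes the use of the i.i.d.\ component structure more explicit.
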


\begin{proof}
We start with the decomposition
 \begin{align*}
        \E{h\brac{X^x_\eta}-h\brac{U^x_1}}&=\E{h\brac{x+\int_0^\eta b(X^x_r)dr+L^\alpha_\eta}-h\brac{x+\eta b(x)+\frac{\eta^{1/\alpha}}{\sigma}\zeta_1 } }\\
        &=\mathcal{A}_1+\mathcal{A}_2,
 \end{align*}
where 
\begin{align*}
    \mathcal{A}_1:=\E{h\brac{x+\int_0^\eta b(X^x_r)dr+L^\alpha_\eta }-h\brac{x+\eta b(x)+L^\alpha_\eta } },
\end{align*}
and 
\begin{align*}
    \mathcal{A}_2:=\E{h\brac{x+\eta b(x)+L^\alpha_\eta}-h\brac{x+\eta b(x)} }
    -\E{h\brac{x+\eta b(x)+\frac{\eta^{1/\alpha}}{\sigma}\zeta_1 }-h\brac{x+\eta b(x)} }. 
\end{align*}
First, $\abs{\mathcal{A}_1}$ can be bounded with Condition \ref{cond_driftb} and Lemma \ref{lemma_someestimatesfromchenxueuler} as follows.
\begin{align*}
    \abs{\mathcal{A}_1}&\leq \brac{\sup_{y\in \R^d}\norm{\nabla h(y)}_{\operatorname{op}}} \E{\abs{\int_0^\eta b(X^x_r)dr-\eta b(x) } }\\
    &\leq\brac{\sup_{y\in \R^d}\norm{\nabla h(y)}_{\operatorname{op}}}\int_0^\eta \E{\abs{b(X^x_r)-b(x)}} dr\\
    &\leq \theta_1 \brac{\sup_{y\in \R^d}\norm{\nabla h(y)}_{\operatorname{op}}} \int_0^\eta \E{\abs{X^x_r-x}}dr\\
    &\leq \theta_1\brac{2\theta_1 C_3(1)(1+\abs{x}^2)^{1/2}+2\E{\abs{L^\alpha_1}}}\brac{\sup_{y\in \R^d}\norm{\nabla h(y)}_{\operatorname{op}}}\int_0^\eta r^{1/\alpha} dr\\
    &\leq \frac{\theta_1}{1+1/\alpha}\brac{2\theta_1 C_3(1)(1+\abs{x}^2)^{1/2}+2\E{\abs{L^\alpha_1}}}\brac{\sup_{y\in \R^d}\norm{\nabla h(y)}_{\operatorname{op}}}\eta^{1+1/\alpha}. 
\end{align*}
Next, let us consider $\mathcal{A}_2$. Dynkin's formula says that
\begin{align*}
    \E{h\brac{x+\eta b(x)+L^\alpha_\eta}-h\brac{x+\eta b(x)} }=\int_0^\eta \E{\Delta^{\alpha/2}h\brac{x+\eta b(x)+L^\alpha_r} }dr. 
\end{align*}
Regarding the second term in $\mathcal{A}_2$, we have
\begin{align*}
    &\E{h\brac{x+\eta b(x)+\frac{\eta^{1/\alpha}}{\sigma}\zeta_1 }-h\brac{x+\eta b(x)} }\\
    &=\frac{\eta^{1/\alpha}}{\sigma}\E{\int_0^1 \inner{\nabla h\brac{x+\eta b(x)+\frac{\eta^{1/\alpha}}{\sigma}t\zeta_1 },\zeta_1  } dt}\\
    &=\frac{\eta^{1/\alpha}}{\sigma}\sum_{i=1}^d\int_{\abs{z_i}\geq 1}\int_0^1 \inner{\nabla h\brac{x+\eta b(x)+\frac{\eta^{1/\alpha}}{\sigma}t z^i },z^i  } \frac{1}{2\abs{z_i}^{\alpha+1}}dtdz_i
    \end{align*}
By performing the change of variable $z^i\mapsto \frac{\eta^{1/\alpha}}{\sigma}z^i$, we arrive at 

    \begin{align*}
     &\E{h\brac{x+\eta b(x)+\frac{\eta^{1/\alpha}}{\sigma}\zeta_1 }-h\brac{x+\eta b(x)} }\\
     &=\frac{\eta^{1/\alpha}}{\sigma}\sum_{i=1}^d\int_{\abs{z_i}\geq \sigma^{-1}\eta^{1/\alpha}}\int_0^1 \inner{\nabla h\brac{x+\eta b(x)+t z^i },\frac{\sigma}{\eta^{1/\alpha}}z^i  } \frac{1}{2\abs{z_i}^{\alpha+1}}\brac{\frac{\eta^{1/\alpha}}{\sigma}}^{\alpha+1}dt\frac{\sigma}{\eta^{1/\alpha}}dz_i\\
    &=\frac{\alpha\eta}{2\sigma^\alpha}\sum_{i=1}^d\int_{\abs{z_i}\geq  \sigma^{-1}\eta^{1/\alpha} }\int_0^1 \inner{\nabla h\brac{x+\eta b(x)+t z^i },z^i  } \frac{1}{\abs{z_i}^{\alpha+1}}dtdz_i\\
    &=\eta \Delta^{\alpha/2} h\brac{x+\eta b(x)}-\mathcal{R},
\end{align*}
where 
\begin{align*}
   & \mathcal{R}
    :=\eta p_\alpha \sum_{i=1}^d \int_{\abs{z_i}<\sigma^{-1}\eta^{1/\alpha}} \int_0^1 \inner{\nabla h\brac{x+\eta b(x)+t z^i }-\nabla h\brac{x+\eta b(x) },z^i }\frac{1}{\abs{z_i}^{\alpha+1}}dtdz_i. 
\end{align*}

Note that in the above equation, we have used the following formula of the fractional Laplacian operator $\Delta^{\alpha/2},\alpha\in (1,2)$ (which is just \eqref{L:0:operator:stable} after an application of the mean value theorem). 
\begin{align*}
    \Delta^{\alpha/2}h(x)&= p_\alpha \sum_{i=1}^d \int_{\R} \int_0^1\brac{\inner{\nabla h\brac{x+\frac{\eta^{1/\alpha}}{\sigma}t z^i },z^i }-\inner{\nabla h\brac{x },z^i }\mathds{1}_{\{\abs{z_i}\leq 1\}} }\frac{1}{\abs{z_i}^{\alpha+1}}dtdz_i,\\
    & = p_\alpha \sum_{i=1}^d \int_{\R} \int_0^1\brac{\inner{\nabla h\brac{x+\frac{\eta^{1/\alpha}}{\sigma}t z^i },z^i }-\inner{\nabla h\brac{x },z^i }}\frac{1}{\abs{z_i}^{\alpha+1}}dtdz_i,
\end{align*}
where $p_\alpha$ is defined in \eqref{p:alpha}.
The previous expansion of terms in $\mathcal{A}_2$ leads to
\begin{align*}
    \abs{\mathcal{A}_2}\leq \abs{\mathcal{R}}+\abs{\int_0^\eta \E{\Delta^{\alpha/2}h\brac{x+\eta b(x)+L^\alpha_r}}dr-\eta \Delta^{\alpha/2}h\brac{x+\eta b(x)}   }.
\end{align*}
Furthermore, by the mean value theorem, we have
\begin{align*}
    \abs{\mathcal{R}}&\leq \eta p_\alpha \brac{\sup_{y\in \R^d}\norm{\nabla^2 h(y)}_{\operatorname{op}}}\sum_{i=1}^d \int_{\abs{z_i}<\sigma^{-1}\eta^{1/\alpha}} \frac{1}{\abs{z_i}^{\alpha-1}}dz_i=d p_\alpha \sigma^{-\alpha}\eta^2. 
\end{align*}
Moreover via Lemma \ref{lemma_fractionallaplacianfromchenxupaper} and self-similarity of $L^\alpha$, we get
\begin{align*}
    &\abs{\int_0^\eta \E{\Delta^{\alpha/2}h\brac{x+\eta b(x)+L^\alpha_r}}dr-\eta \Delta^{\alpha/2}h\brac{x+\eta b(x)}   }\\
    &\leq \int_0^\eta \E{\abs{\Delta^{\alpha/2}h\brac{x+\eta b(x)+L^\alpha_r}-\Delta^{\alpha/2}h\brac{x+\eta b(x)} }}dr\\
    &\leq \frac{4dp_\alpha  }{(2-\alpha)(\alpha-1)}\brac{\sup_{y\in \R^d}\norm{\nabla^2 h(y)}_{\operatorname{op}}} \E{\abs{L^\alpha_1}^{2-\alpha}}\int_0^\eta  r^{2/\alpha-1}dr\\
        &=\frac{4d\alpha p_\alpha \E{\abs{L^\alpha_1}^{2-\alpha}}}{2(2-\alpha)(\alpha-1)}\brac{\sup_{y\in \R^d}\norm{\nabla^2 h(y)}_{\operatorname{op}}} \eta^{2/\alpha}. 
\end{align*}
 Combining the previous estimates yields the desired result. 
\end{proof}


\begin{proof}[Proof of Theorem \ref{theorem_eulerscheme_pareto}]
We proceed similarly as in the proof of Theorem~\ref{theorem_eulerscheme}. First, 
\begin{align*}
    \E{h(X^x_{\eta N})}-\E{h(U^x_N)}
    =P_{\eta N}h(x)-T_Nh(x)=\sum_{i=1}^N T_{i-1}\brac{P_\eta-T_1}P_{(N-i)\eta} h(x). 
\end{align*}
It follows that
 \begin{align}
    \label{estimate_semigroupsdecomposition_pareto}
        &d_{\operatorname{Wass}}(\operatorname{Law}(X_{\eta N}),\operatorname{Law}(U_N))
        \nonumber\\
        &=\sup_{h\in \operatorname{Lip(1)}}\abs{P_{N\eta}h(x)-T_N h(x)}\nonumber\\
        &\leq \sup_{h\in \operatorname{Lip(1)}}\abs{T_{N-1}\brac{P_\eta-T_1}h(x)}+\sup_{h\in \operatorname{Lip(1)}}\sum_{i=1}^{N-1}\abs{T_{i-1}(P_\eta-T_1)P_{(N-i)\eta}h(x) }\nonumber\\&=:\mathcal{C}_1+\mathcal{C}_2. 
    \end{align}
Let us first bound $\mathcal{C}_1$. Notice Lemma \ref{lemma_semigroupfirststep_pareto} cannot be applied to bound $\brac{P_\eta-T_1}h(x)$ since we do not know if $\nabla^2 h(y)$ exists. However, $\brac{P_\eta-T_1}h(x)$ can still be bounded in the same way the quantity $\mathcal{A}_1$ in the proof of Lemma \ref{lemma_semigroupfirststep_pareto} is bounded. This yields
\begin{align*}
\abs{P_\eta h(x)-T_1h(x) }\leq \frac{\theta_1}{1+1/\alpha}\brac{2\theta_1 C_3(1)(1+\abs{x}^2)^{1/2}+2\E{\abs{L^\alpha_1}}}\brac{\sup_{y\in \R^d}\norm{\nabla h(y)}_{\operatorname{op}}}\eta^{1+1/\alpha}.
\end{align*}
Note also that $\eta^{1+1/\alpha}\leq \eta^{2/\alpha-1}$. Thus,
\begin{align*}
    \mathcal{C}_1 &\leq   \frac{\theta_1}{1+1/\alpha}\brac{2\theta_1 C_3(1)\E{\brac{1+\abs{U_{N-1}}^2}^{1/2}}+2\E{\abs{L^\alpha_1}}}\cdot\brac{\sup_{h\in \operatorname{Lip(1)}}\sup_{y\in \R^d}\norm{\nabla h(y)}_{\operatorname{op}}}\eta^{2/\alpha-1}\\
    &\leq   \frac{\theta_1}{1+1/\alpha}\brac{2\theta_1 C_3(1)\brac{\left(1+\abs{x}^2\right)^{1/2}+\frac{2C_7}{\theta_4}}+2\E{\abs{L^\alpha_1}}}\eta^{2/\alpha-1},
\end{align*}
the last line being a consequence of the moment estimate in Lemma \ref{lemma_invariantmeasure_discretepareto}.

Next we consider $\mathcal{C}_2$ in \eqref{estimate_semigroupsdecomposition_pareto}. Lemma \ref{lemma_semigroupfirststep_pareto} implies for any $x\in\R^d$ and $1\leq i\leq N-1$, 
\begin{align}
    \label{equation_step_i>1_pareto}
     & \sup_{h\in \operatorname{Lip(1)}}\abs{(P_\eta-T_1)P_{(N-i)\eta} h(x)}\nonumber\\
&\leq \brac{\frac{\theta_1}{1+1/\alpha}\brac{2\theta_1 C_3(1)(1+\abs{x}^2)^{1/2}+2\E{\abs{L^\alpha_1}}}
+d p_\alpha \sigma^{-\alpha}+\frac{4d\alpha p_\alpha \E{\abs{L^\alpha_1}^{2-\alpha}}}{2(2-\alpha)(\alpha-1)}}\nonumber\\
&\hspace{1em}\cdot\brac{\sup_{h\in \operatorname{Lip(1)}}\sup_{y\in \R^d}\norm{\nabla P_{(N-i)\eta}h(y)}_{\operatorname{op}}+\sup_{h\in \operatorname{Lip(1)}}\sup_{y\in \R^d}\norm{\nabla^2 P_{(N-i)\eta}h(y)}_{\operatorname{op}}}\eta^{2/\alpha}\nonumber \\
&\leq \brac{\frac{\theta_1}{1+1/\alpha}\brac{2\theta_1 C_3(1)(1+\abs{x}^2)^{1/2}+2\E{\abs{L^\alpha_1}}}
+d p_\alpha \sigma^{-\alpha}+\frac{4d\alpha p_\alpha \E{\abs{L^\alpha_1}^{2-\alpha}}}{2(2-\alpha)(\alpha-1)}}\nonumber\\
&\cdot\brac{\sup_{h\in \operatorname{Lip(1)}}\sup_{y\in \R^d}\norm{\nabla h(y)}_{\operatorname{op}}\mathcal{R}\brac{(N-i)\eta}+\sup_{h\in \operatorname{Lip(1)}}\sup_{y\in \R^d}\norm{h(y)}_{\operatorname{op}}C_2\phi\brac{(N-i)\eta} }\eta^{2/\alpha},
\end{align}
where the last line is due to Proposition \ref{prop_semigroupestimate}. We recall from \eqref{phi:t:eqn} that 
\begin{equation*} 
\phi(t)=\begin{cases} 
      1 &\text{if } 0\leq t\leq 1, \\
     \mathcal{R}(t-1) &\text{if } t>1,
   \end{cases}
\end{equation*}
and Lemma \ref{lemma_basedonjianwangpaper}  tells us $\mathcal{R}(t)$ can be taken as $\frac{2\brac{1-e^{-c_1L_0}}}{L_0}\exp\brac{-C_5t}$. 

We further observe that
\begin{align*}
   \sum_{i=1}^{N-1}\mathcal{R}\brac{(N-i)\eta}&= \frac{2\brac{1-e^{-c_1L_0}}}{L_0}\sum_{i=1}^{N-1}\exp\brac{-C_5(N-i)\eta}\\
   &\leq\frac{2\brac{1-e^{-c_1L_0}}}{L_0} \exp\brac{- C_5 N\eta}\int_1^{N}\exp\brac{\eta C_5x}dx\\
   &\leq \frac{2\brac{1-e^{-c_1L_0}}}{L_0}\frac{1}{C_5 \eta}. 
\end{align*}

For $N\geq \frac{1}{\eta}$, $\phi\brac{(N-x)\eta}$ equals $\frac{2\brac{1-e^{-c_1L_0}}}{L_0}\exp\brac{-C_5\brac{(N-x)\eta-1}}$ for $0\leq x\leq N-\frac{1}{\eta}$ and $1$ otherwise. Hence
\begin{align*}
    \sum_{i=1}^{N-1}\phi\brac{(N-i)\eta}
    &\leq \int_1^{N}\phi\brac{(N-x)\eta}dx\\
    &=\int_1^{N-\frac{1}{\eta}}\phi\brac{(N-x)\eta}dx+\int^{N}_{N-\frac{1}{\eta}}\phi\brac{(N-x)\eta}dx\\
    &=\frac{2\brac{1-e^{-c_1L_0}}}{L_0}\frac{1}{C_5\eta}\brac{1-\exp\brac{-C_5\eta(N-1)-1} }+\frac{1}{\eta}\\
    &=\frac{2\brac{1-e^{-c_1L_0}}}{L_0C_5}\frac{1}{\eta}+\frac{1}{\eta}
    \leq \brac{\frac{2\brac{1-e^{-c_1L_0}}}{L_0C_5}+1}\frac{1}{\eta}. 
\end{align*}

By combining the above calculation and Lemma \ref{lemma_invariantmeasure_discretepareto}, we can deduce from \eqref{equation_step_i>1_pareto} that 
\begin{align*}
    \mathcal{C}_2&\leq \sum_{i=1}^{N-1}\Bigg(\frac{2\theta_1}{1+\frac{1}{\alpha}}\brac{\theta_1 C_3(1)\E{\brac{1+\abs{U_{i-1}}^2}^{1/2}}+\E{\abs{L^\alpha_1}}}
+\frac{d p_\alpha}{ \sigma^{\alpha}}\\
&\qquad+\frac{2d\alpha p_\alpha \E{\abs{L^\alpha_1}^{2-\alpha}}}{(2-\alpha)(\alpha-1)}\Bigg)\cdot\Bigg(\mathcal{R}\brac{(N-i)\eta}+C_2\phi\brac{(N-i)\eta} \Bigg)\eta^{2/\alpha}\\
&\leq \sum_{i=1}^{N-1}\Bigg(\frac{2\theta_1}{1+\frac{1}{\alpha}}\brac{\theta_1 C_3(1)\brac{\left(1+\abs{x}^2\right)^{1/2}+\frac{2C_7}{\theta_4}}+\E{\abs{L^\alpha_1}}}
+\frac{d p_\alpha}{ \sigma^{\alpha}}\nonumber\\
&\hspace{5em}+\frac{2d\alpha p_\alpha \E{\abs{L^\alpha_1}^{2-\alpha}}}{(2-\alpha)(\alpha-1)}\Bigg)\Bigg(\mathcal{R}\brac{(N-i)\eta}+C_2\phi\brac{(N-i)\eta} \Bigg)\eta^{2/\alpha}\\
&\leq \Bigg(\frac{2\theta_1}{1+\frac{1}{\alpha}}\brac{\theta_1 C_3(1)\brac{\left(1+\abs{x}^2\right)^{1/2}+\frac{2C_7}{\theta_4}}+\E{\abs{L^\alpha_1}}}
+\frac{d p_\alpha}{ \sigma^{\alpha}}\\&
+\frac{2d\alpha p_\alpha \E{\abs{L^\alpha_1}^{2-\alpha}}}{(2-\alpha)(\alpha-1)}\Bigg)\cdot\Bigg(\frac{2\brac{1-e^{-c_1L_0}}}{L_0}\frac{1}{C_5 }+C_2\brac{\frac{2\brac{1-e^{-c_1L_0}}}{L_0C_5}+1} \Bigg)\nonumber\\
&\hspace{30em}\eta^{2/\alpha-1}.
\end{align*}
Summing up the bounds on $\mathcal{C}_1$ and $\mathcal{C}_2$ which appear on the right hand side of \eqref{estimate_semigroupsdecomposition_pareto} yields the desired estimate on $d_{\operatorname{Wass}}\brac{\operatorname{Law}\brac{X_{\eta N}},\operatorname{Law}\brac{U_N}}$.

Finally, like in the proof of Theorem \ref{theorem_eulerscheme}, the bound on $ d_{\operatorname{Wass}}\brac{\chi_\eta,\nu}$ can be deduced from the triangle inequality
\begin{align*}
    d_{\operatorname{Wass}}\brac{\chi_\eta,\nu}\leq d_{\operatorname{Wass}}\brac{\chi_\eta,\operatorname{Law}(U_N)}+d_{\operatorname{Wass}}\brac{\operatorname{Law}(U_N), \operatorname{Law}(X_{\eta N})}+d_{\operatorname{Wass}}\brac{\operatorname{Law}(X_{\eta N}), \nu}. 
\end{align*}
This completes the proof.
\end{proof}


\section{Proofs of Lemma~\ref{lemma_invariantmeasure_continuousX} and Lemma~\ref{lemma_invariantmeasure_discreteYk}}
\label{section_proofexistenceofinvariantmeasure}

In this Appendix, we provide the proofs of Lemma~\ref{lemma_invariantmeasure_continuousX} and Lemma~\ref{lemma_invariantmeasure_discreteYk} from the main paper.

\begin{proof}[Proof of Lemma~\ref{lemma_invariantmeasure_continuousX}]
The proof follows the same line as \cite[Proof of Proposition 1.5]{chenxu2023euler}. See also \cite[Lemma 3.1]{zhang2022ergodicity} for a similar argument.  

First, we recall the function $V_{\lambda}$ that is defined in \eqref{def_Vlambda}:
\begin{align*}
    V_{\lambda }(x)=\brac{1+\abs{x}^2}^{\lambda /2},
\end{align*}
where $\lambda \in (1,\Lambda\wedge\kappa)$. 
Since
\begin{align*}
    \nabla V_{\lambda }(x)=\frac{\lambda x}{(1+\abs{x}^2)^{\frac{2-\lambda}{2}}}, \qquad \nabla^2 V_{\lambda }(x)=\frac{\beta I_{d\times d}}{(1+\abs{x}^2)^{1-\frac{\lambda}{2}}}+\frac{\lambda(\lambda-2)xx^T}{(1+\abs{x}^2)^{2-\frac{\lambda}{2}}},
\end{align*}
we have
\begin{align}
\label{estimate_gradientVlambda}
    \abs{\nabla V_{\lambda }(x)}\leq \lambda \abs{x}^{\lambda -1},\qquad  \abs{\nabla^2 V_{\lambda }(x)}\leq \lambda (3-\lambda )\sqrt{d}. 
\end{align}
This leads to
\begin{align}
    \inner{b(x),\nabla V_{\lambda }(x)}&=\frac{\lambda \inner{b(x)-b(0)+b(0),x}}{\brac{1+\abs{x}^2}^{\frac{2-\lambda }{2}}}
    \nonumber\\
    &\leq \frac{\lambda \brac{-\theta_4\brac{\abs{x}^2+1}+\theta_4+K+\abs{b(0)}\abs{x}}}{\brac{1+\abs{x}^2}^{\frac{2-\lambda }{2}}}\nonumber\\
    &\qquad\qquad\qquad-\lambda\theta_4 V_\lambda(x)+ \frac{\lambda\brac{ \theta_4+K+\abs{b(0)}\abs{x}}}{\brac{1+\abs{x}^2}^{\frac{2-\lambda }{2}}}\nonumber\\
    &\leq \lambda \brac{-\theta_4 V_{\lambda }(x)+\theta_4+K+\abs{b(0)}\abs{x}^{\lambda -1}}\nonumber\\
    &\leq -\lambda\theta_4 V_{\lambda }(x) +\lambda (\theta_4+K)+(\lambda-1)\theta_4x^\lambda +\frac{\abs{b(0)}^\lambda}{\theta_4^{\lambda-1}}\nonumber\\
    &\leq-\lambda\theta_4 V_{\lambda }(x) +\lambda (\theta_4+K)+(\lambda-1)\theta_4 V_\lambda(x) +\frac{\abs{b(0)}^\lambda}{\theta_4^{\lambda-1}}\nonumber\\
    &\leq -\theta_4V_{\lambda }(x)+\lambda(\theta_4+K) +\frac{\abs{b(0)}^\lambda}{\theta_4^{\lambda-1}}.\label{six:lines}
\end{align}
The second line and the fourth line in \eqref{six:lines} are respectively due to Condition~\ref{cond_diss} and the simple fact that ${1+\abs{x}^2}\geq \abs{x}^2$. To get the fifth line in \eqref{six:lines}, we apply Young's inequality which says
\begin{align*}
\abs{x}^{\lambda -1}\leq a\abs{x}^{\lambda }+b, 
\end{align*}
where the constant $a$ can be any positive value and $b$ depends on $a$. The sixth line in \eqref{six:lines} is a consequence of $\abs{x}^\lambda\leq V_\lambda (x)$. 

Based on \eqref{estimate_gradientVlambda}, we also have
\begin{align}
\label{estimate_needforanotherproofofY}
    &\Delta^{\alpha/2}V_\lambda (x)\nonumber=  \sum_{i=1}^d\frac{p_\alpha}{d}\int_R \brac{V_\lambda (x+z^i)-V_\lambda(x)-\inner{\nabla V_\lambda(x),z^i}\mathds{1}_{\{\abs{z^i}\leq 1\}} }\frac{1}{\abs{z^i}^{1+\alpha}}dz_i\\
    &\quad= \sum_{i=1}^d\frac{p_\alpha}{d}\int_{\abs{z^i}\leq 1}\int_0^1\int_0^r \inner{\nabla^2V_\lambda(x+sz^i),z^i(z^i)^T}dsdr\frac{1}{\abs{z^i}^{1+\alpha}}dz_i \nonumber\\
    &\qquad\qquad\qquad+ \sum_{i=1}^d\frac{p_\alpha}{d}\int_{\abs{z^i}> 1}\int_0^1\inner{\nabla V_\lambda(x+rz^i),z^i}dr\frac{1}{\abs{z^i}^{1+\alpha}}dz_i \nonumber\\
    &\quad\leq \frac{p_\alpha \lambda(3-\lambda)\sqrt{d}}{2d}\sum_{i=1}^d\int_{\abs{z_i}\leq 1}\frac{\abs{z_i}^2}{\abs{z_i}^{1+\alpha}}dz_i+\frac{p_\alpha \lambda}{d}\sum_{i=1}^d\int_{\abs{z_i}>1}\frac{\abs{x}^{\lambda-1}\abs{z_i}+\abs{z_i}^\lambda }{\abs{z_i}^{1+\alpha}}dz_i \nonumber\\
    &\quad=\frac{2 p_\alpha \lambda(3-\lambda)\sqrt{d}}{2(2-\alpha)}+2p_\alpha \lambda\brac{\frac{\abs{x}^{\lambda-1}}{\alpha-1}-\frac{1}{\alpha-\lambda}}.
\end{align}
Then by Young's inequality,
\begin{align*}
    \abs{\Delta^{\alpha/2}V_\lambda (x)}\leq \frac{2 p_\alpha \lambda(3-\lambda)\sqrt{d}}{2(2-\alpha)}+\frac{2p_\alpha \lambda}{\alpha-\lambda}+\brac{\frac{\theta_4}{4}}^{1-\lambda} \brac{\frac{2p_\alpha}{\alpha-1}}^\lambda+\frac{\theta_4}{4}V_\lambda(x). 
\end{align*}

Combining the previous calculations, we get
\begin{align}
\label{estimate_generatorL}
    \mathcal{L}V_\lambda(x)\leq -\frac{\theta_4}{2}V_\lambda(x)+C\mathds{1}_{A}(x),
\end{align}
where
\begin{align}
\label{def_constantCformomentofX}
    C=\lambda(\theta_4+K)+\theta_4^{1-\lambda}\abs{b(0)}^\lambda +\frac{2p_\alpha \lambda(3-\lambda)\sqrt{d}}{2(2-\alpha)}+\frac{2p_\alpha\lambda }{\alpha-\lambda}+\brac{\frac{\theta_4}{4}}^{1-\lambda} \brac{\frac{2p_\alpha}{\alpha-1}}^\lambda, 
\end{align}
and the compact set $A$ is 
\begin{align*}
    A:=\left\{x\in\R^d: \abs{x}\leq (4\theta_4^{-1}C)^{1/\lambda}\right\}. 
\end{align*}

Therefore by \cite[Theorem 5.1]{meyn1993stability_iii}, the solution to Equation \eqref{SDE_alphastable} admits an invariant measure $\nu$. Furthermore, \cite[Theorem 6.1]{meyn1993stability_iii} implies \eqref{estimate_exponentialerg_X}. 

Finally we will derive the moment estimate on $X^x_t$. By Dynkins's formula, 
\begin{align*}
    \E{V_{\lambda}(X^x_t)}=V_\lambda(x)+\int_0^t \E{\mathcal{L}V_\lambda(X^x_s)}ds.
\end{align*}
The estimate in \eqref{estimate_generatorL} implies
\begin{align*}
    \frac{d}{dt}\E{V_{\lambda}(X^x_t)}\leq -\frac{\theta_4}{2}\E{V_{\lambda}(X^x_t)}+C.
\end{align*}
This differential inequality is equivalent to
\begin{align*}
    \frac{d}{dt}\brac{e^{\frac{\theta_4 t}{2}} \E{V_{\lambda}(X^x_t)}}\leq Ce^{\frac{\theta_4 t}{2}}.
\end{align*}
Integrating both sides from $0$ to $t$ gives
\begin{align*}
e^{\frac{\theta_4}{2}t} \E{V_{\lambda}(X^x_t)}-(1+x^2)^{\lambda/2}\leq \frac{2C}{\theta_4}\brac{e^{\frac{\theta_4 t}{2}}-1},
\end{align*}
and hence
\begin{align*}
    \E{V_{\lambda}(X^x_t)}\leq \frac{2C}{\theta_4}+e^{-\frac{\theta_4 t}{2}} \brac{1+\abs{x}^2}^{\lambda/2}\leq \brac{\frac{2C}{\theta_4}+1}\brac{1+\abs{x}^2}^{\lambda/2},
\end{align*}
where the constant $C$ is defined in \eqref{def_constantCformomentofX}. 
The proof is complete.
\end{proof}


\begin{proof}[Proof of Lemma~\ref{lemma_invariantmeasure_discreteYk}]
The proof follows the same line as \cite[Proposition 1.7 and Lemma 1.8]{chenxu2023euler}. We repeat it here for reader's convenience. 

To show exponential ergodicity, we will rely on \cite[Theorem 6.3]{meyntweediestability_i}. Denote $p(\eta,x)$ the density function of $L^\alpha_\eta$. Since $V_1(y)\leq \abs{y}+1$ and $Y_1=x+\eta b(x) +L^\alpha_\eta$, it follows that
\begin{align*}
    \mathbb{E}\bracsq{ V_1(Y_1)|Y_0=x}&\leq \int_{\R^d}\brac{\abs{y}+1}p\brac{\eta,y-x-\eta b(x)}dy\\
    &=\int_{\R^d}\brac{\abs{z+x+\eta b(x)}+1}p(\eta,z)dz\\
    &\leq \E{\abs{L^\alpha_\eta}} +\abs{x+\eta(b(x)-b(0))}+\eta\abs{b(0)}+1. 
\end{align*}
Notice Condition~\ref{cond_driftb} and Condition~\ref{cond_diss} imply
\begin{align*}
    \abs{x+\eta(b(x)-b(0))}^2&=\abs{x}^2+2\eta\inner{b(x)-b(0),x}+\eta^2\abs{b(x)-b(0)}^2\\
    &\leq (1-2\theta_4\eta+\theta_1^2\eta^2)\abs{x}^2+2K\eta. 
\end{align*}
Then, since $\eta\leq \min \left\{ 1,\frac{\theta_4}{8
\theta_1^2},\frac{1}{\theta_4}\right\} $, we have
\begin{align*}
    \mathbb{E}\bracsq{ V_1(Y_1)|Y_0=x}
    &\leq\brac{1-2\theta_4\eta+\theta_1^2\eta^2}^{1/2}\abs{x}+\eta^{1/\alpha}\E{\abs{L^\alpha_1}}+\sqrt{2K\eta}+\eta\abs{b(0)}+1\\
    &\leq (1-\theta_4\eta)\abs{x}+\eta^{1/\alpha}\E{\abs{L^\alpha_1}}+\sqrt{2K\eta}+\eta\abs{b(0)}+1\\
    &\leq \brac{1-\frac{\theta_4\eta}{2}}\abs{x}+\eta^{1/\alpha}\E{\abs{L^\alpha_1}}+\sqrt{2K\eta}+\eta\abs{b(0)}+1. 
\end{align*}
Observe that whenever we have $A(x)\leq C\abs{x} +C'$ for some positive constants $C,C'$, then we can write 
\begin{align*}
    A(x)\leq C\abs{x}+C'\mathds{1}_{\{C\abs{x}\leq 2C'\}}(x). 
\end{align*}

Consequently, we arrive at the estimate
\begin{align*}
    \mathbb{E}\bracsq{ V_1(Y_1)|Y_0=x}\leq CV_1(x)+C'\mathds{1}_{A}(x),
\end{align*}
where
\begin{align*}
    C:=1-\frac{\theta_4\eta}{2}<1, \qquad C':=1+\frac{\theta_4\eta}{2}+\eta^{1/\alpha}\E{\abs{L^\alpha_1}}+\sqrt{2K\eta}+\eta\abs{b(0)},
\end{align*}
and the compact set $A$ is given by:
\begin{align*}
    A:=\left\{ x\in \R^d:\abs{x}\leq \frac{2\brac{\eta^{1/\alpha}\E{\abs{L^\alpha_1}}+\sqrt{2K\eta}+\eta\abs{b(0)}+1}}{\brac{1-\frac{\theta_4\eta}{2}}}\right\}. 
\end{align*}
Now one can follow \cite[Appendix A]{lihuxu2022central} to show $\{Y_n:n\in\N\}$ is an irreducible Markov chain. Then via \cite[Theorem 6.3]{meyntweediestability_i}, our Markov chain is indeed ergodic and satisfies
\eqref{estimate_exponentialerg_Y}. 


Our next step is to show the moment estimate for $Y_n$. We can compute that
    \begin{align}
    \label{step_Vlambda_decompose}
       V_\lambda(Y_{m+1})
       &=V_\lambda\brac{Y_m+\eta b(Y_m)}+V_\lambda\brac{Y_m+\eta b(Y_m)+\eta^{1/\alpha}\xi_m}-V_\lambda\brac{Y_m+\eta b(Y_m)}\nonumber\\
        &=V_\lambda\brac{Y_m}+\int_0^\eta \inner{\nabla V_\lambda(Y_m+sb(Y_m)),b(Y_m)}ds
        \nonumber\\
        &\qquad\qquad\qquad+\brac{V_\lambda\brac{Y_m+\eta b(Y_m)+\eta^{1/\alpha}\xi_m}-V_\lambda\brac{Y_m+\eta b(Y_m)}}\nonumber
        \\
        &=:V_\lambda\brac{Y_m}+\mathcal{A}+\mathcal{B}. 
    \end{align}
Let us first consider the terms $\mathcal{A}$ on the right hand side of \eqref{step_Vlambda_decompose}. Since $\nabla V_\lambda(x)=\lambda x(1+\abs{x})^{(\lambda-2)/2}$, Condition~\ref{cond_diss} implies that
\begin{align*}
    \mathcal{A}&\leq \int_0^\eta \frac{\lambda\inner{Y_m,b(Y_m)}+\lambda s \abs{b(Y_m)}^2 }{\brac{1+\abs{Y_m+sb(Y_m)}^2}^{(2-\lambda)/2}}ds\\
    &\leq \int_0^\eta \frac{-\theta_4 \lambda\abs{Y_m}^2+\lambda K+\lambda\abs{b(0)}\abs{Y_m}  +\lambda s \abs{b(Y_m)}^2}{\brac{1+\abs{Y_m+sb(Y_m)}^2}^{(2-\lambda)/2}} ds.
\end{align*}
Condition~\ref{cond_driftb} and the fact that $\eta\leq \min\brac{1,\frac{\theta_4}{8\theta_1^2},\frac{1}{\theta_4}}$ imply 
that for any $0\leq s\leq\eta$:
\begin{align*}
    &-\theta_4 \lambda\abs{Y_m}^2+\lambda\abs{b(0)}\abs{Y_m}  +\lambda s \abs{b(Y_m)}^2\\
    &\leq -\frac{\theta_4 \lambda}{2} \lambda\abs{Y_m}^2+\frac{\lambda \abs{b(0)}^2}{\theta_4}+2\lambda s\abs{b(0)}^2+\lambda K. 
\end{align*}
Similarly,
\begin{align*}
   1\leq 1+\abs{Y_m+sb(Y_m)}^2
    &=\abs{Y_m}^2+2s\inner{Y_m,b(Y_m)}+s^2b(Y_m)^2+1\\
   &\leq \abs{Y_m}^2+s\frac{2\abs{b(0)}^2}{\theta_4}+2\eta^2\abs{b(0)}^2+1+2\eta\lambda K. 
\end{align*}
Therefore,
\begin{align*}
    &\frac{-\theta_4 \lambda\abs{Y_m}^2+\lambda K+\lambda\abs{b(0)}\abs{Y_m}  +\lambda s \abs{b(Y_m)}^2}{\brac{1+\abs{Y_m+sb(Y_m)}^2}^{(2-\lambda)/2}}\\
    &\leq -\frac{\theta_4\lambda}{2}\frac{\abs{Y_m}^2}{\brac{\abs{Y_m}^2+s\frac{2\abs{b(0)}^2}{\theta_4}+2\eta^2\abs{b(0)}^2+1+2\eta K}^{(2-\lambda)/2} }\\
    &\hspace{14em}+\frac{\lambda \abs{b(0)}^2}{\theta_4}+s2\lambda\abs{b(0)}^2+\lambda K\\
    &\leq -\frac{\theta_4\lambda}{2}\brac{\abs{Y_m}^2+s\frac{2\abs{b(0)}^2}{\theta_4}+2\eta^2\abs{b(0)}^2+1+2\eta K}^{\lambda/2} +C(s)
    \\
    &\leq \frac{\theta_4 \lambda}{2}V_\lambda(Y_m)+C(s),
\end{align*}
where 
\begin{align*}
    C(s):=\frac{\theta_4\lambda}{2}\brac{s\frac{2\abs{b(0)}^2}{\theta_4}+2\eta^2\abs{b(0)}^2+1+2\eta K}+\frac{\lambda \abs{b(0)}^2}{\theta_4}+22\lambda\abs{b(0)}^2+\lambda K.
\end{align*}

This leads to
\begin{align*}
    \mathcal{A}\leq -\frac{\theta_4\lambda}{2}\eta V_\lambda(Y_m)+C(\eta)\eta. 
\end{align*}

Now we will bound the term $\mathcal{B}$ which appears on the right hand side of \eqref{step_Vlambda_decompose}. Dynkin's formula, the estimate at \eqref{estimate_needforanotherproofofY} and Condition~\ref{cond_driftb} imply that
\begin{align*}
    &\abs{\E{V_\lambda\brac{y+\eta b(y)+L^\alpha_\eta}-V_\lambda\brac{y+\eta b(y)} }}\\
    &=\abs{\int_0^\eta\E{\Delta^{\alpha/2} V_\lambda\brac{y+\eta b(y)+L^\alpha_s}}ds }\\
    &=\int_0^\eta\frac{2 p_\alpha \lambda(3-\lambda)\sqrt{d}}{2(2-\alpha)}+2p_\alpha \lambda\brac{\frac{\E{\abs{y+\eta b(y)+L^\alpha_s}^{\lambda-1}}}{\alpha-1}-\frac{1}{\alpha-\lambda}}ds\\
    &\leq 2\lambda p_\alpha\brac{\frac{(3-\alpha)\sqrt{d}\eta}{2(2-\alpha)}+\frac{\eta}{\alpha-\lambda}+\frac{1+\theta_1^{\lambda-1}\eta}{\alpha-1}\abs{y}^{\beta-1}+\eta\abs{b(0)}^{\lambda-1}+\frac{\E{\abs{L^\alpha_1}^{\lambda-1}}\eta}{\alpha-1}} .
\end{align*}
Then by Young's inequality,
\begin{align*}
    &\abs{\E{V_\lambda\brac{Y_m+\eta b(Y_m)+\xi_m}-V_\lambda\brac{Y_m+\eta b(Y_m)} }}\\
    & \leq 2\lambda p_\alpha\cdot\brac{\frac{(3-\alpha)\sqrt{d}\eta}{2(2-\alpha)}+\frac{\eta}{\alpha-\lambda}+\frac{1+\theta_1^{\lambda-1}\eta}{\alpha-1}\E{\abs{Y_k}^{\beta-1}}+\eta\abs{b(0)}^{\lambda-1}+\frac{\E{\abs{L^\alpha_1}^{\lambda-1}}\eta}{\alpha-1}}\\
    &\leq \frac{\theta_4(\lambda-1)\eta}{2}V_\lambda(Y_m)+C'\eta,
\end{align*}
where 
\begin{align*}
    C'&:=2\lambda p_\alpha\brac{\frac{(3-\alpha)\sqrt{d}}{2(2-\alpha)}+\frac{1}{\alpha-\lambda} +\abs{b(0)}^{\lambda-1}+\frac{\E{\abs{L^\alpha_1}^{\lambda-1}}}{\alpha-1}}\\
    &\hspace{11em}+\brac{\frac{2p_\alpha(1+\theta_1^{\lambda-1})}{\alpha-1}}^\lambda\brac{\frac{2}{\theta_4}}^{\lambda-1}. 
\end{align*}
We deduce from \eqref{step_Vlambda_decompose} that
\begin{align*}
    \E{V_\lambda (Y_{m+1})}\leq \brac{1-\frac{\theta_4\eta}{2}}\E{V_\lambda(Y_m)}+(C(\eta)+C')\eta. 
\end{align*}
By doing the previous step inductively, we get 
\begin{align*}
    \E{V_\lambda (Y_{m+1})}&\leq \brac{1-\frac{\theta_4\eta}{2}}^{m+1}{V_\lambda(x)}+(C(\eta)+C')\eta\sum_{j=0}^k\brac{1-\frac{\theta_4\eta}{2}}^j\\
    &\leq V_\lambda(x)+\frac{2(C(\eta)+C')}{\theta_4}. 
\end{align*}
Finally, since $V_\lambda(x)\leq 1+\abs{x}^\lambda$, we obtain
\begin{align*}
    \E{\abs{Y_m}^\lambda}\leq \E{V_\lambda(Y_m)}\leq C_4(1+\abs{x}^\lambda),
\end{align*}
where
\begin{align*}
   C_4:={1+\frac{2(C(\eta)+C')}{\theta_4}}. 
\end{align*}
This completes the proof.
\end{proof}



\section{Proof of Lemma~\ref{lemma_basedonjianwangpaper}} 
\label{section_proofresultbyjianwang}

In this Appendix, we present the proof of Lemma~\ref{lemma_basedonjianwangpaper}.
We will adapt the argument in the reference \cite{jianwangwassersteindecay} 
to the setting of cylindrical stable L\'{e}vy processes. 

Let us recall from Remark~\ref{remark:dissipativity} 
that Condition~\ref{cond_driftb} and~\ref{cond_diss} imply the following \textit{distant dissipativity} condition:
  \begin{align}
  \label{conditionfromjianwang}
 \inner{b(x)-b(y),x-y}\leq  \begin{cases} 
      \theta_1\abs{x-y}^2 &\text{ if } \abs{x-y}\leq L_0, \\
      -\frac{\theta_4}{2}\abs{x-y}^{2} &\text{ if }\abs{x-y}> L_0,
   \end{cases}
\end{align}
where $L_0:=\sqrt{\frac{2K}{\theta_4}}$. 
We recall that $\{e^i:1\leq i\leq d\}$ are the canonical basis of $\R^d$, i.e. $e^{i}$ is a $d$-dimensional vector with $1$ in its $i$-th coordinate and $0$ elsewhere. For $z=\brac{z_1,\ldots,z_d}\in\R^d$, we can therefore write
\begin{align*}
    z^i=z_ie^i. 
\end{align*}
The generator of the process in Equation~\eqref{SDE_alphastable} in the main paper has the form:
\begin{align}
\label{def_generatorL}
    \mathcal{L}f(x)
    =\sum_{i=1}^d \int_{\R}\brac{f(x+z^i)-f(x)-\inner{\nabla f(x),z^i}\mathds{1}_{\{\abs{z_i}\leq 1\}} }\frac{p_\alpha}{\abs{z_i}^{1+\alpha}}dz_i+\inner{b(x),\nabla f(x)},
\end{align}
where $p_\alpha$ is defined in \eqref{p:alpha}.

We will rely on $\mathcal{L}$ to define a new operator which acts on elements in $C^2_b\brac{\R^{2d},\R^{d}}$. For $x,y\in \R^d$ such that $\abs{x-y}\leq L_0$ and any $a\in (0,1/2)$, let us define
\begin{align*}
    \widetilde{\mathcal{L}}f(x,y)
    &:=\frac{1}{2}\sum_{i=1}^d\bigg(  \int_{\{\abs{z_i}\leq a\abs{x_i-y_i}\}}\brac{f\brac{x+z^i,y-z^i}-f(x,y) }\frac{p_\alpha}{\abs{z_i}^{1+\alpha}}dz_i \\
    &\qquad\qquad\qquad+ \int_{\{\abs{z_i}\leq a\abs{x_i-y_i}\}}\brac{f\brac{x-z^i,y+z^i}-f(x,y)}\frac{p_\alpha}{\abs{z_i}^{1+\alpha}}dz_i\bigg)\\
    &+\sum_{i=1}^d\int_{\{\abs{z_i}>a\abs{x_i-y_i}\}}\big(f(x+z^i,y+z^i)-f(x,y)-\inner{\nabla_x f(x,y)+\nabla_y f(x,y),z^i}\\
    &\hspace{25em}\mathds{1}_{\{\abs{z_i}\leq 1\}} \big)\frac{p_\alpha}{\abs{z_i}^{1+\alpha}}dz_i\\
    &\hspace{16.5em}+\inner{b(x),\nabla_x f(x,y)}+\inner{b(y),\nabla_y f(x,y)}. 
\end{align*}
Meanwhile for any $x,y\in \R^d$ such that $\abs{x-y}>L_0$, we define
\begin{align*}
    \widetilde{\mathcal{L}}f(x,y)
    &:= \sum_{i=1}^d\int_{\R}\big(f\brac{x+z^i,y+z^i}-f(x,y)-\inner{\nabla_x f(x,y),z^i}\mathds{1}_{\{\abs{z_i}\leq 1\}} \\
    &+\inner{\nabla_y f(x,y),z^i}\mathds{1}_{\{\abs{z_i}\leq 1\}}\big)\frac{p_\alpha}{\abs{z_i}^{1+\alpha}}dz_i+\inner{b(x),\nabla_x f(x,y)}+\inner{b(y),\nabla_y f(x,y)}.
\end{align*}

Next, we show that $\widetilde{\mathcal{L}}$ coincides with $\mathcal{L}$ on $C^2_b(\R^d)$.

\begin{lemma}
\label{lemma_estimatetildeL}
$\widetilde{\mathcal{L}}$ is the coupling generator of $\mathcal{L}$, that is,
    \begin{align*}
    {\widetilde{\mathcal{L}}f(x)=\mathcal{L}f(x), \quad f\in C^2_b(\R^d). }
\end{align*}
\end{lemma}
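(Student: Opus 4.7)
The plan is to interpret $f \in C^2_b(\R^d)$ as the lift $\tilde f(x,y) := f(x)$ on $\R^{2d}$ that depends only on the first variable, and to verify by direct substitution that $\widetilde{\mathcal L}\tilde f(x,y) = \mathcal L f(x)$ in each of the two regions appearing in the definition of $\widetilde{\mathcal L}$. Since $\nabla_y \tilde f \equiv 0$ and $\nabla_x \tilde f(x,y) = \nabla f(x)$, the drift term $\inner{b(y), \nabla_y \tilde f}$ and every $\nabla_y$-compensator drops out, and the jump increments reduce to $\tilde f(x+z^i, y\pm z^i) - \tilde f(x,y) = f(x\pm z^i) - f(x)$.

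For $\abs{x-y} > L_0$, the substitution is essentially automatic: the shared-noise expression collapses to $\sum_{i=1}^d \int_\R \bracsq{f(x+z^i) - f(x) - \inner{\nabla f(x), z^i}\mathds{1}_{\{\abs{z_i}\leq 1\}}} \frac{p_\alpha}{\abs{z_i}^{1+\alpha}} dz_i + \inner{b(x), \nabla f(x)}$, which by \eqref{def_generatorL} is exactly $\mathcal Lf(x)$. For $\abs{x-y} \leq L_0$, the key ingredient is the symmetry of the L\'evy density $p_\alpha/\abs{z_i}^{1+\alpha}$ under $z_i \mapsto -z_i$: applying this change of variable to the ``reflected'' integrand $f(x-z^i) - f(x)$ shows it equals $f(x+z^i) - f(x)$, so the two small-jump integrals combine (absorbing the factor $1/2$) into $\sum_i \int_{\abs{z_i}\leq a\abs{x_i-y_i}} \bracsq{f(x+z^i)-f(x)} \frac{p_\alpha}{\abs{z_i}^{1+\alpha}} dz_i$, understood as a principal value. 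Adding to this the large-jump piece (which carries the compensator on $\{\abs{z_i} > a\abs{x_i-y_i}\}$) and the drift $\inner{b(x), \nabla f(x)}$, the only apparent discrepancy with $\mathcal Lf(x)$ is the missing compensator on the small-jump region, namely $\sum_i \int_\R \inner{\nabla f(x), z^i}\, \mathds{1}_{\{\abs{z_i}\leq \min(1,\, a\abs{x_i-y_i})\}}\, \frac{p_\alpha}{\abs{z_i}^{1+\alpha}} dz_i$. Its integrand is odd in $z_i$ on an interval symmetric about the origin and therefore vanishes, completing the identification.

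The main (minor) subtlety is handling the case $a\abs{x_i-y_i} > 1$ and keeping track of absolute versus principal-value convergence: one must verify that $\{\abs{z_i} \leq \min(1,\, a\abs{x_i-y_i})\}$ is always symmetric about $0$ so the odd-function argument applies, and that after the two reflected integrals are paired the effective integrand is $O(\abs{z_i}^2)$ near $0$, which together with $\int (\abs{z_i}^2 \wedge 1)\abs{z_i}^{-1-\alpha} dz_i < \infty$ for $\alpha\in(1,2)$ ensures every manipulation is justified for $f\in C^2_b$.
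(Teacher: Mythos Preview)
Your proposal is correct and follows essentially the same approach as the paper: both treat the case $\abs{x-y}>L_0$ as immediate and, for $\abs{x-y}\leq L_0$, exploit the symmetry of $p_\alpha/\abs{z_i}^{1+\alpha}$ under $z_i\mapsto -z_i$ to merge the two reflected small-jump integrals. The only cosmetic difference is the order of operations---the paper first inserts the compensators $\mp\inner{\nabla f(x),z^i}\mathds{1}_{\{\abs{z_i}\leq 1\}}$ (which cancel in the sum) and then applies the change of variable, whereas you change variables first and then observe that the missing compensator integrates to zero by oddness; your additional remarks on absolute versus principal-value convergence and on the case $a\abs{x_i-y_i}>1$ are correct and make the argument slightly more explicit.
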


\begin{proof}
The case when $\abs{x-y}>L_0$ is immediate. When $\abs{x-y}\leq L_0$, we have
\begin{align*}
    \widetilde{\mathcal{L}}f(x)
    &=\frac{1}{2}\sum_{i=1}^d\bigg(  \int_{\{\abs{z_i}\leq a\abs{x_i-y_i}\}}\big(f\brac{x+z^i}-f(x)-\inner{\nabla f(x),z^i}\mathds{1}_{\{\abs{z_i}\leq 1\}}\big)\frac{p_\alpha}{\abs{z_i}^{1+\alpha}}dz_i \\
    &\qquad+ \int_{\{\abs{z_i}\leq a\abs{x_i-y_i}\}}\big(f\brac{x-z^i}-f(x)+\inner{\nabla f(x),z^i}\mathds{1}_{\{\abs{z_i}\leq 1\}}\big)\frac{p_\alpha}{\abs{z_i}^{1+\alpha}}dz_i\bigg)\\
    &\qquad+\sum_{i=1}^d\int_{\{\abs{z_i}>a\abs{x_i-y_i}\}}\big(f(x+z^i)-f(x)-\inner{\nabla f(x),z^i}\mathds{1}_{\{\abs{z_i}\leq 1\}} \big)\frac{p_\alpha}{\abs{z_i}^{1+\alpha}}dz_i\\
    &\qquad\qquad\qquad\qquad\qquad\qquad\qquad\qquad\qquad\qquad\qquad\qquad\qquad+\inner{b(x),\nabla f(x)}. 
\end{align*}
By applying change of variable $z_i\mapsto -z_i$ to the integrals in the second line, we conclude that $\widetilde{\mathcal{L}}f(x)=\mathcal{L}f(x)$. 
\end{proof}

In the next step, we follow the argument in \cite[Section 2.2]{jianwangwassersteindecay}. The operator $\widetilde{\mathcal{L}}$ allows us to construct via a martingale problem a pair 
\begin{align}
\label{def_couplingprocess}
    \{X'(t),Y'(t):t\geq 0\},
\end{align}
that is a non-explosive coupling process of $\{X(t):t\geq 0\}$. This pair satisfies $X'_t=Y'_t$ for any $t> T$ where $T$ is some coupling time. Moreover, the generator of the pair $\{X'(t),Y'(t):0\leq t\leq T\}$ is $\widetilde{\mathcal{L}}$.

In the upcoming result, we prove an estimate similar to \cite[Proposition 3.1]{jianwangwassersteindecay}. Let us define the function
\begin{equation*}
    \psi(r)=\begin{cases}
        1-e^{-c_1r} &\text{ if } r\in [0,2L_0],\\
        Ae^{c_2(r-2L_0)}+B(r-2L_0)^2+(1-e^{-2c_1L_0}-A) &\text{ if } r\in (2L_0,\infty),
    \end{cases}
\end{equation*}
where $c_1$ is a positive constant greater than $1$ that will be determined later, and we also require $c_2\geq 20c_1$, which means
\begin{align}
\label{cond_c_1c_2}
    \log \frac{2(c_1+c_2)}{c_2}\geq 2.1, 
\end{align}
and moreover, 
\begin{align*}
    A:=\frac{c_1}{c_2}e^{-2L_0c_1},\qquad B:=-\frac{(c_1+c_2)c_1}{2}e^{-2L_0c_1}. 
\end{align*}
Set 
\begin{align*}
    \phi(r):=\psi\brac{\sqrt{r}}.
\end{align*}
Then for any $r\in (0,4(L_0)^2)$, we can compute that:
\begin{align*}
     \phi'(r)&=\frac{c_1}{2}\frac{e^{-c_1\sqrt{r}}}{r^{1/2}}>0,\\
     \phi''(r)&=-\frac{1}{4}c_1e^{-c_1\sqrt{r}}\brac{\frac{c_1}{r}+\frac{1}{r^{3/2}}}<0,\\
     \phi'''(r)&=\frac{1}{4}c_1e^{-c_1\sqrt{r}}\brac{\frac{c_1^2}{2r^{3/2}} +\frac{c_1}{r^2}+\frac{c_1}{2r^{3/2}}+\frac{3}{2r^{5/2}}}>0. 
\end{align*}

\begin{lemma}
\label{lemma_generatorestimate_fromjianwang}
Let us assume the condition in \eqref{conditionfromjianwang} holds. For any $x,y\in \R^d$, it holds that
    \begin{align*}
        \widetilde{\mathcal{L}}\psi\brac{\abs{x-y}}\leq -C_5 \psi\brac{\abs{x-y}},
    \end{align*}
where 
\begin{align*}
    C_5:=- e^{-2c_1\sqrt{\frac{2K}{\theta_4}}} \min \Bigg\{  &2\theta_1,\frac{\theta_4}{2}\brac{\frac{2K}{\theta_4}}^{\theta_4/2-1} ,\\
    &\qquad\qquad\frac{c_1}{8\sqrt{2}} \brac{\frac{e^{-2c_1\sqrt{\frac{2K}{\theta_4}}}}{20}+1}\frac{\theta_4^{3/2}}{K^{1/2}} \brac{\frac{2K}{\theta_4}}^{\theta_4/2-1} \Bigg\},
\end{align*}
with
\begin{align*}
    c_1:=\brac{\frac{\theta_1(2-\alpha)}{4p_\alpha}\brac{\frac{\theta_4}{2K}}^{\frac{1-\alpha}{2}}e^{-2\sqrt{\frac{2K}{\theta_4}}} }^{\frac{1}{\alpha-1}}. 
\end{align*}
\end{lemma}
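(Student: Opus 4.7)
The plan is to bound $\widetilde{\mathcal{L}}\psi(\abs{x-y})$ case by case in $r := \abs{x-y}$, exploiting the key observation that for $f(x,y) := \psi(\abs{x-y})$ one has $\nabla_x f + \nabla_y f = 0$. This identity kills the synchronous-jump contributions in both regimes of $\widetilde{\mathcal{L}}$: when $\abs{x-y} \leq L_0$, the large-jump part carries no compensator and $f(x+z^i, y+z^i) - f(x,y) \equiv 0$; when $\abs{x-y} > L_0$, the surviving first-order compensator term vanishes after integration of the odd function $z_i \mathds{1}_{\abs{z_i}\leq 1}$ against the symmetric density $p_\alpha \abs{z_i}^{-1-\alpha}\,dz_i$. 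What survives is therefore (i) the reflected small-jump integral, active only when $r \leq L_0$, and (ii) the drift contribution $\psi'(r)\inner{b(x)-b(y),\,x-y}/r$, which is controlled via the distant dissipativity recorded in \eqref{conditionfromjianwang}. I would split the argument into the three natural regimes $r\leq L_0$, $L_0 < r \leq 2L_0$, and $r > 2L_0$ corresponding to the piecewise definition of $\psi$.

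In the two regimes $r > L_0$ only the drift survives. Dissipativity yields $\inner{b(x)-b(y),x-y} \leq -\frac{\theta_4}{2} r^2$, so $\widetilde{\mathcal{L}}\psi(r) \leq -\frac{\theta_4}{2} r\psi'(r)$. Substituting the explicit forms of $\psi$ on each subinterval and using $\psi(r) \leq 1$, one verifies directly that $\frac{\theta_4}{2} r\psi'(r) \geq C_5 \psi(r)$ with the stated $C_5$; the three terms inside the minimum in the definition of $C_5$ arise respectively from the transition regime where $\psi'(r) = c_1 e^{-c_1 r}$ and $\psi$ is bounded by $1 - e^{-2c_1 L_0}$, from the quadratic correction $B(r-2L_0)^2$ in the far regime, and from the exponential tail $A e^{c_2(r-2L_0)}$, with the constraint $c_2 \geq 20 c_1$ ensuring that $\psi'$ remains positive across the transition and that the ratio $r\psi'(r)/\psi(r)$ is bounded below uniformly.

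In the near regime $r \leq L_0$ the reflected-jump integral reads $\frac{1}{2}\sum_{i=1}^d \int_{\abs{z_i} \leq a\abs{x_i - y_i}}\bigl[\psi(\abs{x-y+2z^i}) + \psi(\abs{x-y-2z^i}) - 2\psi(r)\bigr] \frac{p_\alpha}{\abs{z_i}^{1+\alpha}}\, dz_i$. Passing to $\phi(s) := \psi(\sqrt{s})$ and noting that $\phi' > 0$, $\phi'' < 0$ on $(0, 4L_0^2)$ (an interval inside which the expansion lives, since $a < 1/2$ guarantees $\abs{x-y \pm 2z^i}^2 \leq (1+2a)^2 r^2 < 4L_0^2$), a second-order Taylor expansion of $\phi$ about $s = r^2$ yields integrand $8\phi'(r^2)\,z_i^2 + 16\phi''(r^2)(x_i - y_i)^2 z_i^2 + O(z_i^3)$. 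Integrating $z_i^2 \abs{z_i}^{-1-\alpha}$ over $\abs{z_i} \leq a\abs{x_i-y_i}$ gives $\frac{2 a^{2-\alpha}}{2-\alpha}\abs{x_i - y_i}^{2-\alpha}$, and substituting the explicit expressions for $\phi'(r^2)$, $\phi''(r^2)$ recorded in the preamble to the lemma one finds that the summed leading term is a strictly negative multiple of $c_1^2 e^{-c_1 r} r^{2-\alpha}$. Combining this with the drift bound $\theta_1 r \psi'(r) = \theta_1 c_1 r e^{-c_1 r}$ from the first branch of \eqref{conditionfromjianwang}, one obtains $\widetilde{\mathcal{L}}\psi(r) \leq c_1 e^{-c_1 r}\bigl[\theta_1 r - \text{const}\cdot c_1 a^{2-\alpha} r^{2-\alpha}\bigr]$. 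The explicit formula for $c_1$ is precisely the choice which makes this bracket non-positive for all $r \in (0, L_0]$ and lets one absorb the residual into $-C_5 \psi(r)$ after using $\psi(r) \leq c_1 r$ and $e^{-c_1 r} \geq e^{-c_1 L_0}$, which produces the prefactor $e^{-2c_1\sqrt{2K/\theta_4}}$ visible in $C_5$.

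The main obstacle will be the careful bookkeeping in the near-regime estimate: extracting the sharp form of $c_1$ from the balance between the stable-integral gain of order $c_1^2 r^{2-\alpha}$ and the drift loss of order $\theta_1 r$, while handling the per-coordinate reflection threshold $a\abs{x_i - y_i}$ (rather than $ar$) correctly when summing the Taylor remainder in dimensions $d \geq 2$, in particular bounding the cross terms where the coordinate split $\abs{x_i - y_i}^{4-\alpha}$ couples back to $r^{2-\alpha}$ through Cauchy--Schwarz. Once this is done, matching the contractivity constants across the three regimes through the piecewise definition of $\psi$ is a routine verification using the side condition $c_2 \geq 20 c_1$.
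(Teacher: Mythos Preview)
Your overall architecture matches the paper's proof: the three-regime split in $r=\abs{x-y}$, the observation that synchronous jumps contribute nothing because $\nabla_x f+\nabla_y f=0$, the reduction to the pure drift term for $r>L_0$, and the balance between the reflected-jump gain and the drift loss that pins down $c_1$. The two far-regime arguments you sketch are exactly what the paper does.

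The gap is in your handling of the reflected-jump integral when $r\le L_0$. Your second-order Taylor expansion of $\phi$ about $s=r^2$ correctly produces the term $16\,\phi''(r^2)(x_i-y_i)^2 z_i^2$, but it also produces the \emph{positive} term $8\,\phi'(r^2)\,z_i^2$ (coming from the shift $4z_i^2$ in the argument of $\phi$), and you drop it when you assert that ``the summed leading term is a strictly negative multiple of $c_1^2 e^{-c_1 r}r^{2-\alpha}$''. After integrating $z_i^2\abs{z_i}^{-1-\alpha}$ over $\abs{z_i}\le a\abs{x_i-y_i}$ and summing in $i$, that positive piece is proportional to $\phi'(r^2)\sum_i\abs{x_i-y_i}^{2-\alpha}$, while the negative $\phi''$ piece is proportional to $\abs{\phi''(r^2)}\sum_i\abs{x_i-y_i}^{4-\alpha}$. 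These do not cancel automatically: when $x-y$ is spread across many coordinates the ratio $\sum\abs{x_i-y_i}^{4-\alpha}\big/\sum\abs{x_i-y_i}^{2-\alpha}$ can be as small as $r^2/d$, so the comparison you need is dimension-sensitive and cannot be closed with the stated (dimension-free) $c_1$ by a naive Taylor bound. Your last paragraph flags the norm comparison for the $\phi''$ term but not the competing $\phi'$ term, which is where the real difficulty sits.

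The paper does not Taylor expand. It rewrites each integrand $\Gamma_i$ as a double integral of $\phi''$,
\[
\Gamma_i=\int_0^{4\abs{x_i-y_i}\abs{z_i}}\!\!\int_0^{4\abs{x_i-y_i}\abs{z_i}}\phi''\!\Big(\textstyle\sum_n(x_n-y_n)^2+4z_i^2+s+t-4\abs{x_i-y_i}\abs{z_i}\Big)\,dt\,ds,
\]
and then invokes the sign $\phi'''>0$ (computed just before the lemma) to bound the integrand pointwise by $\phi''$ evaluated at the right endpoint, yielding directly $\Gamma_i\le 16\,\phi''\!\big((1+2a)^2r^2\big)\abs{x_i-y_i}^2 z_i^2$, a purely negative upper bound with no $\phi'$ contribution to fight. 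With this in hand the paper sets $a=1/c_1$, converts $\sum_i\abs{x_i-y_i}^{4-\alpha}$ to $d^{(\alpha-2)/2}r^{4-\alpha}$ via the $\ell^2$--$\ell^{4-\alpha}$ norm comparison, and closes the balance against $\eta_2\le\theta_1 c_1 e^{-c_1 r}r$ exactly as you outline. If you want to keep your Taylor-expansion route, you must explicitly dominate the $8\phi'(r^2)z_i^2$ term; otherwise follow the paper and use the monotonicity of $\phi''$ instead.
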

\begin{proof}

In the first part of the proof, we consider the case $x,y\in\R^d$ and $\abs{x-y}\leq L_0$.
We can compute that
\begin{align}
\label{equation_LpsilessthanL_0}
     \widetilde{\mathcal{L}}\psi\brac{\abs{x-y}}
     &=\frac{1}{2}\sum_{i=1}^d\int_{\{\abs{z_i}\leq a\abs{x_i-y_i}\}}\Bigg(\phi\brac{\sum_{1\leq n\neq i \leq d}\brac{x_n-y_n}^2+\brac{x_i-y_i-2z_i}^2}\nonumber\\
     &+\phi\brac{\sum_{1\leq n\neq i \leq d}\brac{x_n-y_n}^2+\brac{x_i-y_i+2z_i}^2}-2\phi\brac{\sum_{n}\brac{x_n-y_n}^2 } \Bigg)\nonumber\\
     &\qquad\qquad\qquad\qquad\qquad\cdot\frac{p_\alpha}{\abs{z_i}^{1+\alpha}}dz_i\nonumber+\psi'\brac{\abs{x-y}}\frac{\inner{b(x)-b(y),x-y}}{\abs{x-y}}
     \nonumber
     \\
     &=:\eta_1+\eta_2.
\end{align}

Regarding the integrands of the integrals in the above equation, we define:
\begin{align*}
\Gamma_i
&=\phi\brac{\sum_{1\leq n\neq i \leq d}\brac{x_n-y_n}^2+\brac{x_i-y_i-2z_i}^2}\\
     & \qquad\qquad +\phi\brac{\sum_{1\leq n\neq i \leq d}\brac{x_n-y_n}^2+\brac{x_i-y_i+2z_i}^2}-2\phi\brac{\sum_{n}\brac{x_n-y_n}^2 },
\end{align*}
for any $i=1,2,\ldots,d$. 
We observe that
\begin{align*}
\Gamma_i
     &=\phi\brac{\sum_{n=1}^d\brac{x_n-y_n}^2+ 4z_i^2+4\abs{x_i-y_i}\abs{z_i}}\\
     & \qquad\qquad+\phi\brac{\sum_{n=1}^d\brac{x_n-y_n}^2+ 4z_i^2-4\abs{x_i-y_i}\abs{z_i}}-2\phi\brac{\sum_{n}\brac{x_n-y_n}^2 } \\
     &=\int_0^{4\abs{x_i-y_i}\abs{z_i}}\phi'\brac{\sum_{n=1}^d\brac{x_n-y_n}^2+ 4z_i^2+s }\\
     &\hspace{11em}-\phi'\brac{\sum_{n=1}^d\brac{x_n-y_n}^2+ 4z_i^2+s-4\abs{x_i-y_i}\abs{z_i} }ds\\
     &=\int_0^{4\abs{x_i-y_i}\abs{z_i}} \int_0^{4\abs{x_i-y_i}\abs{z_i}}
     \phi''\brac{\sum_{n=1}^d\brac{x_n-y_n}^2+ 4z_i^2+s+t-4\abs{x_i-y_i}\abs{z_i} }\\
     &\hspace{32em}dtds.
 \end{align*}    
     
 Since $\phi'''(r)>0$ on $\brac{0,4(L_0)^2}$ and $\abs{z_i}\leq a\abs{x_i-y_i}$ for every $i$, it follows that 
  \begin{align*}   
     \Gamma_i&\leq 16\phi''\brac{\sum_{n=1}^d\brac{x_n-y_n}^2+ 4z_i^2+4\abs{x_i-y_i}\abs{z_i} }
     \abs{x_i-y_i}^2z_i^2\\
     &= 16\phi''\brac{\sum_{1\leq n\neq i \leq d}\brac{x_n-y_n}^2+\brac{\abs{x_i-y_i}+2\abs{z_i}}^2 } \abs{x_i-y_i}^2z_i^2\\
     &\leq 16\phi''\brac{\sum_{1\leq n\neq i \leq d}\brac{x_n-y_n}^2+(1+2a)^2\abs{x_i-y_i}^2 } \abs{x_i-y_i}^2z_i^2\\
     &\leq 16 \phi''\brac{(1+2a)^2\abs{x-y}^2 }\abs{x_i-y_i}^2z_i^2.
\end{align*}
Note that in the last line, we have $a\in (0,1/2)$ and $\abs{x-y}<L_0$ so that $(1+2a)^2\abs{x-y}^2< 4(L_0)^2$. 
Consequently, the quantity $\eta_1$ in \eqref{equation_LpsilessthanL_0} can be bounded as follows. 
\begin{align}
   \eta_1&\leq 8p_\alpha \phi''\brac{(1+2a)^2\abs{x-y}^2_2}\sum_{i=1}^d\abs{x_i-y_i}^2\int_{\{\abs{z_i}\leq a\abs{x_i-y_i}\}}\abs{z_i}^{1-\alpha}dz_i\nonumber\\
  & \leq  - \frac{2p_\alpha c_1a^{2-\alpha}}{2-\alpha}e^{-c_1(1+2a)\abs{x-y}}\brac{\frac{c_1}{(1+2a)^2\abs{x-y}^2}+\frac{1}{(1+2a)^3\abs{x-y}^3} }\cdot\sum_{i=1}^d\abs{x_i-y_i}^{4-\alpha}\nonumber\\
  & \leq  - \frac{2p_\alpha c_1a^{2-\alpha}}{2-\alpha}e^{-c_1(1+2a)\abs{x-y}}\frac{c_1}{(1+2a)^2\abs{x-y}^2} d^{\frac{\alpha-2}{2}}\abs{x-y}^{4-\alpha}\nonumber\\
  &\leq - \frac{2p_\alpha c_1L_0^{1-\alpha}a^{2-\alpha}e^{-2c_1aL_0}}{2-\alpha}c_1e^{-c_1\abs{x-y}}\abs{x-y}.\label{eta:1:upper:bound}
   \end{align}
The second line in \eqref{eta:1:upper:bound} is due to $\phi''(r)=(-1/4)c_1e^{-c_1\sqrt{r}}\brac{r^{-1}+r^{-3/2}}$ on $(0,2L_0]$. The third line in \eqref{eta:1:upper:bound} is a consequence of the relation $\abs{x-y}\leq d^{(2-\alpha)/(2(4-\alpha))}\abs{x-y}_{4-\alpha}$. The last line in \eqref{eta:1:upper:bound} is due to $\abs{x-y}\leq L_0$ and $1-\alpha<0$.

Next by the condition at \eqref{conditionfromjianwang} and $\psi'(r)=c_1e^{-c_1r}$, we have
\begin{align*}
    \eta_2&\leq \psi'\brac{\abs{x-y}}  \theta_1\abs{x-y}\leq \theta_1c_1e^{-c_1\abs{x-y}}\abs{x-y}.
\end{align*}

Now by choosing {$c_1$ sufficiently large; for instance, we can take: 
\begin{align*}
    c_1:=\brac{\frac{\theta_1(2-\alpha)}{4p_\alpha}\brac{\frac{\theta_4}{2K}}^{\frac{1-\alpha}{2}}e^{-2\sqrt{\frac{2K}{\theta_4}}} }^{\frac{1}{\alpha-1}},
\end{align*}}
and let $a=1/c_1$, it follows that
\begin{align*}
    \widetilde{\mathcal{L}}\psi\brac{\abs{x-y}}&\leq \brac{-\frac{4p_\alpha L_0^{1-\alpha} e^{-2L_0}}{2-\alpha}c_1^{\alpha-1} +\theta_1 } c_1e^{-c_1\abs{x-y}}\abs{x-y}\\
    &=-2\theta_1 c_1 e^{-c_1\abs{x-y}}\abs{x-y}\\
    &\leq -2\theta_1 c_1e^{-c_1L_0}\abs{x-y}. 
\end{align*}
The last inequality is a consequence of $\abs{x-y}\leq L_0$. Moreover, notice that $\psi''(r)<0$ on $[0,2L_0]$ which implies $\psi(r)\leq \psi'(0^+)r=c_1r$. Hence
\begin{align}
\label{estimate_tildeL_part1}
    \widetilde{\mathcal{L}}\psi\brac{\abs{x-y}}\leq -2\theta_1 e^{-c_1L_0}\abs{x-y}=-2\theta_1 e^{-c_1L_0}\psi\brac{\abs{x-y}}. 
\end{align}

In the second part of the proof, we consider $x,y\in\R^d$ such that $\abs{x-y}>L_0$. When $2L_0\geq \abs{x-y}>L_0$, we have
\begin{align}
\label{estimate_tildeL_part2}
    \widetilde{\mathcal{L}}\psi\brac{\abs{x-y}}\nonumber&=\psi'\brac{\abs{x-y}}\frac{\inner{b(x)-b(y),x-y}}{\abs{x-y}}\nonumber\\
    &\leq -\frac{\theta_4}{2} \psi'\brac{\abs{x-y}}\abs{x-y}^{\theta_0-1}\nonumber\\
    &\leq -\frac{\theta_4}{2}L_0^{\theta_0-2} c_1e^{-c_1\abs{x-y}}\abs{x-y}\nonumber\\
    &\leq - \frac{\theta_4}{2}L_0^{\theta_0-2}e^{-c_12L_0}\psi\brac{\abs{x-y}}. 
\end{align}
The second line in \eqref{estimate_tildeL_part2} is due to the condition at \eqref{conditionfromjianwang} . The third line in \eqref{estimate_tildeL_part2} is due to $\psi'(r)=c_1e^{-c_1r}$ and $\abs{x-y}>L_0$. The last line in \eqref{estimate_tildeL_part2} is obtained by noticing $\psi(r)\leq c_1r$ on $[0,2L_0]$ and $\abs{x-y}<2L_0$. 

Finally, we consider the case $\abs{x-y}> 2L_0$. When $r>2L_0$, it has been shown in \cite[p. 1609]{jianwangwassersteindecay} that assumption  \eqref{cond_c_1c_2} implies for $r>2L_0$, 
\begin{align*}
    q(r)=\frac{1}{2}Ac_2e^{c_2(r-2L_0)}+2B(r-2L_0)\geq 0,
\end{align*}
and hence
\begin{align*}
    \psi'(r)=\frac{1}{2}Ac_2e^{c_2(r-2L_0)}+q(r)>\frac{1}{2}Ac_2e^{c_2(r-2L_0)}>0 . 
\end{align*}
Therefore, we can apply the condition at \eqref{conditionfromjianwang} to get for $\abs{x-y}>2L_0$
\begin{align*}\widetilde{\mathcal{L}}\psi\brac{\abs{x-y}}\nonumber&=\psi'\brac{\abs{x-y}}\frac{\inner{b(x)-b(y),x-y}}{\abs{x-y}}\nonumber\\
    &\leq -\frac{\frac{\theta_4}{2} Ac_2}{2}e^{c_2(\abs{x-y}-2L_0)} \abs{x-y}^{\theta_0-1}\nonumber\\
    &\leq -\frac{\frac{\theta_4}{2} Ac_2L_0^{\theta_0-2}}{2} e^{c_2(\abs{x-y}-2L_0)} \abs{x-y}.
\end{align*}
Now observe that $B<0$ and $0<A<c_1/c_2$, so that when $r>2L_0$, we have
\begin{align*}
    \psi(r)&=Ae^{c_2(r-2L_0)}+B(r-2L_0)^2+(1-e^{-2c_1L_0}-A)\\
    &\leq {e^{c_2(r-2L_0)}+1}   \leq {\frac{A+1}{2L_0}}re^{c_2(r-2L_0)}.
\end{align*}
 The previous calculations imply for $\abs{x-y}>2L_0$
\begin{align}
    \label{estimate_tildeL_part3}
    \widetilde{\mathcal{L}}\psi\brac{\abs{x-y}}\leq -\frac{(A+1) \theta_4 Ac_2L_0^{\theta_0-2}}{8L_0} \psi\brac{\abs{x-y}}.
\end{align}

Now we combine the estimates at \eqref{estimate_tildeL_part1}, \eqref{estimate_tildeL_part2} and \eqref{estimate_tildeL_part3}. To make things simpler, let us set $c_2=20c_1$ so that $A=(1/20)e^{-2c_1L_0}$, and recall that $L_0=\sqrt{2K/\theta_4}$. Hence, we conclude that for any $x,y\in\R^d$, 
\begin{align*}
    \widetilde{\mathcal{L}}\psi\brac{\abs{x-y}}
    &\leq - e^{-2c_1\sqrt{\frac{2K}{\theta_4}}} \min \Bigg\{ 2\theta_1,\frac{\theta_4}{2}\brac{\frac{2K}{\theta_4}}^{\theta_4/2-1} ,\\
    &\hspace{4em}\frac{c_1}{8\sqrt{2}} \brac{\frac{e^{-2c_1\sqrt{\frac{2K}{\theta_4}}}}{20}+1}\frac{\theta_4^{3/2}}{K^{1/2}} \brac{\frac{2K}{\theta_4}}^{\theta_4/2-1} \Bigg\}\psi\brac{\abs{x-y}}. 
\end{align*}
This completes the proof.
\end{proof}


\begin{proof}[Proof of Lemma \ref{lemma_basedonjianwangpaper}]
    The proof makes use of the coupling process at \eqref{def_couplingprocess} plus  Lemma~\ref{lemma_estimatetildeL} and Lemma~\ref{lemma_generatorestimate_fromjianwang}, and is exactly the same as the proof of \cite[Theorem 1.2]{jianwangwassersteindecay}. A careful reading of the proof of the aforementioned Theorem will reveal that in their Equation (3.4),
    \begin{align*}
        C(p)= \frac{1-e^{-c_1L_0}}{(L_0)^p}. 
    \end{align*}
Based on this, we deduce the constant $C_2$ which appears in \cite[p. 1613]{jianwangwassersteindecay} when $p=1$ is given by
\begin{align*}
    C_2=2C(1)= \frac{2\brac{1-e^{-c_1L_0}}}{L_0}. 
\end{align*}
The proof is complete.
\end{proof}

	\section{Malliavin calculus on Poisson space}
	\label{appendix_malliavin}
In this Appendix, we introduce the Malliavin calculus developed in \cite{kulik2023gradient}. The aforementioned paper adapts ideas of the classical work \cite{Bichtelerbook, basscranston1986malliavin, norris1988integration} to the setting of cylindrical L\'{e}vy processes.  Unless specified otherwise, Condition~\ref{cond_levymeasure} and Condition~\ref{cond_driftb} are the standing assumptions throughout the Appendix. 

For a $d\times d$ matrix $A$ with entries $a_{ij},1\leq i,j\leq d$, let us write $\abs{A}:=\sqrt{\sum_{i=1}^d \abs{a_{ij}}^2}$ which is the Frobenius norm of $A$.

Recall the setup of Section \ref{sec:cylindrical}. For each $j$, let $N_j$ be a Poisson random measure on $E:= \R\times [0,\infty)$ with intensity measure $m_j(d\xi)ds$, with $m_j$ being specified in Condition~\ref{cond_levymeasure}. Let $N$ be the Poisson random measure that is the product of $N_j$ with the intensity measure: 
\begin{align*}
    m(d\xi)ds=\prod_{j=1}^d m_j(d\xi_j)ds.
\end{align*}
Then due to independence of $Z^j$'s, we have the representation 
\begin{align*}
    Z_t=\int_0^t\int_{\R^d}\xi \overline{N}(d\xi,ds),
\end{align*}
where
\begin{align*}
    \overline{N}(d\xi, ds)&:=\widehat{N}(d\xi, ds)\mathbbm{1}_{\{\abs{\xi}\leq 1\}}+{N}(d\xi, ds)\mathbbm{1}_{\{\abs{\xi}\geq 1\}},\\
    \widehat{N}(d\xi, ds)&:={N}(d\xi, ds)-ds m(d\xi). 
\end{align*}

Consider the $\Lambda$-algebra
\begin{align*}
    \mathfrak{F}_t:=\Lambda\brac{N([0,s]\times \Gamma):0\leq s\leq t, \Gamma\in \mathcal{B}(\R^d)}.
\end{align*}
Then the Poisson random measure $N$ can be viewed as a random element in the space $\mathbb{Z}(E)$ of integer-valued measures on $(E,\mathcal{B})$.

A random variable  $F$ is said to be an $L^0$-functional of $N$ if there exists a sequence of bounded measurable function $f_m:\mathbb{Z}(E)\to \R$ such that the following convergence holds in probability: 
\begin{align*}
    F=\plim_{m\to\infty}f_m(N).
\end{align*}

Next, let us introduce the field $V=(V_1,\ldots,V_d)$ whose $j$-component satisfies
\begin{align}
    \label{def_fieldV}
    V_j(t,\xi_j)=\phi (\xi_j)\psi(t),
\end{align}
where $\psi\in \mathcal{C}^\infty(\R)$ and $\phi \in \mathcal{C}^\infty\brac{\R\setminus \{0\}}$ are non-negative functions such that 
\[ \psi(t):=\begin{cases} 
      0 &\text{if } \abs{t}\geq \delta, \\
     1 &\text{if } \abs{t}\leq \delta/2,
   \end{cases}
\]
where $\delta\in [0,R]$ is a small value and is chosen as in \cite[Proof of Lemma 6]{kulik2023gradient}, and 
\begin{align*}
    \phi(\xi_j):=\abs{\xi_j}^\kappa \psi(\xi_j). 
\end{align*}

Consider the following perturbation of elements in $\mathbb{Z}(E)$:
\begin{align*}
    Q^\epsilon_k\brac{\sum_{j=1}^d \delta_{(t_j,\xi_j)}}=\sum_{j=1}^d \delta_{\brac{t_j,\xi_j+\epsilon V_k(t_j,\xi_j)e^k}},
\end{align*}
where $\{e^k:1\leq k\leq d\}$ is the canonical basis of $\R^d$. 

 For a $L^0$-functional $F$, we write
\begin{align*}
    Q^\epsilon_kF=\plim_{m\to\infty} f_m\brac{ Q^\epsilon_k(N)}, 
\end{align*}
where the existence and well-posedness of $Q^\epsilon_kF$ is shown in \cite[Lemma 2]{kulik2023gradient}. Then the Malliavin derivative of $F$ in the direction $V_ke_k$ is
\begin{align*}
   D_kF=\plim_{\epsilon\to 0}\brac{Q^\epsilon_k(F)-F}.
\end{align*}
If every direction has such a limit, then $F$ is said to be differentiable. We will also write
\begin{align*}
    \der F=\brac{D_1F,\ldots, D_dF},
\end{align*}
which is the Malliavin derivative of $F$ with respect to the field $V=(V_1,\cdots,V_d)$.

The following chain rule of $\der$ will be useful.
\begin{lemma}
    Assume $F_1,\cdots,F_m$ are differentiable $L^0$-functionals of $N$. Then for any $g\in\mathcal{C}^1_b$, $g\brac{F_1,\cdots,F_m}$ is differentiable and 
    \begin{align*}
        D_k g\brac{F_1,\cdots,F_m}=\sum_{j=1}^m \nabla_j g\brac{F_1,\cdots,F_m}D_kF_j. 
    \end{align*}
\end{lemma}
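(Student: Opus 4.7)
The plan is to exploit two facts: the perturbation operator $Q^\epsilon_k$ acts only on the Poisson configuration $N$ and therefore commutes with deterministic composition, and $g \in \mathcal{C}^1_b$ admits a first-order Taylor expansion with a controllable remainder.

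First I would establish the commutation identity
\[
Q^\epsilon_k\brac{g(F_1,\ldots,F_m)} = g\brac{Q^\epsilon_k F_1, \ldots, Q^\epsilon_k F_m}.
\]
Since each $F_j$ is an $L^0$-functional with approximating sequence $f^j_n : \mathbb{Z}(E) \to \R$, I would represent the composite $g(F_1,\ldots,F_m)$ by the sequence $g(f^1_n,\ldots,f^m_n)$ on $\mathbb{Z}(E)$, using continuity of $g$ to push the probability limit through $g$. Then the defining relation $Q^\epsilon_k F = \plim_n f_n(Q^\epsilon_k N)$ applied to this representation yields the identity, again by continuity of $g$.

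Second, I would use the mean value theorem for $g$: for any $x,y \in \R^m$,
\[
g(y) - g(x) = \sum_{j=1}^m \nabla_j g(x)(y_j - x_j) + R(x,y),
\]
with $\abs{R(x,y)} \le \omega_{\nabla g}\brac{\abs{y-x}} \abs{y-x}$, where $\omega_{\nabla g}$ is a local modulus of continuity of $\nabla g$ (finite on bounded sets since $g \in \mathcal{C}^1_b$). Substituting $x_j = F_j$, $y_j = Q^\epsilon_k F_j$ and combining with the commutation identity yields
\[
Q^\epsilon_k g(F_1,\ldots,F_m) - g(F_1,\ldots,F_m) = \sum_{j=1}^m \nabla_j g(F_1,\ldots,F_m)\brac{Q^\epsilon_k F_j - F_j} + R_\epsilon.
\]

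The final step is to take $\plim_{\epsilon \to 0}$, with the normalization implicit in the definition of $D_k$. By the assumed differentiability of each $F_j$, the increments $Q^\epsilon_k F_j - F_j$ converge in probability to $D_k F_j$ at the appropriate $\epsilon$-scale; combined with boundedness of $\nabla g$ and the continuous mapping theorem, the linear terms converge to $\sum_j \nabla_j g(F_1,\ldots,F_m)\, D_k F_j$. The main obstacle is showing that the remainder $R_\epsilon$ is negligible at the same scale. For this I would use that $Q^\epsilon_k F_j \to F_j$ in probability as $\epsilon \to 0$ (which follows from differentiability) together with a standard truncation argument on the event $\{\max_j \abs{F_j} \le M\}$, where $\omega_{\nabla g}$ is uniformly continuous, to conclude $R_\epsilon$ is of order $o\brac{\max_j \abs{Q^\epsilon_k F_j - F_j}}$ in probability. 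Letting $M \to \infty$ after $\epsilon \to 0$ gives the chain rule.
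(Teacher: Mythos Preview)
The paper does not actually give a proof of this lemma; it is stated without proof in Appendix~A as part of the background Malliavin calculus imported from \cite{kulik2023gradient}. So there is no ``paper's own proof'' to compare against.

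Your outline is the standard route and is essentially correct. A couple of remarks. First, the paper's displayed definition $D_kF=\plim_{\epsilon\to 0}\brac{Q^\epsilon_k(F)-F}$ is evidently missing the factor $1/\epsilon$ (as one sees from the formula $D_jZ^j_t=\int_0^t\int_{\R}V_j(s,\xi_j)N_j(d\xi_j,ds)$ later in the appendix); your reference to ``the normalization implicit in the definition'' is therefore the right reading. Second, your remainder control is fine but can be streamlined: since each $(Q^\epsilon_kF_j-F_j)/\epsilon$ converges in probability to $D_kF_j$, the vector $(Q^\epsilon_kF-F)/\epsilon$ is tight, while $Q^\epsilon_kF-F\to 0$ in probability; hence $\omega_{\nabla g}\brac{\abs{Q^\epsilon_kF-F}}\cdot\abs{Q^\epsilon_kF-F}/\epsilon\to 0$ in probability directly, and the truncation on $\{\max_j\abs{F_j}\le M\}$ is only needed because $\nabla g$ need not be globally uniformly continuous. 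That localisation step is exactly as you describe and is routine.
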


Next, we state a key result that is a Bismut-Elworthy-Li formula established by Kulik, Peszat and Priola \cite{kulik2023gradient}.

\begin{proposition}[\cite{kulik2023gradient}]
\label{prop_bismut}
Assume only \eqref{cond_pi},\eqref{cond_intkappa},\eqref{cond_int_2kappaminus2},\eqref{cond_inttwokappa} in Condition~\ref{cond_levymeasure} and \eqref{originalconddirftb} in Condition~\ref{cond_driftb}. Then for any $f\in \mathcal{C}_b(\R^d)$, there exists $G(x,t)$ such that
\begin{align*}
    \nabla P_tf(x)= \E{f(X^x_t)G(x,t)}.
\end{align*}
The random field $G(x,t)=(G_1(x,t),\ldots,G_d(x,t))$ satisfies
    \begin{align*}
    G_j(x,t)=\sum_{k=1}^d \left(A_{k,j}(x,t)B_k(t)-D_kA_{k,j}(x,t)\right),
\end{align*}
where the entries of the $\R^{d\times d}$-valued random field $A(x,t)$ and $\R^d$-valued random field $B(t)$ are given by
\begin{align*}
    A_{k,j}(x,t)&= \bracsq{(\der X^x_t)^{-1}\,\nabla X^x_t}_{k,j},\\
     B_k(t)&= -\int_0^t\int_{-R}^R\frac{\frac{d}{d\xi^k} \brac{V_k(s,\xi_k)\rho_k(\xi_k)}}{\rho_k(\xi_k)}\widehat{N}_k(ds,d\xi).
\end{align*}
Moreover, for any $t\geq 0$, $A_{k,j}(x,t)$ is $p$-integrable for $p\geq 1$ and $B_k(t)$ is $q$-integrable for $2\geq q\geq 1$. 
\end{proposition}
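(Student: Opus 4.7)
Since this proposition is attributed to \cite{kulik2023gradient}, in the paper itself the proof would presumably just quote the reference. That said, the strategy for proving such a Bismut-Elworthy-Li formula is fairly canonical, and I would proceed as follows. First, differentiate the semigroup under the expectation to get $\nabla P_tf(x) = \mathbb{E}[\nabla f(X_t^x)\,\nabla X_t^x]$, where the spatial Jacobian $\nabla X_t^x$ solves the linearized equation $d\nabla X_t^x = \nabla b(X_t^x)\,\nabla X_t^x\,dt$ with $\nabla X_0^x = I$. Since $b$ is $\mathcal{C}^2$-bounded by Condition~\ref{cond_driftb}, $\nabla X_t^x$ is well-defined and pathwise controlled by $e^{\theta_1 t}$. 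The goal is then to re-express $\nabla f(X_t^x)\,\nabla X_t^x$ as a Malliavin derivative acting on $f(X_t^x)$, so that after integration by parts the gradient on $f$ disappears.

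Second, I would compute the Malliavin derivative $\der X_t^x$ by perturbing the underlying Poisson measure along the field $V_ke^k$ defined in \eqref{def_fieldV}. Differentiating the SDE gives a linear equation for $D_kX_t^x$ driven by the same matrix $\nabla b(X_t^x)$, with a jump-source term proportional to $V_k$. Provided the Malliavin matrix is invertible, the algebraic identity $\nabla X_t^x = (\der X_t^x)\,A(x,t)$ with $A(x,t) = (\der X_t^x)^{-1}\nabla X_t^x$ combined with the chain rule $D_kf(X_t^x) = \nabla f(X_t^x)\cdot D_kX_t^x$ yields
\[
\nabla f(X_t^x)\,\nabla X_t^x = \sum_{k=1}^{d} \bigl(D_kf(X_t^x)\bigr)\,A_{k,\cdot}(x,t).
\]

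Third, I would invoke the Poisson-space integration-by-parts formula developed in \cite{kulik2023gradient}, which in the present notation reads $\mathbb{E}[(D_kF)H] = \mathbb{E}[F(HB_k(t) - D_kH)]$ for sufficiently regular $F,H$, the random variable $B_k(t)$ being precisely the compensated Poisson integral of the logarithmic derivative $-\partial_{\xi_k}(V_k\rho_k)/\rho_k$ (i.e.\ the Skorokhod-type divergence of the perturbation field $V_ke^k$). Applying the duality coordinatewise with $F=f(X_t^x)$ and $H=A_{k,j}(x,t)$, then summing in $k$, produces
\[
\nabla_j P_tf(x) = \mathbb{E}\!\left[f(X_t^x)\,\sum_{k=1}^{d}\bigl(A_{k,j}(x,t)B_k(t) - D_kA_{k,j}(x,t)\bigr)\right],
\]
which is exactly the stated formula with the specified $G_j(x,t)$.

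The principal obstacle is verifying that every object involved is well-defined with the claimed integrability. The existence of the inverse $(\der X_t^x)^{-1}$ requires non-degeneracy of the Malliavin matrix, which is where condition \eqref{cond_pi} enters: the lower bound $\liminf_{\epsilon\to 0}\epsilon^\pi m_j(|\xi|\geq\epsilon)>0$ guarantees enough small-jump activity in each coordinate direction to control the smallest singular value, and combined with the weight-integrability bounds \eqref{cond_intkappa}--\eqref{cond_inttwokappa} this delivers $p$-integrability of $A_{k,j}(x,t)$ for all $p\geq 1$. By contrast, the restriction $1\leq q\leq 2$ for $B_k(t)$ is intrinsic: $B_k(t)$ is a compensated Poisson integral whose integrand lies in $L^2(m_k)$ by \eqref{cond_inttwokappa} but generally not in higher $L^p$, so one can only exploit the $L^2$-isometry and the Burkholder-Davis-Gundy inequality on the range $q\in(1,2]$. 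A standard density argument then extends the identity from $\mathcal{C}_b^1$ to all $f\in\mathcal{C}_b$, since the right-hand side only involves $f$ itself.
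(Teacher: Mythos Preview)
Your anticipation is correct: the paper's proof of this proposition consists solely of the line ``Refer to Theorem~1, Lemma~3 and Lemma~5 in \cite{kulik2023gradient}.'' The sketch you supply of the underlying argument---differentiate the semigroup, identify $\nabla X_t^x$ with $(\der X_t^x)A(x,t)$, apply the Poisson-space duality $\mathbb{E}[(D_kF)H]=\mathbb{E}[F(HB_k-D_kH)]$, and then check invertibility of the Malliavin matrix via \eqref{cond_pi} and $L^2$-integrability of $B_k$ via \eqref{cond_inttwokappa}---is faithful to the methodology of \cite{kulik2023gradient} and correctly identifies where each hypothesis enters.
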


\begin{proof}
    Refer to Theorem~1, Lemma~3 and Lemma~5 in \cite{kulik2023gradient}. 
\end{proof}

The goal of this Appendix is to prove the following integrability result. Since we aim to obtain explicit bounds, the proof requires long and tedious calculations and is therefore postponed to Appendix~\ref{section_proofoflemmaGandnablaG}. 

\begin{lemma}
\label{lemma_GandnablaG}
 Assume Condition~\ref{cond_levymeasure} and Condition~\ref{cond_driftb}. For any $t\geq 0$, the $\R^d$-valued random variable $G(x,t)$ is differentiable in $x$ and $q$-integrable for $\tau>q\geq 1$. The same properties hold for the $d\times d$ random matrix $\nabla G(x,t)$.

 Specifically, we have for any $q_0\in (q,\tau)$
 \begin{align*} 
\sup_{x\in\R^d}\E{\abs{G(x,t)}^q}\leq 2^{q-1}\brac{\frac{q}{q_0}\mathcal{Q}^0(q_0,t)+\frac{q_0-q}{q_0} \mathcal{Q}^9\brac{\frac{qq_0}{q_0-q},t}+\sum_{k=1}^d\mathcal{Q}^{11}_k(q,t)},
    \end{align*}
and 
\begin{align*}
&\sup_{x\in \R^d}   \E{ \abs{\nabla G(x,t)}^q}\\
&\leq 2^{q-1} \brac{\sum_{m=1}^d \brac{\frac{q}{q_0}\mathcal{Q}^0(q_0,t)+\frac{q_0-q}{q_0}\mathcal{Q}^{10}_m\brac{\frac{qq_0}{q_0-q},t}+\sum_{k=1}^d\mathcal{Q}^{12}_{k,m}(q,t) }^2}^{\frac{1}{2}}, 
\end{align*}
where the factor $Q_i$'s are defined in Appendix~\ref{section_proofoflemmaGandnablaG}, with $\mathcal{Q}^0$ in Lemma~\ref{lemma_Bk} and $\mathcal{Q}^{9},\mathcal{Q}^{10}, \mathcal{Q}^{11},\mathcal{Q}^{12}$ in Lemma~\ref{lemma_AandDAand_nablaDA}. 

Furthermore, regarding dimension dependence, our upper bound on $\sup_{x\in\R^d}\E{\abs{G(x,t)}^q}$ is of the order
\begin{align*}
\mathcal{O}\left( d^{\frac{qq_0}{2(q_0-q)}}\vee d^{\frac{3}{2}+\brac{\frac{q\tau}{2(\tau-q)}\vee \tau}}\right),
\end{align*}
as $d\rightarrow\infty$, 
while our upper bound on $\sup_{x\in\R^d}\E{ \abs{\nabla G(x,t)}^q}$ is of the order 
\begin{align*}
\mathcal{O}\left(d^{\frac{4qq_0}{q_0-q}+\frac{5}{2}}\vee d^{q_0+\frac{7}{2}-\frac{q_0}{\tau}}\vee d^{\frac{q\tau}{q(\tau-q)}+\frac{5}{2}}\vee d^{\tau+\frac{5}{2}}\right), 
\end{align*}
as $d\rightarrow\infty$.
\end{lemma}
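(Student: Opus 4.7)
The plan is to start from the explicit representation
\[
G_j(x,t) = \sum_{k=1}^d \bigl( A_{k,j}(x,t)\, B_k(t) - D_k A_{k,j}(x,t)\bigr)
\]
from Proposition~\ref{prop_bismut} and reduce both the $q$-integrability and the explicit bound to moment estimates on three building blocks: the Poisson-integral type process $B_k(t)$, the entries $A_{k,j}(x,t)$ of $(\mathbb{D}X_t^x)^{-1}\nabla X_t^x$, and their Malliavin derivatives $D_k A_{k,j}(x,t)$, together with their spatial gradients $\nabla A_{k,j}$ and $\nabla D_k A_{k,j}$. For each $j$, by the triangle inequality and Jensen's inequality (picking up a factor $d^{q-1}$ after summing in $k$), it suffices to control moments of the two summands separately, then sum and take the Euclidean norm over $j$ for $\nabla G$. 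The auxiliary factors $\mathcal{Q}^0,\mathcal{Q}^9,\mathcal{Q}^{10},\mathcal{Q}^{11},\mathcal{Q}^{12}$ in the stated bounds correspond exactly to these building blocks, so I would introduce separate lemmas (referenced in the statement) that compute them.

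For the product $A_{k,j}B_k$ I would use Hölder's inequality with conjugate exponents $q_0/q$ and $q_0/(q_0-q)$, which yields
\[
\mathbb{E}\bigl[|A_{k,j}B_k|^q\bigr] \le \mathbb{E}\bigl[|B_k|^{q_0}\bigr]^{q/q_0}\,\mathbb{E}\bigl[|A_{k,j}|^{qq_0/(q_0-q)}\bigr]^{(q_0-q)/q_0},
\]
which is precisely why $q_0\in(q,\tau)$ has to appear: Proposition~\ref{prop_bismut} only guarantees $q$-integrability of $B_k$ for $q\le 2$, but Condition~\ref{cond_levymeasure}\eqref{cond_inttaukappa} upgrades this to $q<\tau$ via the moment formula for Poisson stochastic integrals applied to $\frac{(V_k\rho_k)'}{\rho_k}$, giving $\mathcal{Q}^0(q_0,t)$. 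The $A_{k,j}$ factor is handled by writing the SDEs satisfied by $\mathbb{D}X_t^x$ and $\nabla X_t^x$, inverting by the standard matrix-valued ODE trick, applying Gronwall, and using the dissipativity from Condition~\ref{cond_driftb} together with Condition~\ref{cond_wassersteindecay} to obtain bounds in the $\mathcal{Q}^9$ and $\mathcal{Q}^{10}$ families. The contribution $D_kA_{k,j}$ is analyzed by differentiating the system for $(\mathbb{D}X_t^x)^{-1}\nabla X_t^x$ in the direction of $V_k$, using the chain rule for $D_k$ and then bounding directly, producing $\mathcal{Q}^{11}$ and $\mathcal{Q}^{12}$.

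For the $\nabla G$ bound I would first justify the interchange of $\nabla_x$ with the Malliavin objects by appealing to smoothness of $b$ (Condition~\ref{cond_driftb}) and the $L^p$-continuity of $x\mapsto X_t^x$, $\nabla X_t^x$, $\mathbb{D}X_t^x$; then apply the same Hölder decomposition to $\nabla A_{k,j}\cdot B_k$ and combine with a separate moment bound for $\nabla D_k A_{k,j}$. Summing over $k$ inside the absolute value gives $d^{q-1}$; summing over $j$ in the Euclidean norm then produces the outer $\bigl(\sum_{m=1}^d(\cdot)^2\bigr)^{1/2}$, with the second power of $2^{q-1}$ in front coming from $(x+y)^q\le 2^{q-1}(x^q+y^q)$ applied once at the Hölder-splitting stage and once at the inner $A_{k,j}B_k\pm D_kA_{k,j}$ split.

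The main obstacle will be the moment estimates for $\nabla D_k A_{k,j}$, since $A_{k,j}$ involves the inverse $(\mathbb{D}X_t^x)^{-1}$. Differentiating this in $x$ and in the Malliavin direction $V_k$ and controlling the resulting expression uniformly in $x$ requires carefully iterating Gronwall-type bounds for three coupled SDEs (for $\nabla X_t^x$, $\mathbb{D}X_t^x$, and their cross-derivatives), and each step costs additional powers of $d$, which is where the worst dimension-dependent exponents in the stated asymptotics arise. Once those estimates are in place, the dimension counts follow by carefully tracking the Frobenius-norm factors $\sqrt{d}$ and the factors $d^{q-1}$ from Jensen, and the cases $q_0/(q_0-q)\gtrless \tau/(\tau-q)$ and $(\tfrac{1}{2(\tau-1)}\vee 1)$ produce the two branches in the asymptotic rates. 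The verification that the required $\tau$-integrability of $B_k$ actually holds (where $\tau>2$ becomes essential, as foreshadowed in Remark~\ref{remark_needtaugreaterthan2}) is the final check.
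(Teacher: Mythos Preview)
Your overall strategy is correct and follows the same architecture as the paper: start from the Bismut--Elworthy--Li representation, split into the $A\cdot B$ part and the $D_kA$ part, and reduce to the building-block moment estimates $\mathcal{Q}^0,\mathcal{Q}^9,\mathcal{Q}^{10},\mathcal{Q}^{11},\mathcal{Q}^{12}$ established via iterated Gr\"onwall and Kunita-type bounds on the flows $\nabla X_t^x$, $\mathbb{D}X_t^x$, $(\mathbb{D}X_t^x)^{-1}$ and their derivatives.

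Two points need correcting. First, H\"older's inequality on $\mathbb{E}[|A_{k,j}B_k|^q]$ gives you a \emph{product} $\mathbb{E}[|B_k|^{q_0}]^{q/q_0}\,\mathbb{E}[|A_{k,j}|^{qq_0/(q_0-q)}]^{(q_0-q)/q_0}$, not the additive form $\tfrac{q}{q_0}\mathcal{Q}^0+\tfrac{q_0-q}{q_0}\mathcal{Q}^9$ that appears in the stated bound. The paper instead applies Young's inequality $xy\le \tfrac{x^a}{a}+\tfrac{y^b}{b}$ with $a=q_0/q$, $b=q_0/(q_0-q)$ directly to the random variables $|A(x,t)|^q|B(t)|^q$ \emph{before} taking expectation; this is what produces the coefficients $q/q_0$ and $(q_0-q)/q_0$. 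Your route can be salvaged by applying Young after H\"older, but as written it does not reach the stated inequality. Second, you invoke Condition~\ref{cond_wassersteindecay} to bound the $A_{k,j}$ moments, but the lemma assumes only Conditions~\ref{cond_levymeasure} and~\ref{cond_driftb}; the Wasserstein decay is not used (and not available) here. The flow bounds come purely from the Lipschitz and second-/third-derivative bounds on $b$ in Condition~\ref{cond_driftb}, via Gr\"onwall. A smaller point: rather than entrywise Jensen with a $d^{q-1}$ factor, the paper passes to matrix Frobenius norms via Cauchy--Schwarz, obtaining $|G(x,t)|\le |A(x,t)|\,|B(t)|+\sqrt{d}\sum_{k}|D_kA(x,t)|$ and then applies $(u+v)^q\le 2^{q-1}(u^q+v^q)$ once; this is where the single $2^{q-1}$ prefactor comes from.
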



\section{Proof of Lemma~\ref{lemma_GandnablaG}}\label{section_proofoflemmaGandnablaG}

In this Appendix, we provide the proof of Lemma~\ref{lemma_GandnablaG}.

First, recall that for a $d\times d$ matrix $A$ with entries $a_{ij},1\leq i,j\leq d$, we write $\abs{A}=\sqrt{\sum_{i=1}^d \abs{a_{ij}}^2}$ which is the Frobenius norm of $A$.

One technical tool we will make frequent use in this Appendix is a maximal inequality for Poisson stochastic integrals known as Kunita's inequality (see \cite[Theorem 4.4.23]{applebaum2009levy} or \cite[Proposition 2.6.1]{kunitabook2019stochastic}). Statements of the Kunita's inequality in the aforementioned references do not include an explicit bounding constant; however as can be seen from their proofs, obtaining an explicit bounding constant does not require much extra work. We repeat the proof below for readers' convenience. 

Let $c>0$ and $t\in[0,T]$. $E$ is the open ball with radius $c$ centered at the origin. Further let $H=\brac{H^1,\ldots,H^d}$ such that each $H^i(\xi,t):E\times [0,T]\times \Omega\to\R$ is a predictable mapping and $\mathbb{P}\brac{\int_0^T\int_E\abs{H(\xi,t)}m(d\xi)dt<\infty}=1$. We can define the Poisson stochastic integrals
\begin{align*}
  {I}(t)=\brac{{I}_1(t),\ldots,{I}_d(t)}, \qquad  \hat{I}(t)=\brac{\hat{I}_1(t),\ldots,\hat{I}_d(t)},
\end{align*}
where for every $i=1,2,\ldots,d$:
\begin{align*}
   {I}_i(t)=\int_0^t\int_EH_i(\xi,s){N}(d\xi,ds),\qquad \hat{I}_i(t)=\int_0^t\int_EH_i(\xi,s)\widehat{N}(d\xi,ds). 
\end{align*}
\begin{lemma}
    \label{lemma_kunita}
    For any $p\geq 2$, it holds that
\begin{align*}
    \E{\sup_{0\leq s\leq t} \abs{\hat{I}(s)}^p}
    \leq C_6(p)\brac{\E{\brac{\int_0^t\int_E \abs{H(\xi,s)}^2m(d\xi)ds}^{\frac{p}{2}}}
    +\E{\int_0^t\int_E \abs{H(\xi,s)}^p m(d\xi)ds }},
\end{align*}
and 
\begin{align*}
    \E{\sup_{0\leq s\leq t} \abs{{I}(s)}^p}
    &\leq C_6(p)\brac{\E{\brac{\int_0^t\int_E \abs{H(\xi,s)}^2m(d\xi)ds}^{\frac{p}{2}}}
    +\E{\int_0^t\int_E \abs{H(\xi,s)}^p m(d\xi)ds }}\\
    &\qquad\qquad\qquad+\E{\brac{\int_0^t\int_E a\abs{H(\xi,s)}m(d\xi)ds}^{p}},
\end{align*}
where the constant factor is 
\begin{align*} C_6(p):= \begin{cases} 
      2 & p=2, \\
     \max\left\{ {2p^4\brac{2^{2p-3}p^2}^{\frac{p}{2(p-2)}}},
    {2^{2p-3}p^7}\right\}\frac{1}{p^5-(p-2)2^{3-2p}} & p>2.
   \end{cases}
\end{align*}

\end{lemma}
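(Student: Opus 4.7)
The plan is to apply It\^o's formula to the function $F(x)=|x|^p$ evaluated along the $d$-dimensional compensated Poisson integral $\hat I$, then split the resulting decomposition into a local martingale part and a compensator drift, bound each piece separately, and finally absorb a self-referential sup-term via Young's inequality.

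First, I would write $|\hat I(t)|^p = M(t)+A(t)$, where $M(t)$ is the stochastic integral against $\widehat N$ of the increment $|\hat I(s_-)+H(\xi,s)|^p-|\hat I(s_-)|^p$, and $A(t)$ is the compensator
\[
A(t)=\int_0^t\!\int_E\Bigl(|\hat I(s_-)+H(\xi,s)|^p-|\hat I(s_-)|^p-p|\hat I(s_-)|^{p-2}\langle\hat I(s_-),H(\xi,s)\rangle\Bigr)m(d\xi)\,ds.
\]
Taylor's theorem with integral remainder controls the integrand of $A$ by $\tfrac12 p(p-1)\bigl(|\hat I(s_-)|+|H(\xi,s)|\bigr)^{p-2}|H(\xi,s)|^2$, which via the elementary inequality $(a+b)^{p-2}\le 2^{p-3}(a^{p-2}+b^{p-2})$ reduces to a sum of $|\hat I|^{p-2}|H|^2$ and $|H|^p$ contributions. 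For the martingale part $M(t)$, I would apply Doob's $L^p$ maximal inequality, then the Burkholder--Davis--Gundy inequality for purely discontinuous martingales, to reduce the bound to the predictable quadratic variation, which again produces the same two types of contributions.

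The core of the argument is then to apply Young's inequality
\[
|\hat I(s)|^{p-2}|H(\xi,s)|^2\le \epsilon\,|\hat I(s)|^p+\tfrac{p-2}{p}\epsilon^{-2/(p-2)}|H(\xi,s)|^p,
\]
and choose $\epsilon$ small enough that the coefficient of $\mathbb{E}[\sup_{s\le t}|\hat I(s)|^p]$ on the right-hand side, after invoking Doob's inequality, is strictly less than $1$; absorbing this term into the left side produces the denominator $p^5-(p-2)2^{3-2p}$ and, through the exponent of the Young constant, the factor $(2^{2p-3}p^2)^{p/(2(p-2))}$ appearing inside the $\max$ defining $C_6(p)$. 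For the non-compensated bound, I would write $I(t)=\hat I(t)+\int_0^t\!\int_E H(\xi,s)\,m(d\xi)\,ds$ and use $(a+b)^p\le 2^{p-1}(a^p+b^p)$ together with the compensated estimate to obtain the stated inequality with the extra drift term.

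The main obstacle is the numerical bookkeeping: one must carefully track the constants through the chain of Taylor expansion, BDG, Doob's, and Young's inequalities, and make the optimal choice of the Young parameter $\epsilon$ to reproduce precisely the displayed form of $C_6(p)$. The case $p=2$ is handled separately and directly: the second-order Taylor expansion of $|x|^2$ is exact, so the compensator reduces to the $L^2$ isometry identity, and together with Doob's $L^2$ maximal inequality one reads off $C_6(2)=2$ without any absorption argument.
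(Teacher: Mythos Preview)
Your overall plan---It\^o's formula, Taylor on the compensator, then an absorption step---matches the paper's strategy, but two of your intermediate steps diverge from it in ways that create genuine problems.

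First, you propose to control the local martingale $M(t)$ via Doob's inequality followed by BDG on its quadratic variation. The paper never does this, and you do not need to either: since $\hat I$ itself is a martingale, Doob's $L^p$ inequality applies directly to give $\E{\sup_{s\le t}|\hat I(s)|^p}\le \bigl(\tfrac{p}{p-1}\bigr)^p\E{|\hat I(t)|^p}$, and because $\E{M(t)}=0$ one has $\E{|\hat I(t)|^p}=\E{A(t)}$. So only $A$ must be estimated; $M$ disappears in expectation and BDG is never invoked. Your BDG route would instead generate, from the predictable quadratic variation of $M$, terms of the form $|\hat I|^{2p-2}|H|^2$ rather than the $|\hat I|^{p-2}|H|^2$ terms you claim, and would introduce extra universal constants that cannot reproduce the displayed $C_6(p)$.

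Second, your pointwise Young's inequality $|\hat I(s)|^{p-2}|H(\xi,s)|^2\le \epsilon|\hat I(s)|^p+C_\epsilon|H(\xi,s)|^p$ fails after integration against $m(d\xi)\,ds$: the first term becomes $\epsilon\,m(E)\int_0^t|\hat I(s_-)|^p\,ds$, and $m(E)=\infty$ for the L\'evy measures of interest here (in particular the $\alpha$-stable case). The paper avoids this by first pulling $\sup_s|\hat I(s_-)|^{p-2}$ out of the double integral, then applying H\"older's inequality in expectation with exponents $p/(p-2)$ and $p/2$, and only then Young's inequality to the resulting product of norms, with a free scaling parameter $a$ chosen so that the coefficient of $\E{\sup_s|\hat I(s)|^p}$ is strictly less than one. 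That step is also what produces the term $\E{\bigl(\int|H|^2\,m\,ds\bigr)^{p/2}}$ in the final bound; your pointwise Young would only yield $\E{\int|H|^p\,m\,ds}$. The $p=2$ remark is fine and matches the paper.
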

\begin{proof}
Let us first show the result for $\hat{I}(t)$. The case $p=2$ is a direct consequence of the It\^{o} isometry of Poisson stochastic integrals, so we can move on and consider the case $p>2$. We will follow the steps in \cite[Proof of Theorem 4.4.23]{applebaum2009levy}. Let us write via It\^{o}'s formula
 \begin{align*}
     \abs{\hat{I}(t)}^p=M(t)+A(t),
 \end{align*}
 where 
 \begin{align*}
     M(t):=\int_0^1\int_E \left(\abs{\hat{I}(s-)+H(\xi,s)}^p-\abs{\hat{I}(s-)}^p\right)\widehat{N}(d\xi,ds),
 \end{align*}
and 
 \begin{align*}
     A(t)&:=\int_0^t\int_E \bigg(\abs{\hat{I}(s-)+H(\xi,s)}^p\\
     &\qquad\qquad\qquad-\abs{\hat{I}(s-)}^p-p\abs{\hat{I}(s-)}^{p-2}\sum_{i=1}^d\hat{I}_i(s-)H^i(\xi,s)\bigg) m(d\xi)ds. 
 \end{align*}
$\{M(t):t\geq 0\}$ is a local martingale but without loss of generality let us assume it is a martingale, noting that we can reduce the general case to this one by constructing an appropriate sequence of stopping times. 

Let $\theta_i\in (0,1)$ for $1\leq i\leq d$ and let $J(\hat{I},H;\theta)$ be the $\R^d$-valued process whose $i$-th component has the value $\hat{I}_i(s-)+\theta_iH_i(\xi,s)$ at $s$. By Taylor's theorem, there exist $\theta_i$'s for which
\begin{align*}
    A(t)&=\int_0^t\int_E \bigg(\frac{1}{2}p(p-2)\abs{J(\hat{I},H;\theta)(s)}^{p-4}\inner{J(\hat{I},H;\theta)(s),H(\xi,s)}^2 \\
    &\qquad\qquad\qquad+p\abs{J(\hat{I},H;\theta)(s)}^{p-2}\abs{H(\xi,s)}^2\bigg)m(d\xi)ds.
\end{align*}
By Cauchy-Schwarz inequality and the fact that $\abs{a+b}^p\leq 2^p\brac{\abs{a}^p+\abs{b}^p}$, we get
\begin{align*}
    \abs{A(t)}\leq p^2 2^{p-3}\int_0^t\int_E \left(\abs{\hat{I}(s-)}^{p-2}\abs{H(\xi,s)}^2+\abs{H(\xi,s)}^p \right)m(d\xi)ds. 
\end{align*}
Then via Doob's martingale inequality, 
\begin{align}
    \E{\sup_{0\leq s\leq t} \abs{\hat{I}(s)}^p}& \leq \brac{\frac{p}{p-1}}^p p^2 2^{p-3} \E{\int_0^t\int_E \abs{\hat{I}(s-)}^{p-2}\abs{H(\xi,s)}^2 m(d\xi)ds }\nonumber\\
    &\qquad\qquad+\brac{\frac{p}{p-1}}^p p^2 2^{p-3}\E{\int_0^t\int_E \abs{H(\xi,s)}^p m(d\xi)ds}.\label{K:1:defn}   
\end{align}
Denote the first term on the right hand side in \eqref{K:1:defn} by $K_1(t)$. Assume $a$ is some constant greater than 1 to be determined later. Then via H\"{o}lder's inequality followed by Young's inequality, we can compute that
\begin{align*}
    K_1(t)
    &\leq \brac{\frac{p}{p-1}}^p p^2 2^{p-3}\E{\sup_{0\leq s\leq t}\frac{1}{a}\abs{\hat{I}(s-)}^{p-2}\int_0^t\int_E a\abs{H(\xi,s)}^2 m(d\xi)ds }\\
    &\leq \brac{\frac{p}{p-1}}^p p^2 2^{p-3} a^{2-p}\E{\brac{\sup_{0\leq s\leq t}\left|\hat{I}(s-)\right| }^p}^{p-\frac{2}{p}}\\
    &\hspace{13em}\cdot\E{\brac{\int_0^t\int_E a\abs{H(\xi,s)}^2m(d\xi)ds }^{\frac{p}{2}}}^{\frac{2}{p}}\\
    &\leq \brac{\frac{p}{p-1}}^p(p-2)p 2^{p-3} a^{2-p}\E{\sup_{0\leq s\leq t} \left|\hat{I}(s)\right|^p}+\frac{2}{p}a^{\frac{p}{2}}\\
    &\hspace{13em}\cdot\E{\brac{\int_0^t\int_E \abs{H(\xi,s)}^2m(d\xi)ds}^{\frac{p}{2}}}.
\end{align*}
This leads to
\begin{align*}
    \E{\sup_{0\leq s\leq t} \abs{\hat{I}(s)}^p}&\leq \brac{\frac{p}{p-1}}^p(p-2)p 2^{p-3} a^{2-p}\E{\sup_{0\leq s\leq t} \left|\hat{I}(s)\right|^p}\\
    &\qquad\qquad+\frac{2}{p}a^{\frac{p}{2}}\E{\brac{\int_0^t\int_E a\abs{H(\xi,s)}^2m(d\xi)ds}^{\frac{p}{2}}}\\
    &\qquad\qquad\qquad+\brac{\frac{p}{p-1}}^p p^2 2^{p-3}\E{\int_0^t\int_E \abs{H(\xi,s)}^p m(d\xi)ds }. 
\end{align*}
Now if the constant $a$ is large enough such that $\brac{\frac{p}{p-1}}^p(p-2)p 2^{p-3} a^{2-p}<1$, then the proof is complete. Note that $\frac{p}{p-1}<2$ for $p>2$, so we can  set 
\begin{align*}
    a=\brac{2^{2p-3}p^2}^{\frac{1}{p-2}}.
\end{align*}
To obtain the maximal inequality for $I(t)$, we write 
\begin{align*}
    I(t)=\hat{I}(t)+\int_0^t\int_E H(\xi,s)m(d\xi)ds,
\end{align*}
and apply the previous maximal inequality for $\hat{I}(t)$. 
The proof is complete.
\end{proof}

Beside from Kunita's inequality, we will often use the following version of Gr\"{o}nwall's inequality: assume $\alpha, \beta$ and $u$ are real valued function on $[a,b]$ such that $\beta,u$ are continuous and $\alpha$ is non-decreasing. If they satisfy for all $t\in [a,b]$
\begin{align*}
    u(t)\leq \alpha(t)+\int_a^t \beta(s) u(s)ds,
\end{align*}
then 
\begin{align}
\label{gronwall}
    u(t)\leq \alpha(t) \exp\brac{\int_0^t \beta(s)ds}. 
\end{align}
Another technical tool that we need is Jensen's inequality for Lebesgue integrals: let $\phi$ be a convex function $\R \mapsto \R$ and $f$ be a non-negative integrable function on $[a,b]$. Then
\begin{align}
\label{jensen}
    \phi\brac{\frac{1}{b-a}\int_a^b f(s)ds }\leq \frac{1}{b-a}\int_a^b \phi\brac{f(s)}ds. 
\end{align}
Furthermore, we will require Young's inequality which is 
\begin{align}
    \label{younginequality}
    xy\leq \frac{x^a}{a}+\frac{x^b}{b},
\end{align}
for $x,y\geq 0$ and $a,b>1;\frac{1}{a}+\frac{1}{b}=1$. 
Finally, we will use 
\begin{align}
\label{minkowskitype_inequality}
(x+y)^p\leq x^p+y^p,
\end{align}
for $x,y\geq 0$ and $0\leq p\leq 1$. 

Now we proceed to proving various results that will lead to the proof of Lemma~\ref{lemma_GandnablaG}. Since we impose stricter conditions compared to \cite{kulik2023gradient}, we can strengthen the result on $q$-integrability of $B_k(t)$ in Proposition~\ref{prop_bismut} from \cite{kulik2023gradient} as follows.  

\begin{lemma}
\label{lemma_Bk}
 Assume Condition~\ref{cond_levymeasure}. For any $t\geq 0$, the $\R^d$-valued random variable $B(t)$ is $q$-integrable for $1\leq q\leq \tau$.
 Specifically, we have
    \begin{align*}
       \mathcal{Q}^0(q,t)&=  \E{\abs{B(t)}^q}\\
        &\leq \sum_{k=1}^d d^{\frac{q}{2}-\frac{q}{\tau}}\bigg(2^{q-\frac{q}{\tau}}C_6(\tau)^{\frac{q}{\tau}}\Big(t^{\frac{q}{2}}M_k(2\kappa-2)^{\frac{q}{2}}+t^{\frac{q}{\tau}}M_k(\tau(\kappa-1))^{\frac{q}{\tau}}
        \\
        &\qquad\qquad
        +t^{\frac{q}{2}}\overline{M}_k(2\kappa)^{\frac{q}{2}}+t^{\frac{q}{\tau}}\overline{M}_k(\tau \kappa)^{\frac{q}{\tau}}\Big)
        +t^q \brac{ M_k(\kappa-1)^{\frac{q}{\tau}}+\overline{M}_k(\kappa)}^q\bigg),
    \end{align*}
where the factor $C_6$ is defined in Lemma~\ref{lemma_kunita} and the remaining notations are given 
in Condition~\ref{cond_levymeasure}. 

Moreover regarding dimension dependence, our upper bound on $\mathcal{Q}^0(q,t)$ is of the order $\mathcal{O}(d^{\frac{q}{2}-\frac{q}{\tau}+1})$ as $d\rightarrow\infty$. 
\end{lemma}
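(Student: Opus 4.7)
The plan is to apply Kunita's inequality coordinate-by-coordinate to the one-dimensional compensated Poisson integrals $B_k(t)$, and then assemble the vector bound via an $\ell^2$--$\ell^\tau$ norm comparison on $\mathbb{R}^d$. First I would write the integrand of $B_k$ explicitly: using the product rule on $V_k(s,\xi_k)\rho_k(\xi_k) = \abs{\xi_k}^\kappa \psi(\xi_k)\psi(s)\rho_k(\xi_k)$,
\begin{align*}
h_k(s,\xi_k) &:= \frac{(d/d\xi_k)\brac{V_k(s,\xi_k)\rho_k(\xi_k)}}{\rho_k(\xi_k)} \\
&= \psi(s)\brac{\kappa\abs{\xi_k}^{\kappa-1}\operatorname{sgn}(\xi_k)\psi(\xi_k) + \abs{\xi_k}^\kappa\psi'(\xi_k) + \abs{\xi_k}^\kappa\psi(\xi_k)\frac{\rho_k'(\xi_k)}{\rho_k(\xi_k)}},
\end{align*}
which naturally splits as $h_k = \tilde h_k + \hat h_k$, where $\tilde h_k$ collects the first two summands and $\hat h_k$ the last. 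Because $\psi,\psi'\in L^\infty$, $\abs{\xi_k}\le R \le 1$, and $\abs{\xi_k}^\kappa \le R\abs{\xi_k}^{\kappa-1}$ on the integration domain, there is a constant (depending only on $\kappa$, $R$ and $\norm{\psi'}_\infty$) making $\int\abs{\tilde h_k(s,\xi_k)}^2 \rho_k\,d\xi_k$ bounded by a multiple of $M_k(2\kappa-2)$, and $\int \abs{\hat h_k(s,\xi_k)}^2 \rho_k\,d\xi_k \le \overline{M}_k(2\kappa)$; analogous statements produce $M_k(\tau(\kappa-1))$ and $\overline M_k(\tau\kappa)$ for the $\tau$-th powers, matching the assumptions \eqref{cond_intkappa}--\eqref{cond_inttaukappa}.

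Next I would apply Kunita's inequality (Lemma~\ref{lemma_kunita}) to each one-dimensional compensated integral $B_k(t)$ with $p=\tau$; this is legitimate because $\tau>2$ by Condition~\ref{cond_levymeasure}. Using the linearity $B_k = -\int \tilde h_k\,d\widehat N_k - \int \hat h_k\,d\widehat N_k$ and the pointwise bounds above, Kunita yields an estimate of the form
\begin{align*}
\E{\abs{B_k(t)}^\tau} \leq C_6(\tau)\bracsq{t^{\tau/2}\brac{M_k(2\kappa-2) + \overline{M}_k(2\kappa)}^{\tau/2} + t\brac{M_k(\tau(\kappa-1)) + \overline{M}_k(\tau\kappa)}}
\end{align*}
up to absolute factors of $2$ coming from the triangle-type inequalities $(a+b)^2 \le 2(a^2+b^2)$ and $(a+b)^\tau \le 2^{\tau-1}(a^\tau+b^\tau)$. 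To move to general $1\le q\le \tau$, apply Jensen's inequality to obtain $\E{\abs{B_k(t)}^q}\le (\E{\abs{B_k(t)}^\tau})^{q/\tau}$, and then use subadditivity $(a+b)^{q/\tau}\le a^{q/\tau}+b^{q/\tau}$ (valid because $q/\tau\le 1$) to split the bracket into the four explicit terms appearing in the statement, producing the prefactor $2^{q-q/\tau}C_6(\tau)^{q/\tau}$ after collecting constants.

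To pass from the scalar moments to $\E{\abs{B(t)}^q}$, I would use the $\ell^2$--$\ell^\tau$ reverse norm inequality on $\mathbb{R}^d$: for $\tau\ge 2$,
\begin{align*}
\abs{B(t)} = \brac{\sum_{k=1}^d B_k(t)^2}^{1/2} \leq d^{1/2-1/\tau}\brac{\sum_{k=1}^d \abs{B_k(t)}^\tau}^{1/\tau},
\end{align*}
so raising to the $q$-th power and invoking subadditivity once more, $\brac{\sum_k y_k}^{q/\tau}\le \sum_k y_k^{q/\tau}$, yields $\abs{B(t)}^q \leq d^{q/2-q/\tau}\sum_{k=1}^d \abs{B_k(t)}^q$. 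Taking expectation and inserting the per-coordinate bound produces the Kunita part of the claimed inequality, with the factor $d^{q/2-q/\tau}$ appearing exactly as stated. The residual $t^q\brac{M_k(\kappa-1)^{q/\tau}+\overline{M}_k(\kappa)}^q$ term comes from bounding the finite-variation piece $t\int_{-R}^R\abs{h_k}\rho_k\,d\xi_k$ that appears when Kunita's inequality is invoked in its uncompensated form (the second estimate in Lemma~\ref{lemma_kunita}) as an auxiliary dominating estimate.

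The main obstacle will be the constant bookkeeping: carefully tracking the explicit constants through Jensen, the two distinct subadditivity steps, and the norm comparison, while ensuring the bound is valid uniformly across $q\in[1,\tau]$. In particular, one must handle both regimes $q\le 2$ and $q\ge 2$ in the variance term, and the $\psi'$ contribution in $\tilde h_k$ (supported away from zero) requires separate treatment from the singular $\abs{\xi_k}^{\kappa-1}$ piece. The dimension dependence $\mathcal{O}(d^{q/2-q/\tau+1})$ is then an immediate consequence of the external $d^{q/2-q/\tau}$ prefactor and the summation over $k$ contributing one more factor of $d$.
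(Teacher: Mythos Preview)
Your proposal is correct and follows essentially the same route as the paper: apply Kunita's inequality (Lemma~\ref{lemma_kunita}) with exponent $\tau$ to each one-dimensional compensated integral $B_k(t)$, then pass to general $q\in[1,\tau]$ via Lyapunov/Jensen and subadditivity of $x\mapsto x^{q/\tau}$, and use an $\ell^2$--$\ell^\tau$ comparison to pick up the factor $d^{q/2-q/\tau}$. The only cosmetic difference is the order of operations: the paper first bounds $\E{\abs{B(t)}^\tau}$ using Jensen on $x\mapsto x^{\tau/2}$ (giving $d^{\tau/2-1}\sum_k\E{\abs{B_k(t)}^\tau}$) and then applies Lyapunov to the vector norm, whereas you apply the norm comparison pointwise before taking expectation and then use Jensen coordinate-wise; both yield the same constants. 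Your observation that the $t^q\brac{M_k(\kappa-1)^{q/\tau}+\overline{M}_k(\kappa)}^q$ term is the drift contribution from the uncompensated form of Kunita's inequality is exactly what the paper does (it simply includes all three terms even though $B_k$ is compensated, making the bound slightly loose but harmless).
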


\begin{proof}
    It is sufficient to prove the statement for $q=\tau$. Per our assumption $\tau>2$, hence we can apply Kunita's inequality in Lemma~\ref{lemma_kunita} to get     
    \begin{align*}
        &\E{\abs{B_k(t)}^\tau}\\
        &=\E{\abs{\int_0^t\int_{-\delta}^\delta\frac{\phi'(\xi_k)\rho_k(\xi_k)+\phi(\xi_k)\rho'_k(\xi_k)}{\rho_k(\xi_k)}\widehat{N}_k(d\xi_k, ds)}^\tau}
        \\&\leq C_6(\tau)\brac{\int_0^t\int_{-\delta}^\delta \abs{\frac{\phi'(\xi_k)\rho_k(\xi_k)+\phi(\xi_k)\rho'_k(\xi_k)}{\rho_k(\xi_k)}}^2m_k(\xi_k) ds}^{\frac{\tau}{2}}\\
        &\qquad\qquad+C_6(\tau)\int_0^t\int_{-\delta}^\delta\abs{\frac{\phi'(\xi_k)\rho_k(\xi_k)+\phi(\xi_k)\rho'_k(\xi_k)}{\rho_k(\xi_k)}}^\tau m_k(\xi_k) ds\\
        &\qquad\qquad\qquad+{\brac{\int_0^t\int_{-\delta}^\delta\abs{\frac{\phi'(\xi_k)\rho_k(\xi_k)+\phi(\xi_k)\rho'_k(\xi_k)}{\rho_k(\xi_k)}} m_k(\xi_k) ds}^\tau}\\
        &\leq 2^{\tau-1}C_6(\tau)\cdot\brac{ \brac{\int_0^t\int_{-\delta}^\delta \abs{\phi'(\xi_k)}^2m_k(\xi_k) ds}^{\frac{\tau}{2}}+  \brac{\int_0^t\int_{-\delta}^\delta \abs{\phi(\xi_k)\frac{\rho_k'(\xi_k)}{\rho_k(\xi_k)}}^2m_k(\xi_k) ds}^{\frac{\tau}{2}}}\\
        &\qquad+2^{\tau-1} C_6(\tau)\cdot\brac{{\int_0^t\int_{-\delta}^\delta\abs{\phi'(\xi_k)}^\tau m_k(\xi_k) ds}+{\int_0^t\int_{-\delta}^\delta\abs{\phi(\xi_k)\frac{\rho_k'(\xi_k)}{\rho_k(\xi_k)}}^\tau m_k(\xi_k) ds}}\\
        &\qquad\qquad+{\brac{{\int_0^t\int_{-\delta}^\delta\abs{\phi'(\xi_k)} m_k(\xi_k) ds}+{\int_0^t\int_{-\delta}^\delta\abs{\phi(\xi_k)\frac{\rho_k'(\xi_k)}{\rho_k(\xi_k)}} m_k(\xi_k) ds}}}^\tau\\
        &\leq 2^{\tau-1}C_6(\tau)\brac{t^{\frac{\tau}{2}}M_k(2(\kappa-1))^{\frac{\tau}{2}}+tM_k(\tau(\kappa-1))+t^{\frac{\tau}{2}}\overline{M}_k(2\kappa)^{\frac{\tau}{2}}+t\overline{M}_k(\tau \kappa) }\\
        &\qquad\qquad\qquad+t^\tau \brac{ M_k(\kappa-1)+\overline{M}_k(\kappa)}^\tau. 
    \end{align*}
The second to last inequality is due to Minskowski's inequality and $\abs{f+g}^p\leq 2^{p-1}\brac{\abs{f}^p+\abs{g}^p}$. The last line is due to the definition of $\phi$ and Condition~\ref{cond_levymeasure}. In particular, (6) in Condition~\ref{cond_levymeasure} and Remark 2.1 imply $M_k(\tau(\kappa-1))\leq M_k(2(\kappa-1))<\infty$, while (7) and (8) in Condition~\ref{cond_levymeasure} guarantee $\overline{M}_k(2\kappa), \overline{M}_k(\tau \kappa) <\infty$. Also H\"{o}lder's inequality and (6), (7) in Condition~\ref{cond_levymeasure} imply $M_k(\kappa-1)\leq M_k(2\kappa-2)<\infty$ and $\overline{M}_k(\kappa)\leq \overline{M}_k(2\kappa)<\infty.$

Next, we apply Jensen's inequality to the function $\abs{x}\mapsto \abs{x}^{\frac{\tau}{2}}$ to get
 \begin{align*}
     \E{\abs{B(t)}^\tau}&=\E{\brac{\sum_{k=1}^d \abs{B_k(t)}^2}^{\frac{\tau}{2}}}
     \\
     &\leq d^{\frac{\tau}{2}-1}\sum_{k=1}^d \E{\abs{B_k(t)}^\tau}\\
     &\leq \sum_{k=1}^d d^{\frac{\tau}{2}-1}\bigg(2^{\tau-1}C_6(\tau)\Big(t^{\frac{\tau}{2}}M_k(2(\kappa-1))^{\frac{\tau}{2}}+tM_k(\tau(\kappa-1))
     \\
     &\qquad\quad
     +t^{\frac{\tau}{2}}\overline{M}_k(2\kappa)^{\frac{\tau}{2}}+t\overline{M}_k(\tau \kappa)\Big)
     +t^\tau \brac{ M_k(\kappa-1)+\overline{M}_k(\kappa)}^\tau\bigg). 
 \end{align*}
Hence, for $q\in [1,\tau]$, Lyapunov's inequality and \eqref{minkowskitype_inequality} imply that
 \begin{align*}
       \E{\abs{B(t)}^q}&\leq \E{\abs{B(t)}^\tau}^{q/\tau}\\
          &\leq \Bigg(\sum_{k=1}^d d^{\frac{\tau}{2}-1}\bigg(2^{\tau-1}C_6(\tau)\Big(t^{\frac{\tau}{2}}M_k(2(\kappa-1))^{\frac{\tau}{2}}+tM_k(\tau(\kappa-1))
          \\
          &\qquad\quad+t^{\frac{\tau}{2}}\overline{M}_k(2\kappa)^{\frac{\tau}{2}}+t\overline{M}_k(\tau \kappa)\Big)
          +t^\tau \brac{ M_k(\kappa-1)+\overline{M}_k(\kappa)}^\tau\bigg)\Bigg)^{\frac{q}{\tau}}\\
          &\leq \sum_{k=1}^d d^{\frac{q}{2}-\frac{q}{\tau}}\Bigg(2^{q-\frac{q}{\tau}}C_6(\tau)^{\frac{q}{\tau}}\bigg(t^{\frac{q}{2}}M_k(2\kappa-2)^{\frac{q}{2}}+t^{\frac{q}{\tau}}M_k(\tau(\kappa-1))^{\frac{q}{\tau}}
          \\
          &\quad\quad
          +t^{\frac{q}{2}}\overline{M}_k(2\kappa)^{\frac{q}{2}}+t^{\frac{q}{\tau}}\overline{M}_k(\tau \kappa)^{\frac{q}{\tau}} \bigg)
          +t^q \brac{ M_k(\kappa-1)^{\frac{q}{\tau}}+\overline{M}_k(\kappa)}^q\Bigg).
    \end{align*}
This completes the proof.
\end{proof}


\begin{lemma}
\label{lemma_nablaX}
 Assume Condition~\ref{cond_levymeasure} and Condition~\ref{cond_driftb}.   For any $t\geq 0$ and  $k,m\leq d$, it holds almost surely that
\begin{align*}
    \sup_{x\in \R^d}\abs{\nabla X^x_t}&\leq e^{\theta_1 t}, \qquad \sup_{x\in \R^d} \abs{\nabla_m \nabla X^x_t}\leq e^{\theta_2 t}, 
\end{align*}
and \begin{align*}
     \sup_{x\in \R^d} \abs{\nabla^2_{k,m} \nabla X^x_t}&\leq e^{\theta_3 t}. 
\end{align*}

\end{lemma}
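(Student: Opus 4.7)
Since the driving noise $Z_t$ in \eqref{sde_cylindricallevy} does not depend on the initial condition $x$, pathwise differentiation of the SDE with respect to $x$ yields a (random) linear ODE whose forcing is controlled by successive derivatives of $b$. I would therefore derive variational equations for $\nabla X_t^x$, $\nabla_m\nabla X_t^x$, and $\nabla_{k,m}^2\nabla X_t^x$ and close each one by a Gr\"onwall-type argument based on \eqref{gronwall} and the uniform bounds in Condition~\ref{cond_driftb}.

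First, by differentiating both sides of
\[
X_t^x = x + \int_0^t b(X_s^x)\,ds + Z_t
\]
componentwise in $x$, the chain rule gives
\[
\nabla X_t^x = I + \int_0^t \nabla b(X_s^x)\,\nabla X_s^x\,ds.
\]
Taking Frobenius (or operator) norms and using $\abs{\nabla b}\le \theta_1$ from Condition~\ref{cond_driftb}, one gets the integral inequality $\abs{\nabla X_t^x}\le 1 + \theta_1\int_0^t \abs{\nabla X_s^x}\,ds$, and \eqref{gronwall} produces the first bound $\abs{\nabla X_t^x}\le e^{\theta_1 t}$ uniformly in $x$.

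For the second bound, I would differentiate the identity above in the $m$-th coordinate. Writing $Y_t := \nabla_m\nabla X_t^x$, the chain rule yields
\[
Y_t = \int_0^t \Bigl( \nabla^2 b(X_s^x)\bigl[\nabla_m X_s^x,\nabla X_s^x\bigr] + \nabla b(X_s^x)\,Y_s \Bigr)\,ds.
\]
The first term is an inhomogeneous source controlled by the already-established bound on $\nabla X_s^x$ together with $\abs{\nabla^2 b}\le \theta_2$, while the second term is the homogeneous linear drift. Applying \eqref{gronwall} to $\abs{Y_t}$ then yields the stated exponential estimate (a careful book-keeping of the constants may require absorbing the polynomial-in-$t$ prefactor coming from the source term into a slightly larger exponential, which is where the factor $e^{\theta_2 t}$ plays its role). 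The third bound is obtained in the same fashion: differentiate the equation for $Y_t$ in the $k$-th coordinate, producing a linear ODE for $\nabla_{k,m}^2\nabla X_t^x$ with a source involving $\nabla^3 b$, $\nabla^2 b$, and lower-order derivatives of $X_t^x$ all of which are already under control; Gr\"onwall closes the argument.

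The main obstacle I anticipate is not the Gr\"onwall step itself but the tensor bookkeeping when differentiating $\nabla b(X_s^x)\,\nabla X_s^x$ repeatedly in $x$, together with tracking which norm ($\abs{\cdot}$ versus $\norm{\cdot}_{\operatorname{op}}$) is used so that the iteration closes with the stated single-exponential bounds $e^{\theta_i t}$ rather than products of exponentials. Everything is pathwise, so measurability and integrability issues do not enter.
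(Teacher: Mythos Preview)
Your approach is exactly the paper's: it writes the variational equation $\nabla X_t^x = I + \int_0^t \nabla b(X_s^x)\,\nabla X_s^x\,ds$, bounds it via Gr\"onwall using $|\nabla b|\le\theta_1$, and then declares ``the remaining cases are similar'' without spelling them out. Your hesitation about whether the higher-order inequalities really close with the single exponentials $e^{\theta_2 t}$ and $e^{\theta_3 t}$ (rather than products involving $e^{\theta_1 t}$) is well-founded---the paper does not address this either and simply records those bounds, so you are not missing an idea, just matching the paper's level of detail.
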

\begin{proof}
    We will only estimate $\nabla X^x_t$; the remaining cases are similar. The first derivative flow satisfies: 
    \begin{align*}
        \nabla X^x_t=I_{d\times d}+ \int_0^t \nabla b(X^x_s)\nabla X^x_sds. 
    \end{align*}
Since $b$ has bounded first derivative, we may write
\begin{align*}
    \abs{\nabla X^x_t}\leq 1+\int_0^t \sup_{y\in\mathbb{R}^{d}}\abs{\nabla b(y)} \abs{\nabla X^x_s}ds.
\end{align*}
It follows from Gr\"{o}nwall's inequality \eqref{gronwall} and Condition~\ref{cond_driftb} that $\sup_{x\in \R^d}\abs{\nabla X^x_t}\leq e^{\theta_1 t} $.  
\end{proof}

\begin{lemma}
\label{lemma_DXandothers}
Assume Condition~\ref{cond_levymeasure} and Condition~\ref{cond_driftb}. For any $t\geq 0$ and $k,m\leq d$, the $d\times d$ random matrices $\der X^x_t,D_k \nabla X^x_t$ and $\nabla_m\der X^x_t$ are $p$-integrable for $p\geq 1$. 

Specifically, it holds for $p\geq 2$ that
\begin{align*}
\mathcal{Q}^1(p,t)  &=\sup_{x\in\R^d}  \E{\abs{\der X^x_t}^p} \\
  &\leq  e^{p\theta_1 t}  d^{\frac{p}{2}-1}\sum_{j=1}^d \brac{C_6(p)\brac{t^{\frac{p}{2}}M_j(2\kappa)^{\frac{p}{2}}+tM_j(p\kappa) }+t^{p} M_j(\kappa)^{p}};
\end{align*}
\begin{align*}
   \mathcal{Q}^2_k(p,t) &=\sup_{x\in\R^d}\E{\abs{D_k \nabla X^x_t}^{p}}\\ &\leq \frac{1}{2} d^{\frac{p}{2}-1} \theta_2^p e^{3p\theta_1 t}    \sum_{j=1}^d \Big(C_6(p)\brac{t^{\frac{3p}{2}}M_j(2\kappa)^{\frac{p}{2}}+t^{p+1}M_j(p\kappa) }+t^{2p} M_j(\kappa)^{p}\Big),
\end{align*}
and
\begin{align*}
 \mathcal{Q}^3_m(p,t) &= \E{\abs{\nabla_m\der X^x_t}^p}\\
    &\leq \frac{1}{2}d^{\frac{p}{2}-1}\theta_2^{p} e^{3p\theta_1 t} \sum_{j=1}^d \Big(C_6(p)\brac{t^{\frac{3p}{2}}M_j(2\kappa)^{\frac{p}{2}}+t^{p+1}M_j(p\kappa) }+t^{2p} M_j(\kappa)^{p}\Big),
\end{align*}
where the factor $C_6$ is defined in Lemma~\ref{lemma_kunita}, and the remaining notations are from Conditions~\ref{cond_levymeasure} and~\ref{cond_driftb}. 

Meanwhile for $2>p\geq 1$, we have
\begin{align*}
&\mathcal{Q}^1(p,t)=\sup_{x\in\R^d}  \E{\abs{\der X^x_t}^p} \leq \brac{e^{2\theta_1 t}  \sum_{j=1}^d \Big(C_6(2) 2t M_j(2\kappa)+t^{2} M_j(\kappa)^{2}\Big)}^{\frac{p}{2}} ;
\\   
&\mathcal{Q}^2_k(p,t) =\sup_{x\in\R^d}\E{\abs{D_k \nabla X^x_t}^{p}}\leq \brac{\frac{1}{2} \theta_2^2 e^{6\theta_1 t}    \sum_{j=1}^d \Big(2 C_6(2){t^{3}M_j(2\kappa) }+t^{4} M_j(\kappa)^{2}\Big)}^{\frac{p}{2}};
\end{align*}
and
\begin{align*}
 \mathcal{Q}^3_m(p,t) &= \E{\abs{\nabla_m\der X^x_t}^p}
    \leq \brac{\frac{1}{2}\theta_2^{2} e^{6\theta_1 t} \sum_{j=1}^d \Big(2C_6(2){t^{3}M_j(2\kappa) }+t^{4} M_j(\kappa)^{2}\Big)}^{\frac{p}{2}}.
\end{align*}

Moreover regarding dimension dependence, our upper bounds on $\mathcal{Q}^1(p,t), \mathcal{Q}^2_k(p,t)$ and $\mathcal{Q}^3_m(p,t)$ are all of the order $\mathcal{O}\left(d^{\frac{p}{2}}\right)$ as $d\rightarrow\infty$. 
\end{lemma}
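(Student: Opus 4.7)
The plan is to derive linear integral equations for $\der X^x_t$, $D_k\nabla X^x_t$, and $\nabla_m\der X^x_t$ by differentiating the SDE $X^x_t = x+\int_0^t b(X^x_s)\,ds+Z_t$ in the Malliavin and spatial senses, and then to combine a pathwise Gronwall estimate with Kunita's inequality (Lemma~\ref{lemma_kunita}) to extract the stated $L^p$ bounds.

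For the first estimate, I would apply $D_k$ to the SDE. Because $D_k$ shifts only the $k$-th jump component, one has $D_k Z^j_t=0$ for $j\neq k$ while $D_k Z^k_t$ is a Poisson integral over $[0,t]\times[-R,R]$ whose integrand is controlled by $V_k(s,\xi_k)\,e^k$. The chain rule then gives an equation of the form
\begin{align*}
D_k X^x_t = \int_0^t \nabla b(X^x_s)\,D_k X^x_s\,ds + J_k(t),
\end{align*}
where $|J_k(t)|$ has integrand modulus bounded by $|\xi_k|^{\kappa}$. Since $|\nabla b|\leq\theta_1$ by Condition~\ref{cond_driftb}, a pathwise Gronwall bound yields $\sup_{s\leq t}|D_k X^x_s|\leq e^{\theta_1 t}\sup_{s\leq t}|J_k(s)|$. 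Raising to the $p$-th power and applying Lemma~\ref{lemma_kunita} generates moments controlled by the three quantities $M_k(2\kappa)^{p/2}t^{p/2}$, $M_k(p\kappa)\,t$, and (from the drift/compensator contribution) $M_k(\kappa)^p t^p$, matching the three summands in the statement. Summing over $k$ via $|\der X^x_t|^2=\sum_k|D_k X^x_t|^2$ and Jensen's inequality (valid for $p\geq 2$) supplies the prefactor $d^{p/2-1}$.

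For $D_k\nabla X^x_t$ and $\nabla_m\der X^x_t$, I would differentiate the first-variation equation $\nabla X^x_t = I + \int_0^t \nabla b(X^x_s)\nabla X^x_s\,ds$. The Malliavin chain rule produces
\begin{align*}
D_k\nabla X^x_t = \int_0^t \nabla^2 b(X^x_s)\bigl(D_k X^x_s,\nabla X^x_s\bigr)\,ds + \int_0^t \nabla b(X^x_s)\,D_k\nabla X^x_s\,ds,
\end{align*}
and Gronwall, together with $|\nabla b|\leq\theta_1$, $|\nabla^2 b|\leq\theta_2$ from Condition~\ref{cond_driftb} and the deterministic bound $|\nabla X^x_s|\leq e^{\theta_1 s}$ from Lemma~\ref{lemma_nablaX}, yields $|D_k\nabla X^x_t|\leq \theta_2\,e^{2\theta_1 t}\int_0^t |D_k X^x_s|\,ds$. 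Raising to the $p$-th power, applying Jensen's inequality to the time integral, and integrating the moment estimate on $|D_k X^x_s|$ just obtained produces the $t^{3p/2}, t^{p+1}, t^{2p}$ time factors in the claim. The bound for $\nabla_m\der X^x_t$ is symmetric: differentiating in $x_m$ first yields a companion equation driven by $\nabla_m X^x_s$, but because $\nabla_m X^x_s$ enjoys the same deterministic estimate $e^{\theta_1 s}$, the resulting right-hand side is identical. The range $1\leq p<2$ is recovered from $p=2$ by Lyapunov's inequality together with \eqref{minkowskitype_inequality}.

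The main technical obstacle is not the Gronwall/Kunita step but the justification of the Malliavin chain rule along the flow and the legitimacy of interchanging $D_k$ with the time integral in the first-variation equation. This requires the hypothesis $b\in\mathcal{C}^2$ with bounded derivatives of Condition~\ref{cond_driftb}, together with a standard but tedious a priori verification that the candidate processes $D_k X^x_t$, $D_k\nabla X^x_t$ and $\nabla_m\der X^x_t$ are genuine $L^0$-functionals of the Poisson random measure $N$ in the sense of Appendix~\ref{appendix_malliavin}, so that the operator $D_k$ is actually applicable. Once this is established, the three estimates of the lemma fall out in parallel from the same template, and the explicit dimension dependence claimed at the end of the statement is read off directly from the powers of $d$ produced by Jensen's inequality at the summation step.
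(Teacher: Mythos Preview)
Your proposal is correct and follows essentially the same route as the paper's proof: derive the linear variation equations for $\der X^x_t$, $D_k\nabla X^x_t$, and $\nabla_m\der X^x_t$, apply a pathwise Gr\"onwall bound using $|\nabla b|\le\theta_1$, $|\nabla^2 b|\le\theta_2$, and $|\nabla X^x_s|\le e^{\theta_1 s}$, control the Poisson-integral forcing term by Kunita's inequality, and then use Jensen's inequality in the time integral and in the $d^{p/2-1}$ summation step. The only cosmetic difference is that the paper applies Gr\"onwall directly to the matrix $\der X^x_t$ and bounds $\mathbb{E}|\der Z_t|^p$ via its diagonal entries, whereas you work componentwise with $D_k X^x_t$ first and then sum; the resulting constants coincide, and the paper does not dwell on the chain-rule justification either, deferring it to the Malliavin framework of \cite{kulik2023gradient}.
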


\begin{proof}

First,  we have
\begin{align*}
    \der X^x_t=\int_0^t\nabla b(X^x_s) \der X^x_sds+ \der Z_t,
\end{align*}
and $\der Z_t$ is a $d\times d$ random matrix with diagonal entries
\begin{align*}
   \bracsq{ \der Z_t}_{j,j}= D_jZ^j_t=\int_0^t\int_{\R}V_j(s,\xi_j)N_j(d\xi_j,ds),
\end{align*}
and non-diagonal entries $\bracsq{\der Z_t}_{i,j}=0$
for any $i\neq j$. 
Note that $p$-integrability of $\mathbb{D}X^x_t$ has been shown in \cite[Section 4]{kulik2023gradient} without explicit constants; so we re-do it here in order to spell out the constants explicitly. It is sufficient to consider only $p\geq 2$. Kunita's inequality in Lemma~\ref{lemma_kunita} implies
\begin{align}
\label{estimate_momentDZ}
    \E{\abs{\der Z_t}^p}&=\brac{{\sum_{j=1}^d \bracsq{\der Z_t}^2_{j,j} }}^{\frac{p}{2}}\nonumber\\
    &\leq d^{\frac{p}{2}-1}\sum_{j=1}^d \E{\abs{\int_0^t\int_{\R}V_j(s,\xi_j)N_j(d\xi_j,ds)}^p}\nonumber\\
    &\leq d^{\frac{p}{2}-1} \sum_{j=1}^d C_6(p)\bigg(\brac{\int_0^t\int_{\R} \brac{\phi (\xi_j)\psi(s)}^2m_j(d\xi_j)ds}^{\frac{p}{2}}\nonumber\\
    &+\int_0^t\int_{\R} \abs{\phi (\xi_j)\psi(s)}^p m_j(d\xi_j)ds\bigg)+\brac{\int_0^t\int_{\R} \brac{\phi (\xi_j)\psi(s)}m_j(d\xi_j)ds}^{p}\nonumber \\
    &\leq d^{\frac{p}{2}-1}\sum_{j=1}^d \brac{C_6(p)\brac{t^{\frac{p}{2}}M_j(2\kappa)^{\frac{p}{2}}+tM_j(p\kappa) }+t^p M_j(\kappa)^p}.
\end{align}
The quantities $M_j(\kappa),M_j(2\kappa)$ and $M_j(p\kappa)$ are bounded per (5) in Condition~\ref{cond_levymeasure} and Remark 2.1. Furthermore, we know 
\begin{align*}
    \abs{\der X^x_t}\leq \int_0^t \theta_1 \abs{\der X^x_s}ds+\abs{\der Z_t},
\end{align*}
and thus by Gr\"{o}nwall's inequality \eqref{gronwall}, $\abs{\der X^x_t}\leq e^{\theta_1t}\abs{\der Z_t}$. Then based on \eqref{estimate_momentDZ}, we can write for $p\geq 2$
\begin{align}
\label{estimate_momentofDX}
\sup_{x\in\R^d}  \E{\abs{\der X^x_t}^p}&\leq e^{p\theta_1 t}\E{\abs{\der Z_t}^p}\nonumber\\
&\leq e^{p\theta_1 t}  d^{\frac{p}{2}-1}\sum_{j=1}^d \brac{C_6(p)\brac{t^{\frac{p}{2}}M_j(2\kappa)^{\frac{p}{2}}+tM_j(p\kappa) }+t^p M_j(\kappa)^p}.
\end{align}

Second, we study $D_k \nabla X^x_t$ which satisfies
\begin{align}
\label{equation_DknablaX}
    D_k \nabla X^x_t=\int_0^t \brac{\nabla_k\nabla b}(X^x_s)\,D_k X^x_s \,\nabla X^x_s+\nabla b(X^x_s)\,D_k  \nabla X^x_sds.
\end{align}
Condition~\ref{cond_driftb} on $b$ implies
\begin{align*}
    \abs{D_k \nabla X^x_t}\leq  \int_0^t  \theta_2\abs{\nabla X^x_s}\abs{D_k X^x_s }ds+\int_0^t \theta_1 \abs{D_k  \nabla X^x_s}ds. 
\end{align*}
By Gr\"{o}nwall's inequality \eqref{gronwall} and Lemma~\ref{lemma_nablaX}, 
\begin{align*}
    \abs{D_k \nabla X^x_t}\leq e^{\theta_1 t} \int_0^t  \theta_2\abs{\nabla X^x_s}\abs{D_k X^x_s }ds \leq\theta_2 e^{2\theta_1 t}  \int_0^t  \abs{D_k X^x_s }ds .
\end{align*}
Let $p\geq 2$. Then via Jensen's inequality \eqref{jensen} applied to the function $x\mapsto \abs{x}^p$ and the estimate at \eqref{estimate_momentofDX}, 
\begin{align*}
    &\E{\abs{D_k \nabla X^x_t}^p}\\
    &\leq  \theta_2^p e^{2p\theta_1 t}t^{p-1} \int_0^t \E{\abs{D_k X^x_s}^p}ds\\
    &\leq   \theta_2^p e^{2p\theta_1 t}t^{p-1}\cdot\int_0^t e^{p\theta_1 s}  d^{\frac{p}{2}-1}\sum_{j=1}^d \brac{C_6(p)\brac{s^{\frac{p}{2}}M_j(2\kappa)^{\frac{p}{2}}+sM_j(p\kappa) }+s^p M_j(\kappa)^p}ds\\
     &\leq \frac{1}{2} d^{\frac{p}{2}-1} \theta_2^p e^{3p\theta_1 t}    \sum_{j=1}^d \brac{C_6(p)\brac{t^{\frac{3p}{2}}M_j(2\kappa)^{\frac{p}{2}}+t^{p+1}M_j(p\kappa) }+t^{2p} M_j(\kappa)^p}. 
\end{align*}

Next we consider the $d\times d$ random matrices $\nabla_m\der X^x_t$ which satisfies
\begin{align*}
    \nabla_m\der X^x_t=\int_0^t  \left(\nabla^2 b(X^x_s)\,\nabla_m X^x_s \,\der X^x_s+\nabla b(X^x_s)\,\nabla_m \der X^x_s\right)ds.
\end{align*}
Condition~\ref{cond_driftb} on $b$ and Lemma~\ref{lemma_nablaX} implies
\begin{align*}
    \abs{\nabla_m\der X^x_t}\leq \int_0^t \left(\theta_2e^{\theta_1s}\abs{\der X^x_s}+\theta_1\abs{\nabla_m\der X^x_s}\right)ds. 
\end{align*}
By Gr\"{o}nwall's inequality \eqref{gronwall}, Jensen's inequality \eqref{jensen} applied to the function $x\mapsto \abs{x}^p$ and the estimate in \eqref{estimate_momentofDX}, we obtain:
\begin{align*}
   &\E{\abs{\nabla_m\der X^x_t}^p}\\
     &\leq  \theta_2^p e^{p\theta_1 t} t^{p-1}
     \cdot\int_0^t e^{2p\theta_1 s}  d^{\frac{p}{2}-1}\sum_{j=1}^d \brac{C_6(p)\brac{s^{\frac{p}{2}}M_j(2\kappa)^{\frac{p}{2}}+sM_j(p\kappa) }+s^p M_j(\kappa)^p}ds\\
     &\leq \frac{1}{2}d^{\frac{p}{2}-1}\theta_2^p e^{3p\theta_1 t} \sum_{j=1}^d \brac{C_6(p)\brac{t^{\frac{3p}{2}}M_j(2\kappa)^{\frac{p}{2}}+t^{p+1}M_j(p\kappa) }+t^{2p} M_j(\kappa)^p}.
\end{align*}

So far we have considered the case $p\geq 2$. Finally, the estimates for the case $2>p\geq 1$ come from the above estimates and the fact that $\E{\abs{F}^p}\leq \E{\abs{F}^2}^{\frac{p}{2}}$. 
This completes the proof.
\end{proof}

\begin{lemma}\label{lemma_DDX}
Assume Condition~\ref{cond_levymeasure} and Condition~\ref{cond_driftb}. For any $t\geq 0$ and $k\leq d$, the $d\times d$ random matrix $D_k\der X^x_t$ is $q$-integrable for $\tau\geq q\geq 1$. 
     
Specifically, we have
\begin{align*}
\mathcal{Q}^4_k(q,t) 
&= \sup_{x\in\R^d}\E{\abs{D_k \der X^x_t}^q} \\
    &\leq 2^{q-\frac{\tau}{q}}e^{q\theta_1 t} \sum_{j=1}^d\bigg(d^{\frac{q}{2}-\frac{q}{\tau}}C_6(\tau)^{\frac{q}{\tau}}\brac{t^\frac{q}{\tau}{M}_j\brac{\tau(2\kappa-1)}^{\frac{q}{\tau}}+t^{\frac{q}{2}}M_j\brac{4\kappa-2)}^{\frac{q}{2}} }
    \\
    &\qquad\qquad\qquad\qquad\qquad\qquad
    +d^{\frac{q}{2}-\frac{q}{\tau}}t^q M_j(2\kappa-1)^q\\
     &\quad
     +\brac{\frac{1}{2}}^{\frac{q}{\tau}} \theta_2^q e^{2q\theta_1 t}   d^{q-\frac{q}{\tau}} \cdot\brac{C_6(2\tau)^{\frac{q}{\tau}}\brac{t^{2q}M_j(2\kappa)^{q}+t^{q+\frac{q}{\tau}}M_j(2\tau\kappa)^{\frac{q}{\tau}} }+t^{3q} M_j(\kappa)^{2q}}\bigg), 
\end{align*}
where the factor $C_6$ is defined in Lemma~\ref{lemma_kunita}, and the remaining notations are from Conditions~\ref{cond_levymeasure} and~\ref{cond_driftb}. Moreover regarding dimension dependence, our upper bound on $\mathcal{Q}^4_k(q,t)$ is of the order $\mathcal{O}\left(d^{q-\frac{q}{\tau}+1}\right)$ as $d\rightarrow\infty$. 
\end{lemma}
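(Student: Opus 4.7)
The plan is to adapt the argument of Lemma~\ref{lemma_DXandothers}: write down the integral equation satisfied by $D_k\der X^x_t$, isolate the inhomogeneous driver via Gronwall, and bound the two resulting pieces by Kunita's inequality (Lemma~\ref{lemma_kunita}) combined with the moment estimates from Lemma~\ref{lemma_DXandothers} itself.

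Starting from the identity $\der X^x_t=\int_0^t\nabla b(X^x_s)\,\der X^x_s\,ds+\der Z_t$ and applying $D_k$ via the chain rule from Appendix~\ref{appendix_malliavin}, one obtains
\begin{align*}
D_k\der X^x_t=\int_0^t\bigl(\nabla^2 b(X^x_s)\,D_kX^x_s\,\der X^x_s+\nabla b(X^x_s)\,D_k\der X^x_s\bigr)\,ds+D_k\der Z_t.
\end{align*}
Condition~\ref{cond_driftb} together with Gronwall's inequality~\eqref{gronwall} then yields the pathwise bound
\begin{align*}
|D_k\der X^x_t|\le e^{\theta_1 t}\brac{|D_k\der Z_t|+\theta_2\int_0^t|D_kX^x_s|\,|\der X^x_s|\,ds}.
\end{align*}
The matrix $D_k\der Z_t$ is diagonal with a single nonzero entry: applying the perturbation definition of $D_k$ twice to $Z^k_t$ (i.e., $Q^\epsilon_k$ shifts each jump $\xi_k$ to $\xi_k+\epsilon V_k(s,\xi_k)$) produces the Poisson integral $\int_0^t\int_\R V_k(s,\xi_k)\,\partial_{\xi_k}V_k(s,\xi_k)\,N_k(d\xi_k,ds)$, whose integrand behaves like $|\xi_k|^{2\kappa-1}$ near the origin thanks to the choice $\phi(\xi_k)=|\xi_k|^\kappa\psi(\xi_k)$. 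Kunita's inequality at exponent $\tau$ then controls its $\tau$-th moment in terms of $M_k(4\kappa-2)$ and $M_k(\tau(2\kappa-1))$, both finite by Remark~\ref{remark_smalljump} combined with Condition~\ref{cond_levymeasure}; this generates the first batch of terms in the asserted bound.

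For the integral term I would raise the pathwise inequality to the power $\tau$, split the two summands via $(a+b)^\tau\le 2^{\tau-1}(a^\tau+b^\tau)$, apply Jensen's inequality~\eqref{jensen} to pass the power inside the time integral, and then use Cauchy--Schwarz to decouple $D_kX^x_s$ from $\der X^x_s$. Each of the resulting $2\tau$-th moments is supplied by Lemma~\ref{lemma_DXandothers} at doubled exponent $2\tau$, accounting for the factors $C_6(2\tau)$, $M_j(2\kappa)^q$, $M_j(2\tau\kappa)^{q/\tau}$ and for the growth factor $\theta_2^q e^{3q\theta_1 t}$ in the second batch. Extension from $q=\tau$ to the full range $1\le q\le\tau$ follows from Lyapunov's inequality together with the subadditivity of $x\mapsto x^{q/\tau}$ recorded in~\eqref{minkowskitype_inequality}.

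The main obstacle lies in the clean identification of $D_k\der Z_t$ and in matching the small-jump moments of $V_k\,\partial_{\xi_k}V_k$ against the exponents available in Condition~\ref{cond_levymeasure}: the relevant quantities $M_k(4\kappa-2)$ and $M_k(\tau(2\kappa-1))$ do not appear explicitly in that Condition but are automatically finite via the monotonicity noted in Remark~\ref{remark_smalljump}. Propagating every explicit prefactor through the Kunita estimate and the subsequent Cauchy--Schwarz step is tedious but routine.
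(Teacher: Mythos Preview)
Your proposal is correct and follows essentially the same route as the paper: derive the integral equation for $D_k\der X^x_t$, apply Gronwall to isolate the driver $D_k\der Z_t$ plus the integral of $\theta_2|D_kX^x_s|\,|\der X^x_s|$, bound $D_k\der Z_t$ via Kunita at exponent $\tau$, and handle the integral via Jensen together with the $2\tau$-moment estimate $\mathcal{Q}^1(2\tau,\cdot)$ from Lemma~\ref{lemma_DXandothers}, then descend to general $q$ by Lyapunov and \eqref{minkowskitype_inequality}. The only cosmetic difference is that the paper replaces $|D_kX^x_s|\,|\der X^x_s|$ by $|\der X^x_s|^2$ (since $D_kX^x_s$ is a column of $\der X^x_s$), bypassing your Cauchy--Schwarz step; either route leads to the same bound.
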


\begin{proof}
 $D_k \der X^x_t$ is the solution of
\begin{align}
\label{equation_DkDX}
    D_k \der X^x_t=\int_0^t \left(\nabla^2b(X^x_s)\,D_kX^{x}_s \, \der X^x_s+\nabla b(X^x_s)\,D_k \der X^x_s\right)ds+D_k \der Z_t.
\end{align}
The last term on the right-hand side is a $d\times d$ random matrix whose entries are given by
\begin{align*}
    \bracsq{D_k \der Z_t}_{j,j}&=\int_0^t \int_{\R}V_j(s,\xi_j)\phi'(\xi_j)\psi(s)N_j(d\xi_j,ds)\\
    &=\int_0^t\int_{\R} \phi(\xi_j)\phi'(\xi_j)\psi(s)^2 N_j(d\xi_j,ds),
\end{align*}
and $\bracsq{D_k\der Z_t}_{i,j}=0$ for $i\neq j$. To see $ \bracsq{D_k\der Z_t}_{j,j}$ is $\tau$-integrable, we apply Kunita's inequality in Lemma~\ref{lemma_kunita} to get
\begin{align}
\label{estimate_momentDDZ}
 \E{\abs{\bracsq{D_k \der Z_t}_{j,j}}^\tau}&=\E{\abs{\int_0^t \int_{\R}\phi(\xi_j)\phi'(\xi_j)\psi(s)^2 N_j(d\xi_j,ds)}^\tau}\nonumber \\
 &\leq  C_6(\tau)\Bigg(\int_0^t\int_{\R} \abs{\phi(\xi_j)\phi'(\xi_j)\psi(s)^2}^\tau \rho_j(d\xi_j)ds\nonumber\\
 &\qquad\qquad+\brac{\int_0^t\int_{\R} \abs{\phi(\xi_j)\phi'(\xi_j)\psi(s)^2}^2 \rho_j(d\xi_j)ds}^{\frac{\tau}{2}}\Bigg)\nonumber\\
 &\qquad+\brac{\int_0^t\int_{\R} \abs{\phi(\xi_j)\phi'(\xi_j)\psi(s)^2} \rho_j(d\xi_j)ds}^{\tau}\nonumber\\
 &\leq C_6(\tau)\brac{t{M}_j\brac{\tau(2\kappa-1)}+t^{\frac{\tau}{2}}M_j\brac{4\kappa-2)}^{\frac{\tau}{2}} }+t^\tau M_j(2\kappa-1)^\tau. 
\end{align}
In the last line, the quantities $M_j(2\kappa-1), {M}_j\brac{\tau(2\kappa-1)}$ and $M_j\brac{4\kappa-2)}$ are both bounded as a consequence of (6) in Condition~\ref{cond_levymeasure}, Remark 2.1 in the paper and the fact that $\min \{\tau(2\kappa-1),4\kappa-2\}>2\kappa-2$. Then we can combine the above estimate and 
\begin{align*}
    \E{\abs{D_k\der Z_t}^\tau}\leq d^{\frac{\tau}{2}-1}\sum_{j=1^d}\E{\abs{\bracsq{D_k \der Z_t}_{j,j}}^\tau}. 
\end{align*}

Now by Condition~\ref{cond_driftb} on $b$ and Gr\"{o}nwall's inequality \eqref{gronwall} applied to \eqref{equation_DkDX}, we have
\begin{align*}
   \abs{D_k \der X^x_t}\leq \brac{D_k \der Z_t+\theta_2\int_0^t \abs{\der X^x_s}^2ds}e^{\theta_1 t}.
\end{align*}
 Jensen's inequality \eqref{jensen} applied to the function $x\mapsto \abs{x}^\tau$, followed by usage of the estimates at \eqref{estimate_momentofDX}, \eqref{estimate_momentDDZ} lead to
\begin{align*}
     &\E{\abs{D_k \der X^x_t}^\tau} \\
     &\leq 2^{\tau-1}e^{\tau\theta_1 t}\brac{\E{\abs{D_k\der Z_t}^\tau} + (\theta_2)^\tau t^{\tau-1}\int_0^t \E{\abs{\der X^x_s}^{2\tau}}ds }\\
     &\leq 2^{\tau-1}e^{\tau\theta_1 t} \sum_{j=1}^d\bigg(d^{\frac{\tau}{2}-1}C_6(\tau)\brac{t{M}_j\brac{\tau(2\kappa-1)}+t^{\frac{\tau}{2}}M_j\brac{4\kappa-2)}^{\frac{\tau}{2}} }\\
     &\hspace{26em}+d^{\frac{\tau}{2}-1}t^\tau M_j(2\kappa-1)^\tau 
     \\&\quad+ (\theta_2)^\tau t^{\tau-1}
     \int_0^t    e^{2\tau\theta_1 s}  d^{\tau-1} \brac{C_6(2\tau)\brac{s^{\tau}M_j(2\kappa)^{\tau}+sM_j(2\tau\kappa) }+s^{2\tau} M_j(\kappa)^{2\tau}}       ds\bigg)\\
     &\leq 2^{\tau-1}e^{\tau\theta_1 t} \sum_{j=1}^d\bigg(d^{\frac{\tau}{2}-1}C_6(\tau)\brac{t{M}_j\brac{\tau(2\kappa-1)}+t^{\frac{\tau}{2}}M_j\brac{4\kappa-2)}^{\frac{\tau}{2}} }\\
     &\hspace{26em}+d^{\frac{\tau}{2}-1}t^\tau M_j(2\kappa-1)^\tau\\
     &\quad+\frac{1}{2} (\theta_2)^\tau e^{2\tau\theta_1 t}   d^{\tau-1} \brac{C_6(2\tau)\brac{t^{2\tau}M_j(2\kappa)^{\tau}+t^{\tau+1}M_j(2\tau\kappa) }+t^{3\tau} M_j(\kappa)^{2\tau}} \bigg)    . 
\end{align*}
Consequently for $q\in [1,\tau]$, Lyapunov's inequality and \eqref{minkowskitype_inequality} imply
\begin{align*}
&\sup_{x\in\R^d}\E{\abs{D_k \der X^x_t}^q}
\\
&\leq \sup_{x\in\R^d}\E{\abs{D_k \der X^x_t}^\tau}^{q/\tau} \\
    &\leq \Bigg(2^{\tau-1}e^{\tau\theta_1 t} \sum_{j=1}^d\bigg( d^{\frac{\tau}{2}-1}C_6(\tau)\brac{t{M}_j\brac{\tau(2\kappa-1)}+t^{\frac{\tau}{2}}M_j\brac{4\kappa-2)}^{\frac{\tau}{2}} }\\
     &\hspace{27em}+d^{\frac{\tau}{2}-1}t^\tau M_j(2\kappa-1)^\tau\\
     &\qquad+\frac{1}{2} (\theta_2)^\tau e^{2\tau\theta_1 t}   d^{\tau-1} \brac{C_6(2\tau)\brac{t^{2\tau}M_j(2\kappa)^{\tau}+t^{\tau+1}M_j(2\tau\kappa) }+t^{3\tau} M_j(\kappa)^{2\tau}}\bigg)   \Bigg)^{\frac{q}{\tau}} \\
      &\leq 2^{q-\frac{\tau}{q}}e^{q\theta_1 t} \sum_{j=1}^d\bigg(d^{\frac{q}{2}-\frac{q}{\tau}}C_6(\tau)^{\frac{q}{\tau}}\brac{t^\frac{q}{\tau}{M}_j\brac{\tau(2\kappa-1)}^{\frac{q}{\tau}}+t^{\frac{q}{2}}M_j\brac{4\kappa-2)}^{\frac{q}{2}} }
      \\
     &\hspace{26em}+d^{\frac{q}{2}-\frac{q}{\tau}}t^q M_j(2\kappa-1)^q\\
     &\quad+\brac{\frac{1}{2}}^{\frac{q}{\tau}} \theta_2^q e^{2q\theta_1 t}   d^{q-\frac{q}{\tau}} \brac{C_6(2\tau)^{\frac{q}{\tau}}\brac{t^{2q}M_j(2\kappa)^{q}+t^{q+\frac{q}{\tau}}M_j(2\tau\kappa)^{\frac{q}{\tau}} }+t^{3q} M_j(\kappa)^{2q}}\bigg)   . 
\end{align*}
Bounded-ness of $M_j(2\kappa-1), {M}_j\brac{\tau(2\kappa-1)}$ and $M_j\brac{4\kappa-2)}$ has been explained below \eqref{estimate_momentDDZ}. Finally, (5) in Condition~\ref{cond_levymeasure} and Remark~\ref{remark_smalljump} imply
\begin{align*}
    M_j(2\tau\kappa)\leq M_j(2\kappa)\leq M_j(\kappa)<\infty. 
\end{align*}
The proof is complete.
\end{proof}

\begin{lemma}(\cite[Lemma 5]{kulik2023gradient})
\label{lemma_inverseDX}
  Assume Condition~\ref{cond_levymeasure} and Condition~\ref{cond_driftb}. For any $t\geq 0$, the $d\times d$ random matrix $(\der X^x_t)^{-1}$ is $p$-integrable for $p\geq 1$. 

Specifically, we have
\begin{align}
\label{estimate_inverseDX}
       \mathcal{Q}^5(p,t) &=\sup_{x\in\R^d} \E{\abs{\brac{\der X^x_t}^{-1}}^p}\nonumber\\
    &\leq e^{p\theta_1t}2^{p-1}\brac{\min \left\{ \frac{\theta_1 te^{\theta_1t}}{1-\theta_1 te^{\theta_1t} },1 \right\} \E{\abs{\brac{DZ_t}^{-1}}^p}+\E{\abs{\brac{DZ_t}^{-1}}^p}}. 
\end{align}
The quantity $\E{\abs{\brac{DZ_t}^{-1}}^p}$ is bounded for every $t>0$. Specifically, it holds that 
\begin{align*}
    E{\abs{\brac{DZ_t}^{-1}}^p}\leq Cd^{\frac{p}{2}}t^{-\frac{\kappa}{\pi}},
\end{align*}
where $C=C(p,T)$ is some constant and the parameters $\pi,\kappa$ are in Condition~\ref{cond_levymeasure}. Hence in terms of dimension dependence, our upper bound of  $\mathcal{Q}^5(p,t)$ is of the order $\mathcal{O}\left(d^{\frac{p}{2}}\right)$ as $d\rightarrow\infty$. 
\end{lemma}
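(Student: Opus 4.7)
The plan is to use variation of constants for the linear matrix equation
$\mathbb{D} X^x_t = \mathbb{D} Z_t + \int_0^t \nabla b(X^x_s)\mathbb{D} X^x_s\,ds$ to peel off the factor $e^{p\theta_1 t}$ at the outset. Let $\Phi_t$ solve $\dot\Phi_t = \nabla b(X^x_t)\Phi_t$ with $\Phi_0 = I$. Integration by parts (noting that $\Phi_s^{-1}$ is absolutely continuous while $\mathbb{D} Z_s$ is pure-jump) gives the representation $\mathbb{D} X^x_t = \Phi_t W_t$ with $W_t := \int_0^t \Phi_s^{-1}\,d\mathbb{D} Z_s$. Since $\Phi_t^{-1}$ satisfies $\dot{\Phi_t^{-1}} = -\Phi_t^{-1}\nabla b(X^x_t)$, Grönwall together with Condition~\ref{cond_driftb} yields $|\Phi_t^{-1}|_{\operatorname{op}} \le e^{\theta_1 t}$. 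Then $|(\mathbb{D} X^x_t)^{-1}|_F = |W_t^{-1}\Phi_t^{-1}|_F \le e^{\theta_1 t}|W_t^{-1}|_F$, so it suffices to control $\mathbb{E}[|W_t^{-1}|_F^p]$ in terms of $\mathbb{E}[|(\mathbb{D} Z_t)^{-1}|_F^p]$.

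Next I would write $W_t = \mathbb{D} Z_t + B_t$ where $B_t := \int_0^t(\Phi_s^{-1}-I)\,d\mathbb{D} Z_s$, so $W_t = (I + B_t\mathbb{D} Z_t^{-1})\mathbb{D} Z_t$ and
$W_t^{-1} = \mathbb{D} Z_t^{-1}(I + B_t\mathbb{D} Z_t^{-1})^{-1}$. Decomposing
$W_t^{-1} = \mathbb{D} Z_t^{-1} + (W_t^{-1} - \mathbb{D} Z_t^{-1})$ and applying $|a+b|^p \le 2^{p-1}(|a|^p+|b|^p)$ generates the two terms appearing in the bound. The crux is then to bound $|B_t\mathbb{D} Z_t^{-1}|_{\operatorname{op}}$: its $j$-th column equals $\int_0^t(\Phi_s^{-1}-I)e_j\,d\mu_j(s)$, where $\mu_j(ds) := (\mathbb{D} Z_t)_{jj}^{-1}d(\mathbb{D} Z_s)_{jj}$ is a probability measure on $[0,t]$ (this exploits the fact that the diagonal entries of $\mathbb{D} Z_s$ are non-decreasing and positive). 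Combined with $|\Phi_s^{-1}-I|_{\operatorname{op}} \le e^{\theta_1 s}-1 \le \theta_1 t e^{\theta_1 t}$, this yields the estimate $|B_t\mathbb{D} Z_t^{-1}|_{\operatorname{op}} \le \theta_1 t e^{\theta_1 t}$. When $\theta_1 t e^{\theta_1 t} < 1$, a Neumann-series expansion gives $|(I + B_t\mathbb{D} Z_t^{-1})^{-1} - I|_{\operatorname{op}} \le \theta_1 t e^{\theta_1 t}/(1 - \theta_1 t e^{\theta_1 t})$, which produces the first branch of the $\min$. For the "1" branch, I would invoke the crude estimate $|W_t^{-1} - \mathbb{D} Z_t^{-1}|_F \le |\mathbb{D} Z_t^{-1}|_F$ via a direct algebraic rearrangement of $W_t^{-1} - \mathbb{D} Z_t^{-1} = -W_t^{-1}B_t\mathbb{D} Z_t^{-1}$, always available regardless of $t$.

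The proof of the companion statement $\mathbb{E}[|(\mathbb{D} Z_t)^{-1}|_F^p] \le Cd^{p/2}t^{-\kappa/\pi}$ is separable: because $\mathbb{D} Z_t$ is diagonal, $|\mathbb{D} Z_t^{-1}|_F^p = (\sum_j (D_jZ^j_t)^{-2})^{p/2} \le d^{p/2-1}\sum_j (D_jZ^j_t)^{-p}$ by Jensen, and it remains to bound $\mathbb{E}[(D_jZ^j_t)^{-p}]$. Each $D_jZ^j_t = \int_0^t\int_\mathbb{R}\phi(\xi_j)\psi(s)N_j(d\xi_j,ds)$ is a non-decreasing jump process, and Condition~\ref{cond_levymeasure}(i), which says $\liminf_{\epsilon\to 0}\epsilon^\pi m_j(|\xi|\ge\epsilon) > 0$, guarantees a sufficiently dense accumulation of small jumps to lower-bound $D_jZ^j_t$. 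This is the content of the small-jump estimate in \cite{Bichtelerbook,basscranston1986malliavin,norris1988integration} (and Lemma~6 of \cite{kulik2023gradient}), which delivers the polynomial decay in $t^{-\kappa/\pi}$.

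The hardest step will be securing the operator-norm bound $|B_t\mathbb{D} Z_t^{-1}|_{\operatorname{op}} \le \theta_1 t e^{\theta_1 t}$ without an extra $\sqrt{d}$ factor. A naive column-wise estimate combined with $|A|_{\operatorname{op}} \le \sqrt{d}\max_j|Ae_j|_2$ would give $\sqrt{d}\,\theta_1 t e^{\theta_1 t}$; eliminating this would require exploiting either that each column is a \emph{probability-weighted average} of columns of a common matrix $\Phi_s^{-1}-I$ whose operator norm is controlled uniformly in $s$, or using Jensen's inequality carried out inside the operator-norm supremum rather than pointwise over columns. Once this dimension-free inequality is in hand, everything else consists of assembling the pieces via $|a+b|^p \le 2^{p-1}(|a|^p+|b|^p)$ and substituting the known moment bound on $(\mathbb{D} Z_t)^{-1}$.
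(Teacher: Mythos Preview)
Your approach is essentially the paper's, in different notation: your $\Phi_t$ is $\nabla X^x_t$, your $W_t^{-1}$ is the paper's $A(x,t)=(\der X^x_t)^{-1}\nabla X^x_t$, and your $B_t(\der Z_t)^{-1}$ is exactly their $Q(x,t)$. Both arguments factor $(\der X^x_t)^{-1}=A(x,t)(\nabla X^x_t)^{-1}$, expand $(I+Q)^{-1}$ as a Neumann series, and finish the $(\der Z_t)^{-1}$ bound via Jensen on the diagonal entries together with the small-jump estimate from \cite{kulik2023gradient}.

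There is one real gap. Your ``$1$'' branch does not follow from the algebraic identity $W_t^{-1}-\der Z_t^{-1}=-W_t^{-1}B_t\,\der Z_t^{-1}$ alone: that would require $\lvert W_t^{-1}B_t\rvert_{\mathrm{op}}\le 1$, which is precisely what is in question (and for large $t$ you do not even know $I+Q$ is invertible). The paper does not get this from an operator-norm estimate at all. It comes from the \emph{construction} of the perturbation field $V$: the time-cutoff $\psi$ is supported in $[0,\delta]$ with $\delta$ chosen small as in \cite[Proof of Lemma~6]{kulik2023gradient}, so $d(\der Z_s)$ is supported on $[0,\delta]$ and only values $(\nabla X^x_s)^{-1}-I$ for $s\le\delta$ enter $Q$. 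With $\delta$ small enough this forces $\lvert Q(x,t)\rvert\le \tfrac12$ for \emph{every} $t$, which simultaneously guarantees invertibility of $I+Q$, the Neumann-tail bound $\sum_{n\ge1}\lvert Q\rvert^n\le 1$, and hence the ``$1$'' branch. Your probability-measure argument for the first branch is a good heuristic, but as you yourself note, the columns are averaged against \emph{different} $\mu_j$, and in general mixing columns from different matrices can inflate the operator norm (so the naive route does pick up a $\sqrt d$). The paper sidesteps this entirely by invoking the a priori cap $\lvert Q\rvert\le\tfrac12$ from the choice of $\delta$, rather than deriving a sharp dimension-free bound on $\lvert Q\rvert_{\mathrm{op}}$.
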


\begin{proof}
$p$-integrability of $(\der X^x_t)^{-1}$ has been verified in \cite[Lemma 5]{kulik2023gradient}, below we will make the bounding constant in the $p$-th moment explicit. 

Proposition~\ref{prop_bismut} says 
\begin{align}
\label{equation_DXasA(x,t)andnablaX}
    \brac{\der X^x_t}^{-1}=A(x,t)\brac{\nabla X^x_t}^{-1}.
\end{align}
Next, let us study $p$-integrability of $A(x,t)$ for $p\geq 1$. \cite[Section 5]{kulik2023gradient} provides the representation
\begin{align}
\label{equation_A(x,t)expansion}
    A(x,t)=\brac{\der Z_t}^{-1} \brac{I_{d\times d}+Q(x,t)}^{-1}=\brac{\der Z_t}^{-1} +\brac{\der Z_t}^{-1} \sum_{n=1}^\infty (-1)^n Q(x,t)^n,
\end{align}
where 
\begin{align*}
    Q(x,t)&=\brac{\int_0^t \brac{\brac{\nabla X^x_s}^{-1}-I_{d\times d}}d{\der Z_s}}(\der Z_t)^{-1}
    \\&\leq \min \left\{\abs{\brac{\nabla X^x_t}^{-1}-I_{d\times d}},\frac{1}{2} \right\}. 
\end{align*}
Notice that for $a,b>0$, $e^a-e^b\leq e^{a+b}\abs{a-b}$ so
\begin{align*}
    \abs{\brac{\nabla X^x_t}^{-1}-I_{d\times d}}&\leq \exp \brac{-\int_0^t \nabla b(X_s)ds}-\exp (0)\leq \theta_1 te^{\theta_1t}. 
\end{align*}
The last line is a consequence of Condition~\ref{cond_driftb}. This implies 
\begin{align*}
    \abs{\sum_{n=1}^\infty (-1)^n Q(x,t)^n }\leq \sum_{n=1}^\infty \brac{\min \left\{\theta_1 te^{\theta_1t}, \frac{1}{2} \right\}}^n
    =\min \left\{ \frac{\theta_1 te^{\theta_1t}}{1-\theta_1 te^{\theta_1t} },1 \right\}. 
\end{align*}
Therefore, we deduce from Equation \eqref{equation_A(x,t)expansion} that
\begin{align*}
    \E{\abs{A(x,t)}^p}\leq 2^{p-1}\min \left\{ \frac{\theta_1 te^{\theta_1t}}{1-\theta_1 te^{\theta_1t} },1 \right\} \E{\abs{\brac{DZ_t}^{-1}}^p}+2^{p-1}\E{\abs{\brac{DZ_t}^{-1}}^p}. 
\end{align*}
The above estimate and Equation \eqref{equation_DXasA(x,t)andnablaX} lead to 
\begin{align*}
    &\E{\abs{\brac{\der X^x_t}^{-1}}^p}\leq \abs{\brac{\nabla X^x_t}^{-1}}^p \E{\abs{A(x,t) }^p}\\
    &\qquad\leq e^{p\theta_1t}2^{p-1}\brac{\min \left\{ \frac{\theta_1 te^{\theta_1t}}{1-\theta_1 te^{\theta_1t} },1 \right\} \E{\abs{\brac{DZ_t}^{-1}}^p}+\E{\abs{\brac{DZ_t}^{-1}}^p}}. 
\end{align*}
It has been shown in \cite[part i of Lemma 6 and Section 7]{kulik2023gradient} that the quantity $\E{\abs{\brac{DZ_t}^{-1}}^p}$ can be bounded as follows: 
\begin{align*}
    \E{\abs{\brac{DZ_t}^{-1}}^p}&=\E{\brac{\sum_{j=1}^d\frac{1}{\bracsq{DZ_t}^2_{j,j}} }^{\frac{p}{2}}}\\
    &\leq d^{\frac{p}{2}-1}\sum_{j=1}^d \E{\frac{1}{\bracsq{DZ_t}^p_{j,j}}}\\
    &=\frac{d^{\frac{p}{2}-1}}{\Gamma(p)}\sum_{j=1}^d\int_0^\infty s^{p-1}\exp\brac{-t\int_{\R}\brac{1-e^{-sV_j(\xi,s)}} m_j(d\xi_j)}ds.
\end{align*}
When $t\in (0,T]$ for some $T>0$, \cite[Lemma 7]{kulik2023gradient}, (4) in Condition~\ref{cond_levymeasure} and the above inequality imply
\begin{align*}
    \E{\abs{\brac{DZ_t}^{-1}}^p}\leq Cd^{\frac{p}{2}}t^{-\frac{\kappa}{\pi}},
\end{align*}
for some constant $C=C(p,T)$.  
This completes the proof.
\end{proof}

\begin{lemma}
\label{lemma_nablaDDX}
   Assume Condition~\ref{cond_levymeasure} and Condition~\ref{cond_driftb}. For any $t\geq 0$ and $m,k\leq d$, the $d\times d$ random matrix $\nabla_m D_k\mathbb{D}X^x_t$ is well-defined and $q$-integrable for $\tau>q\geq 1$. 

Specifically, we have
\begin{align*}
    &\mathcal{Q}^6_{m,k}(q,t)=\sup_{x\in\R^d}\E{\abs{ \nabla_m D_k\mathbb{D}X^x_t}^q}\\
    &\leq \frac{1}{2}\theta_3^q e^{4q\theta_1 t}  d^{q-1}4^{q-1}\sum_{j=1}^d \brac{C_6(2q)\brac{t^{2q}M_j(2\kappa)^{q}+tM_j(2q\kappa) }+t^{3q} M_j(\kappa)^{2q}}\\
    &+\frac{1}{4}d^{q-1}4^{q-1}\theta_2^{3q} e^{5q\theta_1 t} \sum_{j=1}^d \brac{C_6(2q)\brac{t^{4q}M_j(2\kappa)^{q}+t^{3q+1}M_j(2q\kappa) }+t^{5q} M_j(\kappa)^{2q}}\\
    &+ \frac{\theta_2^q}{2}e^{3q\theta_1 t}  d^{q-1}4^{q-1}\sum_{j=1}^d \brac{C_6(2q)\brac{t^{2q}M_j(2\kappa)^{q}+t^q M_j(2q\kappa) }+t^{3q} M_j(\kappa)^{2q}}\\
       &\quad+  2^{q-\frac{\tau}{q}+\tau-3}4^{q-1}e^{(3q+\tau)\theta_1 t} \\&\hspace{3em}\cdot\sum_{j=1}^d\bigg(d^{\frac{q}{2}-\frac{q}{\tau}}C_6(\tau)^{\frac{q}{\tau}}\brac{t^{\frac{q}{\tau}+q}{M}_j\brac{\tau(2\kappa-1)}^{\frac{q}{\tau}}+t^{\frac{q}{2}+q}M_j\brac{4\kappa-2)}^{\frac{q}{2}} }\\
     &\qquad\qquad\qquad+d^{\frac{q}{2}-\frac{q}{\tau}}t^{2q} M_j(2\kappa-1)^q\\
     &+\brac{\frac{1}{2}}^{\frac{q}{\tau}} \theta_2^q e^{2q\theta_1 t}   d^{q-\frac{q}{\tau}} \Big(C_6(2\tau)^{\frac{q}{\tau}}\brac{t^{3q}M_j(2\kappa)^{q}+t^{2q+\frac{q}{\tau}}M_j(2\tau\kappa)^{\frac{q}{\tau}} }+t^{4q} M_j(\kappa)^{2q}\Big)\bigg),    \end{align*}
where the factor $C_6$ is defined in Lemma~\ref{lemma_kunita}, and the remaining notations are from Conditions~\ref{cond_levymeasure} and~\ref{cond_driftb}. Moreover regarding dimension dependence, our upper bound on $\mathcal{Q}^6_{m,k}(q,t)$ is of the order $\mathcal{O}\left(d^{q+1-\frac{q}{\tau}}\right)$ as $d\rightarrow\infty$. 
\end{lemma}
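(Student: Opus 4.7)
The plan is to mimic the strategies used in the proofs of Lemmas~\ref{lemma_DXandothers} and~\ref{lemma_DDX}, now differentiating one further level in the spatial variable. Starting from the integral equation \eqref{equation_DkDX} for $D_k\der X^x_t$, I would differentiate in $x_m$; since the pure noise term $D_k\der Z_t$ does not depend on $x$, no new Poisson integral appears and one obtains
\begin{align*}
\nabla_m D_k \der X^x_t &= \int_0^t \big[ \nabla^3 b(X^x_s)\,\nabla_m X^x_s\,D_k X^x_s\,\der X^x_s + \nabla^2 b(X^x_s)\,\nabla_m D_k X^x_s\,\der X^x_s \\
&\qquad + \nabla^2 b(X^x_s)\,D_k X^x_s\,\nabla_m \der X^x_s + \nabla^2 b(X^x_s)\,\nabla_m X^x_s\,D_k \der X^x_s \\
&\qquad + \nabla b(X^x_s)\,\nabla_m D_k \der X^x_s \big]\,ds.
\end{align*}
The interchange of $\nabla_m$, $D_k$ and the Lebesgue integral is justified by Condition~\ref{cond_driftb} on $b$ together with the integrable moment bounds already established for $\nabla X^x,\der X^x, D_k\der X^x, D_k\nabla X^x$, and $\nabla_m\der X^x$ in Lemmas~\ref{lemma_nablaX}--\ref{lemma_DDX}.

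Using $|\nabla b|\le\theta_1$, $|\nabla^2 b|\le\theta_2$, $|\nabla^3 b|\le\theta_3$ from Condition~\ref{cond_driftb} and the pathwise bound $|\nabla X^x_s|\le e^{\theta_1 s}$ from Lemma~\ref{lemma_nablaX}, and noting that $\nabla_m D_k X^x_s$ and $\nabla_m X^x_s$ are columns of the matrices $D_k\nabla X^x_s$ and $\nabla X^x_s$ respectively, one obtains
\begin{align*}
|\nabla_m D_k \der X^x_t| &\le \theta_1\int_0^t |\nabla_m D_k \der X^x_s|\,ds + \int_0^t \big[\theta_3 e^{\theta_1 s}|D_k X^x_s|\,|\der X^x_s| \\
&\qquad + \theta_2|D_k\nabla X^x_s|\,|\der X^x_s| + \theta_2|D_k X^x_s|\,|\nabla_m\der X^x_s| + \theta_2 e^{\theta_1 s}|D_k\der X^x_s|\big]\,ds.
\end{align*}
Gr\"{o}nwall's inequality \eqref{gronwall} then absorbs the first term on the right into an $e^{\theta_1 t}$ prefactor, leaving a sum of four forcing contributions.

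Next, I would raise both sides to the $q$-th power (using $(a_1+\cdots+a_4)^q\le 4^{q-1}\sum_i a_i^q$) and apply Jensen's inequality \eqref{jensen} to the remaining time integral, so the problem reduces to bounding the time-integrated $q$-th moments of four products of previously-controlled quantities. For each such term I would apply Cauchy--Schwarz (or H\"{o}lder with three conjugate exponents in the $\nabla^3 b$ case) and invoke the moment estimates already proved: $\mathcal{Q}^1(2q,s)$ for $|\der X^x_s|$ and $|D_k X^x_s|$, $\mathcal{Q}^2_k(2q,s)$ for $|D_k\nabla X^x_s|$, and $\mathcal{Q}^3_m(2q,s)$ for $|\nabla_m\der X^x_s|$ (all from Lemma~\ref{lemma_DXandothers}), together with $\mathcal{Q}^4_k(q,s)$ for $|D_k\der X^x_s|$ from Lemma~\ref{lemma_DDX}. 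The four resulting contributions correspond, in the stated bound, to the four displayed summands with prefactors $\theta_3^q e^{4q\theta_1 t}$, $\theta_2^{3q} e^{5q\theta_1 t}$, $\theta_2^q e^{3q\theta_1 t}$, and the $\mathcal{Q}^4$-type term carrying $e^{(3q+\tau)\theta_1 t}$.

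The main technical obstacle will be the careful bookkeeping of constants and exponents required to match the stated bound. The exponential factors $e^{cq\theta_1 t}$ compound the Gr\"{o}nwall prefactor of the current step with those inherited from $\mathcal{Q}^1,\mathcal{Q}^2_k,\mathcal{Q}^3_m,\mathcal{Q}^4_k$, while the powers $\theta_2^q,\theta_2^{3q},\theta_3^q$ track the combined effect of differentiating $b$ at the present step together with the $\theta_2$ factors already absorbed into $\mathcal{Q}^2_k$ and $\mathcal{Q}^3_m$. The restriction $q<\tau$ enters only through the single application of Lemma~\ref{lemma_DDX} to the last forcing term, since the other three contributions involve only $\mathcal{Q}^1,\mathcal{Q}^2_k,\mathcal{Q}^3_m$ and are valid for all $q\ge 1$. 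Finally, the stated dimension dependence $\mathcal{O}(d^{q+1-q/\tau})$ as $d\to\infty$ is read off by comparing the dimensional growth rates of the four contributions, the dominant one coming from the $\mathcal{Q}^4_k$ summand whose bound is of order $d^{q-q/\tau+1}$.
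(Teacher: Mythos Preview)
Your proposal is correct and follows essentially the same approach as the paper: differentiate \eqref{equation_DkDX} in $x_m$ (your displayed equation is in fact slightly more accurate than the paper's, which writes $\nabla b$ in place of $\nabla^2 b$ in the fourth term but then correctly uses $\theta_2$ in the ensuing bound), apply Gr\"{o}nwall followed by Jensen with $4^{q-1}$, and then control the four forcing terms via $\mathcal{Q}^1(2q,\cdot)$, $\mathcal{Q}^2_k(2q,\cdot)$, $\mathcal{Q}^3_m(2q,\cdot)$, and $\mathcal{Q}^4_k(q,\cdot)$. The only cosmetic difference is that the paper handles the product terms with Young's inequality $a^q b^q\le\tfrac12(a^{2q}+b^{2q})$ rather than Cauchy--Schwarz, and for the $\theta_3$ term simply uses $|D_kX^x_s|\le|\der X^x_s|$ rather than a three-factor H\"{o}lder.
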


\begin{proof}
    It follows from \eqref{equation_DkDX} that
    \begin{align*}
         \nabla_m D_k\mathbb{D}X^x_t
         =&\int_0^t  \bigg(\nabla^3 b(X^x_s) \,\nabla_m X^x_s \,D_kX^{x}_s \,\der X^x_s + \nabla^2b(X^x_s) \,\nabla_m D_kX^{x}_s \,\der X^x_s\\
         &\qquad\qquad+\nabla^2b(X^x_s) \,D_kX^{x}_s \,\nabla_m\mathbb{D}X^x_s+\nabla b(X^x_s)\,\nabla_m X^x_s\, D_k\mathbb{D}X^x_s\\
         &\qquad\qquad\qquad\qquad+\nabla b(X^x_s)\,\nabla_m D_k\mathbb{D}X^x_s \bigg)ds.
    \end{align*}
Condition~\ref{cond_driftb} on $b$ implies
\begin{align*}
  \abs{ \nabla_m D_k\mathbb{D}X^x_t}
         =&\int_0^t\bigg(\theta_3\,e^{\theta_1 s} \,\abs{D_kX^{x}_s} \,\abs{\der X^x_s} + \theta_2 \,\abs{\nabla_m D_kX^{x}_s} \,\abs{\der X^x_s}\\
         &\qquad\qquad+\theta_2 \,\abs{D_kX^{x}_s }\,\abs{\nabla_m\mathbb{D}X^x_s}+\theta_2\,e^{\theta_1 s}\, \abs{D_k\mathbb{D}X^x_s}\\
         &\qquad\qquad\qquad\qquad\qquad+\theta_1\,\abs{\nabla_m D_k\mathbb{D}X^x_s} \bigg)ds.
\end{align*}
Assume $q\in [1,\tau)$. By Gr\"{o}nwall's inequality \eqref{gronwall} and Jensen's inequality \eqref{jensen} applied to the function $\abs{x}\mapsto \abs{x}^q$,
\begin{align*}
  \E{\abs{ \nabla_m D_k\mathbb{D}X^x_t}^q}
   &\leq e^{q\theta_1 t} t^{q-1}4^{q-1}\int_0^t\bigg(\theta_3^q\,e^{q\theta_1 s} \,\E{\abs{D_kX^{x}_s}^q \,\abs{\der X^x_s}^q} \\
    &\qquad\qquad\qquad\qquad\qquad\qquad+\theta_2^q \,\E{\abs{\nabla_m D_kX^{x}_s}^q \,\abs{\der X^x_s}^q}
    \\
    &\qquad+\theta_2^q \,\E{\abs{D_kX^{x}_s }^q\,\abs{\nabla_m\mathbb{D}X^x_s}^q}+\theta_2^q\,e^{q\theta_1 s}\, \E{\abs{D_k\mathbb{D}X^x_s}^q}\bigg)ds\\
    &\leq e^{q\theta_1 t} t^{q-1}4^{q-1}\bigg( \theta_3^qe^{q\theta_1t}\int_0^t \E{\abs{D_kX^{x}_s}^q \,\abs{\der X^x_s}^q}ds \\
    &\quad+\theta_2^q \int_0^t \E{\abs{\nabla_m D_kX^{x}_s}^q \,\abs{\der X^x_s}^q} ds+\theta_2^q \int_0^t \E{\abs{D_kX^{x}_s }^q\,\abs{\nabla_m\mathbb{D}X^x_s}^q}ds\\
    &\qquad\qquad\qquad\qquad+\theta_2^q e^{q\theta_1 t}\int_0^t \E{\abs{D_k\mathbb{D}X^x_s}^q}ds\bigg).
\end{align*}
Let us study terms on the right hand side. Lemma~\ref{lemma_DXandothers} (keeping in mind $2q\geq 2$) imply
\begin{align*}
    &\theta_3^qe^{q\theta_1t}\int_0^t \E{\abs{\abs{\der X^x_s}^{2q}}}ds\\
    &= \theta_3^qe^{q\theta_1t}\int_0^t \mathcal{Q}^1(2q,s) ds\\
   & \leq \frac{1}{2}\theta_3^q e^{3q\theta_1 t}  d^{q-1}\sum_{j=1}^d \brac{C_6(2q)\brac{t^{q+1}M_j(2\kappa)^{q}+tM_j(2q\kappa) }+t^{2q+1} M_j(\kappa)^{2q}}.
\end{align*}

Similarly, 
\begin{align*}
    &\theta_2^q\int_0^t \E{\abs{\nabla_m D_kX^{x}_s}^q \,\abs{\der X^x_s}^q} ds\\
    &\leq \frac{\theta_2^q}{2}\int_0^t \left(\E{\abs{\nabla_m D_kX^{x}_s}^{2q}}+ \E{\abs{\der X^x_s}^{2q}}\right)ds\\
    &\leq \frac{\theta_2^q}{2}\int_0^t \left(\mathcal{Q}^2_m(2q,s)+\mathcal{Q}^1(2q,s) \right)ds\\
    &\leq \frac{1}{8}d^{q-1}\theta_2^{3q} e^{4q\theta_1 t} \sum_{j=1}^d \brac{C_6(2q)\brac{t^{3q+1}M_j(2\kappa)^{q}+t^{2q+2}M_j(2q\kappa) }+t^{4q+1} M_j(\kappa)^{2q}}\\
    &\qquad+ \frac{\theta_2^q}{4}e^{2q\theta_1 t}  d^{q-1}\sum_{j=1}^d \brac{C_6(2q)\brac{t^{q+1}M_j(2\kappa)^{q}+tM_j(2q\kappa) }+t^{2q+1} M_j(\kappa)^{2q}}.
\end{align*}

The term $\theta_2^q\int_0^t\E{\abs{D_kX^{x}_s }^q\,\abs{\nabla_m\mathbb{D}X^x_s}^q}ds$
has the same bound as the previous one. Finally,
\begin{align*}
   &\theta_2^q e^{q\theta_1 t} \int_0^t \E{\abs{D_k\mathbb{D}X^x_s}^q}ds
   \\
   &=\theta_2^q e^{q\theta_1 t}\int_0^t \mathcal{Q}^4_k(q,s)ds\\
    &\leq 2^{q-\frac{\tau}{q}+\tau-3}e^{(2q+\tau)\theta_1 t}\\
    &\quad\cdot\sum_{j=1}^d\bigg(d^{\frac{q}{2}-\frac{q}{\tau}}C_6(\tau)^{\frac{q}{\tau}}\brac{t^{\frac{q}{\tau}+1}{M}_j\brac{\tau(2\kappa-1)}^{\frac{q}{\tau}}+t^{\frac{q}{2}+1}M_j\brac{4\kappa-2)}^{\frac{q}{2}} }\\
     &\qquad\qquad\qquad\qquad+d^{\frac{q}{2}-\frac{q}{\tau}}t^{q+1} M_j(2\kappa-1)^q+\brac{\frac{1}{2}}^{\frac{q}{\tau}} \theta_2^q e^{2q\theta_1 t}   d^{q-\frac{q}{\tau}}\\
     & \qquad\qquad\cdot\Big(C_6(2\tau)^{\frac{q}{\tau}}\brac{t^{2q+1}M_j(2\kappa)^{q}+t^{q+\frac{q}{\tau}+1}M_j(2\tau\kappa)^{\frac{q}{\tau}} }+t^{3q+1} M_j(\kappa)^{2q}\Big)\bigg)   .    
\end{align*}

A combination of the previous calculations will yield the desired bound on \\$\mathcal{Q}^6_{m,k}(q,t)=\sup_{x\in\R^d}\E{\abs{ \nabla_m D_k\mathbb{D}X^x_t}^q}$. 

Finally, let us consider the dimension dependence of our upper bound on $\mathcal{Q}^6_{m,k}(q,t)$. Between $d^q$ and $d^{q+1-\frac{q}{\tau}}$, the latter is the dominating quantity since $q/\tau<1$, hence we conclude the upper bound on $\mathcal{Q}^6_{m,k}(q,t)$ is of the order $\mathcal{O}\left(d^{q+1-\frac{q}{\tau}}\right)$ as $d\rightarrow\infty$. 
This completes the proof.
\end{proof}

\begin{lemma}
\label{lemma_nablaDnablaX}
Assume Condition~\ref{cond_levymeasure} and Condition~\ref{cond_driftb}. For any $t\geq 0$ and $m,k\leq d$, the $d\times d$ random matrix $\nabla_m D_k\nabla X^x_t$ is $p$-integrable for any $p\geq 1$. 
 
Specifically, we have for $p\geq 2$
\begin{align*}
     \mathcal{Q}^7_{m,k}(p,t)&=\sup_{x\in\R^d}\E{\abs{\nabla_m D_k\nabla X^x_t}^p}\\
    &\leq \brac{e^{4p\theta_1t}\theta_3^p+e^{p(2\theta_1+\theta_2)t}\theta_2^p }4^{p-1}   d^{\frac{p}{2}-1}\\
    &\hspace{2em}\cdot\sum_{j=1}^d \brac{C_6(p)\brac{t^{\frac{3p}{2}}M_j(2\kappa)^{\frac{p}{2}}+t^{p+1}M_j(p\kappa) }+t^{2p} M_j(\kappa)^{p}}\\
    &\quad+2d^{\frac{p}{2}-1}\theta_2^{2p} 4^{d-1} e^{5p\theta_1 t} \sum_{j=1}^d \Big(C_6(p)\brac{t^{\frac{5p}{2}}M_j(2\kappa)^{\frac{p}{2}}+t^{2p+1}M_j(p\kappa) }+t^{3p} M_j(\kappa)^{p}\Big),
\end{align*}
and when $2>p\geq 1$
\begin{align*}
    \mathcal{Q}^7_{m,k}(p,t)
    &=\sup_{x\in\R^d}\E{\abs{\nabla_m D_k\nabla X^x_t}^p}\\
    &\leq \Bigg(\brac{e^{8\theta_1t}\theta_3^2+e^{2(2\theta_1+\theta_2)t}\theta_2^2 }4 \sum_{j=1}^d {2C_6(2)\brac{t^{3}M_j(2\kappa) }+t^{4} M_j(\kappa)^{2}}\\
    &\hspace{5em}+2\theta_2^{4} 4 e^{10\theta_1 t} \sum_{j=1}^d \Big(C_6(2){2t^{5}M_j(2\kappa) }+t^{6} M_j(\kappa)^{2}\Big)\Bigg)^{\frac{p}{2}},
\end{align*}
where the factor $C_6$ is defined in Lemma~\ref{lemma_kunita}, and the remaining notations are from Conditions~\ref{cond_levymeasure} and~\ref{cond_driftb}. Moreover regarding dimension dependence, our upper bound on $\mathcal{Q}^7_{m,k}(p,t)$ is of the order $\mathcal{O}\left(d^{\frac{p}{2}}\right)$ as $d\rightarrow\infty$. 
\end{lemma}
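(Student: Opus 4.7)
The plan is to derive a linear integral equation for $\nabla_m D_k \nabla X^x_t$ by differentiating the equation \eqref{equation_DknablaX} for $D_k \nabla X^x_t$ in the direction $x_m$, and then feed back into the already-established moment bounds of Lemma~\ref{lemma_DXandothers}. Concretely, applying $\nabla_m$ to \eqref{equation_DknablaX} and using the chain rule, I would obtain an identity of the shape
\begin{align*}
\nabla_m D_k \nabla X^x_t &= \int_0^t \Big( \nabla^3 b(X^x_s)\,\nabla_m X^x_s\,D_k X^x_s\,\nabla X^x_s + \nabla^2 b(X^x_s)\,\nabla_m D_k X^x_s\,\nabla X^x_s \\
&\qquad\qquad + \nabla^2 b(X^x_s)\,D_k X^x_s\,\nabla_m \nabla X^x_s + \nabla^2 b(X^x_s)\,\nabla_m X^x_s\,D_k \nabla X^x_s \\
&\qquad\qquad\qquad + \nabla b(X^x_s)\,\nabla_m D_k \nabla X^x_s \Big) ds,
\end{align*}
which has exactly the same structure as the equation used in the proof of Lemma~\ref{lemma_nablaDDX}, except that $\mathbb{D}X$ is replaced by the deterministic flow $\nabla X$ (in the sense that $|\nabla X^x_s|$ and $|\nabla_m \nabla X^x_s|$ obey the \emph{almost sure} bounds $e^{\theta_1 s}$ and $e^{\theta_2 s}$ from Lemma~\ref{lemma_nablaX}), which is what allows the $\theta_2$ to appear in the exponent of the final estimate.

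Next, I would use Condition~\ref{cond_driftb} to bound the three derivatives of $b$ by $\theta_1,\theta_2,\theta_3$ respectively, and Lemma~\ref{lemma_nablaX} to pull out the deterministic factors $|\nabla X^x_s| \leq e^{\theta_1 s}$ and $|\nabla_m \nabla X^x_s| \leq e^{\theta_2 s}$. Gronwall's inequality \eqref{gronwall} then gives a pathwise estimate of the form
\begin{align*}
|\nabla_m D_k \nabla X^x_t| \leq e^{\theta_1 t} \int_0^t \Big( \theta_3 e^{2\theta_1 s} |D_k X^x_s| + \theta_2 e^{\theta_1 s} |\nabla_m D_k X^x_s| + \theta_2 e^{\theta_2 s} |D_k X^x_s| + \theta_2 e^{\theta_1 s} |D_k \nabla X^x_s| \Big) ds,
\end{align*}
after absorbing the $\nabla b$ term into the Gronwall factor $e^{\theta_1 t}$.

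With the almost-sure control in hand, I would raise both sides to the $p$-th power, apply Jensen's inequality \eqref{jensen} to the convex map $x\mapsto |x|^p$ so as to push the expectation inside the time integral, and bound each of the four summands $4^{p-1}$ times. The resulting expectations $\E{|D_k X^x_s|^p}$, $\E{|\nabla_m D_k X^x_s|^p}$ and $\E{|D_k \nabla X^x_s|^p}$ are exactly $\mathcal{Q}^1, \mathcal{Q}^3_m, \mathcal{Q}^2_k$ (the first with a trivial $\der$ already integrated, the latter two being $\mathcal{Q}^2_k, \mathcal{Q}^3_m$) from Lemma~\ref{lemma_DXandothers}. Integrating those explicit expressions in $s$ produces the extra factor of $t$ and then delivers the two displayed summands of the stated bound: the first block comes from the $\theta_3$ and the $\theta_2$ with $|\nabla_m \nabla X|$ and $|D_k \nabla X|$ contributions, while the second block with $\theta_2^{2p}$ and $e^{5p\theta_1 t}$ comes from inserting the bound on $\mathcal{Q}^3_m(p,s)$ (which itself carries a $\theta_2^p e^{3p\theta_1 s}$ factor) into the $\theta_2$ contribution. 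The case $1\leq p<2$ then follows, as in Lemma~\ref{lemma_DXandothers}, from Lyapunov/Jensen applied to the $p=2$ estimate.

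The main obstacle I foresee is purely bookkeeping: correctly matching the prefactors $\theta_3^p$ versus $\theta_2^{2p}$ and the exponential rates $e^{4p\theta_1 t}$, $e^{p(2\theta_1+\theta_2)t}$, $e^{5p\theta_1 t}$ in the final bound, which requires tracking which Gronwall factor absorbs which $\nabla X$-type bound. In particular, the appearance of $e^{p(2\theta_1+\theta_2) t}$ seems to come from the term $\theta_2 |\nabla_m \nabla X^x_s|\cdot |D_k X^x_s|$ in which one factor is the deterministic bound $e^{\theta_2 s}$ from Lemma~\ref{lemma_nablaX} and the other contributes the exponential growth from $\mathcal{Q}^1(p,s)$; one has to be careful not to accidentally promote this to a $\theta_2^{2p}$ term. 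Apart from this, the argument is essentially a re-run of the scheme used for $\mathcal{Q}^6_{m,k}$ in Lemma~\ref{lemma_nablaDDX}, with the simplification that no genuine stochastic term of the form $\nabla_m D_k \der Z_t$ appears here, which is what allows the final bound to be in terms of $M_j(\kappa), M_j(2\kappa), M_j(p\kappa)$ only (as in Lemma~\ref{lemma_DXandothers}) rather than the heavier moments $M_j(\tau(2\kappa-1))$ etc.\ that featured in Lemma~\ref{lemma_DDX}.
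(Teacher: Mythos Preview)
Your proposal is correct and follows essentially the same approach as the paper: differentiate \eqref{equation_DknablaX} to obtain the five-term integral equation, absorb the $\nabla b$ term via Gr\"onwall, bound the remaining four summands using Lemma~\ref{lemma_nablaX} and the moment estimates $\mathcal{Q}^1,\mathcal{Q}^2_k,\mathcal{Q}^3_m$ from Lemma~\ref{lemma_DXandothers}, and then pass to $1\le p<2$ by Lyapunov. Your identification of which term produces which exponential prefactor (including the $e^{p(2\theta_1+\theta_2)t}$ from the $\theta_2\,|\nabla_m\nabla X|\cdot|D_k X|$ contribution and the $\theta_2^{2p}e^{5p\theta_1 t}$ from feeding $\mathcal{Q}^3_m$ back in) matches the paper's computation.
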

\begin{proof}

Based on Equation \eqref{equation_DknablaX}, we can write
\begin{align*}
    \nabla_m D_k\nabla X^x_t
    &=\int_0^t \bigg(\nabla^3 b(X^x_s)\, \nabla_m X^x_s\, D_kX^x_s \, \nabla X^x_s+\nabla^2 b(X_s)\, \nabla_m D_kX^x_s\, \nabla X^x_s\\
    &\qquad+\nabla^2 b(X^x_s)\, D_kX^x_s\, \nabla_m\nabla X^x_s +\nabla^2 b(X^x_s)\, \nabla_m X^x_s \, D_k\nabla X^x_s
    \\
    &\qquad\qquad\qquad\qquad\qquad+\nabla b(X^x_s)\, (\nabla_m D_k \nabla X^x_s) \bigg)ds. 
\end{align*}
Assume $p\geq 2$. Condition~\ref{cond_driftb} and an application of Gr\"{o}nwall's inequality \eqref{gronwall}, followed by Jensen's inequality \eqref{jensen} with $\phi(x)=\abs{x}^{p}$ lead to
\begin{align*}
    &\E{\abs{\nabla_m D_k\nabla X^x_t}^p}\\
    &\leq e^{p\theta_1t}4^{p-1} t^{p-1}\int_0^t \bigg(\theta_3^pe^{2p\theta_1 s}\E{\abs{\der X^x_s}^p}+\theta_2^pe^{p\theta_1s}\E{\abs{\nabla_m \der X_s}^p}\\
    &\hspace{13em}+\theta_2^p e^{p\theta_2 s}\E{\abs{\der X^x_s}^p}+\theta_2^p e^{p\theta_1 s}\E{\abs{D_k\nabla X^x_s}^p}\bigg)ds\\
    &\leq \brac{e^{3p\theta_1t}\theta_3^p+e^{p(\theta_1+\theta_2)t}\theta_2^p }4^{p-1}t^{p-1}\int_0^t \mathcal{Q}^1(p,s)ds\\
    &\hspace{5em}+e^{2p\theta_1t}\theta_2^p4^{p-1}t^{p-1}\int_0^t\mathcal{Q}^3_m(p,s)ds+e^{2p\theta_1 t}\theta_2^p4^{p-1}t^{p-1}\int_0^t\mathcal{Q}^2_k(p,s)ds\\
    &\leq \brac{e^{4p\theta_1t}\theta_3^p+e^{p(2\theta_1+\theta_2)t}\theta_2^p }4^{p-1}   d^{\frac{p}{2}-1}\\
    &\hspace{10em}\cdot\sum_{j=1}^d \brac{C_6(p)\brac{t^{\frac{3p}{2}}M_j(2\kappa)^{\frac{p}{2}}+t^{p+1}M_j(p\kappa) }+t^{2p} M_j(\kappa)^{p}}\\
    &\qquad+2d^{\frac{p}{2}-1}\theta_2^{2p} 4^{d-1} e^{5p\theta_1 t} \\
    &\hspace{8em}\cdot\sum_{j=1}^d \Big(C_6(p)\brac{t^{\frac{5p}{2}}M_j(2\kappa)^{\frac{p}{2}}+t^{2p+1}M_j(p\kappa) }+t^{3p} M_j(\kappa)^{p}\Big).
\end{align*}
To reach the last line, we have used Lemma~\ref{lemma_DXandothers}. 

 The estimate for the case $2>p\geq 1$ come from the above estimate and the fact that $\E{\abs{F}^p}\leq \E{\abs{F}^2}^{\frac{p}{2}}$. 
\end{proof}


\begin{lemma}
\label{lemma_nablainverseDX_anddifferentiablity}
Assume Condition~\ref{cond_levymeasure} and Condition~\ref{cond_driftb}. For any $t\geq 0$ and $m\leq d$, the $d\times d$ random matrix $\nabla_m(\der X^x_t)^{-1}$ is well-defined and $p$-integrable for any $p\geq 1$. Moreover, the $d\times d$ random matrix $D_m(\der X^x_t)^{-1}$ is well-defined and $q$-integrable for $\tau>q\geq 1$. 

Specifically for any $p\geq 1$,
\begin{align*}
    \E{\abs{\nabla_m (\der X^x_t)^{-1}}^p}
    &\leq \mathcal{Q}^5(4p,t)+\mathcal{Q}^3_m(2p,t), 
\end{align*}
and for any $q$ such that $\tau>q\geq 1$,
\begin{align*}
     \E{\abs{D_m (\der X^x_t)^{-1}}^q}&\leq \frac{\tau-q}{\tau}\mathcal{Q}^5\brac{\frac{2\tau q}{\tau-q},t}+\frac{q}{\tau}\mathcal{Q}^4_m(\tau,t). 
\end{align*}
The terms $\mathcal{Q}^3, \mathcal{Q}^4$ and $\mathcal{Q}^5$ are  respectively defined in Lemmas~\ref{lemma_DXandothers},~\ref{lemma_DDX} and~\ref{lemma_inverseDX}. 

\end{lemma}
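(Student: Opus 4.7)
The plan is to base the entire argument on the classical identity $D(A^{-1}) = -A^{-1}(DA)A^{-1}$, valid for any invertible matrix-valued object $A$ and any derivation $D$. I would apply this identity once with $D$ being the spatial gradient $\nabla_m$ and once with $D$ being the Malliavin derivative $D_m$, and in each case distribute the resulting product of norms through the right inequality (Cauchy--Schwarz plus AM--GM in the first case, Young's inequality with conjugate exponents $\tau/q$ and $\tau/(\tau-q)$ in the second).

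For the spatial gradient, Lemma~\ref{lemma_inverseDX} already guarantees that $(\der X^x_t)^{-1}$ has all polynomial moments, and Condition~\ref{cond_driftb} ensures that $\der X^x_t$ is $\mathcal{C}^1$ in $x$ (with $\nabla_m \der X^x_t$ controlled by Lemma~\ref{lemma_DXandothers}), so the formula $\nabla_m(\der X^x_t)^{-1} = -(\der X^x_t)^{-1}(\nabla_m \der X^x_t)(\der X^x_t)^{-1}$ holds in the usual sense. Taking the Frobenius norm gives $|\nabla_m(\der X^x_t)^{-1}| \leq |(\der X^x_t)^{-1}|^2 |\nabla_m \der X^x_t|$, and then
$$\E{|\nabla_m(\der X^x_t)^{-1}|^p} \leq \E{|(\der X^x_t)^{-1}|^{4p}}^{1/2}\E{|\nabla_m \der X^x_t|^{2p}}^{1/2} \leq \mathcal{Q}^5(4p,t) + \mathcal{Q}^3_m(2p,t),$$
where the last step uses $\sqrt{ab} \leq a+b$ for $a,b \geq 0$.

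For the Malliavin derivative, I would first justify $D_m(\der X^x_t)^{-1} = -(\der X^x_t)^{-1}(D_m \der X^x_t)(\der X^x_t)^{-1}$ via the chain rule recalled in Appendix~A, which again yields $|D_m(\der X^x_t)^{-1}| \leq |(\der X^x_t)^{-1}|^2|D_m \der X^x_t|$. Young's inequality with $a = \tau/q$ and $b = \tau/(\tau-q)$ (so that $1/a + 1/b = 1$), applied to $x = |D_m \der X^x_t|^q$ and $y = |(\der X^x_t)^{-1}|^{2q}$, produces
$$\E{|D_m(\der X^x_t)^{-1}|^q} \leq \frac{q}{\tau}\E{|D_m \der X^x_t|^\tau} + \frac{\tau-q}{\tau}\E{\Big|(\der X^x_t)^{-1}\Big|^{2q\tau/(\tau-q)}},$$
which is precisely $\frac{q}{\tau}\mathcal{Q}^4_m(\tau,t) + \frac{\tau-q}{\tau}\mathcal{Q}^5\!\left(\tfrac{2\tau q}{\tau-q},t\right)$; the finiteness follows from Lemmas~\ref{lemma_DDX} and~\ref{lemma_inverseDX}. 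The condition $q < \tau$ is exactly what makes the conjugate exponents well-defined.

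The main technical obstacle is the rigorous justification of the Malliavin chain rule for the map $A \mapsto A^{-1}$, which is not globally in $\mathcal{C}^1_b$; the chain rule in Appendix~A is stated for $g \in \mathcal{C}^1_b$. I would handle this by a standard localization argument: replace $A \mapsto A^{-1}$ by a smoothly truncated version $g_\epsilon$ that agrees with $A^{-1}$ on $\{|\det A| > \epsilon\}$, apply the chain rule to $g_\epsilon(\der X^x_t)$, and then pass to $\epsilon \to 0$ using dominated convergence, where the moment bound of Lemma~\ref{lemma_inverseDX} supplies the required uniform integrability. The spatial version is analogous but essentially immediate since $(\der X^x_t)^{-1}$ is almost surely $\mathcal{C}^1$ in $x$ under Condition~\ref{cond_driftb}.
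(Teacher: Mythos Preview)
Your proposal is correct and follows essentially the same route as the paper: both proofs rest on the identity $D(A^{-1})=-A^{-1}(DA)A^{-1}$, take Frobenius norms to obtain $|D(A^{-1})|\le |A^{-1}|^{2}|DA|$, and then split the expectation with Young's inequality (exponents $2,2$ for $\nabla_m$ and $\tau/q,\,\tau/(\tau-q)$ for $D_m$). The only cosmetic difference is that for the spatial gradient the paper applies Young's inequality directly to $|(\der X^x_t)^{-1}|^{2p}\cdot|\nabla_m\der X^x_t|^{p}$, whereas you first pass through Cauchy--Schwarz and then use $\sqrt{ab}\le a+b$; the resulting bounds are identical. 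For the Malliavin chain rule on $A\mapsto A^{-1}$, the paper simply cites \cite{kulik2023gradient} (Lemma~5 there) rather than carrying out the localization you sketch, so your proposed $\epsilon$-truncation argument is a valid self-contained substitute for that citation.
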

\begin{proof}
Malliavin differentiablity of $(\der X^x_t)^{-1}$ has been shown in \cite[Lemma 5]{kulik2023gradient}; therefore what remains to show is that $\nabla_m(\der X^x_t)^{-1}$ is differentiable in $x$. Assume $t\geq 0$ and for any $x\in\R^d$, $\{x_n:n\in\N\}$ is a sequence converging to $x$. We have
\begin{align*}
    (\der X^{x_n}_t)^{-1}\,\der X^{x_n}_t-(\der X^x_t)^{-1}\,\der X^x_t=I_{d\times d}-I_{d\times d}=0. 
\end{align*}
This is equivalent to
\begin{align*}
    \brac{(\der X^{x_n}_t)^{-1}-(\der X^{x}_t)^{-1}}\,\der X^{x_n}_t-(\der X^x_t)^{-1}\,\brac{\der X^{x_n}_t-\der X^x_t}=0,
\end{align*}
and 
\begin{align*}
    (\der X^{x_n}_t)^{-1}-(\der X^{x}_t)^{-1}=(\der X^x_t)^{-1}\,\brac{\der X^{x_n}_t-\der X^x_t}\,(\der X^{x_n}_t)^{-1}.
\end{align*}
The last equation implies
\begin{align*}
    \nabla_m (\der X^x_t)^{-1}=(\der X^x_t)^{-1}\,\nabla_m \der X^x_t \,(\der X^x_t)^{-1}.
\end{align*}
Then by Young's inequality \eqref{younginequality} with $a=b=2$ and Lemmas~\ref{lemma_DXandothers},~\ref{lemma_inverseDX} 
\begin{align*}
    \E{\abs{\nabla_m (\der X^x_t)^{-1}}^p}&\leq \frac{1}{2}\brac{\E{\abs{(\der X^x_t)^{-1}}^{4p}}+ \E{\abs{\nabla_m \der X^x_t}^{2p}}}\\
    &\leq \mathcal{Q}^5(4p,t)+\mathcal{Q}^3_m(2p,t). 
\end{align*}

Next we have
\begin{align*}
    D_m(\der X^x_t)^{-1}=(\der X^x_t)^{-1}\,D_m\der X^x_t\,(\der X^x_t)^{-1}.
\end{align*}
Assume $q \in [1,\tau)$. Via Young's inequality \eqref{younginequality} with $a=\frac{\tau}{q},b=\frac{\tau}{\tau-q}$ and Lemmas~\ref{lemma_DDX},~\ref{lemma_inverseDX}, we deduce that
\begin{align*}
     \E{\abs{D_m (\der X^x_t)^{-1}}^q}&\leq \frac{\tau-q}{\tau}\E{\abs{(\der X^x_t)^{-1}}^{\frac{2\tau q}{\tau-q}}} +\frac{q}{\tau}\E{\abs{D_m \der X^x_t}^{\tau}}\\
    &\leq \frac{\tau-q}{\tau}\mathcal{Q}^5\brac{\frac{2\tau q}{\tau-q},t}+\frac{q}{\tau}\mathcal{Q}^4_m(\tau,t). 
\end{align*}

The proof is complete.
\end{proof}

\begin{lemma}
\label{lemma_nablaDinverseDX}
Assume Condition~\ref{cond_levymeasure} and Condition~\ref{cond_driftb}. For any $t\geq 0$ and $k,m\leq d$, the $d\times d$ random matrix  $\nabla_m D_k(\mathbb{D}X^x_t)^{-1}$ is well-defined and $q$-integrable for $\tau> q\geq 1$. 

Specifically, let $q_0$ be any constant such that $q<q_0<\tau$ then
\begin{align*}
\mathcal{Q}^8_{m,k}(q,t) 
&=\sup_{x\in \R^d}\E{\abs{\nabla_m D_k(\mathbb{D}X^x_t)^{-1}}^q}\\
&\leq 2\frac{3^{q-1}q}{q_0}\mathcal{Q}^4_k(q_0,t)+2\frac{3^{q-1}(q_0-q)}{2q_0}\bigg(\mathcal{Q}^5\brac{\frac{8qq_0}{q_0-q},t }+\mathcal{Q}^3_m\brac{\frac{4qq_0}{q_0-q},t}\\
&+\mathcal{Q}^5\brac{\frac{2qq_0}{q_0-q},t } \bigg)+ \frac{3^{q-1}q}{q_0}\mathcal{Q}^6_{m,k}(q_0,t)+\frac{3^{q-1}(q_0-q)}{q_0}\mathcal{Q}^5\brac{\frac{2qq_0}{q_0-q},t}. 
\end{align*}
Moreover regarding dimension dependence, our upper bound on the quantity $\mathcal{Q}^8_{m,k}(q,t) $ is of the order $\mathcal{O}\left(d^{\frac{4qq_0}{q_0-q}\vee \brac{q_0+1-\frac{q_0}{\tau}}}\right)$ as $d\rightarrow\infty$. 
\end{lemma}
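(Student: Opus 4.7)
The plan is to derive a representation of $\nabla_m D_k(\mathbb{D}X^x_t)^{-1}$ as a sum of three explicit products of already-studied matrix-valued processes, and then control the $q$-th moment of each product by a carefully chosen application of Young's inequality that feeds back into the bounds obtained in Lemmas~\ref{lemma_DXandothers}--\ref{lemma_nablainverseDX_anddifferentiablity}. Differentiating the identity $(\mathbb{D}X^x_t)(\mathbb{D}X^x_t)^{-1}=I_{d\times d}$ in the Malliavin direction gives
\begin{align*}
D_k(\mathbb{D}X^x_t)^{-1}=-(\mathbb{D}X^x_t)^{-1}\,D_k\mathbb{D}X^x_t\,(\mathbb{D}X^x_t)^{-1},
\end{align*}
and applying $\nabla_m$ together with the analogous formula $\nabla_m(\mathbb{D}X^x_t)^{-1}=-(\mathbb{D}X^x_t)^{-1}\,\nabla_m\mathbb{D}X^x_t\,(\mathbb{D}X^x_t)^{-1}$ extracted from Lemma~\ref{lemma_nablainverseDX_anddifferentiablity} produces three terms
\begin{align*}
T_1&=\nabla_m(\mathbb{D}X^x_t)^{-1}\,D_k\mathbb{D}X^x_t\,(\mathbb{D}X^x_t)^{-1},\\
T_2&=-(\mathbb{D}X^x_t)^{-1}\,\nabla_m D_k\mathbb{D}X^x_t\,(\mathbb{D}X^x_t)^{-1},\\
T_3&=(\mathbb{D}X^x_t)^{-1}\,D_k\mathbb{D}X^x_t\,\nabla_m(\mathbb{D}X^x_t)^{-1},
\end{align*}
each of which is well-defined as a product of factors whose existence and integrability have already been established, so the well-definedness half of the claim is immediate.

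From Jensen's inequality, $\abs{\nabla_m D_k(\mathbb{D}X^x_t)^{-1}}^q\leq 3^{q-1}\brac{\abs{T_1}^q+\abs{T_2}^q+\abs{T_3}^q}$, and it suffices to bound each $\E{\abs{T_i}^q}$. For $T_2$, submultiplicativity of the Frobenius norm and Young's inequality with conjugate exponents $q_0/q$ and $q_0/(q_0-q)$ (admissible precisely because $q<q_0<\tau$) yield
\begin{align*}
\E{\abs{T_2}^q}\leq \frac{q}{q_0}\mathcal{Q}^6_{m,k}(q_0,t)+\frac{q_0-q}{q_0}\mathcal{Q}^5\brac{\frac{2qq_0}{q_0-q},t}.
\end{align*}
For $T_1$, and symmetrically $T_3$, I first peel off $\abs{D_k\mathbb{D}X^x_t}^q$ with the same Young split to produce the $\mathcal{Q}^4_k(q_0,t)$ contribution, and then control the remaining expectation $\E{\abs{\nabla_m(\mathbb{D}X^x_t)^{-1}}^{qq_0/(q_0-q)}\abs{(\mathbb{D}X^x_t)^{-1}}^{qq_0/(q_0-q)}}$ by the symmetric split $ab\leq(a^2+b^2)/2$ followed by the estimate $\E{\abs{\nabla_m(\mathbb{D}X^x_t)^{-1}}^{p}}\leq \mathcal{Q}^5(4p,t)+\mathcal{Q}^3_m(2p,t)$ from Lemma~\ref{lemma_nablainverseDX_anddifferentiablity} applied at $p=2qq_0/(q_0-q)$. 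The doubling in $p$ is exactly what produces the exponents $8qq_0/(q_0-q)$ and $4qq_0/(q_0-q)$ appearing in the statement, so assembling the three contributions, with $T_1$ and $T_3$ giving identical estimates by symmetry, recovers the claimed inequality together with the $3^{q-1}$ factors.

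The dimension-dependence claim then follows by substituting the growth rates $\mathcal{Q}^5(p,t)=\mathcal{O}(d^{p/2})$, $\mathcal{Q}^3_m(p,t)=\mathcal{O}(d^{p/2})$, $\mathcal{Q}^4_k(p,t)=\mathcal{O}(d^{p+1-p/\tau})$, and $\mathcal{Q}^6_{m,k}(p,t)=\mathcal{O}(d^{p+1-p/\tau})$ already recorded in Lemmas~\ref{lemma_DXandothers}--\ref{lemma_nablaDDX}: the $\mathcal{Q}^5(8qq_0/(q_0-q),t)$ term arising from $T_1,T_3$ is of order $\mathcal{O}(d^{4qq_0/(q_0-q)})$, while the $\mathcal{Q}^4_k(q_0,t)$ and $\mathcal{Q}^6_{m,k}(q_0,t)$ terms are of order $\mathcal{O}(d^{q_0+1-q_0/\tau})$, giving the maximum order stated in the lemma. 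The main obstacle is really bookkeeping: one must choose the Young exponents so that every resulting moment is covered by a $\mathcal{Q}^i$ already known to be finite under Condition~\ref{cond_levymeasure}, and the strict inequality $q<q_0<\tau$ is what allows the $\tau$-moment bounds on $D_k\mathbb{D}X^x_t$ through $\mathcal{Q}^4_k$ and $\nabla_m D_k\mathbb{D}X^x_t$ through $\mathcal{Q}^6_{m,k}$ to be invoked; this is the same structural reason that motivated the introduction of $\tau>2$ in Condition~\ref{cond_levymeasure}, as highlighted in Remark~\ref{remark_needtaugreaterthan2}.
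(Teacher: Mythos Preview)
Your proposal is correct and follows essentially the same approach as the paper: the same three-term product expansion of $\nabla_m D_k(\mathbb{D}X^x_t)^{-1}$, the same $3^{q-1}$ Jensen factor, the same Young splits with exponents $q_0/q$ and $q_0/(q_0-q)$ for the $D_k\mathbb{D}X^x_t$ and $\nabla_m D_k\mathbb{D}X^x_t$ factors, the same symmetric split on the remaining $\nabla_m(\mathbb{D}X^x_t)^{-1}\cdot(\mathbb{D}X^x_t)^{-1}$ product, and the same invocation of the bound from Lemma~\ref{lemma_nablainverseDX_anddifferentiablity} at $p=2qq_0/(q_0-q)$ to produce the $8qq_0/(q_0-q)$ and $4qq_0/(q_0-q)$ exponents. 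The dimension-dependence bookkeeping also matches.
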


\begin{proof}
To show $D_k(\mathbb{D}X^x_t)^{-1}$ is differentiable in $x$, one can follow a very similar argument in the proof of Lemma~\ref{lemma_nablainverseDX_anddifferentiablity} for differentiability of $(\mathbb{D}X^x_t)^{-1}$. Thus, we can write 
    \begin{align*}
        \nabla_m D_k(\mathbb{D}X^x_t)^{-1}&=\nabla_m\brac{(\mathbb{D}X^x_t)^{-1}\,D_k\mathbb{D}X^x_t\,(\mathbb{D}X^x_t)^{-1}}\\
        &=\nabla_m(\mathbb{D}X^x_t)^{-1}\,D_k\mathbb{D}X^x_t\,(\mathbb{D}X^x_t)^{-1}+(\mathbb{D}X^x_t)^{-1}\,\nabla_m D_k\mathbb{D}X^x_t\,(\mathbb{D}X^x_t)^{-1}\\
        &\qquad\qquad\qquad+(\mathbb{D}X^x_t)^{-1}\,D_k\mathbb{D}X^x_t\,\nabla_m(\mathbb{D}X^x_t)^{-1}.
    \end{align*}

Then for $q\in [1,\tau)$,
\begin{align}
    \E{\abs{D_k(\mathbb{D}X^x_t)^{-1}}^q}
    &\leq 3^{q-1}\Big(\E{\abs{\nabla_m(\mathbb{D}X^x_t)^{-1}}^q\,\abs{D_k\mathbb{D}X^x_t}^q\,\abs{(\mathbb{D}X^x_t)^{-1}}^q}\nonumber\\
    &\qquad\qquad+\E{\abs{(\mathbb{D}X^x_t)^{-1}}^q\,\abs{\nabla_m D_k\mathbb{D}X^x_t}^q\,\abs{(\mathbb{D}X^x_t)^{-1}}^q}\nonumber\\
&\qquad\qquad\qquad+\E{\abs{(\mathbb{D}X^x_t)^{-1}}^q\,\abs{D_k\mathbb{D}X^x_t}^q\,\abs{\nabla_m(\mathbb{D}X^x_t)^{-1}}^q}\Big).\label{bound:three:terms}
\end{align}

Let us bound each term on the right hand side of \eqref{bound:three:terms} separately. 

First, let us bound the first term on the right hand side of \eqref{bound:three:terms}, noting that it is the same as the third term on the right hand side of \eqref{bound:three:terms}.
Assume $q_0$ is a constant in $(q,\tau)$. 
By applying Young's inequality \eqref{younginequality} twice, first with $a=\frac{q_0}{q},b=\frac{q_0}{q_0-q}$ and then with $a=b=2$, we arrive at
\begin{align*}
&\E{\abs{\nabla_m(\mathbb{D}X^x_t)^{-1}}^q\,\abs{D_k\mathbb{D}X^x_t}^q\,\abs{(\mathbb{D}X^x_t)^{-1}}^q}\\
&\leq \frac{q}{q_0}\E{\abs{D_k\der X^x_t}^{q_0}}+\frac{q_0-q}{2q_0}\brac{\E{\abs{\nabla_m (\der X^x_t)^{-1}}^{\frac{2qq_0}{q_0-q}}} +\E{\abs{(\der X^x_t)^{-1}}^{\frac{2qq_0}{q_0-q}}}}\\
&\leq \frac{q}{q_0}\mathcal{Q}^4_k(q_0,t)+\frac{q_0-q}{2q_0}\bigg(\mathcal{Q}^5\brac{\frac{8qq_0}{q_0-q},t }+\mathcal{Q}^3_m\brac{\frac{4qq_0}{q_0-q},t}+\mathcal{Q}^5\brac{\frac{2qq_0}{q_0-q},t } \bigg). 
\end{align*}
The last line is due to Lemmas~\ref{lemma_DDX},~\ref{lemma_inverseDX},~\ref{lemma_nablainverseDX_anddifferentiablity}. 

Next, let us bound the second term on the right hand side of \eqref{bound:three:terms}.
Similarly, assume $q_0$ as above. Then, via Young's inequality \eqref{younginequality} with $a=\frac{q_0}{q},b=\frac{q_0}{q_0-q}$ and Lemmas~\ref{lemma_inverseDX},~\ref{lemma_nablaDDX},
\begin{align*}
    &\E{\abs{(\mathbb{D}X^x_t)^{-1}}^q\,\abs{\nabla_m D_k\mathbb{D}X^x_t}^q\,\abs{(\mathbb{D}X^x_t)^{-1}}^q}\\
    &\leq \frac{q}{q_0}\E{\abs{\nabla_m D_k\mathbb{D}X^x_t}^{q_0}}+\frac{q_0-q}{q_0}\E{\abs{(\der X^x_t)^{-1}}^{\frac{2qq_0}{q_0-q}}}\\
    &\leq \frac{q}{q_0}\mathcal{Q}^6_{n,k}(q_0,t)+\frac{q_0-q}{q_0}\mathcal{Q}^5\brac{\frac{2qq_0}{q_0-q},t}. 
\end{align*}

Combining the previous calculations yields the desired bound on 
\begin{equation*}
\mathcal{Q}^8_{m,k}(q,t)=\sup_{x\in\R^d}\E{\abs{D_k(\mathbb{D}X^x_t)^{-1}}^q}.
\end{equation*}

Next we deal with the dimension dependence of our upper bound on $\mathcal{Q}^8_{m,k}(q,t) $. Based on previous lemmas, the contribution to the upper bound on $\mathcal{Q}^8_{m,k}(q,t) $ from 
\begin{align*}
\mathcal{Q}^4_k(q_0,t),\quad
\mathcal{Q}^5\brac{\frac{8qq_0}{q_0-q},t },\quad
\mathcal{Q}^3_m\brac{\frac{4qq_0}{q_0-q},t}
\end{align*}
and 
\begin{align*}
    \mathcal{Q}^5\brac{\frac{2qq_0}{q_0-q},t },\quad
\mathcal{Q}^6_{m,k}(q_0,t)
\end{align*}
are respectively of the order $\mathcal{O}\left(d^{q_0+1-\frac{q_0}{\tau}}\right)$, $\mathcal{O}\left(d^{\frac{4qq_0}{q_0-q}}\right)$, $\mathcal{O}\left(d^{\frac{2qq_0}{q_0-q}}\right)$, $\mathcal{O}\left(d^{\frac{qq_0}{q_0-q}}\right)$, $\mathcal{O}\left(d^{q_0+1-\frac{q_0}{\tau}}\right)$ as $d\rightarrow\infty$. Thus, the upper bound on $\mathcal{Q}^8_{m,k}(q,t) $ is of the order $\mathcal{O}\left(d^{\frac{4qq_0}{q_0-q}\vee \brac{q_0+1-\frac{q_0}{\tau}}}\right)$ as $d\rightarrow\infty$. 
\end{proof}


\begin{lemma}
    \label{lemma_AandDAand_nablaDA}
    Assume Condition~\ref{cond_levymeasure} and Condition~\ref{cond_driftb}. Recall the $d\times d$ random matrix $A(x,t)$ in Proposition~\ref{prop_bismut}.  Then for any $t\geq 0$ and $m\leq d$, the $d\times d$ random matrices $A(x,t)$ and $\nabla_m A(x,t)$ are $p$-integrable for $p\geq 1$. Moreover for $k,m\leq d$, the $d\times d$ random matrices  $D_kA(x,t)$ and $\nabla_m D_kA(x,t)$ are $q$-integrable for $\tau> q\geq 1$.

    Specifically, we have for $p\geq 1$
    \begin{align*}
        \mathcal{Q}^9(p,t)&= \sup_{x\in\R^d}\E{\abs{ A(x,t)}^p}\leq e^{p\theta_1 t}\mathcal{Q}^5(p,t),
    \end{align*}
and    
    \begin{align*}
    \mathcal{Q}^{10}_m(p,t)
    &=\sup_{x\in\R^d}\E{\abs{\nabla_m A(x,t)}^p}\\
    &\leq 2^{p-2}\brac{\mathcal{Q}^5(8p,t)+\mathcal{Q}^3_m(4p,t)+\mathcal{Q}^5(2p,t)+e^{2p\theta_1 t}+e^{2p\theta_2t} }. 
\end{align*}
Meanwhile for $q\in [1,\tau)$ and any $q_0$ such that $q<q_0<\tau$, 
\begin{align*}
   \mathcal{Q}^{11}_k(q,t)&=\sup_{x\in\R^d}\E{\abs{D_k A(x,t)}^q}\\
    &\leq 2^{q-1}e^{q\theta_1 t} \brac{\frac{\tau-q}{\tau}\mathcal{Q}^5\brac{\frac{q\tau}{\tau-q},t}+\frac{q}{\tau}\mathcal{Q}^4_k(\tau,t)} +2^{q-2}\mathcal{Q}^5(2q,t)+2^{q-2}\mathcal{Q}^2_k(2q,t),
\end{align*}
and
\begin{align*}
    \mathcal{Q}^{12}_{m,k}(q,t)&=\sup_{x\in\R^d}\E{\abs{\nabla_m D_kA(x,t)}^q}\\
    &\hspace{1em}\leq  4^{q-1}e^{q\theta_1 t} \mathcal{Q}^8_{m,k}(q,t)+4^{q-1}e^{q\theta_2 t} \brac{\frac{\tau-q}{\tau}\mathcal{Q}^5\brac{\frac{q\tau}{\tau-q},t}+\frac{q}{\tau}\mathcal{Q}^4_m(\tau,t)}\\
    &\qquad\qquad+\frac{4^{q-1}}{2}\brac{\frac{\tau-q}{\tau}\mathcal{Q}^5(8q,t)+\frac{q}{\tau}\mathcal{Q}^3_m(4q,t) +\mathcal{Q}^2_k(2q,t)}\\
    &\qquad\qquad\qquad+4^{q-1}\brac{{\frac{1}{q_0}\mathcal{Q}^7_{m,k}(q_0,t)+\frac{q_0-q}{q_0}\mathcal{Q}^5\brac{\frac{qq_0}{q_0-q},t}}}. 
\end{align*}

Regarding dimension dependence, our upper bounds on $\mathcal{Q}^9(p,t), \mathcal{Q}^{10}_m(p,t), \mathcal{Q}^{11}_k(q,t)$ and $\mathcal{Q}^{12}_{m,k}(q,t)$ are respectively of the order $\mathcal{O}\left(d^{\frac{p}{2}}\right), \mathcal{O}\left(d^{4p}\right), \mathcal{O}\left(d^{\frac{q\tau}{2(\tau-q)}\vee \tau}\right)$ and 
\begin{align*}
\mathcal{O}\left(d^{\frac{4qq_0}{q_0-q}}\vee d^{q_0+1-\frac{q_0}{\tau}}\vee d^{\frac{q\tau}{q(\tau-q)}}\vee d^\tau\right), 
\end{align*}
as $d\rightarrow\infty$
\end{lemma}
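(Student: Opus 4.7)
The plan is to exploit the factorization $A(x,t)=(\mathbb{D}X^x_t)^{-1}\nabla X^x_t$ from Proposition A.2 and differentiate it with respect to $x$ and in the Malliavin direction via the ordinary and Malliavin product rules. Each of the four moments $\mathcal{Q}^9,\mathcal{Q}^{10},\mathcal{Q}^{11},\mathcal{Q}^{12}$ will then be bounded by decomposing the resulting expression and invoking the moment estimates already obtained in Lemmas~\ref{lemma_nablaX}--\ref{lemma_nablaDinverseDX}, combined with Young's inequality to split mixed products.

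Concretely, for $\mathcal{Q}^9$ I will use the deterministic sup-norm bound $\abs{\nabla X^x_t}\leq e^{\theta_1 t}$ from Lemma~\ref{lemma_nablaX} and then apply Lemma~\ref{lemma_inverseDX} directly to $(\mathbb{D}X^x_t)^{-1}$. For $\mathcal{Q}^{10}$ I will expand
\begin{align*}
\nabla_m A(x,t)=\bigl[\nabla_m(\mathbb{D}X^x_t)^{-1}\bigr]\nabla X^x_t+(\mathbb{D}X^x_t)^{-1}\nabla_m\nabla X^x_t,
\end{align*}
use $\abs{\nabla X^x_t}\leq e^{\theta_1 t}$ and $\abs{\nabla_m\nabla X^x_t}\leq e^{\theta_2 t}$ from Lemma~\ref{lemma_nablaX}, apply Young's inequality with $a=b=2$ to decouple the random factors from the deterministic exponentials, and finally invoke Lemma~\ref{lemma_nablainverseDX_anddifferentiablity}. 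For $\mathcal{Q}^{11}$, the analogous decomposition
\begin{align*}
D_k A(x,t)=\bigl[D_k(\mathbb{D}X^x_t)^{-1}\bigr]\nabla X^x_t+(\mathbb{D}X^x_t)^{-1}D_k\nabla X^x_t
\end{align*}
will be handled by applying Lemma~\ref{lemma_nablainverseDX_anddifferentiablity} to the first summand (whose bound already contains $\mathcal{Q}^4_k$ and $\mathcal{Q}^5$ via Young's inequality with the exponent pair $(\tau/q,\tau/(\tau-q))$) and Young's inequality with $a=b=2$ to the second, reducing it to $\mathcal{Q}^5(2q,t)$ and $\mathcal{Q}^2_k(2q,t)$ from Lemmas~\ref{lemma_inverseDX} and~\ref{lemma_DXandothers}.

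For $\mathcal{Q}^{12}$ the Leibniz rule yields the four-term decomposition
\begin{align*}
\nabla_m D_k A(x,t)
&=\bigl[\nabla_m D_k(\mathbb{D}X^x_t)^{-1}\bigr]\nabla X^x_t+\bigl[D_k(\mathbb{D}X^x_t)^{-1}\bigr]\nabla_m\nabla X^x_t\\
&\quad+\bigl[\nabla_m(\mathbb{D}X^x_t)^{-1}\bigr]D_k\nabla X^x_t+(\mathbb{D}X^x_t)^{-1}\nabla_m D_k\nabla X^x_t,
\end{align*}
which I will bound, term by term, by: Lemma~\ref{lemma_nablaDinverseDX} (contributing $\mathcal{Q}^8_{m,k}$) for the first; Lemma~\ref{lemma_nablainverseDX_anddifferentiablity} together with $\abs{\nabla_m\nabla X^x_t}\leq e^{\theta_2 t}$ for the second; Young's inequality with $a=b=2$ combined with Lemmas~\ref{lemma_DXandothers} and~\ref{lemma_nablainverseDX_anddifferentiablity} for the third; and Young's inequality with the free exponent pair $(q_0/q,q_0/(q_0-q))$ together with Lemmas~\ref{lemma_inverseDX} and~\ref{lemma_nablaDnablaX} for the fourth. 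The dimension dependence then follows by tracking $d$-powers through the component lemmas and taking the maximum of the resulting exponents.

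The main obstacle is not conceptual but a matter of careful bookkeeping: one must select the Young's inequality exponents in each summand so as to produce the precise constants listed in the lemma and thereby preserve the prescribed dimension dependence. Because the interpolation exponents are constrained to stay below $\tau$ on any Malliavin-differentiated factor (in order to apply Lemma~\ref{lemma_DDX}) and must interface through the free parameter $q_0\in(q,\tau)$ with Lemma~\ref{lemma_nablaDnablaX}, I will verify that a single choice of $q_0$ works uniformly across the four summands in $\mathcal{Q}^{12}$ so that the bound can be stated as a single inequality.
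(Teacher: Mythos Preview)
Your proposal is correct and follows essentially the same approach as the paper: the factorization $A(x,t)=(\mathbb{D}X^x_t)^{-1}\nabla X^x_t$, the Leibniz-rule decompositions of $\nabla_m A$, $D_k A$, and $\nabla_m D_k A$, and the specific Young's inequality exponent pairs you describe (in particular $(\tau/q,\tau/(\tau-q))$ for the Malliavin-differentiated inverse and $(q_0/q,q_0/(q_0-q))$ for the fourth summand in $\mathcal{Q}^{12}$) coincide with the paper's proof. The dimension-tracking step is likewise carried out by taking the maximum of the $d$-exponents contributed by each constituent $\mathcal{Q}^i$, exactly as in the paper.
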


\begin{proof}
The estimate on $\mathcal{Q}^9(p,t)=\sup_{x\in\R^d}\E{\abs{ A(x,t)}^p}$ is a direct consequence of Lemmas~\ref{lemma_nablaX} and~\ref{lemma_inverseDX}. 

Next, we have
    \begin{align*}
     \nabla_m A(x,t)&=\nabla_m \brac{(\mathbb{D}X^x_t)^{-1}\,\nabla X^x_t}\\
     &={\nabla_m(\mathbb{D}X^x_t)^{-1}\,\nabla X^x_t}+{(\mathbb{D}X^x_t)^{-1}\,\nabla_m\nabla X^x_t}.
\end{align*}
By Young's inequality \eqref{younginequality} with $a=b=2$ and Lemmas~\ref{lemma_nablaX},~\ref{lemma_inverseDX},~\ref{lemma_nablainverseDX_anddifferentiablity}, 
\begin{align*}
    \mathcal{Q}^{10}_m(p,t)
    &=\sup_{x\in\R^d}\E{\abs{\nabla_m A(x,t)}^p}\\
    &\leq 2^{p-2}\bigg(\sup_{x\in\R^d}\E{\abs{\nabla_m(\mathbb{D}X^x_t)^{-1}}^{2p}}+\sup_{x\in\R^d}\E{\abs{\nabla X^x_t}^{2p}}
    \\
    &\qquad\qquad+\sup_{x\in\R^d}\E{\abs{(\mathbb{D}X^x_t)^{-1}}^{2p}}+\sup_{x\in\R^d}\E{\abs{\nabla_m\nabla X^x_t}^{2p}}\bigg)\\
    &\leq 2^{p-2}\brac{\mathcal{Q}^5(8p,t)+\mathcal{Q}^3_m(4p,t)+\mathcal{Q}^5(2p,t)+e^{2p\theta_1 t}+e^{2p\theta_2t} }. 
\end{align*}
In terms of dimension dependence, the quantities $\mathcal{Q}^5(8p,t),\mathcal{Q}^3_m(4p,t),
\mathcal{Q}^5(2p,t)$ are respectively of the order $\mathcal{O}(d^{4p}), \mathcal{O}(d^{2p}), \mathcal{O}(d^p)$, and thus our upper bound on $\mathcal{Q}^{10}_m(p,t)$ is of the order $\mathcal{O}(d^{4p})$ as $d\rightarrow\infty$.

Next, let us consider
\begin{align*}
     D_k A(x,t)&= D_k \brac{(\mathbb{D}X^x_t)^{-1}\,\nabla X^x_t}\\
     &={ D_k(\mathbb{D}X^x_t)^{-1}\,\nabla X^x_t}+{(\mathbb{D}X^x_t)^{-1}\, D_k\nabla X^x_t}.
\end{align*}
We assume $q\in [1,\tau)$. By Young's inequality and Lemmas~\ref{lemma_DXandothers},~\ref{lemma_inverseDX},~\ref{lemma_nablainverseDX_anddifferentiablity},  
\begin{align*}
    &\mathcal{Q}^{11}_k(q,t)=\sup_{x\in\R^d}\E{\abs{D_k A(x,t)}^q}\\
    &\leq 2^{q-1}\brac{\sup_{x\in\R^d}\E{\abs{D_k(\der X^x_t)^{-1} \nabla X^x_t}^q }+\sup_{x\in\R^d}\E{\abs{(\der X^x_t)^{-1} D_k\nabla X^x_t}^q} }\\
    &\leq 2^{q-1}e^{q\theta_1 t}\sup_{x\in\R^d}\E{\abs{D_k(\der X^x_t)^{-1}}^q}+2^{q-2}\sup_{x\in\R^d}\E{\abs{(\der X^x_t)^{-1}}^{2q}}\\
    &\hspace{17em}+2^{q-2}\sup_{x\in\R^d}\E{\abs{D_k\nabla X^x_t}^{2q}}\\
    &\leq 2^{q-1}e^{q\theta_1 t} \brac{\frac{\tau-q}{\tau}\mathcal{Q}^5\brac{\frac{q\tau}{\tau-q},t}+\frac{q}{\tau}\mathcal{Q}^4_k(\tau,t)} \\
    &\hspace{17em}+2^{q-2}\mathcal{Q}^5(2q,t)+2^{q-2}\mathcal{Q}^2_k(2q,t).
\end{align*}
Regarding the dimension dependence, the quantities 
\begin{align*}
    \mathcal{Q}^5\brac{\frac{q\tau}{\tau-q},t},\quad
\mathcal{Q}^2_k(2q,t),\quad
\mathcal{Q}^5(2q,t),\quad\text{and}\quad
\mathcal{Q}^4_k(\tau,t),
\end{align*}
are respectively of the order $\mathcal{O}\left(d^{\frac{q\tau}{2(\tau-q)}}\right)$, $\mathcal{O}(d^\tau)$, $\mathcal{O}(d^q)$  
and $\mathcal{O}(d^q)$ as $d\rightarrow\infty$.
Then our upper bound on 
$\mathcal{Q}^{11}_k(q,t)$ is of the order $\mathcal{O}\left(d^{\frac{q\tau}{2(\tau-q)}\vee \tau}\right)$ as $d\rightarrow\infty$. 

The last thing to study is
\begin{align*}
    \nabla_m D_kA(x,t)&=\nabla_m D_k\brac{(\mathbb{D}X^x_t)^{-1}\,\nabla X^x_t}\\
    &=\nabla_mD_k(\mathbb{D}X^x_t)^{-1}\,\nabla X^x_t +D_k(\mathbb{D}X^x_t)^{-1}\,\nabla_m\nabla X^x_t \\
    &\qquad\qquad+\nabla_m (\mathbb{D}X^x_t)^{-1}\,D_k\nabla X^x_t + (\mathbb{D}X^x_t)^{-1}\,\nabla_m D_k\nabla X^x_t .
\end{align*}
Again let us assume $q\in [1,\tau)$ and $q_0$ is another constant such that $q<q_0<\tau$. Via Young's inequality and Lemmas~\ref{lemma_nablaX},~\ref{lemma_DXandothers},~\ref{lemma_nablaDnablaX},~\ref{lemma_nablainverseDX_anddifferentiablity} and~\ref{lemma_nablaDinverseDX}, we can compute that
\begin{align*}
    \mathcal{Q}^{12}_{m,k}(q,t)
    &=\sup_{x\in\R^d}\E{\abs{\nabla_m D_kA(x,t)}^q}\\
    &\leq 4^{q-1}e^{q\theta_1 t}\sup_{x\in\R^d}\E{\abs{\nabla_m D_k(\der X^x_t)^{-1}}^q}+4^{q-1}e^{q\theta_2 t}\sup_{x\in\R^d}\E{\abs{D_k(\der X^x_t)^{-1}}^q}\\
    &\qquad+\frac{4^{q-1}}{2}\brac{\sup_{x\in\R^d}\E{\abs{\nabla_m(\der X^x_t)^{-1}}^{2q}}+\sup_{x\in\R^d}\E{\abs{D_k\nabla X^x_t}^{2q}} }\\
    &\qquad\qquad+4^{q-1}\brac{\frac{q}{q_0}\sup_{x\in\R^d}\E{\abs{\nabla_mD_k\nabla X^x_t}^{q_0}}+\frac{q_0-q}{q_0}\sup_{x\in\R^d}\E{\abs{(\der X^x_t)^{-1}}^{\frac{qq_0}{q_0-q}}} }\\
    &\leq 4^{q-1}e^{q\theta_1 t} \mathcal{Q}^8_{m,k}(q,t)+4^{q-1}e^{q\theta_2 t} \brac{\frac{\tau-q}{\tau}\mathcal{Q}^5\brac{\frac{q\tau}{\tau-q},t}+\frac{q}{\tau}\mathcal{Q}^4_m(\tau,t)}\\
    &\qquad+\frac{4^{q-1}}{2}\brac{\frac{\tau-q}{\tau}\mathcal{Q}^5(8q,t)+\frac{q}{\tau}\mathcal{Q}^3_m(4q,t) +\mathcal{Q}^2_k(2q,t)}\\
    &\qquad\qquad+4^{q-1}\brac{{\frac{1}{q_0}\mathcal{Q}^7_{m,k}(q_0,t)+\frac{q_0-q}{q_0}\mathcal{Q}^5\brac{\frac{qq_0}{q_0-q},t}}}. 
\end{align*}
Regarding dimension dependence, one can compute that the quantities 
\begin{align*}
&\mathcal{Q}^8_{m,k}(q,t),\mathcal{Q}^5\brac{\frac{q\tau}{\tau-q},t}, \mathcal{Q}^4_m(\tau,t),\mathcal{Q}^5(8q,t),\\
&\mathcal{Q}^3_m(4q,t) ,\mathcal{Q}^2_k(2q,t), \mathcal{Q}^7_{m,k}(q_0,t), \mathcal{Q}^5\brac{\frac{qq_0}{q_0-q},t},
\end{align*}
are respectively of the order 
\begin{align*}
&\mathcal{O}\left(d^{\frac{4qq_0}{q_0-q}\vee \brac{q_0+1-\frac{q_0}{\tau}}}\right), \mathcal{O}\left(d^{\frac{q\tau}{2(\tau-q)}}\right),\mathcal{O}\left(d^{q_0+1-\frac{q_0}{\tau}}\right), \mathcal{O}\left(d^{4q}\right), 
\\
&\mathcal{O}\left(d^{2q}\right), \mathcal{O}(d^q), \mathcal{O}\left(d^{\frac{q_0}{2}}\right), \mathcal{O}\left(d^{\frac{qq_0}{2(q_0-q)}}\right), 
\end{align*}
as $d\rightarrow\infty$.
Since $\tau>q_0>q\geq 1$, we can conclude that our upper bound of $\mathcal{Q}^{12}_{m,k}(q,t)$ is of the order
\begin{align*}
\mathcal{O}\left(d^{\frac{4qq_0}{q_0-q}}\vee d^{q_0+1-\frac{q_0}{\tau}}\vee d^{\frac{q\tau}{q(\tau-q)}}\vee d^\tau\right),
\end{align*}
as $d\rightarrow\infty$. This completes the proof.
\end{proof}

\begin{proof}[Proof of Lemma~\ref{lemma_GandnablaG}]
Note that $q$-integrability of $G(x,t)$ when $q=1$ has been proved in \cite[Theorem 1]{kulik2023gradient}, and here under more stringent conditions, we show an extension to the case $\tau>q\geq 1$. 

Per Proposition~\ref{prop_bismut}, 
    \begin{align*}
    G_j(x,t)=\sum_{k=1}^d \left(A_{k,j}(x,t)B_k(t)-D_kA_{k,j}(x,t)\right),
\end{align*}
so that
\begin{align}\label{G:A:B:D}
    \abs{G(x,t)}\leq \abs{A(x,t)}\abs{B(t)}+\sqrt{d}\sum_{k=1}^d\abs{D_kA(x,t)}. 
\end{align}
To see this, notice that $G(x,t)=R(x,t)-S(x,t)$, 
where 
\begin{align*}
    R(x,t)=(R_{1}(x,t),\ldots,R_{d}(x,t))^{T},\quad S(x,t)=(S_{1}(x,t),\ldots,S_{d}(x,t))^{T}
\end{align*}
with
\begin{equation*}
R_{j}(x,t):=\sum_{k=1}^{d}A_{k,j}(x,t)B_k(t),
\qquad
S_{j}(x,t):=\sum_{k=1}^{d}D_kA_{k,j}(x,t).
\end{equation*}
We can compute that
$|G(x,t)|\leq|R(x,t)|+|S(x,t)|$, 
and moreover, by Cauchy-Schwarz inequality,
\begin{align*}
|R(x,t)|
&=\left(\sum_{j=1}^{d}\left(\sum_{k=1}^{d}A_{k,j}(x,t)B_k(t)\right)^{2}\right)^{1/2}
\\
&\leq
\left(\sum_{j=1}^{d}\sum_{k=1}^{d}|A_{k,j}(x,t)|^{2}\sum_{k=1}^{d}|B_k(t)|^{2}\right)^{1/2}
=\abs{A(x,t)}\abs{B(t)},
\end{align*}
and furthermore,
\begin{align*}
|S(x,t)|
&=\left(\sum_{j=1}^{d}\left(\sum_{k=1}^{d}D_kA_{k,j}(x,t)\right)^{2}\right)^{1/2}
\leq\sum_{j=1}^{d}\left|\sum_{k=1}^{d}D_kA_{k,j}(x,t)\right|
\\
&\leq\sum_{k=1}^{d}\sum_{j=1}^{d}|D_kA_{k,j}(x,t)|
\leq\sqrt{d}\sum_{k=1}^{d}\left(\sum_{j=1}^{d}|D_kA_{k,j}(x,t)|^{2}\right)^{1/2}
\\
&\leq\sqrt{d}\sum_{k=1}^{d}\left(\sum_{j=1}^{d}\sum_{\ell=1}^{d}|D_kA_{\ell,j}(x,t)|^{2}\right)^{1/2}
=\sqrt{d}\sum_{k=1}^d\abs{D_kA(x,t)}. 
\end{align*}
Hence, we proved \eqref{G:A:B:D}.

Now let $q_0\in (q,\tau)$. Via \eqref{G:A:B:D}, Young inequality \eqref{younginequality} with $a=\frac{q_0}{q},b=\frac{q_0}{q_0-q}$ and Lemmas~\ref{lemma_Bk},~\ref{lemma_AandDAand_nablaDA},
\begin{align*}
\sup_{x\in\R^d}\E{\abs{G(x,t)}^q}
&\leq 2^{q-1}\brac{\sup_{x\in\R^d}\E{\abs{A(x,t)}^q\abs{B(t)}^q} +\sqrt{d}\sum_{k=1}^d\sup_{x\in\R^d}\E{\abs{D_kA(x,t)}^q} }\\
&\leq 2^{q-1}\Bigg(\frac{q}{q_0}\sup_{x\in\R^d}\E{\abs{B(t)}^{q_0}}+\frac{q_0-q}{q_0}\sup_{x\in\R^d}\E{\abs{A(x,t)}^{\frac{qq_0}{q_0-q}}}\\
&\hspace{15em}+\sqrt{d}\sum_{k=1}^d\sup_{x\in\R^d}\E{\abs{D_kA(x,t)}^q}  \Bigg)\\
&\leq 2^{q-1}\brac{\frac{q}{q_0}\mathcal{Q}^0(q_0,t)+\frac{q_0-q}{q_0} \mathcal{Q}^9\brac{\frac{qq_0}{q_0-q},t}+\sqrt{d}\sum_{k=1}^d\mathcal{Q}^{11}_k(q,t) }. 
    \end{align*}
In terms of the dimension dependence, one can compute that 
the quantities 
\begin{equation*}
\mathcal{Q}^0(q_0,t), \quad \mathcal{Q}^9\brac{\frac{qq_0}{q_0-q},t}
\quad\text{and}\quad \sqrt{d}
\sum_{k=1}^d\mathcal{Q}^{11}_k(q,t), 
\end{equation*}
are respectively of the order $\mathcal{O}\left(d^{\frac{q}{2}+1-\frac{q}{\tau}}\right), \mathcal{O}\left(d^{\frac{qq_0}{2(q_0-q)}}\right)$ and $\mathcal{O}\left(d^{\frac{3}{2}+\brac{\frac{q\tau}{2(\tau-q)}\vee \tau}}\right)$. Since $q/2+1\geq 3/2$ for $q\geq 1$, our upper bound on $\sup_{x\in\R^d}\E{\abs{G(x,t)}^q}$ is of the order
\begin{align*}
\mathcal{O}\left( d^{\frac{qq_0}{2(q_0-q)}}\vee d^{\frac{3}{2}+\brac{\frac{q\tau}{2(\tau-q)}\vee \tau}}\right),
\end{align*}
as $d\rightarrow\infty$.

Next, to see that $G_j(x,t)$ is differentiable in $x$, we need to check that $A_{k,j}(x,t)$ and $D_kA_{k,j}(x,t)$ are differentiable in $x$. The former has been done in the proof of Lemma~\ref{lemma_nablainverseDX_anddifferentiablity}, and the latter can be shown in a similar way. Therefore, we have
\begin{align*}
    \nabla  G_j(x,t)=\sum_{k=1}^d  \left(\nabla A_{k,j}(x,t)\,B_k(t)- \nabla D_kA_{k,j}(x,t)\right). 
\end{align*} 


Next, let us prove $q$-integrability of $ \nabla  G(x,t)$ for $\tau>q\geq 1$. Note that we have
\begin{align*}
    \abs{\nabla G(x,t)}=\brac{\sum_{m=1}^d \abs{\nabla_m G(x,t)}^2}^{\frac{1}{2}},
\end{align*}
and similar as in \eqref{G:A:B:D}, we can show that
\begin{align*}
    \abs{\nabla_m G(x,t)}\leq \abs{\nabla_m A(x,t)}\abs{B(t)}+\sqrt{d}\sum_{k=1}^d\abs{\nabla_m D_k A(x,t)}. 
\end{align*}
As before, we assume that $q_0\in (q,\tau)$. By Young's inequality with $a=\frac{q_0}{q},b=\frac{q_0}{q_0-q}$ and Lemmas~\ref{lemma_Bk},~\ref{lemma_AandDAand_nablaDA}, 
\begin{align*}
   & \sup_{x\in\R^d}\E{\abs{\nabla_m G(x,t)}^q}\\
   &\leq 2^{q-1}\brac{\E{\abs{B(t)}^q \abs{\nabla_m A(x,t)}^q}+\sqrt{d} \sum_{k=1}^d\sup_{x\in\R^d}\E{\abs{\nabla_m D_k A(x,t)}^q} }\\
   &\leq 2^{p-1}\Bigg(\frac{q}{q_0}\E{\abs{B(t)}^{q_0}}+\frac{q_0-q}{q_0}\sup_{x\in\R^d}\E{\abs{\nabla_m A(x,t)}^{\frac{qq_0}{q_0-q}}}\\
   &\hspace{15em}+\sqrt{d}\sum_{k=1}^d\sup_{x\in\R^d}\E{\abs{\nabla_m D_k A(x,t)}^q} \Bigg)\\
   &\leq 2^{q-1}\brac{\frac{q}{q_0}\mathcal{Q}^0(q_0,t)+\frac{q_0-q}{q_0}\mathcal{Q}^{10}_m\brac{\frac{qq_0}{q_0-q},t}+\sqrt{d}\sum_{k=1}^d\mathcal{Q}^{12}_{k,m}(q,t) }.
\end{align*}
Therefore,
\begin{align*}
   &\sup_{x\in\R^d}\E{ \abs{\nabla G(x,t)}^q}
   \\
   &\leq 2^{q-1} \brac{\sum_{m=1}^d \brac{\frac{q}{q_0}\mathcal{Q}^0(q_0,t)+\frac{q_0-q}{q_0}\mathcal{Q}^{10}_m\brac{\frac{qq_0}{q_0-q},t}+\sqrt{d}\sum_{k=1}^d\mathcal{Q}^{12}_{k,m}(q,t) }^2}^{\frac{1}{2}}. 
\end{align*}
Finally, let us calculate the dimension dependence. One can compute that the quantities 
\begin{align*}
    \mathcal{Q}^0(q_0,t),\quad \mathcal{Q}^{10}_m\brac{\frac{qq_0}{q_0-q},t}
    \quad\text{and}\quad
    \sqrt{d}\sum_{k=1}^d\mathcal{Q}^{12}_{k,m}(q,t),
\end{align*}
are respectively of the order (in terms of dimension dependence) $\mathcal{O}\left(d^{\frac{q}{2}+\frac{3}{2}-\frac{q}{\tau}}\right), \\\mathcal{O}\left(d^{\frac{4qq_0}{q_0-q}+\frac{1}{2}}\right)$ and 
\begin{align*}
\mathcal{O}\left(d^{\frac{4qq_0}{q_0-q}+\frac{3}{2}}\vee d^{q_0+\frac{5}{2}-\frac{q_0}{\tau}}\vee d^{\frac{q\tau}{q(\tau-q)}+\frac{3}{2}}\vee d^{\tau+\frac{3}{2}}\right), 
\end{align*}
as $d\rightarrow\infty$.
Therefore, our upper bound on $\sup_{x\in\R^d}\E{ \abs{\nabla G(x,t)}^q}$ is of the order 
\begin{align*}
\mathcal{O}\left(d^{\frac{4qq_0}{q_0-q}+\frac{5}{2}}\vee d^{q_0+\frac{7}{2}-\frac{q_0}{\tau}}\vee d^{\frac{q\tau}{q(\tau-q)}+\frac{5}{2}}\vee d^{\tau+\frac{5}{2}}\right), 
\end{align*}
as $d\rightarrow\infty$.
The proof is complete.
\end{proof}

\section{The Ornstein-Uhlenbeck process}\label{sec:OU}

In this section, $\{L^\alpha_t:t\geq 0\}$ will denote a one-dimensional $\alpha$-stable L\'{e}vy process where $1< \alpha< 2$ and the associated L\'{e}vy measure is ${p_\alpha}/{\abs{z}^{\alpha+1}}$, with ${p_\alpha}$ defined in \eqref{p:alpha}. Via a direct calculation, we will show that the rate $\eta$ in Theorem \ref{theorem_eulerscheme} can be achieved for the Euler scheme of a one-dimensional Ornstein-Uhlenbeck process driven by $L^\alpha$.

Let us consider the process
\begin{align*}
    dX_t=-X_tdt+dL^\alpha_t,
\end{align*}
and its Euler discretization
\begin{align*}
Y_{m+1}&=Y_m+\eta Y_m+\xi_m, \quad Y_0=x. 
\end{align*}
Here $\xi_m:= L^\alpha_{(m+1)\eta}-L^\alpha_{m\eta}$ $\eta>0$ is the stepsize, and $\xi_m,m\in \N$ is a family of i.i.d. stable random variables such that $\xi_{m}\sim\xi$ in distribution for every $m\in\mathbb{N}$, 
where $\xi$ has the with characteristic function $\mathbb{E}[e^{iu\xi}]=\exp \brac{\abs{u}^\alpha}$
for any $u\in\mathbb{R}$.

Denote $X_\infty$ and $Y_\infty$ as the random variables that are distributed as the invariant measures of the process $X_t,t\geq 0$ and the Markov chain $Y_m,m\in\N$ respectively. \cite[Lemma 3]{raj2023algorithmic} tells us that
\begin{align*}
    \E{\exp(iuX_\infty)}=\exp\brac{-\frac{1}{\alpha}\abs{u}^\alpha},
\end{align*}
which implies that $X_\infty\sim \brac{\frac{1}{\alpha}}^{1/\alpha}\xi$ in distribution. Meanwhile, \cite[Corrolary 11]{raj2023algorithmic} and $\sum_{j=0}^\infty (1-\eta)^{\alpha j}=\frac{1}{1-(1-\eta)^\alpha}$ imply that
\begin{align*}
    \E{\exp\brac{iuY_\infty }}= \exp\brac{-\frac{\eta}{1-(1-\eta)^\alpha} \abs{u}^\alpha},
\end{align*}
and hence $Y_\infty\sim \brac{\frac{\eta}{1-(1-\eta)^\alpha} }^{1/\alpha} \xi$ in distribution. 

Therefore, by the definition of Wasserstein distance, we have 
\begin{align*}
    d_{\operatorname{Wass}}\brac{\operatorname{Law}(X_\infty),\operatorname{Law}(Y_\infty)}\leq \E{\abs{\xi}}\abs{\brac{\frac{\eta}{1-(1-\eta)^\alpha} }^{1/\alpha}-\brac{\frac{1}{\alpha}}^{1/\alpha} }. 
\end{align*}
Next, we define $P(\alpha):=\brac{\frac{\eta}{1-(1-\eta)^\alpha} }^{1/\alpha}-\brac{\frac{1}{\alpha}}^{1/\alpha}$
and consider its dependence on $\eta$ as $\eta\rightarrow 0$. Note that we have
\begin{align*}
    (1-\eta)^\alpha=1-\alpha\eta +\frac{\alpha(\alpha-1)}{2}\eta^2+\mathcal{O}\brac{\eta^3},
\end{align*}
so that 
\begin{align*}
    \frac{\eta}{1-(1-\eta)^\alpha}&=\frac{\eta}{\alpha\eta -\frac{\alpha(\alpha-1)}{2}\eta^2-\mathcal{O}\brac{\eta^3}}=\frac{1}{\alpha}\brac{1+\frac{\alpha+1}{2}\eta+\mathcal{O}\brac{\eta^2}},
\end{align*}
as $\eta\rightarrow 0$. 
This infers that
\begin{align*}
    P(\alpha)= \brac{\frac{1}{\alpha}}^{1/\alpha}\brac{\brac{ 1+\frac{\alpha+1}{2}\eta+\mathcal{O}\brac{\eta^2}
 }^{1/\alpha}-1 }=\mathcal{O}\brac{\eta},
\end{align*}
and consequently
\begin{align*}
     d_{\operatorname{Wass}}\brac{\operatorname{Law}(X_\infty),\operatorname{Law}(Y_\infty)}\leq \mathcal{O}\brac{\eta}. 
\end{align*}
Hence, the discretization error in Wasserstein distance has the linear dependence on the stepsize $\eta$, 
which is the same as in Theorem~\ref{theorem_eulerscheme} in the main paper.

 \end{document}